\numberwithin{equation}{section}
\numberwithin{figure}{section}
\theoremstyle{plain}
\newtheorem{thm}{\protect\theoremname}[section]
\theoremstyle{definition}
\newtheorem{defn}[thm]{\protect\definitionname}
\theoremstyle{plain}
\newtheorem{lem}[thm]{\protect\lemmaname}
\theoremstyle{definition}
\newtheorem{problem}[thm]{\protect\problemname}
\theoremstyle{plain}
\newtheorem{prop}[thm]{\protect\propositionname}
\theoremstyle{remark}
\newtheorem{rem}[thm]{\protect\remarkname}
\theoremstyle{definition}
\newtheorem{example}[thm]{\protect\examplename}
\theoremstyle{plain}
\newtheorem{fact}[thm]{\protect\factname}
\providecommand{\definitionname}{Definition}
\providecommand{\examplename}{Example}
\providecommand{\factname}{Fact}
\providecommand{\lemmaname}{Lemma}
\providecommand{\problemname}{Problem}
\providecommand{\propositionname}{Proposition}
\providecommand{\remarkname}{Remark}
\providecommand{\theoremname}{Theorem}
\begin{document}
\title[New definitions for type 1 computable topological spaces]{New definitions in the theory of Type 1 computable topological spaces}
\author{Emmanuel Rauzy}
\thanks{The author is funded by an Alexander von Humboldt Research Fellowship.}
\begin{abstract}
In 1957, Lacombe initiated a systematic study of the different possible
notions of  ``computable topological spaces''. However, he interrupted
this line of research, settling for the idea that ``computably open
sets should be computable unions of basic open sets''. We explain
the limits of this approach, which in particular is not general enough
to account for all spaces that admit a computable metric. 

We give a general notion of Type 1 computable topological space that
does not rely on a notion of effective basis. Building on the work
of Spreen, we show that the use of a \emph{formal inclusion relation}
should be systematized. We give the first general definition of the
computable topology associated to a computable metric that does not
rely on effective separability. This definition can be translated
to other constructive settings, and its relevance goes beyond that
of Type 1 computability. 

Finally, we give a new version of a theorem of Moschovakis, by showing
that for an appropriate notion of effective basis, the ``computably
open sets'' reduce to ``computable unions of basic open sets''
on computably separable spaces. 
\end{abstract}

\maketitle
\tableofcontents{}

\section{\label{sec:Introduction}Introduction}

In computable mathematics, each type of structure classically considered
in standard mathematics admits an effective counterpart. The notions
of \emph{computable graph}, \emph{computable group}, \emph{computable
real number} are thus well studied and widely accepted as the correct
equivalents to the notions of \emph{graph, }of\emph{ group }and of\emph{
real number}. 

The purpose of the present article is to explain why previously existing
notions of \emph{computable topological spaces} are unsatisfactory,
and to propose, in the context of Type 1 computability, a new, more
general and more robust notion of Type 1 computable topological space,
for instance general enough to be able to guarantee that a computable
metric will always yield a computable topology. 

\bigskip

The study of the interplay between computability and topology goes
back to the 1950's. Indeed, different notions of ``computable functionals'',
or of ``computable higher order functions'' had been proposed by
Turing \cite{Turing1937,Turing1938}, Kleene \cite{Kleene1952}, Grzegorczyk
\cite{Grzegorczyk1957}, Markov \cite{Markov1963}, and others, and
it turned out that the comparison between these different notions
systematically involved the study of topology: the Rice-Shapiro Theorem
\cite{Rogers1987}, the Myhill-Shepherdson Theorem \cite{Myhill1955}
and the Kreisel-Lacombe-Shoenfield Theorem \cite{Kreisel1957} all
amount to establishing that certain ``computable functions'' are
``effectively continuous''. 

To unify these results in a common framework, one has to devise a
general setting in which it would make sense to talk about ``effectively
continuous functions'' and ``computably open sets''. That is, one
has to give a definition of a \emph{computable topological space}. 

\bigskip

In 1957, in \cite{Lacombe1957b}, Daniel Lacombe began the systematic
study of computable topological spaces, and the search for a correct
set of ground definitions and theorems. This article is unfortunately
incomplete, in that it does not contain the central definition of
a ``recursive based space'': the definition is omitted ``due to
lack of space''. 

Nevertheless, it provides a clear understanding of the issues that
Lacombe deemed crucial for the development of ``recursive topology'':
\begin{itemize}
\item The problem that is discussed most in \cite{Lacombe1957b} is the
one of defining a Cartesian closed category of topological spaces
where one would be able to study both continuity and computability:
what is the (computable) topology to be considered on sets of continuous
functions between topological spaces? 
\item A second problem considered is the problem of defining different notions
of ``complete spaces'': different kinds of ``approximating sequences''
are described, and the problem of guaranteeing that these approximating
sequences always converge to a point is discussed. 
\item And finally, Lacombe remarks (we translate) ``A similar question
(but slightly more complicated) arises when we agree to restrict functional
definitions to the sets $E^{*}$ and $E_{1}^{*}$ of recursive points
of $E$ and $E_{1}$''.\footnote{``Une question analogue (mais un peu plus compliquée) se pose lorsqu'on
convient de restreindre les définitions fonctionnelles aux ensembles
$E^{*}$ et $E_{1}^{*}$ constitués respectivement par les éléments
récursifs de $E$ et de $E_{1}$.''}
\end{itemize}
There is by now a much better understanding of the first and second
problems raised by Lacombe, and we think it is fair to argue that
these problems are now solved. 

Indeed, the first problem above is now well understood thanks to Schröder's
classification of the spaces that admit admissible representation
(the qc$\text{b}_{0}$ spaces), which form a Cartesian closed category
on which computability and continuity can be studied jointly. See
\cite{Schroeder2003,Schroeder2021}. 

The second problem can also be considered solved. In \cite[Section 6.5]{Lacombe1957b}
is sketched a notion of ``abstract basis'', which would be a structure
meant to ``abstractly reproduce the topological relations defined
on bases of open sets''. These ``abstract bases'' could then be
used to construct completion of various spaces. We think that what
Lacombe hinted at corresponds exactly to Mathew de Brecht's notion
of \emph{quasi-Polish space} \cite{Brecht2013}, and the desired completion
is the one given in \cite{Brecht2020a}. 

The present article answers the third question raised by Lacombe,
by giving the ground definitions and theorems that are required to
study Type 1 computable topology. Note that in Section \ref{subsec:Remark-on-Type-2}
of this introduction we explain why the considerable developments
that arose in the study of various forms of constructive analysis
(Type 2 computable analysis, synthetic topology, abstract stone duality)
do not directly translate to a solution of this third problem -thus
explaining that there is indeed something that is ``more complicated''
when we deal only with finite descriptions. 

\bigskip

In the article of Lacombe that followed \cite{Lacombe1957b}, which
is \cite{Lacombe1957}, discussions about the correct notion of ``computable
topological space'' did not appear anymore. Instead, Lacombe settled
for the following setting: a topological space with a basis equipped
with a total numbering, in which the ``computably open sets'' would
be \emph{c.e. unions of basic open sets}.\footnote{Note also that, in the incomplete definition that appears in \cite{Lacombe1957b},
the classical definition of a basis for a topology is expressed in
a way which would naturally lead to the notion of computable basis
that was later introduced by Nogina \cite{Nogina1966}, and thus to
a notion different from that of \cite{Lacombe1957}.} These sets, computable unions of basic open sets, were later called
\emph{Lacombe sets} by Lachlan in \cite{Lachlan1964} and Moschovakis
in \cite{Moschovakis1964}. 

We thus refer ourselves to the \emph{Lacombe approach} whenever discussing
computable topologies based on the idea that computably open sets
are c.e. unions of basic open sets. 

\bigskip

Let us briefly explain what is unsatisfying with Lacombe's final approach.

The first obvious problem is that ``computable topologies'' were
never defined: what Lacombe defined is a notion of ``computable basis''.
And such a definition cannot be used without a general definition
of a computable topological space, which would indicate \emph{what
it is the basis should generate}. 

More importantly, this approach to computable topology is not general
enough. For instance, it is not possible to define the computable
topology coming from a computable metric\emph{ }if we take ``computably
open sets'' to be c.e. unions of basic sets. (This will be discussed
in details in Section \ref{sec:Problems-with-Lacombe-bases}.) The
Lacombe approach is only general enough to deal with effectively separable
metric spaces, i.e. those that admit a dense and computable sequence.
The present article stems from the author's work on the space of marked
groups, which is a Polish space which, equipped with its natural representation,
is not computably Polish because it is not computably separable \cite{RauzyV}.
On this space, one cannot define the ``computably open sets'' to
be the Lacombe sets. 

In fact, a very important but little known theorem of Moschovakis
from \cite{Moschovakis1964} states that, on recursive Polish spaces
(i.e. on the set of computable points of a computably Polish space),
two notions of computably open sets agree: if $O$ is a semi-decidable
set which admits a computable multi-function $F$ that, given $x\in O$,
produces a computable real $r$ such that $B(x,r)\subseteq O$, then
the set $O$ is in fact a Lacombe set. (Furthermore, the equivalence
is effective, it is possible to effectively pass from one description
of $O$ to the other one.) Several other versions of this theorem
exist in the context of Type 2 computability: see \cite[Theorem 3.10]{Brattka2003}
and \cite[Proposition 3.3]{Gregoriades2016}. Note that these theorems
are easier to obtain in the context of Type 2 computability than in
Type 1 computability. 

We summarize the above in the following sentence:

\vskip 0.2cm

\begin{adjustwidth}{1cm}{1cm}

In general, there are more ``computably open sets'' than ``computable
unions of basic open sets''. And there are ``Moschovakis-like Theorems''
which give sufficient conditions for those two notions to become equivalent. 

\end{adjustwidth}

\vskip 0.2cm

And thus the notion of ``computable topological space'' considered
today in the literature, which is Weihrauch and Grubba's formalization
of Lacombe's approach \cite{Weihrauch2009ElementaryCT}, is not a
general notion: it applies precisely to those spaces for which a Moschovakis
Theorem exists. 

\bigskip

In the present article, we give the first general definition of a
Type 1 computable topological space, and the first general definition
of the computable topology generated by a computable metric. As we
will see, the second point is non-trivial. We rely on a notion of
formal inclusion relation and on ideas that go back to Dieter Spreen
\cite{Spr98}. 

In Theorem \ref{thm:Moschovakis-theorem-on Spreen bases}, we give
a new version of Moschovakis' Theorem, which is more general than
the previous one, as it relies on effective separability as sole hypothesis.
The proof of this new theorem is in fact very easy: we show that by
using an elaborate notion of effective basis, the difficulty of the
proof disappears. 

\bigskip

The relevance of our study of the topology generated by a metric goes
well beyond the scope of Type 1 computability: while we are only using
one of several possible formalisms to study computable topologies,
our results easily translate to the study of metric spaces in realizability
\cite{Bauer2000}, in synthetic topology \cite{Bauer2012}, in Type
2 computability \cite{Iljazovic2021}. For instance, a theorem of
Matthias Schröder \cite{Schroeder1998} establishes an effective version
of the Urysohn Metrization theorem. The effective metrization theorem
applies to spaces that need not be computably separable, and, because
of this, it has never been stated before how the computable topology
that one is applying metrization to is related to the computable metric
given by the theorem: in \cite{Schroeder1998}, and in \cite{Weihrauch2013}
where Weihrauch revisits this theorem, the statement of the theorem
ends up being ``there is a computable metric which (classically)
generates the topology under scrutiny''. Thanks to the presented
article, we can strengthen this result: the computable topology one
started with is indeed the computable topology generated by the computable
metric. 

\bigskip

We will now discuss the contents of this paper, giving slightly more
details. 

\subsection{Type 1 computability }

The present article is written in the context of Type 1 computability.
This choice was mostly dictated by the immediate needs of the author
(see \cite{RauzyV}), but it is on the whole rather inconsequential.
In Section \ref{subsec:Remark-on-Type-2} and \ref{subsec:The-realizability-approach}
of this introduction, we discuss relation to other approaches. 

Type 1 computability is the approach to computable mathematics one
obtains by working only with finite descriptions. It was formalized
thanks to the use of \emph{numberings} in a pioneering article of
Malcev \cite{Maltsev1961,Malcev1971} on numbered algebras. A \emph{numbering}
of a set $X$ is a surjection $\nu$ defined on a subset of the natural
numbers and whose codomain is $X$. This is denoted by 
\[
\nu:\mathbb{N}\rightharpoonup X,
\]
where the symbol $\rightharpoonup$ indicates that $\nu$ does not
have to be total. A numbering can be interpreted as a ``semantic
function'', mapping symbols to their meaning. A natural number $n$
that is mapped to a point $x$ of $X$ via $\nu$ is called a $\nu$\emph{-name}
of $x$. 

One of the innovations of the present article is that we consider
also \emph{subnumberings}. A subnumbering $\nu$ of a set $X$ is
a numbering of a subset of $X$. The image of $\nu$ is then called
the set of $\nu$\emph{-computable points}. We denote by $X_{\nu}$
the set of $\nu$-computable points. A function $f:X\rightarrow Y$
between subnumbered sets $(X,\nu)$ and $(Y,\mu)$ is called $(\nu,\mu)$\emph{-computable}
when there exists a partial computable function $F:\mathbb{N}\rightharpoonup\mathbb{N}$
which maps any $\nu$-name of a point $x$ of $X$ to a $\mu$-name
of its image $f(x)$ in $Y$. 

A commonly accepted idea states that Type 1 computability has something
to say only about countable sets, because only countable sets admit
numberings. The use of subnumberings allows to disprove this idea.
Since subnumberings are present in mathematics as soon as some mathematical
objects have finite descriptions, they in fact arise very naturally
even in the study of uncountable sets. For instance, every mathematician
can easily ``compute'' the union of the intervals $]0,2[$ and $]1,3[$
in $\mathbb{R}$, and the fact that not all real numbers need to have
a finite description, i.e. that there is no numbering of the set of
real numbers, is not usually seen as a fundamental obstacle to finding
the result $]0,3[$. While this example is rather trivial, very similar
arguments are commonly accepted as showing that the use of numberings
should be restricted to settings where all points admit finite descriptions.

\subsection{Formal inclusion relations, computable topological spaces }

We now present the definitions of \emph{formal inclusion relations,
Type 1 computable topological spaces}, and \emph{effectively continuous
functions}. 

The following definition is exactly Spreen's definition of a \emph{strong
inclusion}, except that we also demand reflexivity. We thus use the
terms ``formal inclusion relation'' instead of ``reflexive strong
inclusion relation''. This is also related to the notion of formal
inclusion relation as used in the theory of domain representation
\cite{StoltenbergHansen2008}. 
\begin{defn}
[Spreen, \cite{Spr98}, Definition 2.3]\label{def:Formal inclusion relation }Let
$\mathfrak{B}$ be a subset of $P(X)$, and $\beta$ a numbering of
$\mathfrak{B}$. Let $\mathring{\subseteq}$ be a binary relation
on $\text{dom}(\beta)$. We say that $\mathring{\subseteq}$ is a
\emph{formal inclusion relation for $(\mathfrak{B},\beta)$} if the
following hold:
\begin{itemize}
\item The relation $\mathring{\subseteq}$ is reflexive and transitive (i.e.
$\mathring{\subseteq}$ is a preorder); 
\item $\forall n,m\in\text{dom}(\beta),\,n\mathring{\subseteq}m\implies\beta(n)\subseteq\beta(m)$.
\end{itemize}
\end{defn}

We now define Type 1 computable topological spaces\emph{. }See Definition
\ref{def:MAIN DEF} for more details. 
\begin{defn}
\label{def:MAIN DEF-1-1}A \emph{Type 1 computable topological space}
is a quintuple $(X,\nu,\mathcal{T},\tau,\mathring{\subseteq})$ where
$X$ is a set, $\mathcal{T}\subseteq\mathcal{P}(X)$ is a topology
on $X$, $\nu:\mathbb{N}\rightharpoonup X$ is a subnumbering of $X$,
$\tau:\mathbb{N}\rightharpoonup\mathcal{T}$ is a subnumbering of
$\mathcal{T}$, $\mathring{\subseteq}$ is a formal inclusion relation
for $\tau$, and such that: 
\begin{enumerate}
\item The image of $\tau$ generates the topology $\mathcal{T}$;
\item The empty set and $X$ both belong to the image of $\tau$;
\item The open sets in the image of $\tau$ are uniformly $\nu$-semi-decidable; 
\item The operations of taking computable unions and finite intersections
are computable, i.e.:
\begin{enumerate}
\item The function $\bigcup:\mathcal{T}^{\mathbb{N}}\rightarrow\mathcal{T}$
which maps a computable sequence of open sets $(A_{n})_{n\in\mathbb{N}}$
to its union is computable, and it can be computed by a function that
is expanding and increasing for $\mathring{\subseteq}$;
\item The function $\bigcap:\mathcal{T}\times\mathcal{T}\rightarrow\mathcal{T}$
which maps a pair of open sets to their intersection is computable,
and it can be computed by a function increasing for $\mathring{\subseteq}$. 
\end{enumerate}
\end{enumerate}
\end{defn}

Recall that a function $f:A\rightarrow A$ on a poset $(A,\preceq)$
is called expanding if $\forall x\in A,\,x\preceq f(x)$. 

The first condition is here to record the fact that the abstract topology
we are ``effectivizing'' is indeed $\mathcal{T}$. The third condition
was coined Malcev's condition by Spreen \cite{Spr98} in reference
to Malcev's work on numberings from \cite{Malcev1971}. 

The open sets in the image of $\tau$ are called the \emph{computably
open sets}. 

Finally, we have the definition of effectively continuous functions.
Note that we give a \emph{single} definition. There are different
definitions of ``effectively continuous functions'' that can be
found in the literature. They in fact correspond to effective continuity
with respect to different computable topologies. 
\begin{defn}
A function $f$ between two computable topological spaces $(X,\nu,\mathcal{T}_{1},\tau_{1},\mathring{\subseteq}_{1})$
and $(Y,\mu,\mathcal{T}_{2},\tau_{2},\mathring{\subseteq}_{2})$ is
called \emph{effectively continuous} if it is classically continuous,
if the function $f^{-1}:\mathcal{T}_{2}\rightarrow\mathcal{T}_{1}$
is $(\tau_{2},\tau_{1})$-computable, and if it can be computed by
a function increasing with respect to the formal inclusion relation:
there should exist a computable function $h:\text{dom}(\tau_{2})\rightarrow\text{dom}(\tau_{1})$
for which:
\[
\forall n\in\text{dom}(\tau_{2}),\,\tau_{1}(h(n))=f^{-1}(\tau_{2}(n)),
\]
\[
\forall n_{1},n_{2}\in\text{dom}(\tau_{2}),\,n_{1}\mathring{\subseteq}_{2}n_{2}\implies h(n_{1})\mathring{\subseteq}_{1}h(n_{2}).
\]
\end{defn}

An important remark is the fact that the use of a formal inclusion
relation gives a \emph{strictly more general definition }than what
we would obtain by omitting it. Indeed, in any setting where a formal
inclusion relation is not specified, it is as though the actual inclusion
relation $n\mathring{\subseteq}m\iff\tau(n)\subseteq\tau(m)$ was
chosen, by default. And thus the definitions written above provide
the extra flexibility of using a formal inclusion that is different
from the actual inclusion relation. 

\subsection{Lacombe and Nogina bases }

The obvious approach towards defining the computable topology generated
by a computable metric is to use a notion of computable basis, and
to say that the computable topology generated by the metric is the
topology whose basis are the open balls. 

When proceeding like this, one is immediately confronted to a problem:
classically equivalent notions of bases yield non-equivalent effective
definitions. Indeed, there are two common formulations of the classical
definition of a basis for a topology on a set $X$. A basis is a set
$\mathcal{B}$ of subsets of $X$ such that:
\begin{itemize}
\item Every element of $X$ belongs to an element of $\mathcal{B}$;
\item For any two elements $B_{1}$ and $B_{2}$ of $\mathcal{B}$, and
for any $x$ in $B_{1}\cap B_{2}$, there is an element $B_{3}$ in
$\mathcal{B}$ containing $x$ and such that $B_{3}$ is a subset
of $B_{1}\cap B_{2}$.
\item A subset $O$ of $X$ is called \emph{open} if for any $x$ in $O$
there is $B$ in $\mathcal{B}$ such that $x\in B$ and $B\subseteq O$. 
\end{itemize}
Or, a second approach: 
\begin{itemize}
\item The union of elements of $\mathcal{B}$ gives $X$: 
\[
\underset{B\in\mathcal{B}}{\bigcup}B=X;
\]
\item The intersection of two elements of the basis can be written as a
union of elements of this basis: for any $B_{1}$ and $B_{2}$ in
$\mathcal{B}$, there is a subset $\mathcal{C}$ of $\mathcal{B}$
such that 
\[
B_{1}\cap B_{2}=\underset{B\in\mathcal{C}}{\bigcup}B.
\]
\item A subset $O$ of $X$ is called \emph{open} if it can be written as
a union of basic sets. 
\end{itemize}
Both of these approaches are equivalent. Note that one must prove
that the sets we have called ``open'' do form a topology. 

In terms of computational contents, the above conditions are however
not equivalent. We sketch here the corresponding effective definitions.
The first one we associate to Nogina, since it is the approach followed
in \cite{Nogina1966}, the second one corresponds to the already discussed
Lacombe approach. 

Nogina approach: 
\begin{itemize}
\item There is a program which, given the name of a point in $X$, produces
the name of a basic open set that contains $x$;
\item There is a program which, given the names of two basic open sets $B_{1}$
and $B_{2}$ and the name of a point $x$ in their intersection, produces
the name of a basic open set $B_{3}$, containing $x$, and which
is a subset of $B_{1}\cap B_{2}$.
\item An effective open set is a semi-decidable set $O$ for which there
is a program that, on input a name for a point $x$ in $O$, produces
the name of a basic set $B$ with $x\in B$ and $B\subseteq O$. 
\end{itemize}
Lacombe approach: 
\begin{itemize}
\item The ambient set $X$ can be written as a computable union of basic
open sets; 
\item There is a program which, given the name of two basic open sets $B_{1}$
and $B_{2}$, produces the code of a computable sequence of basic
open sets whose union gives the intersection $B_{1}\cap B_{2}$.
\item An effective open set is a computable union of basic open sets. 
\end{itemize}
Moschovakis' theorem (which applies only to countable sets equipped
with onto numberings) then states:
\begin{thm}
[Moschovakis, \cite{Moschovakis1964}, Theorem 11] \label{thm:Moschovakis, INTRO Theorem}Let
$(X,\nu)$ be numbered set with a computable metric $d$, where limits
of computably convergent computable sequences can be computed, and
which admits a dense and computable sequence. 

Then, the set of open balls with computable radii, equipped with its
natural numbering, forms both a Lacombe and a Nogina basis, and the
computable topologies generated by these bases are identical.
\end{thm}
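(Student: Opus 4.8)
The plan is to fix, once and for all, the natural numbering $\beta$ of the basic balls, where $\beta(\langle n,k\rangle)=B(a_n,q_k)$ ranges over the balls centered at the points $a_n$ of the dense computable sequence with positive rational radii $q_k$, and then to carry out four tasks: verify the two Nogina axioms, verify the two Lacombe axioms, and prove both inclusions between the resulting notions of computably open set. The single computational device underlying everything is the semi-decidable \emph{formal inclusion} of balls: for a ball $B(x,r)$ with computable center and computable radius, the relation $d(a_n,x)+q_k<r$ is semi-decidable, being a strict inequality between computable reals, and by the triangle inequality it implies the genuine inclusion $B(a_n,q_k)\subseteq B(x,r)$. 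This will serve as the formal inclusion relation $\mathring{\subseteq}$ in the sense of Definition \ref{def:Formal inclusion relation } and will replace the undecidable genuine inclusion throughout.

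First I would dispatch the two Nogina axioms. Given a name of a point $x$, density produces a dense point $a_n$ with $d(x,a_n)$ small and a rational $q_k$ with $x\in B(a_n,q_k)$, settling the first axiom. For the refinement axiom, given basic balls $B(a_{n_1},q_1),B(a_{n_2},q_2)$ and a point $y$ in their intersection, the reals $q_i-d(y,a_{n_i})$ are computable and known to be positive, so one can compute a positive rational below their minimum; density then yields a dense point $a_n$ near $y$ and a rational $q$ with $y\in B(a_n,q)$ and $d(a_n,a_{n_i})+q<q_i$ for $i=1,2$, whence $B(a_n,q)\subseteq B(a_{n_1},q_1)\cap B(a_{n_2},q_2)$. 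The Lacombe axioms are handled by the same recentering run in parallel over all dense points: $X=\bigcup_n B(a_n,1)$ is a computable union, and $B(a_{n_1},q_1)\cap B(a_{n_2},q_2)$ equals the computable union of all basic balls $B(a_n,q)$ satisfying both formal inclusions $d(a_n,a_{n_i})+q<q_i$, the inclusion $\subseteq$ being the triangle inequality and the reverse inclusion being exactly the covering just described. Finally, the direction ``Lacombe-open $\Rightarrow$ Nogina-open'' is immediate: if $O=\bigcup_k B_{i_k}$ is a computable union of basic balls, then membership in $O$ is semi-decidable, and on input $x\in O$ one searches for an index $k$ with $x\in B_{i_k}$ and outputs that basic ball, which lies in $O$.

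The heart of the matter, and the step I expect to be the main obstacle, is the converse ``Nogina-open $\Rightarrow$ Lacombe-open''. Here $O$ is only semi-decidable and comes equipped with the multifunction $F$ producing, for each $x\in O$, a radius $r$ with $B(x,r)\subseteq O$; the difficulty is that for a fixed basic ball the predicate $B(a_n,q)\subseteq O$ is not semi-decidable, so one cannot naively enumerate the basic balls contained in $O$. My plan is to avoid testing inclusion into $O$ altogether and to route every test through a witnessed ball. Concretely, I would dovetail over all names $m$; whenever the semi-decision procedure confirms $\nu(m)\in O$, I compute $r_m=F(\nu(m))$, with its guarantee $B(\nu(m),r_m)\subseteq O$, and enumerate every basic ball $B(a_n,q)$ for which the \emph{formal} inclusion $d(a_n,\nu(m))+q<r_m$ holds. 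This is semi-decidable, so the whole family is a computable sequence of basic balls. Each such ball satisfies $B(a_n,q)\subseteq B(\nu(m),r_m)\subseteq O$, so the union is contained in $O$; conversely any $y\in O$ is some $\nu(m_0)$, and applying density inside the witnessed ball $B(\nu(m_0),r_{m_0})$ yields a dense point $a_n$ and a rational $q$ with $y\in B(a_n,q)$ and $d(a_n,\nu(m_0))+q<r_{m_0}$, so $y$ is covered, exhibiting $O$ as a computable union of basic balls. The role of the completeness hypothesis is precisely to make this last manoeuvre legitimate: it guarantees that the onto numbering $\nu$ really does name \emph{every} point of $X$ as a computable limit of the dense sequence, so that each $y\in O$ is available as an input $\nu(m_0)$ to which $F$ and the metric apply. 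Since all four constructions are uniform in the given data, the translations between the two descriptions of computably open sets are effective, and the two bases generate one and the same computable topology.
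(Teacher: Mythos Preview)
Your argument for the hard direction (Nogina $\Rightarrow$ Lacombe) has a genuine gap. You write ``dovetail over all names $m$'', but in this paper a numbering is a \emph{partial} surjection, and $\text{dom}(\nu)$ need not be computably enumerable (the Cauchy numbering of the computable reals is the prototypical example, with a $\Pi^0_2$ domain). If you dovetail over all $m\in\mathbb{N}$, then on $m\notin\text{dom}(\nu)$ the semi-decision procedure for $O$, the realizer $\hat{C}$ of $F$, and the distance computations are all allowed to halt with garbage, so you may enumerate basic balls not contained in $O$. If instead you restrict the dovetailing to the only c.e.\ set of names you actually have---the dense sequence $(u_n)$---then your covering argument collapses: for $y\in O$ not lying on the dense sequence you must show that \emph{some} $\nu(u_n)$ near $y$ receives a radius $\hat{C}(u_n)$ large enough to swallow $y$, and nothing in the Nogina data prevents these radii from vanishing as $\nu(u_n)\to y$, since $\hat{C}$ is a multi-function whose output may depend arbitrarily on the name.

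This is exactly where the limit hypothesis enters, and your explanation of its role is mistaken: that $\nu$ is onto is already part of ``numbered set'', not a consequence of completeness. What the paper does is use the limit algorithm to manufacture, from the bare dense sequence $(u_n)$, a much richer c.e.\ sequence $(w_n)$ of \emph{names} (still naming the same dense set of points). The construction feeds into $\text{Lim}$ sequences that either converge to a target point $x$ or stall at some $u_k$ depending on a halting computation; Markov's Lemma (undecidability of the halting problem) then forces $\hat{C}$, evaluated along $(w_n)$, to produce at least one ball that actually captures $x$. Without this enrichment step the argument does not go through, and indeed the paper notes that the limit hypothesis cannot be dropped.
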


We include a proof of this theorem in Section \ref{subsec:Proof of Moschovakis-1}. 

\bigskip

In fact, neither of the two notions of computable basis introduced
above is satisfactory, and neither one of them will yield the desired
notion of \emph{the computable topology generated by a computable
metric}. 
\begin{itemize}
\item The conditions imposed on Nogina bases are as weak as possible, and
thus the notion of Nogina basis is very general, but Nogina bases
can only handle countable sets and actual numberings. Also, it was
noted by Bauer in \cite{Bauer2000} that Nogina bases are ``not as
well behaved'' as Lacombe bases\footnote{We discuss the relation between this article and the results of \cite{Bauer2000}
in Section \ref{subsec:The-realizability-approach} of this introduction.}.
\item Lacombe bases do yield computable topologies even when we consider
possibly uncountable subnumbered spaces. However, the conditions imposed
on a basis to be a Lacombe basis are too restrictive, and in particular
they do not apply to metric spaces unless we add effective separability
conditions. 
\item Finally, whereas it is easy to see that Lacombe bases are well behaved
as soon as the ambient space is computably separable, Moschovakis'
Theorem requires some additional hypotheses to identify the Nogina
and Lacombe approaches. While the condition of looking at a metric
space can be dropped, as was shown by Spreen \cite{Spr98}, the hypothesis
that ``limits of computably convergent sequences can be computed''
cannot be omitted. 
\end{itemize}
Note that in the literature, the notion of Lacombe basis is the most
studied one. The article \cite{Weihrauch2009ElementaryCT} by Grubba
and Weihrauch was particularly influential, it is based on the use
of c.e. Lacombe bases. Other articles where a Lacombe approach is
followed are: \cite{Morozov2004,Wei08,Selivanov2008,Freer2010,KOROVINA2016,Hoyrup2016,Schroeder2021,Lupini2023,Melnikov2023}.
Nogina \cite{Nogina1966,Nogina_1969} and Spreen \cite{Spr98,SPREEN_2016}
seem to be the only authors to have used a Nogina approach. 

\subsection{Spreen bases }

The main result of this paper is the definition of a new type of computable
basis which lies in between Nogina bases and Lacombe bases and which
suffers from none of the issues mentioned above. The idea of using
formal inclusion relations to define notions of computable bases that
lies in between Nogina and Lacombe bases is due to Dieter Spreen \cite{Spr98}.
We thus define what we call \emph{Spreen bases}. 

Because the general definition of a Spreen basis and of the computable
topology it generates are slightly technical (see Section \ref{part: Non Surjective Numberings}),
we will in this introduction only illustrate them in the case of metric
spaces.

Suppose that we want to prove that open balls in a metric space form
a (classical) basis. We consider two balls $B(x,r_{1})$ and $B(y,r_{2})$
and a point $z$ in their intersection, and define $r_{3}=\text{min}(r_{1}-d(x,z),r_{2}-d(y,z))$.
As a consequence of the triangular inequality, one can see that the
ball centered at $z$ and with radius $r_{3}$ must be a subset of
the intersection $B(x,r_{1})\cap B(y,r_{2})$. 

The reader will surely be convinced that this is essentially the only
proof available to us to show that the open balls form the basis of
a topology.

Studying computable topology, we will thus rely on the map $\Theta$
given by 
\[
\Theta:(x,r_{1},y,r_{2},z)\mapsto\text{min}(r_{1}-d(x,z),r_{2}-d(y,z))
\]
 to try and prove that open balls form an effective basis. 

The situation is then the following: 
\begin{itemize}
\item Thanks to the map $\Theta$, we can show that the open balls in a
metric space form a Nogina basis, but ``being a Nogina basis'' is
too weak a condition for our purpose.
\item On the other hand, the map $\Theta$ is not sufficient to be able
to express the intersection of two balls as a computable union of
balls, if we do not add hypotheses which state somewhat that ``points
can be constructed''. 
\end{itemize}
The crucial step to obtaining a new type of computable basis is to
remark that the map $\Theta$ satisfies some additional conditions,
which are not demanded in the plain Nogina setting. Namely:
\begin{itemize}
\item $\Theta$ is increasing in $r_{1}$ and $r_{2}$, and it does not
depend on the given names of $x$, $y$ and $z$. We associate the
symbol $\boxed{\mathring{\sim}}$ to this condition, which will be
called ``respect of the formal inclusion condition''. 
\item The function $\Theta$ does not vanish inside $B(x,r_{1})\cap B(y,r_{2})$:
for any $z$ in $B(x,r_{1})\cap B(y,r_{2})$, there is some $s>0$
such that for any point $p$ in $B(z,s)$, $\Theta(x,r_{1},y,r_{2},p)>s$.
We associate the symbol $\circledcirc$ to this ``non-vanishing condition''. 
\end{itemize}
Those two conditions are precisely those that we will demand of Spreen
bases. However, we will first have to be able to express them in the
general context of topological spaces, without relying on the metric. 

It becomes clear at this point that we will have to use an intensional
notion to generalize these conditions. Indeed, the function $\Theta$
is not a function that takes balls, seen as sets, as input, but balls
\emph{parametrized by pairs center-radius}. And given different pairs
center-radius that in fact define the same ball, i.e. the same set
of points, the function $\Theta$ may still produce different results. 

Thus the condition $\boxed{\mathring{\sim}}$, in the general context
of topological spaces, will say that the function that computes intersections
is \emph{formally increasing}, increasing with respect to a formal
inclusion relation. We cannot expect it to be actually increasing
with respect to set inclusion, since it is not a function that depends
only on its input seen as a set, it also depends on \emph{how} a certain
set is given as input. 

The condition $\circledcirc$ in the general context of topological
spaces will say that when computing the intersection of two basic
sets $B_{1}$ and $B_{2}$ in the neighborhood of a point $z$ of
$B_{1}\cap B_{2}$, the computed basic sets will not be needlessly
small, in particular they will always contain a neighborhood of $z$.
However, this condition is again stated with some intentionality:
indeed, the non-vanishing condition depends on the given names of
$B_{1}$ and $B_{2}$, and not only on $B_{1}$ and $B_{2}$ given
as abstract sets.

As a simple example, consider the closed interval $[0,1]\subseteq\mathbb{R}$
as a metric space. Suppose that we compute the intersections of the
balls $B(1/2,1/2+\epsilon)$ and $B(1,1)$ at $1$ for different values
of $\epsilon>0$. In this case, the radius computed around $1$ by
the function $\Theta$ is exactly $\epsilon$. And thus this radius
can become arbitrarily small, even though in fact the open balls that
we are considering are always the same sets: the considered intersection
is always $[0,1]\cap]0,1]$, and any radius below one would be an
acceptable answer. This explains again why we cannot expect to have
an extensional non-vanishing condition $\circledcirc$. 

Let us translate the two conditions $\boxed{\mathring{\sim}}$ and
$\circledcirc$ written above in the language of formal inclusion
relations. 

Fix a subnumbered set $(X,\nu)$. Suppose that we have a numbered
basis $(\mathfrak{B},\beta)$ equipped with a formal inclusion relation
$\mathring{\subseteq}$, and a map $\Theta$ which takes as input
the $\beta$-names of two basic open sets $B_{1}$ and $B_{2}$ and
the $\nu$-name of a point $x$ in their intersection, and produces
the $\beta$-name of a ball that contains $x$ and that is contained
in $B_{1}\cap B_{2}$. 

We will then ask that: 
\begin{itemize}
\item $\Theta$ should be increasing for $\mathring{\subseteq}$: if $b_{1}\mathring{\subseteq}\hat{b}_{1}$,
$b_{2}\mathring{\subseteq}\hat{b}_{2}$ and if $n$ and $m$ are two
different names of a point $x$ in $\beta(b_{1})\cap\beta(b_{2})$,
then we should have 
\[
\Theta(b_{1},b_{2},n)\mathring{\subseteq}\Theta(\hat{b}_{1},\hat{b}_{2},m).
\]
This is the condition $\boxed{\mathring{\sim}}$ as written above
for metric spaces formulated in terms of formal inclusion relations. 
\item $\Theta$ does not vanish: if $b_{1}$ and $b_{2}$ are the names
of two basic open sets, and if $x$ is a point in $\beta(b_{1})\cap\beta(b_{2})$,
there should be some basic set $B_{3}=\beta(b_{3})$ that contains
$x$ and such that for each computable point $z=\nu(n)$ in $B_{3}$
we have:
\[
b_{3}\mathring{\subseteq}\Theta(b_{1},b_{2},n).
\]
This is exactly the reformulation of the non-vanishing condition $\circledcirc$
written above for metric spaces to the general context of numbered
bases with formal inclusion relations. 
\end{itemize}
The definition of a Spreen basis, which appears in Section \ref{part: Non Surjective Numberings},
is then simply the Nogina definition to which we have added the two
conditions above: respect of the formal inclusion relation and non
vanishing condition.

\subsection{Spreen bases and Lacombe bases}

We then prove that Spreen bases are well behaved and generalize properly
Lacombe bases. 
\begin{lem}
\label{Lem: Lacombe basis is Spreen basis-1-1}Let $(X,\nu)$ be a
subnumbered set. As soon as each non-empty basic open set of $\mathfrak{B}$
contains a $\nu$-computable point, any Lacombe basis \textup{$(\mathfrak{B},\beta)$}
on $(X,\nu)$ is a Spreen basis. Furthermore, there is a process that
transforms the code of a Lacombe set into a name of this same set
seen as a Spreen open set.
\end{lem}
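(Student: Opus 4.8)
The plan is to check that the given Lacombe basis $(\mathfrak{B},\beta)$ carries all the data and satisfies all the clauses of a Spreen basis, which amounts to (i)~exhibiting the underlying Nogina structure, (ii)~fixing a formal inclusion relation $\mathring{\subseteq}$ and the map $\Theta$, (iii)~verifying the two conditions proper to Spreen bases, namely the respect of the formal inclusion $\boxed{\mathring{\sim}}$ and the non-vanishing condition $\circledcirc$, and finally (iv)~producing the algorithm that turns a code for a Lacombe set into a name for the corresponding Spreen open set. Throughout, the standing hypothesis that every non-empty basic open set contains a $\nu$-computable point is what keeps every set we manipulate ``visible'' from the computable points, and I expect it to be used precisely where the Spreen conditions, which quantify over computable points $z=\nu(n)$, have to reflect genuine set-theoretic inclusions.

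First I would extract the Nogina data directly from the two Lacombe clauses. The point-covering map comes from the fact that $X$ is a computable union of basic sets $\bigcup_k\beta(c_k)$: given a $\nu$-name of a point $x$, dovetail the uniformly $\nu$-semi-decidable membership tests $x\in\beta(c_k)$ and return the first $c_k$ that halts. The map $\Theta$ comes from the intersection clause: on input $(b_1,b_2,n)$ with $\nu(n)=x\in\beta(b_1)\cap\beta(b_2)$, run the Lacombe program producing a computable sequence $(C_j)_j$ of basic sets with $\bigcup_j C_j=\beta(b_1)\cap\beta(b_2)$, and search for a term $C_j$ containing $x$; such a term exists because $x$ lies in the union, and it is found effectively by semi-decidability. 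This already gives a basic set with $x\in\beta(C_j)\subseteq\beta(b_1)\cap\beta(b_2)$, i.e.\ a legitimate Nogina intersection map.

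For the formal inclusion relation I would take the one already attached to the Lacombe basis, defaulting to actual set inclusion $b\mathring{\subseteq}b'\iff\beta(b)\subseteq\beta(b')$ when none is specified. The respect condition $\boxed{\mathring{\sim}}$ should then follow by arranging the search inside $\Theta$ to be monotone, using the monotonicity of the Lacombe intersection operation for $\mathring{\subseteq}$. The hard part will be the non-vanishing condition $\circledcirc$: it forces, for \emph{fixed} names $b_1,b_2$, the basic sets returned by $\Theta$ not to shrink to nothing around any point $x$ of the intersection, since one needs a single basic $B_3\ni x$ with $\beta(B_3)\subseteq\beta(\Theta(b_1,b_2,n))$ for every computable $z=\nu(n)$ in $B_3$. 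A naive ``first index'' choice fails, because a computable point $z$ near $x$ may first be found in a term $C_j$ that omits $x$ and hence cannot contain $B_3$; this is exactly the intensional phenomenon the introduction isolates for $\circledcirc$. I therefore expect the real work to be the construction of a \emph{non-degenerate} assignment $z\mapsto\Theta(b_1,b_2,n_z)$, and this is where the density hypothesis enters: taking $B_3$ to be a term $C_{j_0}$ of the cover containing $x$, one must control the value of $\Theta$ on the computable points of $\beta(C_{j_0})$, which are dense in it by hypothesis, so that each such value formally contains $C_{j_0}$.

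Finally, for the ``furthermore'', I would give the translation explicitly. A Lacombe set is presented by a code for a computable sequence $(\beta(a_k))_k$ of basic sets with union $O$; this same code yields, on the one hand, a $\nu$-semi-decision procedure for $O$ by dovetailing the tests $x\in\beta(a_k)$, and on the other hand a pointing map sending a name of $x\in O$ to the first $a_k$ with $x\in\beta(a_k)$, so that $x\in\beta(a_k)\subseteq O$. This is precisely a name for $O$ as a Spreen open set, produced uniformly from the Lacombe code, and it describes the very same set $O$; the hypothesis that non-empty basic sets carry computable points is what guarantees that this description is faithful and that no information about $O$ is lost in passing to its computable points.
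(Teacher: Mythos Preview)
Your proposal identifies the right obstacle but does not overcome it, and the two choices you make at the outset are exactly what block you.

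First, the formal inclusion relation. You default to actual inclusion $b\mathring{\subseteq}b'\iff\beta(b)\subseteq\beta(b')$ and then appeal to ``the monotonicity of the Lacombe intersection operation for $\mathring{\subseteq}$''. But the definition of a Lacombe basis carries no such monotonicity clause: the intersection program is handed two \emph{names} $b_1,b_2$ and returns some computable sequence, with no promise about how that sequence behaves under inclusion of the input sets. So $\boxed{\mathring{\sim}}$ does not follow. The paper instead takes the \emph{finest} formal inclusion, namely equality: $n\mathring{\subseteq}m\iff n=m$. With this choice the monotonicity requirement in the second and third arguments of $G_2$ is vacuous, and the only thing to check is independence of the $\nu$-name of $x$ up to $\mathring{\sim}$.

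Second, the output of $\Theta$. You search for the \emph{first} term $C_j$ of the Lacombe decomposition containing $x$ and return that single basic set. You then correctly observe that this fails $\circledcirc$, and announce that ``the real work'' will be a non-degenerate assignment; but you never build it. The paper's move is to recall that in the full definition $G_2$ is \emph{sequence-valued}, and to output the \emph{entire} subsequence $\{b_k:\nu(n)\in\beta(b_k)\}$ of terms containing $x$, not just one. With equality as the formal inclusion, two different $\nu$-names of the same point produce the same set of indices (perhaps enumerated in a different order), hence $\mathring{\sim}$-equivalent sequences; that settles $\boxed{\mathring{\sim}}$. For $\circledcirc$, pick any $b_3$ in the decomposition with $x\in\beta(b_3)$; then every computable $y=\nu(m)\in\beta(b_3)$ has $b_3$ appearing in $G_2(m,b_1,b_2)$, which is precisely $b_3\mathring{\subseteq}G_2(m,b_1,b_2)$ for the sequence extension of equality. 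The density hypothesis is used only to make this last condition non-vacuous.

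The same correction applies to your ``furthermore'': the pointing map associated to a Lacombe set $O=\bigcup_k\beta(a_k)$ should send a name of $x$ to the sequence of \emph{all} $a_k$ with $x\in\beta(a_k)$, not to the first one; then $\boxed{\mathring{\sim}}$ and $\circledcirc$ go through exactly as above.
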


The proof of this lemma requires the introduction of a formal inclusion
relation for any Lacombe basis. The formal inclusion relation naturally
associated to a Lacombe basis is equality: $a\mathring{\subseteq}b\iff a=b$.
It is the finest formal inclusion relation, and thus the most restrictive
one. 

As soon as the considered set has a dense and computable sequence,
Spreen and Lacombe topologies in fact agree. The following results
appear as Lemma \ref{lem: dense seq spreen =00003D> lacombe} and
Theorem \ref{thm:Moschovakis-theorem-on Spreen bases} in the text. 
\begin{lem}
Let $(\mathfrak{B},\beta,\mathring{\subseteq})$ be a Spreen basis
on $(X,\nu)$. If there exists a $\nu$-computable sequence which
is dense for the topology generated by $\mathfrak{B}$, then $(\mathfrak{B},\beta)$
is also a Lacombe basis. 
\end{lem}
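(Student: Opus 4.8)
The plan is to unwind the definitions and show that the Spreen-basis data, together with a dense computable sequence, provides exactly the programs required to witness the Lacombe conditions. Recall that being a Lacombe basis means (i) the ambient set $X$ is a computable union of basic open sets, (ii) there is a program that, on input two $\beta$-names $b_1,b_2$, outputs the code of a computable sequence of basic sets whose union is $\beta(b_1)\cap\beta(b_2)$, and (iii) every open set is a computable union of basic sets. The only genuine work is condition (ii), since condition (i) and the reformulation of (iii) follow once (ii) is in hand and once we know how to cover a set by the dense sequence. So I would first fix a $\nu$-computable dense sequence $(z_k)_{k\in\mathbb{N}}$, i.e.\ a computable function $k\mapsto n_k$ with $\nu(n_k)=z_k$, whose image is dense for the topology generated by $\mathfrak{B}$.

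The heart of the argument is to express $\beta(b_1)\cap\beta(b_2)$ as a computable union of basic sets using the map $\Theta$ and the two Spreen conditions. First I would enumerate all pairs $(k,b_3)$ with $k\in\mathbb{N}$ and $b_3\in\operatorname{dom}(\beta)$, and keep exactly those pairs for which the dense point $z_k=\nu(n_k)$ lies in $\beta(b_1)\cap\beta(b_2)$ and for which $b_3\mathrel{\mathring{\subseteq}}\Theta(b_1,b_2,n_k)$ holds. Since $\mathring{\subseteq}$ is part of the basis data and the two semi-decidability/Malcev conditions let us recognize membership, this selection can be run as a (partial) semi-decidable search, producing a computable sequence of $b_3$'s. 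For the union of the corresponding basic sets $\beta(b_3)$ I must verify two inclusions. The inclusion of the union into $\beta(b_1)\cap\beta(b_2)$ is immediate: each selected $b_3$ satisfies $b_3\mathrel{\mathring{\subseteq}}\Theta(b_1,b_2,n_k)$, and since $\mathring{\subseteq}$ refines set inclusion and $\beta(\Theta(b_1,b_2,n_k))\subseteq\beta(b_1)\cap\beta(b_2)$ by the defining property of $\Theta$, we get $\beta(b_3)\subseteq\beta(b_1)\cap\beta(b_2)$.

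The reverse inclusion is where the non-vanishing condition $\circledcirc$ and density do the essential work, and this is the step I expect to be the main obstacle. Let $x$ be an arbitrary (not necessarily computable) point of $\beta(b_1)\cap\beta(b_2)$. The non-vanishing condition yields, for some computable witness point, a basic set $B_3=\beta(b_3)$ containing that witness with $b_3\mathrel{\mathring{\subseteq}}\Theta(b_1,b_2,n)$ for every computable $z=\nu(n)$ in $B_3$; the difficulty is that $\circledcirc$ is phrased relative to a point of the intersection, and I need it to supply a basic neighborhood around the arbitrary $x$ that actually gets captured by the dense-sequence enumeration. The idea is to take a basic neighborhood $B_3$ of $x$ inside $\beta(b_1)\cap\beta(b_2)$ on which $\Theta$ does not vanish, then use density of $(z_k)$ to find some $z_k\in B_3$; applying $\circledcirc$ at $z_k$ produces a basic set $B_3'\ni z_k$ with $b_3'\mathrel{\mathring{\subseteq}}\Theta(b_1,b_2,n_k)$, so the pair $(k,b_3')$ is selected by our enumeration and $z_k\in\beta(b_3')$. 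To conclude that $x$ itself lies in the union one must be careful: I would arrange the non-vanishing neighborhood so that it is contained in the eventually selected $\beta(b_3')$, which is exactly what the ``not needlessly small'' phrasing of $\circledcirc$ guarantees, namely that the selected basic sets contain a full neighborhood of $z_k$ and hence, with the right choice, the original point $x$.

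Finally, once condition (ii) is established, condition (i) follows by applying the same covering argument to $X=\beta(b)$ for the $\beta$-name of $X$ (or directly by covering $X$ with the basic neighborhoods of the dense points), and condition (iii) follows because any Spreen-open set, being a computable union of basic sets at the level of the Spreen topology, is turned into a Lacombe computable union once each basic set is itself a computable union via (ii); the effectivity of all these steps is inherited from the computability of $\Theta$, of the dense sequence, and of the relation $\mathring{\subseteq}$, so the passage from a Spreen description to a Lacombe description is uniform.
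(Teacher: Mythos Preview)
Your proposal has a genuine gap. The enumeration you describe requires you to semi-decide, for a given $b_3\in\operatorname{dom}(\beta)$, whether $b_3\mathrel{\mathring{\subseteq}}\Theta(b_1,b_2,n_k)$ holds, and to enumerate $\operatorname{dom}(\beta)$ in the first place. Neither is available: the definition of a formal inclusion relation (Definition~\ref{def:Formal inclusion relation Text }) imposes no computability condition whatsoever on $\mathring{\subseteq}$, and the definition of a Spreen basis does not require $\operatorname{dom}(\beta)$ to be c.e. Your sentence ``Since $\mathring{\subseteq}$ is part of the basis data and the two semi-decidability/Malcev conditions let us recognize membership'' conflates two different things: Malcev's condition says that \emph{points belonging to basic sets} is semi-decidable, not that the formal inclusion relation between \emph{names of basic sets} is.

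The fix is to drop the search entirely. The function $G_2$ from the Spreen basis definition already outputs, on input $(n_k,b_1,b_2)$, a computable sequence of $\beta$-names of basic sets contained in $\beta(b_1)\cap\beta(b_2)$. So you simply dovetail the outputs $G_2(n_k,b_1,b_2)$ over all $k$ with $\nu(n_k)\in\beta(b_1)\cap\beta(b_2)$ (this index set is c.e.\ since basic sets are uniformly semi-decidable). No reference to $\mathring{\subseteq}$ is needed to \emph{produce} the sequence; the relation $\mathring{\subseteq}$ is only used in the \emph{verification} that the union equals $\beta(b_1)\cap\beta(b_2)$, via the $\circledcirc$ condition: for any $x$ in the intersection, $\circledcirc$ gives $b_3$ with $x\in\beta(b_3)$ and $b_3\mathrel{\mathring{\subseteq}}G_2(m,b_1,b_2)$ for every computable $\nu(m)\in\beta(b_3)$; density puts some $\nu(n_k)$ in $\beta(b_3)$, so $x\in\beta(b_3)$ is contained in one of the sets enumerated by $G_2(n_k,b_1,b_2)$. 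This is exactly the paper's argument, and it shows why the theorem is, as the paper says, ``trivial'' once Spreen bases are set up correctly. Your reverse-inclusion paragraph was groping towards this but was obscured by the unnecessary detour through an enumeration you cannot perform.

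A minor point: your condition (iii) is not part of the definition of a Lacombe basis; it is the definition of the Lacombe \emph{topology}. The lemma only asks for the basis conditions, so once (i) and (ii) are done you are finished.
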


\begin{thm}
[Moschovakis' theorem on Lacombe sets for Spreen bases]\label{thm:Moschovakis THM Spreen bases}Let
$(\mathfrak{B},\beta,\mathring{\subseteq})$ be a Spreen basis on
$(X,\nu)$. If there exists a $\nu$-computable sequence which is
dense for the topology generated by $\mathfrak{B}$, then the computable
topologies generated by $(\mathfrak{B},\beta,\mathring{\subseteq})$,
respectively seen as a Spreen and as a Lacombe basis, are computably
equivalent (i.e. they are identical). 
\end{thm}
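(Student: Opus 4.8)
The plan is to prove that the Spreen open sets and the Lacombe open sets are exactly the same subsets of $X$, and that a name for either presentation can be computed uniformly from a name for the other. Both computable topologies are available to begin with: $(\mathfrak{B},\beta,\mathring{\subseteq})$ is a Spreen basis by hypothesis, and by Lemma~\ref{lem: dense seq spreen =00003D> lacombe} the density of the $\nu$-computable sequence makes it a Lacombe basis as well. Of the two inclusions to establish, the Lacombe-to-Spreen passage is routine and the Spreen-to-Lacombe passage carries all the content; I would treat them in that order.

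For the Lacombe-to-Spreen direction I would simply invoke Lemma~\ref{Lem: Lacombe basis is Spreen basis-1-1}. Its sole hypothesis is that every non-empty basic open set contains a $\nu$-computable point, and this holds here because a dense $\nu$-computable sequence meets every non-empty open set, in particular every non-empty element of $\mathfrak{B}$. That lemma then provides the required uniform process turning a code of a Lacombe set into a name of the same set seen as a Spreen open set.

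The heart of the argument is the Spreen-to-Lacombe direction, and the point I would emphasise is that it is the very coverage argument already carried out for Lemma~\ref{lem: dense seq spreen =00003D> lacombe}, now applied to an arbitrary Spreen open set $O$ in place of an intersection $\beta(b_1)\cap\beta(b_2)$ of two basic sets. Write $O$ by its semi-decision procedure together with its point-witness function $h$, which on a $\nu$-name $n$ of a point of $O$ returns a $\beta$-name $h(n)$ of a basic set with $\nu(n)\in\beta(h(n))\subseteq O$, and which satisfies the non-vanishing condition $\circledcirc$ and respects $\mathring{\subseteq}$, playing for $O$ precisely the role that $\Theta(b_1,b_2,\cdot)$ plays for an intersection. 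Dovetailing the semi-decision procedure over the dense sequence, I would, each time a point $x_n=\nu(n)$ is confirmed to lie in $O$, emit the basic set $\beta(h(n))$; this produces a computable sequence of basic sets, uniformly in the name of $O$. Every emitted set lies in $O$, so their union is contained in $O$. For the converse inclusion take any $y\in O$, whether or not it is $\nu$-computable: the condition $\circledcirc$ furnishes a basic set $B_3=\beta(b_3)$ with $y\in B_3$ and $b_3\mathring{\subseteq}h(m)$ for every $\nu$-computable point $\nu(m)\in B_3$. By density $B_3$ contains some $x_m=\nu(m)$; the relation $b_3\mathring{\subseteq}h(m)$ holds for this $m$, so in particular $h(m)$ is defined, hence $x_m\in O$ and $\beta(h(m))$ was emitted, and since $b_3\mathring{\subseteq}h(m)$ entails $\beta(b_3)\subseteq\beta(h(m))$ we get $y\in B_3=\beta(b_3)\subseteq\beta(h(m))$. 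Thus $y$ belongs to the emitted union, which is therefore exactly $O$.

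It then remains to tidy up the formal inclusion bookkeeping and conclude. On the Lacombe side the formal inclusion relation is equality, the finest possible one, so the expanding and increasing requirements of Definition~\ref{def:MAIN DEF-1-1} are automatically satisfied and impose no extra constraint on either translation. Assembling the two uniform translations shows that the Spreen and Lacombe open sets are the same sets with computably equivalent numberings, which is the assertion. The one genuinely delicate step is the converse inclusion above, and this is exactly where the elaborate structure of a Spreen basis earns its keep: a bare Nogina (point-witness) presentation would let us cover only the anchoring computable points, whereas the non-vanishing condition $\circledcirc$, combined with the density of the computable sequence, forces every point of $O$---the non-computable ones included---into one of the enumerated basic sets. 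This is the sense in which, as announced, the difficulty of Moschovakis' theorem evaporates once the right notion of basis is in place.
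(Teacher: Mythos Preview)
Your proof is correct and follows essentially the same route as the paper's: enumerate the dense computable sequence through the semi-decidable set $O$, emit the basic sets produced by the Spreen witness function, and use the non-vanishing condition $\circledcirc$ together with density to show that every (possibly non-computable) point of $O$ is covered. The only discrepancy is notational: in the paper's Definition~\ref{def: Spreen Top text} the witness function $F$ associated to a Spreen open returns a \emph{sequence} of $\beta$-names rather than a single one, so the emitted family is $\bigcup_{u_k\in L}\bigcup_{p\in\mathbb{N}}\beta(F(u_k)(p))$; your single-output $h$ is a harmless simplification and the argument is unchanged.
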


\subsection{\label{subsec:Intro Computable-metric-spaces}Definition of the metric
topology }

Finally, we define the computable topology coming from a computable
metric. 
\begin{lem}
Suppose that $(X,\nu,d)$ is a subnumbered space with a computable
metric. Suppose that the image of $\nu$ is dense with respect to
the classical metric topology. Then, the set of open balls of $X$
centered at computable points equipped with their natural numbering
forms a Spreen basis. 
\end{lem}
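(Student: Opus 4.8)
The plan is to equip the set $\mathfrak{B}$ of open balls with the obvious numbering $\beta$ sending a code $\langle n,m\rangle$ to the ball $B(\nu(n),q_m)$, where $\nu(n)$ is a computable center and $q_m$ is a positive computable radius, and to take as formal inclusion relation the relation
\[
\langle n,m\rangle\mathring{\subseteq}\langle n',m'\rangle\iff d(\nu(n),\nu(n'))+q_m\le q_{m'}.
\]
Reflexivity and transitivity of $\mathring{\subseteq}$ follow at once from $d(c,c)=0$ and the triangle inequality, and the implication $n\mathring{\subseteq}m\Rightarrow\beta(n)\subseteq\beta(m)$ is the standard fact that $d(c,c')+r\le r'$ forces $B(c,r)\subseteq B(c',r')$; hence $\mathring{\subseteq}$ is a formal inclusion relation in the sense of Definition \ref{def:Formal inclusion relation }. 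The map $\Theta$ is the one singled out in the introduction, sending the names of $B(c_1,r_1)$ and $B(c_2,r_2)$ together with a $\nu$-name $n$ of a point $z$ in their intersection to a $\beta$-name of $B(z,s)$ with $s=\min(r_1-d(c_1,z),r_2-d(c_2,z))$. As $d$ is a computable metric, $s$ is a computable real, it is positive precisely because $z$ lies in both balls, and its center $z=\nu(n)$ is computable, so $\Theta$ is well defined and computable. The classical basis property and the two Nogina conditions are then routine: the first Nogina condition is witnessed by $n\mapsto B(\nu(n),1)$, and the second by $\Theta$, the inclusion $B(z,s)\subseteq B(c_1,r_1)\cap B(c_2,r_2)$ being the usual triangle-inequality computation.

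It remains to verify the two conditions that upgrade a Nogina basis to a Spreen basis. For the respect-of-formal-inclusion condition $\boxed{\mathring{\sim}}$, suppose $b_i\mathring{\subseteq}\hat b_i$ for $i=1,2$, with $b_i$ naming $B(c_i,r_i)$ and $\hat b_i$ naming $B(\hat c_i,\hat r_i)$, and let $n,m$ be names of the same point $z$ of $\beta(b_1)\cap\beta(b_2)$. Both $\Theta(b_1,b_2,n)$ and $\Theta(\hat b_1,\hat b_2,m)$ are balls centered at $z$, so the desired relation $\Theta(b_1,b_2,n)\mathring{\subseteq}\Theta(\hat b_1,\hat b_2,m)$ collapses to the inequality $s\le\hat s$ between their radii. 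This I would check componentwise: from $d(c_i,\hat c_i)+r_i\le\hat r_i$ and the reverse triangle inequality $d(\hat c_i,z)-d(c_i,z)\le d(c_i,\hat c_i)$ one gets $r_i-d(c_i,z)\le\hat r_i-d(\hat c_i,z)$, and passing to minima preserves the inequality. This is exactly the step where the formal inclusion relation, rather than plain set inclusion, is essential, since $\Theta$ genuinely depends on the parametrization of the balls and not only on their underlying point sets.

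The main obstacle is the non-vanishing condition $\circledcirc$, and it is the only place where the density hypothesis enters. Given names $b_1,b_2$ of $B(c_1,r_1),B(c_2,r_2)$ and an arbitrary point $x$ of the intersection, set $\rho=\min(r_1-d(c_1,x),r_2-d(c_2,x))>0$. Since the $\nu$-computable points are dense, I would pick a computable center $c$ with $d(x,c)<\rho/10$ and take $B_3=B(c,\rho/5)$, a genuine basic set containing $x$. The point is that $B_3$ sits robustly inside the intersection: for every computable $z\in B_3$ one estimates $d(x,z)<3\rho/10$, whence the radius $s_z=\min(r_1-d(c_1,z),r_2-d(c_2,z))$ returned by $\Theta$ at $z$ satisfies $s_z\ge\rho-d(x,z)>7\rho/10$, while $d(c,z)+\rho/5<2\rho/5$. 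Therefore $d(c,z)+\rho/5\le s_z$, which is precisely $b_3\mathring{\subseteq}\Theta(b_1,b_2,n)$ for any name $n$ of $z$; as $s_z$ depends only on $z$ and not on the chosen name, this holds for all names simultaneously. The constants are not optimized, but the shape of the argument -- approximate the possibly non-computable point $x$ by a computable center and shrink the radius enough that $\Theta$ cannot collapse on the nearby computable points -- is forced, and it is exactly the failure of this step when dense computable points are unavailable that obstructs the naive Lacombe approach.
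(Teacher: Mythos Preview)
Your approach is essentially the paper's: same formal inclusion, same $\Theta$ (the paper's $G_2$), same $G_1(n)=B(\nu(n),1)$, and the same strategy for $\circledcirc$ of approximating the possibly non-computable $x$ by a nearby computable center and taking a small enough ball around it. The paper works with the factor $1/3$ (isolated in a small lemma), you do the estimate directly with $1/10$ and $1/5$; the arithmetic is equivalent.

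There is one small but genuine gap in your $\circledcirc$ argument. You set $B_3=B(c,\rho/5)$ and call it ``a genuine basic set'', but a basic set must have a $\beta$-name, hence both a computable center and a \emph{computable radius}. Since $\rho=\min(r_1-d(c_1,x),\,r_2-d(c_2,x))$ depends on the arbitrary point $x$, it need not be a computable real, so $B(c,\rho/5)$ may fail to lie in the image of $\beta$. The paper sidesteps this by choosing $b_3$ to be ``the name of any ball contained in $B(x,\rho/3)$ and which contains $x$'', with existence guaranteed by density; no specific radius is named. Your argument is easily repaired in the same spirit: pick any rational $r_3$ with $d(x,c)<r_3<\rho/5$, so that $B(c,r_3)$ is a genuine basic set containing $x$, and your chain of inequalities $d(c,z)+r_3<2\rho/5<7\rho/10<s_z$ still goes through verbatim.

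A secondary omission: the Spreen conditions $\boxed{\mathring{\sim}}$ and $\circledcirc$ must also be verified for $G_1$, not only for $\Theta$. You only record $n\mapsto B(\nu(n),1)$ as satisfying the Nogina condition. Both checks are trivial here (for $\circledcirc$, any basic ball around $x$ of radius less than $1/2$ witnesses it), but they should be mentioned for completeness.
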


The formal inclusion relation implicit in the lemma above is the one
that comes from the relation on pairs in $X\times\mathbb{R}$: $(y,r_{1})\mathring{\subseteq}(z,r_{2})\iff d(z,y)+r_{1}\le r_{2}$.
Thus we can introduce the computable topology generated by this basis.
This topology has a nice direct description, which we now give. It
involves the numbering naturally associated to left computable reals,
which we denote $c_{\nearrow}$. Denote $\mathbb{R}^{+}$ the set
of positive real numbers. 
\begin{defn}
[Direct definition of the metric topology, Definition \ref{def:Direct-definition-of-Metric-TOP}]
\label{def:Direct-definition-of-Metric-TOP-1-1}Suppose that $(X,\nu,d)$
is a subnumbered space with a computable metric and with dense computable
points. We define a computable topology $(\mathcal{T},\tau,\mathring{\subseteq})$
on $(X,\nu)$ thanks to the following. $\mathcal{T}$ is the metric
topology on $X$, and $\tau$ is a subnumbering defined as follows.
The $\tau$-name of an open set $O$ is an encoded pair $(n,m)$,
where $n$ gives the code of $O\cap X_{\nu}$ as a semi-decidable
set, and where $m$ encodes a $(\nu,c_{\nearrow})$-computable function
$F:O\cap X_{\nu}\rightarrow\mathbb{R}^{+}$, which satisfies the following:
\[
\forall x\in O\cap X_{\nu},\,B(x,F(x))\subseteq O;
\]
\[
\circledcirc\,\,\forall x\in O,\,\exists r>0,\,\forall y\in B(x,r)\cap X_{\nu},\,F(y)\ge r.
\]

Finally, we define a formal inclusion relation on names of open sets
by saying that that an open set $O_{1}$ described by a pair $(O_{1}\cap X_{\nu},F_{1})$
is formally included in an open set $O_{2}$ described by a pair $(O_{2}\cap X_{\nu},F_{2})$
when 
\[
\forall n\in\text{dom}(\nu),\,\nu(n)\in O_{1}\implies\nu(n)\in O_{2}\,\&\,F_{1}(n)\le F_{2}(n).
\]
\end{defn}

\subsection{\label{subsec:Remark-on-constructive}Effectively given domains }

The main example of computable topological spaces that are considered
in the present paper are spaces with a computable metric. Another
important example that has been considered in the literature is that
of \emph{effectively given domains. }

On a partially ordered set $(Q,\sqsubseteq)$, the way below relation
$\ll$ is defined by: $y\ll z$ if every directed set whose supremum
is above $z$ already contains a point above $y$. A domain basis
for $Q$ is a set $Z$ such that for every $y\in Q$ the set $\{z\in Z,\,z\ll y\}$
is directed and has supremum $y$. A domain is continuous if it has
a basis. When $Z$ is a domain basis, sets of the form $B_{z}=\{y\in Q,y\ll z\}$
will form a topological basis for the Scott topology on $Q$.

Weihrauch's approach to ``effectivizing'' continuous domains \cite{Weihrauch1983}
is to rely on a domain basis equipped with a total numbering for which
the way below relation is semi-decidable. These effectively given
domains indeed provide another example of Type 1 computable topological
spaces \cite{Spr98}. 

We want here to remark the following. 

The approach that relies on a totally numbered domain basis to define
a notion of \emph{computable domain} only allows to define effectively
given domains that have a c.e. domain basis, and that are in turn
computably second countable. In the present paper, we take care in
defining the computable topology coming from a computable metric without
relying on a computable and dense sequence. A similar endeavour could
be undertaken for continuous domains. 
\begin{problem}
Is it possible to find a more intrinsic approach to computable domains,
that does not rely on a c.e. basis to define a notion of computability,
but which becomes equivalent to Weihrauch's approach for domains that
do have a c.e. basis?
\end{problem}

\subsection{Remark on Type 2 computability\label{subsec:Remark-on-Type-2}}

The study of \emph{computable topological spaces} \cite{Weihrauch2009ElementaryCT}
and of \emph{computable metric spaces} \cite{Brattka2003} is usually
set in the context of computable analysis, as developed by Weihrauch
and his students. There, the main object of study is \emph{represented
sets}: sets equipped with ways of describing elements by sequences
of natural numbers, i.e. elements of $\mathbb{N}^{\mathbb{N}}$. A
\emph{representation} of a set $X$ is a partial surjection $\rho:\mathbb{N}^{\mathbb{N}}\rightharpoonup X$.
And thus the study of representations is strictly more general than
the study of numberings, since a numbering can be seen for instance
as a representation whose domain is a subset of the set of constant
sequences. 

And we would indeed obtain a strictly more general definition of computable
topological space if we defined a Type 2 computable topological space
to be a set equipped with a representation, together with a classical
topology on this set, also equipped with a representation, and such
that the operations of taking countable unions and finite intersections
are computable. However, this is not the approach that is usually
followed in computable analysis. 

Indeed, following ideas that were obtained independently by Matthias
Schröder \cite{Schroeder2003}, Paul Taylor \cite{Taylor2011} and
Martin Escardo \cite{Escardo2004}, computable topologies are studied
in Type 2 computability via the \emph{Sierpi\'{n}ski representation
of open sets. }See \cite{Pauly2016} for a very good survey. Whenever
$(X,\rho)$ is a represented set, $X$ can be equipped with the final
topology of the representation $\rho$. Then, one can prove that every
open set of $X$ is Type 2 semi-decidable modulo some oracle (using
the fact that this holds on Baire space). This allows to define the
Sierpi\'{n}ski representation of open sets of $X$: the name of an
open set $O$ is the concatenation of the code of a Type 2 Turing
machine that stops exactly on the elements of $O$ together with the
infinite sequence which is the oracle that this function requires
to operate. 

One can see that this representation is the ``Type 2 equivalent''
of the numbering of semi-decidable sets associated to a numbered set.
This ``equivalence'' can in fact be formalized in terms of realizability
theory\emph{, }it is in each case\emph{ }the\emph{ intrinsic topology
}\cite{Bauer2000} that comes from the representation/numbering. 

One could thus say that in the Type 2 theory of effectivity, one always
\emph{restricts} their attention to the intrinsic topology of the
considered sets, the topology of semi-decidable sets. 

However, a crucial fact in the Type 2 theory of effectivity is that
this ``restriction'' is actually not one: Weihrauch and Kreitz \cite{Kreitz1985}
have built admissible representations for all second countable spaces,
and Schröder extended their results in \cite{Schroeder2003} to $\text{T}_{0}$
quotients of countably based spaces, the so called qc$\text{b}_{0}$-spaces.
(The $\text{T}_{0}$ hypothesis can be removed thanks to multi-representations.)
It follows from these that \emph{the set of topologies that are final
topologies of represented spaces is sufficiently rich to develop computable
analysis. }And in turn, the systematic use of the Sierpi\'{n}ski representation
provides a very beautiful and robust theory, it also greatly simplifies
proofs.\emph{ }

In the study of Type 1 computability, the above fact fails. The intrinsic
topology of a numbered set, the topology of semi-decidable sets, is
called the \emph{Ershov topology }(see for instance \cite{Spr98})\emph{.}
And it is not possible to study Type 1 computable topologies by restricting
our attention to Ershov topologies, because Type 1 Ershov topologies
cannot be ``calibrated'' to match any desired abstract topology.
The most famous example of this phenomenon is Friedberg's theorem
of existence of a semi-decidable set of computable real numbers which
is not open in the usual topology of $\mathbb{R}$ \cite{Friedberg1958}.
This example was explained by Hoyrup and Rojas in \cite{Hoyrup2016}
in terms of Kolmogorov complexity. The fact that the use of intrinsic
topologies is not adapted to Type 1 computability was already remarked
in \cite{Bauer2012}. 

Because of this, the definition of \emph{a Type 1 computable topological
space} that we present in this article does not directly translate
into a relevant Type 2 notion. 

However, the definition that we give of the computable topology generated
by a computable metric can be relevant in Type 2 computability. 

Adapting this definition, Definition \ref{def:Direct-definition-of-Metric-TOP-1-1},
to the context of represented spaces yields the following. Let $(X,\rho)$
be a represented space, and let $d:X\times X\rightarrow\mathbb{R}$
be a $\rho$-computable metric. Let $\mathbb{R}_{\nearrow}$ be the
set of real numbers equipped with the left representation: a real
number is given by the list of rationals below it. Let $\mathcal{C}_{p}(X,\mathbb{R}_{\nearrow})$
be the represented space of partial continuous functions from $X$
to $\mathbb{R}_{\nearrow}$. We say that \emph{the metric $d$ induces
the computable topology }of $(X,\rho)$ if the following multi-function
is computable:
\begin{align*}
\mathcal{O}(X) & \rightrightarrows\mathcal{C}_{p}(X,\mathbb{R}_{\nearrow})\\
O & \mapsto\{f:O\rightarrow\mathbb{R}_{\nearrow},\,\forall x\in O,\,B(x,f(x))\subseteq O\}.
\end{align*}
Note that the non-vanishing condition that we have to demand in the
case of Type 1 computability is automatically satisfied in Type 2.
One can check that the above condition is always satisfied when applying
the Schröder metrization theorem to a represented space, thus the
computable metric given by the theorem does generate the computable
topology one started with. 

\subsection{Realizability and synthetic topology\label{subsec:The-realizability-approach}}

Computable topologies have been studied in the context of realizability
by Andrej Bauer in his dissertation \cite{Bauer2000}, and more precisely,
topological spaces equipped with countable bases. The two main approaches
studied in \cite{Bauer2000} are called the \emph{point-wise topology},
which corresponds to the Nogina approach, and the \emph{point-less
topology}, which corresponds to the Lacombe approach. The initial
analysis of these types of basis presented in \cite{Bauer2000} is
very similar to the one we present here (Section \ref{subsec:First-two-notions-of-bases}
is mostly already contained in \cite{Bauer2000}). 

The main differences between our analysis and the one presented in
\cite{Bauer2000} are as follow.

The different notions of bases studied in \cite{Bauer2000} are notions
of \emph{computably enumerable bases}, the focus is thus on \emph{computably
second countable spaces}. One of the conclusions of \cite{Bauer2000}
is that the Lacombe approach is better behaved that the Nogina approach.
We claim that the Lacombe approach is not well behaved, but the examples
that we provide are not ``computably second countable'', thus there
is no contradiction between our results and those of \cite{Bauer2000}.

Metric and topological spaces have also been studied in the context
of synthetic topology by Bauer and Lešnik in \cite{Bauer2012}, and
more recently by Bauer in \cite{Bauer2023}. There, the metric topology
is defined as the topology of \emph{overt unions of open balls}. For
subsets of $\mathbb{N}$, the over sets and c.e. subsets agree. Thus
while we have presented the Lacombe approach by saying that ``computably
open sets are defined to be the c.e. unions of open sets'', we could
equivalently have used overt unions of basic open sets. Differences
would appear if we were to consider bases indexed by sets other than
sets of natural numbers. The set of \emph{metric open sets }considered\emph{
}in\emph{ }\cite{Bauer2012} is not stable by finite intersections
in general, unless we consider overt metric spaces, and thus it is
not a computable topology in our sense. 

\subsection{Contents }

This article is organized as follows. 

In Section \ref{part: Preliminaries}, we introduce basic definitions
about numberings and multi-numberings. 

In Section \ref{part:Two-notions-of-Bases}, we introduce the general
notion of Type 1 computable topological space, and the first two notions
of effective bases: Lacombe and Nogina bases. 

In Section \ref{sec:Problems-with-Lacombe-bases}, we compare Lacombe
and Nogina bases in different contexts. We give an example of a non-computably
separable computable metric space where open balls do not form a Lacombe
basis. We provide a new proof of Moschovakis' Theorem which shows
that on recursive Polish spaces, the Lacombe and Nogina approaches
coincide. 

In Section \ref{part: Non Surjective Numberings}, we introduce Spreen
bases. This is the longest section of this article, we give more details
about its contents. 

In Section \ref{subsec:Spreen Bases}, we introduce Spreen bases,
their associated topology, and prove the theorem which states that
one indeed defines a topology in this way. 

In Section \ref{subsec:New notion of equivalence of basis }, we describe
the notion of ``equivalence of bases'' appropriate to the study
of Spreen bases. 

In Section \ref{subsec:Indep Numbering basis Spreen}, we explain
that when using a formal inclusion relation, numberings are not considered
``up to equivalence'' (i.e. bi-translatability), but ``up to equivalence
that preserves the formal inclusion relation''. 

In Section \ref{subsec:Effective-continuity-and preimage of basic open set },
we discuss the effective equivalent of ``a function is continuous
if the preimage of any basic open set in its codomain is an open set
of its domain''. 

In Section \ref{subsec:Lacombe-bases-define-Spreen-Bases}, we show
that Spreen bases generalize Lacombe bases. 

In Section \ref{subsec:Effective-separability-Moschovakis THM}, we
establish a Moschovakis-type theorem, which states that as soon as
the considered space has a computable and dense sequence, the notions
of Lacombe bases and Spreen bases agree. 

In Section \ref{subsec:Recursive-metric-spaces II }, we discuss metric
spaces, and describe the Spreen topology induced by a computable metric. 

In Section \ref{subsec:Spreen-bases-and-Nogina bases}, we translate
Moschovakis' Theorem on countable sets in terms of identification
of Spreen and Nogina topologies. 

\subsection*{Acknowledgements }

I thank Andrej Bauer for taking the time to answer my questions, Vasco
Brattka for very helpful discussions, and Matthieu Hoyrup for comments
on a previous version of this paper.

\section{\label{part: Preliminaries}Preliminaries on numberings}

\subsection{Subnumberings, multi-subnumberings }

A \emph{numbering} of a set $X$ is a partial surjection from $\mathbb{N}$
to $X$. A \emph{subnumbering} of $X$ is a numbering of a subset
of $X$. We denote this by $\nu:\mathbb{N}\rightharpoonup X$. Denote
by $\text{dom}(\nu)$ the domain of $\nu$. 

Denote by $\mathcal{N}_{X}$ the set of subnumberings on $X$. 

The points in the image of $\nu$ are called the $\nu$\emph{-computable
points}. The set of $\nu$-computable points is denoted $X_{\nu}$.
If $\nu(n)=x$, then $n$ is a $\nu$\emph{-name} of $x$. 

We will sometimes refer ourselves to multi-subnumberings\footnote{I do not know who introduced multi-numberings first, but they are
well known objects. They are used for instance by Spreen in \cite{Spreen_2010}.
Multi-representations were first systematically investigated by Matthias
Schröder in his dissertation \cite{Schroeder2003}. See also \cite{Weihrauch2008}. }. 

A \emph{multi-subnumbering} of a set $X$ is a multi-function from
$\mathbb{N}$ to $X$. We denote this by $\nu:\mathbb{N}\rightrightarrows X$. 

Extensionally, this is exactly saying that $\nu$ is a total function
from $\mathbb{N}$ to the power set of $X$: $\nu:\mathbb{N}\rightarrow\mathcal{P}(X)$.
However, intensionally, those definitions differ. For instance, the
preimage of a subset $A\subseteq X$ by $\nu$ is not defined as $\{n\in\mathbb{N},\,\nu(n)=A\}$,
but as $\nu^{-1}(A)=\{n\in\mathbb{N},\,\nu(n)\cap A\neq\emptyset\}$
(i.e. we use the \emph{upper pre-image} and not the \emph{lower preimage}
of multi-subnumberings \cite{Berge1997}.).

If $\nu$ is a multi-subnumbering, we denote by $\text{dom}(\nu)$
the set $\{n\in\mathbb{N},\,\nu(n)\ne\emptyset\}$. 

The set of multi-subnumberings on $X$ is denoted $\mathcal{MN}_{X}$.
In case $x\in\nu(n)$, we say that $n$ is a $\nu$\emph{-name} of
$x$. The number $n$ may not characterize $x$, and thus it is in
general a partial description. 
\begin{defn}
A function $f:X\rightarrow Y$ between sets equipped with multi-subnumberings
$\nu$ and $\mu$ is called\emph{ $(\nu,\mu)$-computable} if there
exists a partial computable function $F:\mathbb{N}\rightharpoonup\mathbb{N}$
defined at least on $\text{dom}(\nu)$ such that $\forall n\in\text{dom}(\nu),\,f(\nu(n))\subseteq\mu(F(n))$. 
\end{defn}

An equivalent formulation of this definition is: for any $x$ in $X$
and $n$ in $\mathbb{N}$, if $n$ is a $\nu$-name of $x$, then
$F(n)$ is a $\mu$-name of $f(x)$. In this case, $F$ is called
a $(\nu,\mu)$\emph{-realizer} for $f$, and a function is called
$(\nu,\mu)$-computable exactly when it has a computable $(\nu,\mu)$-realizer. 

A subnumbering $\nu$ is identified with a multi-subnumbering $\hat{\nu}$
by 
\[
\forall n\in\text{dom}(\nu),\,\hat{\nu}(n)=\{\nu(n)\},
\]
\[
\forall n\notin\text{dom}(\nu),\,\hat{\nu}(n)=\emptyset.
\]

Note that in this case we have $\text{dom}(\nu)=\text{dom}(\hat{\nu})$,
so that our definitions are coherent. 

If $(X,\nu)$ is a multi-subnumbered set, the union of the image of
$\nu$, i.e. the set of points of $X$ that have names for $\nu$,
is called \emph{the set of $\nu$-computable points}, and denoted
$X_{\nu}$. 

A \emph{multi-function} between a set $X$ and a set $Y$ is a function
$f:X\rightarrow\mathcal{P}(Y)$. However, we denote a multi-function
from $X$ to $Y$ by $f:X\rightrightarrows Y$, and not by $f:X\rightarrow\mathcal{P}(Y)$. 

It seems that Weihrauch was the first to define computability of multi-functions
in \cite{Weihrauch1987}. 
\begin{defn}
A multi-function $f$ between sets $(X,\nu)$ and $(Y,\mu)$ equipped
with multi-subnumberings is called $(\nu,\mu)$\emph{-computable}
if and only if there exists a partial computable function $F:\mathbb{N}\rightharpoonup\mathbb{N}$,
whose domain contains at least the set $\text{dom}(\nu)$, and such
that:
\[
\forall n\in\text{dom}(\nu),\,\forall x\in\nu(n),\,\mu(F(n))\cap f(x)\ne\emptyset.
\]
\end{defn}

In words: $f$ is $(\nu,\mu)$-computable if, given the name of a
point $x$ for $\nu$, it is possible to compute a $\mu$-name of
\emph{some} point in its image $f(x)$. Note that which point is computed
might depend of the given name for $x$. 

In the definition above, we again say that $F$ is a\emph{ computable
realizer of $f$ for $\nu$ and $\mu$. }

Throughout, we fix a standard enumeration of all 1-ary partial computable
function, denoted $(\varphi_{0},\varphi_{1},\varphi_{2},...)$. We
write $\varphi_{n}(k)\downarrow$ if $k\in\text{dom}(\varphi_{n})$
and $\varphi_{n}(k)\uparrow$ if $k\notin\text{dom}(\varphi_{n})$. 

For $\nu$ and $\mu$ multi-subnumberings, we define a multi-numbering
of the set of $(\nu,\mu)$-computable functions, which we denote $\mu^{\nu}$,
by the following: 
\begin{align*}
\mu^{\nu} & :\begin{cases}
\mathbb{N}\rightrightarrows & Y^{X}\\
n\mapsto & \{f\in Y^{X},\,\varphi_{n}\text{ computes \ensuremath{f} for \ensuremath{\nu} and \ensuremath{\mu}}\}
\end{cases}
\end{align*}

The multi-numbering $\mu^{\nu}$ becomes a numbering when $\nu$ is
a numbering and $\mu$ is a subnumbering, since in this case a function
$\varphi_{n}$ can compute for at most one function $X\rightarrow Y$. 

When $\text{id}_{\mathbb{N}}$ is the identity of $\mathbb{N}$, $\nu^{\text{id}_{\mathbb{N}}}$
is the multi-numbering associated to $\nu$-computable sequences,
it is a numbering when $\nu$ is a subnumbering.  

Let $X$ be a set, and $\nu$ and $\mu$ be two multi-subnumberings
on $X$. We define a relation, denoted $\nu\le\mu$, by 
\[
\nu\le\mu\iff\text{id}_{X}:X\rightarrow X\text{ is \ensuremath{(\nu,\mu)}-computable}.
\]
In this case we say that $\nu$ \emph{translates} to $\mu$. One interprets
$\nu\le\mu$ by: the name of a point for $\nu$ contains more information
than a name of that same point for $\mu$. 

Denote $\nu\equiv\mu$ if $\nu\le\mu$ and $\mu\le\nu$. If $\nu\equiv\mu$,
we say that $\nu$ and $\mu$ are \emph{equivalent}. It follows directly
from the properties of $\le$ that $\equiv$ is an equivalence relation.
The equivalence classes for $\equiv$ are called the \emph{multi-subnumbering
types}. 

Results that interest us are almost always independent of a choice
of a multi-subnumbering, up to $\equiv$. An important exception arises
in Section \ref{subsec:Indep Numbering basis Spreen}. The set of
equivalence classes of multi-subnumberings is a lattice for the order
induced by $\le$. 

On a set $X$, define the \emph{trivial multi-numbering on $X$, }denoted\emph{
}$t_{X}$ defined by $t_{X}(n)=X$ for all $n$. The following is
immediate: 
\begin{prop}
A multi-numbering of $X$ is equivalent to $t_{X}$ if and only if
it has the set $X$ in its image. 
\end{prop}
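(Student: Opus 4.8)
The plan is to unfold the definition of the order $\le$ and to observe that one of the two required translations is automatic, so that the whole content of the statement lies in the reverse translation. Recall that $\nu\le\mu$ means precisely that $\text{id}_X$ is $(\nu,\mu)$-computable, i.e.\ that there is a partial computable $F$ defined at least on $\text{dom}(\nu)$ with $\nu(n)\subseteq\mu(F(n))$ for every $n\in\text{dom}(\nu)$.

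First I would check that $\nu\le t_X$ holds for \emph{every} multi-subnumbering $\nu$. Since $\nu(n)\subseteq X=t_X(m)$ for all $n$ and all $m$, any total computable $F$ (for instance the identity) is a realizer. Thus the equivalence $\nu\equiv t_X$ reduces to the single condition $t_X\le\nu$.

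Next I would analyse $t_X\le\nu$. Assuming $X\neq\emptyset$, we have $\text{dom}(t_X)=\mathbb{N}$, so a realizer is a total computable $F$ with $t_X(n)=X\subseteq\nu(F(n))$ for all $n$. The key observation is that $\nu(F(n))\subseteq X$ always holds, so the inclusion $X\subseteq\nu(F(n))$ forces $\nu(F(n))=X$. Hence the existence of such an $F$ is equivalent to the existence of a single index $m$ with $\nu(m)=X$, that is, to $X$ lying in the image of $\nu$: if $\nu(m)=X$ then the constant map $F\equiv m$ is a realizer, and conversely any realizer exhibits $\nu(F(0))=X$.

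Combining the two steps yields $\nu\equiv t_X\iff t_X\le\nu\iff X\in\text{image}(\nu)$, as desired. There is no genuine obstacle here: the only points requiring care are the correct unfolding of $(\nu,\mu)$-computability in the multi-subnumbering sense and the remark that, for the trivial numbering whose value is all of $X$, an inclusion $X\subseteq\nu(F(n))$ automatically upgrades to an equality. The degenerate case $X=\emptyset$ is also immediate, since then $t_\emptyset$ is the unique multi-subnumbering on $X$ and both sides of the equivalence hold trivially.
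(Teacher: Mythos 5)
Your proof is correct, and it is exactly the straightforward unfolding that the paper has in mind: it states this proposition without proof as "immediate." Both directions ($\nu\le t_X$ always, and $t_X\le\nu$ forcing some $\nu(F(0))=X$ via the automatic reverse inclusion $\nu(F(n))\subseteq X$) are handled properly, including the degenerate case $X=\emptyset$.
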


The following vocabulary was introduced by Malcev \cite{Malcev1971}. 
\begin{defn}
A subnumbering $\nu:\mathbb{N}\rightharpoonup X$ is \emph{positive}
if equality is semi-decidable for $\nu$ names, i.e. if there is an
algorithm that on input two $\nu$-names, stops if and only if they
define the same element of $X$. It is \emph{negative} if equality
is co-semi-decidable. It is \emph{decidable} if equality is a decidable
relation of $\text{dom}(\nu)\times\text{dom}(\nu)$. 
\end{defn}

\subsection{Pairings and product multi-subnumberings }

Throughout, we denote by $(n,m)\mapsto\langle n,m\rangle$ a pairing
function, such as Cantor's pairing function. We extend $\langle\rangle$
recursively to arbitrarily many arguments: $\langle n,m,p\rangle=\langle n,\langle m,p\rangle\rangle$,
and so on. 

If $\nu$ is a multi-subnumbering of $X$ and $\mu$ a multi-subnumbering
of $Y$, denote by $\nu\times\mu$ the multi-subnumbering of $X\times Y$
defined by: $\langle n,m\rangle$ is a $\nu\times\mu$-name of $(x,y)\in X\times Y$
if and only if $n$ is a $\nu$-name of $x$ and $m$ a $\mu$-name
of $y$. This is the \emph{product multi-subnumbering}. We thus have:
\[
\forall\langle n,m\rangle\in\mathbb{N},\,(\nu\times\mu)(\langle n,m\rangle)=\nu(n)\times\mu(m).
\]

\subsection{Decidable sets }

A subset $A$ of $(X,\nu)$ is $\nu$\emph{-decidable} if there is
an algorithm that, on input the $\nu$-name of a point $x$ of $X$,
decides whether or not $x$ belongs to $A$. This induces a multi-subnumbering
of subsets of $X$ denoted $\nu_{D}$. 

If $c_{\{0,1\}}$ designates the natural decidable numbering of $\{0,1\}$,
one has $\nu_{D}\equiv c_{\{0,1\}}^{\nu}$. 
\begin{rem}
It is not true that a subset $A$ of $X$ is decidable if and only
if there exists a computable subset $R$ of $\mathbb{N}$ such that
$\nu^{-1}(A)=\text{dom}(\nu)\cap R$. Consider for instance c.e. computably
inseparable sets $U$ and $V$, with $U,V\subseteq\mathbb{N}$. Then
$U$ is a $\text{id}_{\mathbb{N}}$-decidable subset of $U\cup V$,
but the above fails. 
\end{rem}

\subsection{\label{subsec:Semi-decidable-sets-OPERATOR}Semi-decidable sets }

If $\nu$ is a subnumbering of $X$, a subset $A$ of $X$ is $\nu$-semi-decidable
if there is a procedure that, on input a $\nu$-name of a point in
$X$, will halt if and only if this point belongs to $A$. 
\begin{defn}
Let $\nu$ be a subnumbering on $X$. A subset $A$ of the set $X_{\nu}$
of $\nu$-computable points of $X$ is \emph{$\nu$-semi-decidable}
if and only if there is a computable function $\varphi_{n}$ such
that 
\[
\forall i\in\text{dom}(\nu),\,\varphi_{n}(i)\downarrow\iff\nu(i)\in A.
\]

We define a subnumbering $\nu_{SD}\in\mathcal{N}_{\mathcal{P}(X)}$
by putting $\nu_{SD}(n)=A$ for each $n$ and $A$ as above. 
\end{defn}

Note that we can either consider that $\nu_{SD}$ is a subnumbering
of $\mathcal{P}(X)$, or a subnumbering of $\mathcal{P}(X_{\nu})$.
(The $\nu_{SD}$-name encodes a subset of $X_{\nu}$, but of course
any subset of $X_{\nu}$ is a subset of $X$.)

If $\nu$ is a multi-subnumbering of $X$, the definition above remains
almost unchanged.
\begin{defn}
Let $\nu$ be a multi-subnumbering on $X$. A subset $A$ of the set
$X_{\nu}$ is $\nu$-semi-decidable if and only if there is a computable
function $\varphi_{n}$ such that 
\[
\forall i\in\text{dom}(\nu),\,\varphi_{n}(i)\downarrow\implies\nu(i)\subseteq A;
\]
\[
\forall i\in\text{dom}(\nu),\,\varphi_{n}(i)\uparrow\implies\nu(i)\cap A=\emptyset.
\]

We define a subnumbering $\nu_{SD}\in\mathcal{N}_{\mathcal{P}(X)}$
by putting $\nu_{SD}(n)=A$ for each $n$ and $A$ as above. 
\end{defn}

Remark that $\nu_{SD}$ remains a subnumbering even when $\nu$ is
a multi-subnumbering. 

If $\nu$ is a multi-subnumbering and $A$ is $\nu$-semi-decidable,
then for all $n$ in $\text{dom}(\nu)$, either $\nu(n)\subseteq A$,
or $\nu(n)\cap A=\emptyset$. This condition is obvious when $\nu$
is a subnumbering, since, in this case, $\nu(n)$ is a singleton,
and so $\nu(n)\subseteq A$ or $\nu(n)\cap A=\emptyset$ holds for
any $A$. 

When $\nu$ is a subnumering, it can be convenient to consider a multi-subnumbering
of $\mathcal{P}(X)$ associated to $\nu_{SD}$, we thus introduce
a multi-subnumbering $\nu_{SD,m}$ defined as follows:

\[
\forall n\in\text{dom}(\nu_{SD}),\,\nu_{SD,m}(n)=\{A\subseteq X,\,A\cap X_{\nu}=\nu_{SD}(n)\},
\]
\[
\forall n\notin\text{dom}(\nu_{SD}),\,\nu_{SD,m}(n)=\emptyset.
\]
Thus the name of a set $A$ for $\nu_{SD,m}$ encodes a description
of the set $A\cap X_{\nu}$ of computable points of $A$ for $\nu_{SD}$.
Since this does not define $A$ uniquely, $\nu_{SD,m}$ is a multi-subnumbering. 

We then extend the notion of semi-decidability to subsets of $X$
that need not be subsets of $X_{\nu}$: a subset $A$ of $X$ is \emph{semi-decidable}
if it has a $\nu_{SD,m}$-name, i.e. if $A\cap X_{\nu}$ is semi-decidable
following the previous definition. 

Define a numbering $s$ of $\{0,1\}$ by the following: for any $n\in\mathbb{N}$,
$s(n)=0$ if $\varphi_{n}(n)\uparrow$ and $s(n)=1$ if $\varphi_{n}(n)\downarrow$
(the\emph{ Sierpi\'{n}ski numbering}). Then $\nu_{SD,m}\equiv s^{\nu}$.

\subsection{Computably enumerable sets }
\begin{defn}
A numbered set $(X,\nu)$ is \emph{computably enumerable} (or \emph{$X$
is $\nu$-computably enumerable}) if it is empty or if there exists
a surjective function $f:\mathbb{N}\twoheadrightarrow X$ which is
$(\text{id}_{\mathbb{N}},\nu)$-computable. 
\end{defn}

This corresponds to a possible definition for ``being effectively
countable''. We use the abbreviation ``c.e.'' for computably enumerable.

We define the subnumbering of $\mathcal{P}(X)$ associated to computably
enumerable subsets. 

Consider the usual numbering $W$ of c.e. subsets of $\mathbb{N}$:
$W_{i}=\text{dom}(\varphi_{i})$. 

We define the subnumbering $\nu_{ce}$ as follows: 
\[
\text{dom}(\nu_{ce})=\{i\in\mathbb{N},\,W_{i}\subseteq\text{dom}(\nu)\},
\]
\[
\forall i\in\text{dom}(\nu_{ce}),\,\nu_{ce}(i)=\nu(W_{i}).
\]

This subnumbering will be very useful: if $\beta$ is the numbering
of a basis, the associated numbering of Lacombe sets is $i\mapsto\bigcup\beta_{ce}(i)$. 

\subsection{\label{subsec:Computable-real-numbers}Computable real numbers and
computable metric spaces}

\subsubsection{Numbering of $\mathbb{Q}$}

Denote by $c_{\mathbb{Q}}:\mathbb{N}\rightarrow\mathbb{Q}$ the map
$\langle p,q,r\rangle\mapsto(-1)^{p}\frac{q}{r+1}$. It is a natural
numbering of $\mathbb{Q}$, and any other natural numbering of $\mathbb{Q}$
can be seen to be equivalent to this one. 

\subsubsection{Computable real numbers}

We denote by $c_{\mathbb{R}}$ the Cauchy subnumbering of $\mathbb{R}$:
a real is described by the $c_{\mathbb{Q}}^{\text{id}_{\mathbb{N}}}$-name
of a computable sequence of rationals that converges towards it at
exponential speed. 

More precisely: 
\begin{defn}
The subnumbering $c_{\mathbb{R}}$ is defined by 
\[
\text{dom}(c_{\mathbb{R}})=\{i\in\mathbb{N},\,\exists x\in\mathbb{R},\,\forall n\in\mathbb{N},\,\left|c_{\mathbb{Q}}(\varphi_{i}(n))-x\right|<2^{-n}\};
\]
\[
\forall i\in\text{dom}(c_{\mathbb{R}}),\,c_{\mathbb{R}}(i)=\lim_{n\rightarrow\infty}c_{\mathbb{Q}}(\varphi_{i}(n)).
\]
\end{defn}

We will also use the subnumbering associated to\emph{ }left computable
reals, which we denote $c_{\nearrow}$. Left computable reals are
the limits of computable and increasing sequences of rationals. Left
computable reals are sometimes also called \emph{lower semi-computable
reals,} the term left computable seems to be the more common one\emph{,
}following\emph{ \cite{Rettinger_2021}.}
\begin{defn}
Define a subnumbering $c_{\nearrow}$ of $\mathbb{R}$ by 
\[
\text{dom}(c_{\nearrow})=\{i\in\mathbb{N},\,\exists x\in\mathbb{R},\,\forall n\in\mathbb{N},\,c_{\mathbb{Q}}(\varphi_{i}(n))\le c_{\mathbb{Q}}(\varphi_{i}(n+1))<x\};
\]
\[
\forall i\in\text{dom}(c_{\nearrow}),\,c_{\nearrow}(i)=\lim_{n\rightarrow\infty}c_{\mathbb{Q}}(\varphi_{i}(n)).
\]

The $c_{\nearrow}$-computable reals are called the \emph{left computable
reals. }
\end{defn}

We usually extend $c_{\nearrow}$ to $\mathbb{R}\cup\{+\infty\}$,
by considering that a description for $+\infty$ is an unbounded increasing
sequence. 

The following folklore results will be very useful: 
\begin{prop}
The function $\sup:(\mathbb{R}\cup\{+\infty\})^{\mathbb{N}}\rightarrow\mathbb{R}\cup\{+\infty\}$
(which maps a sequence to its supremum) is $(c_{\nearrow}^{id_{\mathbb{N}}},c_{\nearrow})$-computable. 
\begin{prop}
The set of left computable reals is $c_{\nearrow}$-computably enumerable,
as are intervals in $\mathbb{R}_{c_{\nearrow}}$ (open or closed,
possibly with infinite bounds).
\end{prop}

\end{prop}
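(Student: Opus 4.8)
The plan is to prove both assertions by explicit uniform constructions, the essential difficulty being that one cannot range directly over ``valid names'': the set of indices of total, nondecreasing, strictly-below-the-limit rational sequences (i.e.\ $\text{dom}(c_{\nearrow})$) is not c.e. The device that circumvents this is a \emph{totalization}, producing from an arbitrary index a sequence that is \emph{always} a legitimate $c_{\nearrow}$-name, while never corrupting the reals that must be reached.

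For the first assertion I would enumerate over pairs $\langle a,e\rangle$, reading $a$ as the $c_{\mathbb{Q}}$-name of a rational $q_a=c_{\mathbb{Q}}(a)$ and $\varphi_e$ as an arbitrary partial function. At stage $N$ set
\[ M_N = \max\left(\{q_a\}\cup\{c_{\mathbb{Q}}(\varphi_e(k)) : k\le N,\ \varphi_e(k)\downarrow \text{ in } N \text{ steps}\}\right),\qquad s_N = M_N - 2^{-N}. \]
The forced term $q_a$ makes $s$ total; the subtraction of $2^{-N}$ makes $s$ strictly increasing, since $s_{N+1}-s_N=(M_{N+1}-M_N)+2^{-(N+1)}>0$, and keeps every term strictly below $\lim_N M_N$. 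Hence $s$ is a valid $c_{\nearrow}$-name of some extended real $\ge q_a$ in \emph{every} case, giving a total computable $F$ whose image lies in $\mathbb{R}_{c_{\nearrow}}$ (the value $+\infty$ being included by the standing convention on $c_{\nearrow}$). Surjectivity is where the forced rational pays off: given left computable $x$ with a genuine name $\varphi_{e_0}$, whose values $q^{(k)}=c_{\mathbb{Q}}(\varphi_{e_0}(k))$ satisfy $q^{(k)}\nearrow x$ and $q^{(k)}<x$, choosing $a$ with $q_a=q^{(0)}<x$ turns $M_N$ into the running maximum of the $q^{(k)}$, which converges to $x$ from below, so $c_{\nearrow}(F(\langle a,e_0\rangle))=x$. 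Thus $c_{\nearrow}\circ F$ is an $(\text{id}_{\mathbb{N}},c_{\nearrow})$-computable surjection onto $\mathbb{R}_{c_{\nearrow}}$, which is the definition of c.e.; this is exactly a supremum-of-a-rational-sequence computation, so it can also be read off from the preceding proposition on $\sup$.

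For intervals I would first record two auxiliary facts, each proved by the same ``approach from below'' argument: $\min$ and $\max$ are $(c_{\nearrow}\times c_{\nearrow},c_{\nearrow})$-computable, and adding a fixed rational is $(c_{\nearrow},c_{\nearrow})$-computable. Indeed, if $s^x_N\nearrow x$ and $s^\beta_N\nearrow\beta$ with strict terms, then $\min(s^x_N,s^\beta_N)$ is nondecreasing, converges to $\min(x,\beta)$, and remains strictly below it (it is $\le s^x_N<x$ and $\le s^\beta_N<\beta$), and symmetrically for $\max$. Given $c_{\nearrow}$-names of the finite endpoints $\alpha\le\beta$, define the clamp $x\mapsto\max(\alpha,\min(\beta,x))$; composing $F$ from the first part with this clamp yields an $(\text{id}_{\mathbb{N}},c_{\nearrow})$-computable surjection onto $[\alpha,\beta]\cap\mathbb{R}_{c_{\nearrow}}$, because the clamp lands in $[\alpha,\beta]$, preserves left computability, and is the identity on $[\alpha,\beta]$. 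Infinite bounds are handled by omitting the corresponding side of the clamp.

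The only genuinely delicate point is recovering the \emph{open} (and half-open) intervals exactly, since a plain clamp always includes the endpoints. The remedy is to clamp with rational inner margins: enumerate additionally over rationals $\delta_1,\delta_2>0$ and use $x\mapsto\max(\alpha+\delta_1,\min(\beta-\delta_2,x))$, which lands in $[\alpha+\delta_1,\beta-\delta_2]\subseteq(\alpha,\beta)$ and is the identity there. Every $y\in(\alpha,\beta)\cap\mathbb{R}_{c_{\nearrow}}$ is reached by taking rationals $\delta_1\le y-\alpha$ and $\delta_2\le\beta-y$, while no endpoint is ever produced; a side that is to be closed simply reuses the margin-free clamp on that side. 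I expect the running-maximum totalization and this margin trick to be the two ideas doing all the work; the remaining checks (monotonicity, convergence, and strictness below the limit) are the three defining conditions of $c_{\nearrow}$ and are routine.
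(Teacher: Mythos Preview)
The paper states both propositions as folklore and offers no proof, so there is no argument in the paper to compare against.

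Your proposal addresses only the second proposition; the $(c_{\nearrow}^{\text{id}_{\mathbb{N}}},c_{\nearrow})$-computability of $\sup$ is referenced (``the preceding proposition on $\sup$'') but never argued. A proof along the very lines you already use is immediate---given $i$ with $\varphi_i(k)\in\text{dom}(c_{\nearrow})$ for all $k$, dovetail over $k$ and $N$, collect the rationals $c_{\mathbb{Q}}(\varphi_{\varphi_i(k)}(N))$ as they converge, and output their running maximum shifted by $-2^{-\text{stage}}$---but it should be stated rather than assumed.

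Your totalization argument for the enumerability of $\mathbb{R}_{c_{\nearrow}}$ is correct, as is the clamp for closed intervals. For open and half-open intervals there is a genuine gap: you enumerate over \emph{all} positive rational margins $\delta_1,\delta_2$, but when $\alpha+\delta_1>\beta-\delta_2$ the clamp $\max(\alpha+\delta_1,\min(\beta-\delta_2,z))$ does not land in the (empty) set $[\alpha+\delta_1,\beta-\delta_2]$; it returns $\alpha+\delta_1$, which need not lie in $(\alpha,\beta)$ at all. Your assertion ``lands in $[\alpha+\delta_1,\beta-\delta_2]\subseteq(\alpha,\beta)$'' is therefore false as written. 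The fix is to restrict the margin enumeration to pairs with $\alpha+\delta_1<\beta-\delta_2$; that restriction is itself computably enumerable precisely when the rationals below $\beta-\alpha$ form a c.e.\ set, which holds for computable endpoints, or for $\alpha$ rational and $\beta$ left computable---in particular in the one instance the paper later invokes, the interval $(0,x]$ with $x$ left computable. With that filter in place your construction goes through.
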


\subsubsection{Computable metric spaces }
\begin{defn}
[\cite{Hertling1996}]\label{def: (non-effectively-separable) CMS}A
\emph{non-necessarily effectively separable computable metric space}
is a triple $(X,\nu,d)$, where $(X,d)$ is a metric space, $\nu$
is a subnumbering of $X$ with dense image, and such that $d$ is
a $(\nu\times\nu,c_{\mathbb{R}})$-computable function. 
\end{defn}

Throughout, we will use the following notation: by\emph{ computable
metric space$^{*}$}, or $\text{CMS}^{*}$, we designate a non-necessarily
effectively separable computable metric space.

If, in Definition \ref{def: (non-effectively-separable) CMS}, we
ask $\nu$ to be defined on all of $\mathbb{N}$, then this definition
is exactly the more common definition of a \emph{computable metric
space} \cite{Brattka2003,Iljazovic2021}. 

One of the crucial objectives of the present study is to guarantee
that the notion of computable topological space always generalizes
that of computable metric space\emph{$^{*}$}. 

A metric space is \emph{effectively complete} if there is an algorithm
that on input the code for a Cauchy sequence that converges exponentially
fast (i.e. the distance between the $n$-th term of the sequence to
the $m$-th term, with $m>n$, is less than $2^{-n}$) produces the
name of its limit, which must thus always exist. One in fact practically
never needs this hypothesis, but only the weaker one, that \emph{provided
that the limit exists}, it can be computed. 

It is well known \cite{Kushner1984} that existence of an algorithm
that computes limits of sequences that converge at exponential speed
is equivalent to existence of an algorithm that takes as input both
a computable and convergent sequence and a computable function that
bounds its convergence speed and computes the limit of this sequence. 
\begin{defn}
A computable metric space\emph{$^{*}$} $(X,\nu,d)$ admits \emph{an
algorithm of passage to the limit} \emph{\cite{Kushner1984}} if there
is a procedure that on input the $\nu^{id_{\mathbb{N}}}$-name of
a computable sequence that converges at exponential speed produces
the name of the limit of this sequence. 
\end{defn}

This notion has various names in the literature:\emph{ }$(X,\nu,d)$\emph{
allows effective limit passing \cite{Spr98}, }or is \emph{pre-complete
\cite{Kushner1984}, satisfies condition (A) \cite{Moschovakis1964}. }

Whenever $(X,\nu,d)$ does not admit an algorithm of passage to the
limit, it is possible to consider the subnumbering $\mu$ of $X$
\emph{that provides the most information while allowing limits to
be computed. }

Denote by $\text{Lim}$ the function defined on the subset of $X^{\mathbb{N}}$
of sequences that converge at exponential speed and which maps a sequence
to its limit. 

The above sentence is then formalized by saying that the set 
\[
I=\{\mu\in\mathcal{N}_{X},\,\nu\le\mu,\,\text{Lim is \ensuremath{(\mu^{\text{id}_{\mathbb{N}}},\mu)}-computable}\}
\]
has a least element, it is the \emph{Cauchy subnumbering associated
to $\nu$}. 

The Cauchy subnumbering associated to $\nu$ is denoted $c_{\nu}$
and defined as follows: 
\[
\text{dom}(c_{\nu})=\{i\in\mathbb{N},\,\exists x\in X,\,\forall n\in\mathbb{N},\,\left|\nu(\varphi_{i}(n))-x\right|<2^{-n}\};
\]
\[
\forall i\in\text{dom}(c_{\nu}),\,c_{\nu}(i)=\lim_{n\rightarrow\infty}\nu(\varphi_{i}(n)).
\]

The Cauchy subnumbering is thus constructed following a general principle
formalized by Weihrauch in \cite[Chapter 2.7]{Weihrauch1987}: if
$(X,\nu)$ is a numbered set and $F$ is a finite set of functions
from $X$ (or $X^{n}$ or $X^{\mathbb{N}}$) to $X$, there is a canonical
way of rendering the functions of $F$ computable, while diminishing
as little as possible the amount of information given by $\nu$. 

The following is well known \cite{Kushner1984}: 
\begin{lem}
If $(X,\nu,d)$ is a computable metric space, then $(X,c_{\nu},d)$
is also a computable metric space, and it admits an algorithm of passage
to the limit. 
\end{lem}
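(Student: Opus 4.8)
The plan is to verify the two defining properties of a computable metric space$^{*}$ for the triple $(X,c_{\nu},d)$ --- namely that $c_{\nu}$ is a subnumbering of $X$ with dense image and that $d$ is $(c_{\nu}\times c_{\nu},c_{\mathbb{R}})$-computable --- and then separately to exhibit the algorithm of passage to the limit. That $c_{\nu}$ is a subnumbering is immediate, since uniqueness of limits in a metric space guarantees that $c_{\nu}(i)$ is well defined for each $i\in\text{dom}(c_{\nu})$. For density I would first observe that $\nu\le c_{\nu}$: given a $\nu$-name $m$ of a point $x$, an index for the constant function $n\mapsto m$ is a $c_{\nu}$-name of $x$, because the constant sequence $\nu(m),\nu(m),\dots$ satisfies $d(\nu(m),x)=0<2^{-n}$ for all $n$, and this index is produced from $m$ by the s-m-n theorem. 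Hence $X_{\nu}\subseteq X_{c_{\nu}}$, and since $X_{\nu}$ is dense by hypothesis, so is $X_{c_{\nu}}$.

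For the computability of $d$, suppose $i$ and $j$ are $c_{\nu}$-names of points $x$ and $y$, so that the points $a_{n}=\nu(\varphi_{i}(n))$ and $b_{n}=\nu(\varphi_{j}(n))$ satisfy $d(a_{n},x)<2^{-n}$ and $d(b_{n},y)<2^{-n}$. Two applications of the triangle inequality give $|d(a_{n},b_{n})-d(x,y)|<2^{-n+1}$, so the reals $d(a_{n},b_{n})$ converge to $d(x,y)$ with controlled exponential error. Since $d$ is $(\nu\times\nu,c_{\mathbb{R}})$-computable, from $\varphi_{i}(n)$ and $\varphi_{j}(n)$ one computes a $c_{\mathbb{R}}$-name of $d(a_{n},b_{n})$, and hence a rational within $2^{-n-1}$ of it. Taking $n=m+2$ and combining the two estimates yields a rational within $2^{-m}$ of $d(x,y)$; assembling these over $m$ produces, uniformly in $i$ and $j$, a $c_{\mathbb{R}}$-name of $d(x,y)$.

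For the algorithm of passage to the limit I would use the standard diagonalization. Suppose we are given an index $e$ with $\varphi_{e}(k)$ a $c_{\nu}$-name of $x_{k}$, where the computable sequence $(x_{k})$ converges exponentially, i.e. $d(x_{k},x_{l})<2^{-k}$ for $l>k$; letting $l\to\infty$ gives $d(x_{k},x)\le 2^{-k}$ for the limit $x$. Writing $a_{k,n}=\varphi_{\varphi_{e}(k)}(n)$ for the $\nu$-names furnished by the name of $x_{k}$, one has $d(\nu(a_{k,n}),x)\le d(\nu(a_{k,n}),x_{k})+d(x_{k},x)<2^{-n}+2^{-k}$. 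Setting $c_{m}=a_{m+1,m+1}=\varphi_{\varphi_{e}(m+1)}(m+1)$ therefore gives $d(\nu(c_{m}),x)<2^{-m}$, so the computable function $m\mapsto c_{m}$ is a $c_{\nu}$-name of $x$; an index for it is obtained from $e$ by the s-m-n theorem, which furnishes the desired algorithm.

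I expect no step to present a genuine obstacle, as the result is classical; the only point requiring care is the uniform bookkeeping of the exponential error bounds --- in particular the choice $n=m+2$ in the second step and $k=n=m+1$ in the diagonalization --- so that all three constructions come out uniform in the given codes.
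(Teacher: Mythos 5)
Your proof is correct, but note that the paper itself offers no proof to compare against: it states this lemma as ``well known'' with a citation to Kushner, immediately after defining the Cauchy subnumbering $c_{\nu}$. Your argument is the standard folklore one and it is complete: the translation $\nu\le c_{\nu}$ via constant sequences (giving density of $X_{c_{\nu}}$), the computation of $d(x,y)$ from $c_{\nu}$-names by evaluating $d$ on the approximating $\nu$-names with the error budget $2^{-n+1}+2^{-n-1}$ absorbed by the shift $n=m+2$, and the diagonal $c_{m}=\varphi_{\varphi_{e}(m+1)}(m+1)$ for limit passage, all made uniform by s-m-n; the bounds check out, including the strictness of $d(\nu(c_{m}),x)<2^{-m}$ needed for membership in $\mathrm{dom}(c_{\nu})$. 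One interpretive point you handled correctly: the lemma must be read in the sense of computable metric spaces$^{*}$, since $c_{\nu}$ is never a total numbering even when $\nu$ is, so the conclusion can only assert the defining properties you verified (subnumbering with dense image, computability of $d$) rather than totality.
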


\subsection{Usual definition of computable topological space}

We will sometimes compare our definition of computable topological
space to the following definition: 
\begin{defn}
[Usual definition, \cite{Weihrauch2009ElementaryCT}]\label{def:Usual Definition }A
\emph{``}computable topological space\emph{''} is a triple $(X,\mathcal{B},\beta)$,
where $X$ is a set, $\mathcal{B}$ is a topological basis on $X$
that makes of it a $T_{0}$ space, and $\beta:\mathbb{N}\rightarrow\mathcal{B}$
is a total numbering of $\mathcal{B}$, for which there exists a c.e.
subset\footnote{Note that in the present article, we prefer the formulation ``there
is a computable function that maps $(i,j)$ to the code of a c.e.
subset of $\mathbb{N}$'' to the formulation which involves a c.e.
subset of $\mathbb{N}^{3}$. Both formulations are equivalent, as
can be seen using the universal Turing machine theorem for one direction,
and the Smn Theorem for the converse. The functional formulation is
however more general, in that it does not have to be modified to handle
non-computably enumerable topological bases. It is closer to the formulation
that would be used in the general context of realizability. } $R$ of $\mathbb{N}^{3}$ such that for any $i$, $j$ in $\mathbb{N}$:
\[
\beta(i)\cap\beta(j)=\underset{(i,j,k)\in R}{\bigcup\beta(k)}.
\]
An open set $U$ of $X$ is \emph{computably enumerable} or \emph{effectively}
$\Sigma_{1}^{0}$ if there is a c.e. set $I\subseteq\mathbb{N}$ such
that $U=\underset{i\in I}{\bigcup}\beta(i)$. This defines a numbering
of effective open sets, by associating a code of $I$ to $U$, for
$I$ and $U$ as above. 

A point $x$ of $X$ is called \emph{computable} if the set $\{i\in\mathbb{N}\,\vert\,x\in\beta(i)\}$
is c.e., this also allows to define a subnumbering $\nu:\mathbb{N}\rightharpoonup X$
in the obvious way. 
\end{defn}

In our vocabulary, the above definition is a special case of the notion
of \emph{Lacombe topology}, i.e. of topology arising from a Lacombe
basis. The following additional hypotheses are added to the Lacombe
conditions: the considered space should be $T_{0}$, the basis should
be computably enumerable (thus the spaces is ``effectively second
countable''), and the underlying set $X$ should be equipped with
the subnumbering induced by the numbering $\beta$ of its basis (``a
point is described by the basic open sets that contain it''). 

Each of these additional hypotheses is very natural, and Lacombe bases
are very well behaved on sets that are computably separable. Thus
the above definition provides a very good framework to study computable
topologies. However, it should be clear that the name ``computable
topological space'' cannot be attached to this definition, since
it contains many more hypotheses than the bare fact that we are considering
a computable topology. In particular, if we were to follow this definition,
``computable topological spaces'' do not generalize computable metric
spaces\emph{$^{*}$}, and not every finite topological space is a
``computable topological space''. This last fact should stand out
as especially odd, considering that computability deals with problems
that stem from the existence of infinite sets. The fact that ``every
finite topological space is a computable topological space'' should
be as natural as the fact that ``every finite graph is a computable
graph''. 

Note also that the above definition is too restrictive from the point
of view of Type 2 computability: thanks to the work of Schröder \cite{Schroeder2001,Schroeder2003},
we know that the topological spaces that admit admissible (multi-)representations
are the qcb spaces, which need not be second countable. Yet every
space that admits an admissible representation surely deserves to
be called a (Type 2) computable topological space.

\section{\label{part:Two-notions-of-Bases}Type 1 Computable topological spaces,
first two notions of bases }

\subsection{Computable topological spaces}

\subsubsection{Formal inclusion relations}

We define here formal inclusion relations. We add reflexivity to the
more general definition of ``strong inclusion'' introduced by Spreen. 
\begin{defn}
[Spreen, \cite{Spr98}, Definition 2.3] \label{def:Formal inclusion relation Text }Let
$\mathfrak{B}$ be a subset of $\mathcal{P}(X)$, and $\beta$ a numbering
of $\mathfrak{B}$. Let $\mathring{\subseteq}$ be a binary relation
on $\text{dom}(\beta)$. We say that $\mathring{\subseteq}$ is a
\emph{formal inclusion relation for $(\mathfrak{B},\beta)$} if the
following hold:
\begin{itemize}
\item The relation $\mathring{\subseteq}$ is reflexive and transitive (i.e.
$\mathring{\subseteq}$ is a preorder); 
\item $\forall n,m\in\text{dom}(\beta),\,n\mathring{\subseteq}m\implies\beta(n)\subseteq\beta(m)$.
\end{itemize}
\end{defn}

When working with formal inclusion relations, we will be interested
in functions that can be computed by maps that preserve the formal
inclusion relations. We will also need the definition of a function
computed by an expanding map. 

Let $X$ and $Y$ be sets. Consider a subnumbering $\beta$ of $\mathcal{P}(X)$,
and $\gamma$ a subnumbering of $\mathcal{P}(Y)$. Let $\mathring{\subseteq}_{\beta}$
and $\mathring{\subseteq}_{\gamma}$ be formal inclusion relations
for $\beta$ and $\gamma$ respectively. 
\begin{itemize}
\item A $(\beta,\gamma)$-computable function $f:\mathcal{P}(X)\rightarrow\mathcal{P}(Y)$
\emph{can be computed by a function that is increasing for $\mathring{\subseteq}_{\beta}$
and $\mathring{\subseteq}_{\gamma}$} if there exists a computable
function $F:\mathbb{N}\rightharpoonup\mathbb{N}$ such that 
\[
\forall n\in\text{dom}(\beta),\,f(\beta(n))=\gamma(F(n));
\]
\[
\forall n,m\in\text{dom}(\beta),\,n\mathring{\subseteq}_{\beta}m\implies F(n)\mathring{\subseteq}_{\gamma}F(m).
\]
\item A $(\beta,\beta)$-computable function $g:\mathcal{P}(X)\rightarrow\mathcal{P}(X)$
c\emph{an be computed by a function that is expanding} for $\mathring{\subseteq}_{\beta}$
if there exists a computable function $G:\mathbb{N}\rightharpoonup\mathbb{N}$
such that 
\[
\forall n\in\text{dom}(\beta),\,g(\beta(n))=\beta(G(n));
\]
\[
\forall n\in\text{dom}(\beta),\,n\mathring{\subseteq}_{\beta}G(n).
\]
\end{itemize}
In the same conditions as above, the subnumberings $\beta$ and $\gamma$
allow us to define a subnumbering of $\mathcal{P}(X\times Y)$ by
$\beta\times\gamma(\langle n,m\rangle)=\beta(n)\times\gamma(m)$ (with
the obvious domain). One checks easily that putting 
\[
\langle n_{1},m_{1}\rangle\mathring{\subseteq}_{\beta\times\gamma}\langle n_{2},m_{2}\rangle\iff n_{1}\mathring{\subseteq}_{\beta}n_{2}\,\&\,m_{1}\mathring{\subseteq}_{\gamma}m_{2},
\]
we obtain a formal inclusion relation for $\beta\times\gamma$. 

In metric spaces, we consider the formal inclusion relation that comes
from the relation on balls parametrized by pairs (center-radii):
\[
\forall y,z\in X,\,\forall r_{1},r_{2}\in\mathbb{R},\,(y,r_{1})\mathring{\subseteq}(z,r_{2})\iff d(z,y)+r_{1}\le r_{2}.
\]
 Figure \ref{fig:Inclusion-and-strong} illustrates the fact that
in the square $[0,1]\times[0,1]\subseteq\mathbb{R}^{2}$, the natural
formal inclusion is not the actual inclusion relation. 

In some cases, the formal inclusion relations under scrutiny happens
to be extensional, see the case of continuous domains in \cite{Spr98},
for which the natural formal inclusion is the way below relation.
But sometimes the intentionality is an unavoidable feature. Note that
there are examples in the literature of techniques that may look like
ways of going around the use of formal inclusion relations, but which
in fact do not. For instance, in \cite{Kalantari2008}, Kalantari
and Welch consider a notion of ``sharp filter that converges to a
point $x$'', which is a sequence of basic open sets $(B_{i})_{i\in\mathbb{N}}$
with $\overline{B_{i+1}}\subseteq B_{i}$ and $\bigcap B_{i}=\{x\}$.
The relation $\overline{B_{i+1}}\subseteq B_{i}$ is extensional,
but it cannot always replace the formal inclusion relation: in computable
metric spaces, the Cauchy name of a point $x$ should provide balls
around this point which are \emph{explicitly} \emph{given by arbitrarily
small radii}, and this does not amount to the condition that a sequence
of balls $(B_{i})_{i\in\mathbb{N}}$ with $\overline{B_{i+1}}\subseteq B_{i}$
converges to this point. See \cite{Rauzy2023RPZ} for an explicit
example where these two descriptions differ. And thus the description
of a point via a sharp filter in a metric space as considered in \cite{Kalantari2008}
provides in some cases strictly less information than a Cauchy name
for this point. 

\begin{figure}
\includegraphics[scale=0.8]{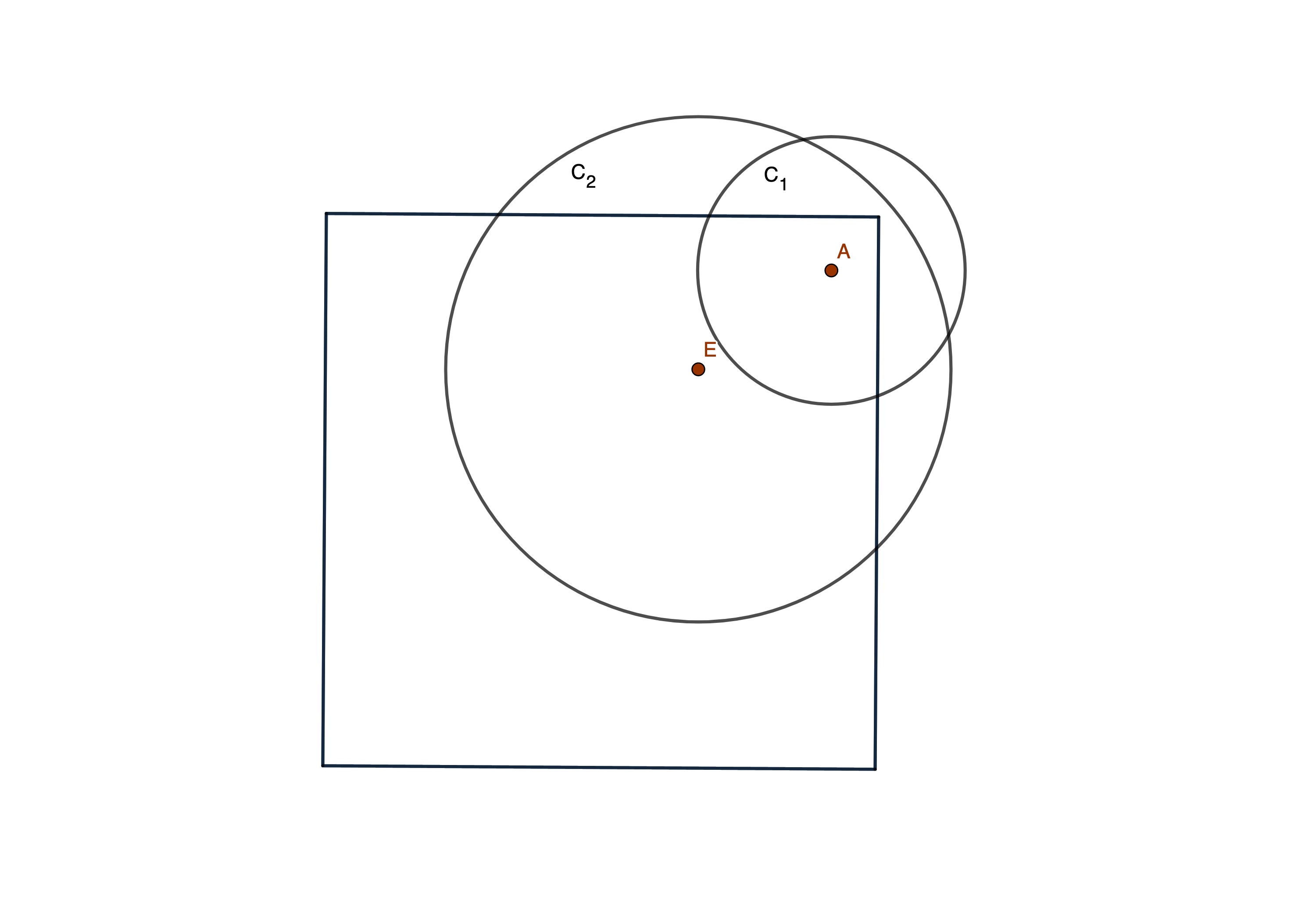}\caption{\label{fig:Inclusion-and-strong}Inclusion and formal inclusion inside
a square}
\end{figure}

\subsubsection{Computable topological spaces}

We give here a general definition that does not suppose that $\nu$
nor $\tau$ be surjective. 
\begin{defn}
\label{def:MAIN DEF}A \emph{Type 1 computable topological space}
is a quintuple $(X,\nu,\mathcal{T},\tau,\mathring{\subseteq})$ where
$X$ is a set, $\mathcal{T}\subseteq\mathcal{P}(X)$ is a topology
on $X$, $\nu:\mathbb{N}\rightharpoonup X$ is a subnumbering of $X$,
$\tau:\mathbb{N}\rightharpoonup\mathcal{T}$ is a subnumbering of
$\mathcal{T}$, and such that: 
\begin{enumerate}
\item The image of $\tau$ generates the topology $\mathcal{T}$;
\item The empty set and $X$ both belong to the image of $\tau$;
\item (Malcev's Condition). The open sets in the image of $\tau$ are uniformly
semi-decidable, i.e. $\tau\le\nu_{SD,m}$; 
\item The operations of taking computable unions and finite intersections
are computable, i.e.:
\begin{enumerate}
\item The function $\bigcup:\mathcal{T}^{\mathbb{N}}\rightarrow\mathcal{T}$
which maps a sequence of open sets $(A_{n})_{n\in\mathbb{N}}$ to
its union is $(\tau^{\text{id}_{\mathbb{N}}},\tau)$-computable and
it can be computed by a function that is expanding and increasing
for $\mathring{\subseteq}$;
\item The function $\bigcap:\mathcal{T}\times\mathcal{T}\rightarrow\mathcal{T}$
which maps a pair of open sets to its intersection is $(\tau\times\tau,\tau)$-computable,
and it can be computed by a function increasing for $\mathring{\subseteq}$. 
\end{enumerate}
\end{enumerate}
The third quoted condition is very important: it is the only condition
that relates the subnumbering of $X$ to its computable topology.
Without it, we do not relate the effective structure of $X$ to its
computable topology.
\end{defn}

More general definitions are possible, and probably better, but we
do not study them here:
\begin{enumerate}
\item To render effective the condition that open sets be stable by unions,
we want to say that taking unions of open sets is computable. 

The union function, which maps a set of open sets to its union, is
defined on the power set of $\mathcal{T}$: denote it $\bigcup:\mathcal{P}(\mathcal{T})\rightarrow\mathcal{T}$.
To ask that this function be computable requires the introduction
of a certain subnumbering of $\mathcal{P}(\mathcal{T})$. We have
chosen above a natural subnumbering of $\mathcal{P}(\mathcal{T})$
induced by the numbering $\tau$ of $\mathcal{T}$: the subnumbering
associated to computable sequences. But there is no reason to think
that this is in general the only valid choice of a subnumbering of
$\mathcal{P}(\mathcal{T})$ to define a computable topology. 

In particular, uncountable sets of open sets can have finite descriptions,
there is no reason to exclude those if they are to arise. 
\item Definition \ref{def:MAIN DEF} can be rewritten using multi-subnumberings,
both of $X$ and of $\mathcal{T}$, instead of subnumberings. All
other conditions can remain as they are. Such a definition is more
general than Definition \ref{def:MAIN DEF} and is useful in different
cases. A general definition of the Ershov topology uses this multi-subnumbering
definition.
\end{enumerate}
The following lemma shows one can use either $\tau^{\text{id}_{\mathbb{N}}}$
or $\tau_{ce}$ in the definition above. 
\begin{lem}
Let $\mu$ be a subnumbering of $\mathcal{P}(X)$ with the empty set
in its image. Then the union function $\bigcup:\mathcal{P}(\mathcal{P}(X))\rightarrow\mathcal{P}(X)$
is $(\mu_{ce},\mu)$-computable if and only if it is $(\mu^{\text{id}_{\mathbb{N}}},\mu)$-computable. 
\end{lem}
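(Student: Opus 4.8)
The plan is to prove the two implications separately, and to observe that the real content lies in the backward direction, where the hypothesis that $\emptyset$ belongs to the image of $\mu$ is used in an essential way. Recall that a $\mu^{\text{id}_{\mathbb{N}}}$-name is a code $n$ for which $\varphi_n$ is total with all values in $\text{dom}(\mu)$, coding the sequence $(A_i)_{i}$ with $A_i=\mu(\varphi_n(i))$, whereas a $\mu_{ce}$-name is an index $j$ with $W_j\subseteq\text{dom}(\mu)$, coding the family $\mu(W_j)$. Under the range map (sequence $\mapsto$ set of its terms) both are (sub)numberings of subsets of $\mathcal{P}(\mathcal{P}(X))$, and the assertion is that $\bigcup$ is computable against one exactly when it is computable against the other.

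For the easier direction, suppose $\bigcup$ is $(\mu_{ce},\mu)$-computable via a realizer $G$. Given a $\mu^{\text{id}_{\mathbb{N}}}$-name $n$, the function $\varphi_n$ is total with all values in $\text{dom}(\mu)$, so by the smn theorem I would produce, uniformly in $n$, an index $j$ with $W_j$ equal to the range of $\varphi_n$. Then $W_j\subseteq\text{dom}(\mu)$, hence $j\in\text{dom}(\mu_{ce})$ and $\mu_{ce}(j)=\mu(W_j)=\{A_i:i\in\mathbb{N}\}$, so $G(j)$ is a $\mu$-name of $\bigcup_i A_i$ and $n\mapsto G(j)$ is the desired realizer. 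This direction does not use the empty set, since the range of a total function is automatically a nonempty c.e. set. For the backward direction, suppose $\bigcup$ is $(\mu^{\text{id}_{\mathbb{N}}},\mu)$-computable via a realizer $H$, and fix once and for all a $\mu$-name $e$ of $\emptyset$. Given a $\mu_{ce}$-name $j$, I would convert the c.e. family $\mu(W_j)$ into a computable sequence by enumeration with padding: define a computable function whose value on $\langle s,t\rangle$ is $t$ if $\varphi_j(t)$ halts within $s$ steps, and $e$ otherwise. By the smn theorem this yields, uniformly in $j$, an index $m$ that is a valid $\mu^{\text{id}_{\mathbb{N}}}$-name, since every output is either an element of $W_j\subseteq\text{dom}(\mu)$ or the name $e\in\text{dom}(\mu)$. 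The set of terms of this sequence is $\mu(W_j)\cup\{\emptyset\}$, whose union equals $\bigcup\mu(W_j)=\bigcup\mu_{ce}(j)$; hence $H(m)$ is a $\mu$-name of $\bigcup\mu_{ce}(j)$, and $j\mapsto H(m)$ realizes $\bigcup$ for $\mu_{ce}$ and $\mu$.

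The main obstacle---and the place where the hypothesis on $\emptyset$ is indispensable---is the conversion of an arbitrary c.e. family into a computable sequence. A $\mu_{ce}$-name may code the empty family (when $W_j=\emptyset$) or a finite one, whereas a $\mu^{\text{id}_{\mathbb{N}}}$-name must produce a $\mu$-name at every index $i\in\mathbb{N}$ and can never code the empty collection. While dovetailing through the enumeration of $W_j$, one must therefore emit a legitimate $\mu$-name during the (possibly unbounded, possibly permanent) stages at which no new element of $W_j$ has yet appeared; the name $e$ of $\emptyset$ serves exactly this purpose. The subtle point is that the two numberings are \emph{not} equivalent as numberings of $\mathcal{P}(\mathcal{P}(X))$---padding genuinely alters the coded family---but this is harmless here precisely because $\bigcup$ is insensitive to the presence of $\emptyset$. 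It is this insensitivity of the union, together with the availability of a $\mu$-name for $\emptyset$, that makes the two computability notions coincide.
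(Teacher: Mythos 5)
Your proof is correct and follows essentially the same route as the paper: the paper's (very brief) argument is exactly your padding construction, transforming a $\mu_{ce}$-name of a possibly empty or finite family into a genuine computable sequence by filling gaps with a fixed $\mu$-name of $\emptyset$, which leaves the union unchanged. Your additional details (the easy range-of-a-total-function direction via the smn theorem, and the remark that the two numberings are not equivalent but the union is insensitive to adding $\emptyset$) are accurate elaborations of what the paper leaves implicit.
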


\begin{proof}
This simply comes from the fact that the $\mu_{ce}$-name of a possibly
empty or finite sequence of names can be transformed into a proper
sequence by filling gaps with a $\mu$-name for the empty set, which
does not change the resulting union. 
\end{proof}
\begin{defn}
The $\tau$-computable sets are called the \emph{effective open sets.
}Sets whose complement is $\tau$-computable are the \emph{effective
closed sets. }
\end{defn}

We define effectively continuous functions. 
\begin{defn}
\label{def: eff C0 text def}A function $f$ between two computable
topological spaces $(X,\nu,\mathcal{T}_{1},\tau_{1},\mathring{\subseteq}_{1})$
and $(Y,\mu,\mathcal{T}_{2},\tau_{2},\mathring{\subseteq}_{2})$ is
called \emph{effectively continuous} if it is classically continuous,
if the function $f^{-1}:\mathcal{T}_{2}\rightarrow\mathcal{T}_{1}$
is $(\tau_{2},\tau_{1})$-computable, and if it can be computed by
a function increasing with respect to the formal inclusion relation:
there should exist a computable function $h:\text{dom}(\tau_{2})\rightarrow\text{dom}(\tau_{1})$
for which:
\[
\forall n\in\text{dom}(\tau_{2}),\,\tau_{1}(h(n))=f^{-1}(\tau_{2}(n)),
\]
\[
\forall n_{1},n_{2}\in\text{dom}(\tau_{2}),\,n_{1}\mathring{\subseteq}_{2}n_{2}\implies h(n_{1})\mathring{\subseteq}_{1}h(n_{2}).
\]
\end{defn}

In other words, there should be a procedure that, given an effective
open set for $\tau_{2}$, produces a description of its preimage as
an effective open for $\tau_{1}$, and which respects formal inclusions.

We also define continuity at a point. 
\begin{defn}
\label{def: C0 at a point }A function $f$ between two computable
topological spaces $(X,\nu,\mathcal{T}_{1},\tau_{1},\mathring{\subseteq}_{1})$
and $(Y,\mu,\mathcal{T}_{2},\tau_{2},\mathring{\subseteq}_{2})$ is
called \emph{effectively continuous at a point $x\in X$} if there
exists a procedure that, given the $\tau_{2}$-name of an open set
$O_{2}$ that contains $f(x)$, produces the $\tau_{1}$-name of an
open set $O_{1}$ such that $x\in O_{1}$ and $O_{1}\subseteq f^{-1}(O_{2})$.
In addition, this procedure should be increasing with respect to the
formal inclusion relations of $\tau_{1}$ and $\tau_{2}$. 
\end{defn}

Note that this definition does not require $x$ to be a computable
point.
\begin{defn}
\label{def: Eff finer eff coarser }Let $(\mathcal{T}_{1},\tau_{1},\mathring{\subseteq}_{1})$
and $(\mathcal{T}_{2},\tau_{2},\mathring{\subseteq}_{2})$ be two
computable topologies defined on $(X,\nu)$. Then $(\mathcal{T}_{1},\tau_{1},\mathring{\subseteq}_{1})$
is \emph{effectively coarser} than $(\mathcal{T}_{2},\tau_{2},\mathring{\subseteq}_{2})$,
or $(\mathcal{T}_{2},\tau_{2},\mathring{\subseteq}_{2})$ is \emph{effectively
finer }than $(\mathcal{T}_{1},\tau_{1},\mathring{\subseteq}_{1})$,
if:
\begin{itemize}
\item $\mathcal{T}_{1}\subseteq\mathcal{T}_{2}$;
\item There is a procedure that, given the $\tau_{1}$-name of a set of
$\mathcal{T}_{1}$, produces the $\tau_{2}$-name of that set, and
which is increasing with respect to $\mathring{\subseteq}_{1}$ and
$\mathring{\subseteq}_{2}$. 
\end{itemize}
This is denoted by $(\mathcal{T}_{1},\tau_{1},\mathring{\subseteq}_{1})\subseteq_{e}(\mathcal{T}_{2},\tau_{2},\mathring{\subseteq}_{2})$
\cite{Spr98}. 
\end{defn}

Equivalently, one has $(\mathcal{T}_{1},\tau_{1},\mathring{\subseteq}_{1})\subseteq_{e}(\mathcal{T}_{2},\tau_{2},\mathring{\subseteq}_{2})$
if and only if the identity on $X$, seen as going from $X$ equipped
with $(\mathcal{T}_{2},\tau_{2},\mathring{\subseteq}_{2})$ to $X$
equipped with $(\mathcal{T}_{1},\tau_{1},\mathring{\subseteq}_{1})$,
i.e. $\text{id}_{X}:\,(X,\nu,\mathcal{T}_{2},\tau_{2},\mathring{\subseteq}_{2})\rightarrow(X,\nu,\mathcal{T}_{1},\tau_{1},\mathring{\subseteq}_{1})$,
is effectively continuous.

If two computable topologies $(X,\nu,\mathcal{T}_{1},\tau_{1},\mathring{\subseteq}_{1})$
and $(X,\nu,\mathcal{T}_{2},\tau_{2,},\mathring{\subseteq}_{2})$
are each effectively coarser than the other, then $\mathcal{T}_{1}=\mathcal{T}_{2}$
and $\tau_{1}\equiv\tau_{2}$. In this case, these two computable
topologies should be considered identical, we sometimes say that they
are \emph{effectively equivalent}.

\subsubsection{Inclusion as a formal inclusion }

In this section, we make the remark that when the formal inclusion
we are using is the actual inclusion, i.e. $n\mathring{\subseteq}m\iff\tau(n)\subseteq\tau(m)$,
all the requirements related to ``being increasing with respect to
the formal inclusions'' are trivially satisfied.

This is what shows that the use of formal inclusion relations is not
restrictive. 

We denote by $\subseteq$ the relation on names of open sets given
by $n\subseteq m\iff\tau(n)\subseteq\tau(m)$. The following proposition
is immediate. 
\begin{prop}
When using the inclusion $\subseteq$ as a formal inclusion relation,
the conditions that appear in Definition \ref{def:MAIN DEF} involving
the formal inclusion, namely that computation of unions and intersections
is increasing for $\subseteq$, and that computation of unions is
expanding, are always trivially satisfied. 
\end{prop}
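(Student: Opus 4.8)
The plan is to unwind the definitions and observe that the inclusion relation $\subseteq$ (defined by $n\subseteq m \iff \tau(n)\subseteq\tau(m)$) is itself a formal inclusion relation by construction, and then check that every clause about being ``increasing'' or ``expanding'' for $\subseteq$ reduces to a trivially true set-theoretic statement. First I would verify that $\subseteq$ satisfies Definition \ref{def:Formal inclusion relation Text}: reflexivity and transitivity are inherited from set inclusion on $\mathcal{P}(X)$, and the implication $n\subseteq m \implies \tau(n)\subseteq\tau(m)$ holds by definition, with equality rather than mere implication. So $\subseteq$ is a legitimate choice of formal inclusion relation, and it makes sense to ask whether the monotonicity conditions of Definition \ref{def:MAIN DEF} become vacuous.

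Next I would treat the three clauses in turn. For the intersection function, the hypothesis already grants a $(\tau\times\tau,\tau)$-computable realizer $F$ with $\bigcap(\tau(n)\times\tau(m))=\tau(F(\langle n,m\rangle))$; to say $F$ is increasing for the product relation $\mathring{\subseteq}_{\beta\times\gamma}$ and $\subseteq$ means that whenever $\langle n_1,m_1\rangle \subseteq \langle n_2,m_2\rangle$ we have $F(\langle n_1,m_1\rangle)\subseteq F(\langle n_2,m_2\rangle)$. But $\tau(n_1)\subseteq\tau(n_2)$ and $\tau(m_1)\subseteq\tau(m_2)$ force $\tau(n_1)\cap\tau(m_1)\subseteq\tau(n_2)\cap\tau(m_2)$, which is exactly $\tau(F(\langle n_1,m_1\rangle))\subseteq\tau(F(\langle n_2,m_2\rangle))$, i.e.\ the desired relation. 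The same monotonicity of set intersection does all the work; no constraint on $F$ beyond being a realizer is needed. The union case is identical in spirit: if $(A_n)\subseteq(B_n)$ pointwise then $\bigcup_n A_n \subseteq \bigcup_n B_n$, so any realizer of the union is automatically increasing for $\subseteq$.

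For the \emph{expanding} requirement on the union realizer $G$, I would note that each term of a sequence is contained in the union of the sequence: $A_k \subseteq \bigcup_n A_n$. Phrased through the realizer, if $n$ is the name of the $k$-th open set then $\tau(n)\subseteq\tau(G(\text{code}))$, which is precisely $n \subseteq G(\text{code})$ in the inclusion relation. Hence expansiveness for $\subseteq$ holds automatically as well. In each case the key observation is simply that $\subseteq$ encodes genuine set inclusion and that union and intersection are monotone (and union is extensive) as set operations; once the realizers compute the correct sets on the nose, as guaranteed by the underlying $(\tau,\tau)$-computability hypotheses, the monotonicity conditions follow for free.

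I do not anticipate a serious obstacle here; the only mild subtlety is bookkeeping about \emph{which} formal inclusion relation is placed on the product numbering $\tau\times\tau$. I would use the induced product relation $\mathring{\subseteq}_{\tau\times\tau}$ from the earlier discussion, and check that when $\mathring{\subseteq}$ is taken to be $\subseteq$ on each factor, the induced product relation coincides with componentwise set inclusion, so that the intersection argument above goes through verbatim. Apart from this, the proposition is genuinely immediate, and its real content is conceptual rather than technical: it certifies that adopting a formal inclusion relation can only \emph{enlarge} the class of admissible spaces, since the default choice $\subseteq$ trivializes all the extra demands.
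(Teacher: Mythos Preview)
Your proposal is correct and matches the paper's approach: the paper simply declares the proposition ``immediate'' without giving any argument, and your unwinding of the definitions---checking that monotonicity of set-theoretic union and intersection, together with $A_k\subseteq\bigcup_n A_n$, automatically yield the increasing and expanding conditions once $\mathring{\subseteq}$ is taken to be actual inclusion---is exactly the verification that ``immediate'' stands for. Your closing remark about the conceptual content (that the choice $\subseteq$ shows the formal-inclusion framework is a genuine generalization) also mirrors the paper's own commentary surrounding the proposition.
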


The following is also immediate:
\begin{prop}
If $f$ is a function between two computable topological spaces $(X,\nu,\mathcal{T}_{1},\tau_{1},\subseteq)$
and $(Y,\mu,\mathcal{T}_{2},\tau_{2},\subseteq)$ both equipped with
the inclusion as a formal inclusion relation, then $f$ is effectively
continuous if and only if $f^{-1}:\mathcal{T}_{2}\rightarrow\mathcal{T}_{1}$
is $(\tau_{2},\tau_{1})$-computable. That is to say the monotonicity
condition with respect to formal inclusion is automatically satisfied. 
\end{prop}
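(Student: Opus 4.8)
The plan is to observe that one direction is built directly into the definition of effective continuity, and that the reverse direction collapses to the elementary inclusion-monotonicity of set-theoretic preimages. First I would dispatch the forward implication: if $f$ is effectively continuous, then $(\tau_{2},\tau_{1})$-computability of $f^{-1}:\mathcal{T}_{2}\rightarrow\mathcal{T}_{1}$ is \emph{verbatim} one of the clauses of Definition \ref{def: eff C0 text def}, so nothing is to be shown.

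For the converse I would start from the hypothesis that $f^{-1}:\mathcal{T}_{2}\rightarrow\mathcal{T}_{1}$ is $(\tau_{2},\tau_{1})$-computable, and extract the two missing ingredients of effective continuity. Classical continuity comes for free: the mere assertion that $f^{-1}$ is a well-defined map $\mathcal{T}_{2}\rightarrow\mathcal{T}_{1}$ already says that the preimage of every open set is open. Then I would fix a computable realizer $h:\text{dom}(\tau_{2})\rightarrow\text{dom}(\tau_{1})$, so that $\tau_{1}(h(n))=f^{-1}(\tau_{2}(n))$ for all $n\in\text{dom}(\tau_{2})$; the only remaining clause is the monotonicity condition $n_{1}\subseteq n_{2}\implies h(n_{1})\subseteq h(n_{2})$, where $\subseteq$ now denotes the formal inclusion, which in this setting is the actual inclusion of Definition \ref{def:MAIN DEF}.

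The key step is simply to unwind both occurrences of $\subseteq$. The premise $n_{1}\subseteq n_{2}$ reads $\tau_{2}(n_{1})\subseteq\tau_{2}(n_{2})$; applying $f^{-1}$, which is inclusion-monotone for purely set-theoretic reasons, yields $f^{-1}(\tau_{2}(n_{1}))\subseteq f^{-1}(\tau_{2}(n_{2}))$, that is $\tau_{1}(h(n_{1}))\subseteq\tau_{1}(h(n_{2}))$, which is exactly $h(n_{1})\subseteq h(n_{2})$. I expect no genuine obstacle: the entire point is that when the formal inclusion coincides with set inclusion, the monotonicity clause cannot fail, since it is an automatic consequence of the monotonicity of preimages that \emph{any} realizer enjoys — so the two notions coincide and the proposition is indeed immediate.
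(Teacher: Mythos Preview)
Your proposal is correct and matches the paper's approach: the paper declares the proposition ``immediate'' and gives no proof, and what you have written is precisely the one-line unfolding that justifies this claim. The only content is that set-theoretic preimage is inclusion-monotone, so any realizer of $f^{-1}$ is automatically increasing for the actual-inclusion formal relation; you have spelled this out cleanly.
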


We leave it to the reader to remark that similar results hold for
Definitions \ref{def: C0 at a point } and \ref{def: Eff finer eff coarser }. 

Because of these remarks, we make the following convention: whenever
the formal inclusion attached to a computable topological space is
the actual inclusion relation, we will simply omit it, and we will
thus write $(X,\nu,\mathcal{T}_{1},\tau_{1})$ instead of $(X,\nu,\mathcal{T}_{1},\tau_{1},\subseteq)$. 

In the following section, we discuss Nogina bases and Lacombe bases.
These do not use a formal inclusion, and thus we will not make an
explicit reference to the fact that we are using inclusion as a formal
inclusion. 

\subsection{\label{subsec:First-two-notions-of-bases}First two notions of bases}

\subsubsection{Nogina Bases}

We define Nogina bases. The notion of a Nogina basis is valid only
when $\nu$ is a numbering, and the set under consideration is countable.
What this means is the following: in any setting, whether $\nu$ is
a numbering or only a subnumbering, we can write down the two following
definitions, Definition \ref{def: Nogina basis Text } and Definition
\ref{def: top gen by eff basis}. But then, in order to be able to
prove that those definitions yield an effective topology (Proposition
\ref{prop:The-effective-topology of a basis}), we need the hypothesis
that $\nu$ is an actual numbering. 
\begin{defn}
[Nogina, \cite{Nogina1966}]\label{def: Nogina basis Text }A \emph{Nogina
basis }for\emph{ $(X,\nu)$, }where $\nu$ is a numbering, is a pair
$(\mathcal{B},\beta)$, where $\mathcal{B}$ is a subset of $P(X)$
and $\beta$ is a numbering of $\mathcal{B}$, and that satisfies
the following conditions:
\begin{enumerate}
\item The elements of the basis $(\mathcal{B},\beta)$ are uniformly semi-decidable,
i.e. $\beta\le\nu_{SD}$;
\item There is a procedure that given a $\nu$-name for a point $x$ in
$X$ produces the $\beta$-name of a set $B$ in $\mathcal{B}$ such
that $x\in B$ (and in particular any point in $X$ belongs to an
element of $\mathcal{B}$); 
\item There is a procedure that given $\beta$-names for two sets $B_{1}$
and $B_{2}$ and the $\nu$-name of a point $x$ in the intersection
$B_{1}\cap B_{2}$ produces the $\beta$-name of a set $B_{3}$ such
that $B_{3}\subseteq B_{1}\cap B_{2}$ and $x\in B_{3}$. 
\end{enumerate}
\end{defn}

This is a particular case of Bauer's definition of the ``Pointwise
Topology'' \cite[Definition 5.4.1]{Bauer2000}. We will now explicitly
define the computable topology associated to this basis. 
\begin{defn}
\label{def: top gen by eff basis}Let $(X,\nu,\mathcal{B},\beta)$
be a numbered set equipped with a Nogina basis. The \emph{Nogina topology
associated to the Nogina basis $(\mathcal{B},\beta)$ }is a pair $(\mathcal{T},\tau)$
defined as follows:
\begin{enumerate}
\item $\mathcal{T}$ is the topology on $X$ generated by $\mathcal{B}$
(as a classical basis);
\item $\tau$ is a subnumbering of $\mathcal{T}$, such that the $\tau$-name
of an open set $O$ is an encoded pair of algorithms, one that testifies
for the fact that $O$ is semi-decidable, and one which, given the
$\nu$-name of a point $x$ in $O$, produces the $\beta$-name of
an element $B$ of the basis such that $x\in B$ and $B\subseteq O$. 
\end{enumerate}
Thus a set $O$ is an effective open for the topology generated by
the Nogina basis \emph{$(\mathcal{B},\beta)$} if and only if it is
semi-decidable and there is an algorithm that, given a point in $O$,
produces a element of the basis which contains this point and is contained
in $O$. 

The fundamental result to use Nogina bases is of course:
\end{defn}

\begin{prop}
\label{prop:The-effective-topology of a basis}The Nogina topology
associated to a Nogina basis is a computable topology. 
\end{prop}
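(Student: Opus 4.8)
I need to prove that the pair $(\mathcal{T}, \tau)$ defined from a Nogina basis is a Type 1 computable topological space, i.e. it satisfies all the conditions in Definition \ref{def:MAIN DEF}. Since the Nogina setting uses inclusion as its formal inclusion relation, all the "increasing/expanding with respect to formal inclusion" conditions are automatic (by the Proposition right before, the one about using $\subseteq$ as a formal inclusion). So I really only need: (1) $\mathcal{T}$ is generated by $\tau$'s image; (2) $\emptyset, X \in \text{im}(\tau)$; (3) Malcev's condition $\tau \le \nu_{SD,m}$; (4a) computability of unions; (4b) computability of finite intersections.

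Let me think carefully about what a $\tau$-name encodes. A $\tau$-name of an open set $O$ is a pair $\langle e, i \rangle$ where $\varphi_e$ witnesses that $O$ is semi-decidable (halts on $\nu$-names of points in $O$, diverges otherwise), and $\varphi_i$ is a "localization" procedure: given a $\nu$-name of a point $x \in O$, it produces a $\beta$-name of a basic set $B$ with $x \in B \subseteq O$.

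**Condition (3), Malcev.** This is almost immediate from the definition: the first component $\varphi_e$ of a $\tau$-name is precisely a witness for semi-decidability of $O \cap X_\nu$. So I can extract from a $\tau$-name a $\nu_{SD,m}$-name by projecting onto the first coordinate. The map $\langle e, i\rangle \mapsto e$ is computable, giving $\tau \le \nu_{SD,m}$.

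**Conditions (1) and (2).** For (1), I need $\mathcal{T}$ to be the topology generated by $\mathcal{B}$, which holds by definition of the Nogina topology. But I should also verify that the image of $\tau$ actually generates $\mathcal{T}$ — i.e. that every basic open set $B \in \mathcal{B}$ is itself a $\tau$-computable open set (so that $\tau$'s image contains a basis). Given $\beta$-name $b$ of $B$, I must produce a $\tau$-name: the semi-decidability witness comes from condition (1) of a Nogina basis ($\beta \le \nu_{SD}$), and the localization procedure comes from condition (3) of a Nogina basis (intersecting $B$ with itself, using $x \in B \cap B$). This shows each $B$ is $\tau$-computable, hence $\text{im}(\tau) \supseteq \mathcal{B}$ generates $\mathcal{T}$. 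For (2): $X$ is $\tau$-computable because it's semi-decidable (everything halts) and its localization is given by Nogina condition (2) (any point in $X = O$ lies in some basic $B \subseteq X$); and $\emptyset$ is $\tau$-computable trivially (semi-decidability witness never halts, localization vacuous).

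**Condition (4b), intersections.** Given $\tau$-names $\langle e_1, i_1 \rangle$ and $\langle e_2, i_2 \rangle$ of $O_1, O_2$, I produce a $\tau$-name of $O_1 \cap O_2$. Semi-decidability: run both $\varphi_{e_1}, \varphi_{e_2}$ in parallel, halt when both halt. Localization: given $x \in O_1 \cap O_2$, use $\varphi_{i_1}, \varphi_{i_2}$ to get basic $B_1 \subseteq O_1$, $B_2 \subseteq O_2$ both containing $x$, then apply the Nogina intersection procedure (condition 3) to $(B_1, B_2, x)$ to get a basic $B_3 \subseteq B_1 \cap B_2 \subseteq O_1 \cap O_2$ containing $x$. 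All uniform, hence computable.

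**Condition (4a), unions.** This is where I expect the main subtlety. Given a $\tau^{\text{id}_\mathbb{N}}$-name of a sequence $(A_n)_n$ — i.e. a program computing $n \mapsto \langle e_n, i_n\rangle$ — I produce a $\tau$-name of $\bigcup_n A_n$. Semi-decidability of the union: given a $\nu$-name of $x$, search for some $n$ with $\varphi_{e_n}$ halting on $x$; this dovetailing search halts iff $x \in \bigcup_n A_n$. Localization: given $x \in \bigcup A_n$, find (by the same dovetailing) some $n$ with $x \in A_n$, then apply $\varphi_{i_n}$ to get a basic $B$ with $x \in B \subseteq A_n \subseteq \bigcup A_n$. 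Both are uniform in the sequence's index.

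**The key obstacle.** The genuine difficulty — and the place where the hypothesis that $\nu$ is an actual (total-image) numbering, not merely a subnumbering, is used — lies in condition (1): verifying that $\mathcal{T}$, the classical topology generated by $\mathcal{B}$, coincides with the topology whose open sets are exactly the $\tau$-computable-describable sets, and more precisely that the sets I am calling "effective open" are genuinely open and that every point-localization argument goes through. The semi-decidability and localization searches above all rely on being able to search over $\nu$-names of a point, and the correctness of the dovetailing ("the search halts iff $x$ is really in the set") depends on every relevant point having a $\nu$-name. This is exactly the countability/numbering hypothesis flagged in the text before Definition \ref{def: Nogina basis Text }. The remaining verifications are routine dovetailing and $s$-$m$-$n$ bookkeeping; I expect the write-up to spend its care on (4a) and on confirming that the described $\tau$-computable sets form precisely the topology generated by $\mathcal{B}$.
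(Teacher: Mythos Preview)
Your proof is correct and follows essentially the same route as the paper: verify conditions (1)--(4) of Definition~\ref{def:MAIN DEF} in turn, using the Nogina basis axioms for (2) and the intersection, dovetailing for the union, and noting that Malcev's condition is built into the $\tau$-name.

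One correction to your closing discussion: the place where surjectivity of $\nu$ is genuinely needed is not in the dovetailing searches of (4a), which operate only on $\nu$-names and are fine regardless. The real issue is that $\tau$ must be a \emph{subnumbering}, i.e.\ a $\tau$-name must determine a unique open set $O\in\mathcal{T}$. A $\tau$-name encodes only $O\cap X_\nu$ (via the semi-decidability witness) together with a localization map defined on $O\cap X_\nu$; when $\nu$ is not onto, two distinct open sets $O\neq O'$ with $O\cap X_\nu=O'\cap X_\nu$ can share the same $\tau$-name, so $\tau$ degenerates to a multi-subnumbering. (This is precisely what the non-vanishing condition $\circledcirc$ in the Spreen setting is designed to repair.) The paper's proof does not dwell on this point, but your ``key obstacle'' paragraph misplaces it.
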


\begin{proof}
Let $(X,\nu,\mathcal{B},\beta)$ be a numbered set equipped with a
Nogina basis, and let $(\mathcal{T},\tau)$ denote the Nogina topology
associated to $(X,\nu,\mathcal{B},\beta)$. We show that it is a computable
topology, proving each condition that appeared in Definition \ref{def:MAIN DEF}
in order: 
\begin{enumerate}
\item The image of $\tau$ generates the topology $\mathcal{T}$. 

This comes from the easy fact that each element of the basis $\mathcal{B}$
is effectively open for $(\mathcal{T},\tau)$, in fact we have $\beta\le\tau$.
Thus the topology generated by the image of $\tau$ is $\mathcal{T}$. 
\item The empty set and $X$ both belong to the image of $\tau$. 

This is clear by vacuity for the empty set, for $X$ it follows directly
from point (2) of the definition of a Nogina basis. 
\item (Malcev's Condition). The open sets in the image of $\tau$ are uniformly
semi-decidable, i.e. $\tau\le\nu_{SD}$. 

This was presupposed in our definition. 
\item The function $\bigcup:\mathcal{T}^{\mathbb{N}}\rightarrow\mathcal{T}$
which maps a sequence of open sets $(A_{n})_{n\in\mathbb{N}}$ to
its union is $(\tau^{\text{id}_{\mathbb{N}}},\tau)$-computable.

This is straightforward: a computable union of semi-decidable sets
is semi-decidable, and this is uniform by dovetailing. Given a point
$x$ in such a union, one can find at least one element of the union
to which $x$ belongs, and then find a basic open set which contains
$x$ and is a subset of the union. 
\item The function $\bigcap:\mathcal{T}\times\mathcal{T}\rightarrow\mathcal{T}$
which maps a pair of open sets to its intersection is $(\tau\times\tau,\tau)$-computable. 

This follows directly from the intersection condition for the Nogina
basis $(\mathcal{B},\beta)$. 

\end{enumerate}
\vspace{-0,5cm}
\end{proof}

\subsubsection{Lacombe Bases}

We define Lacombe bases on a subnumbered set $(X,\nu)$. This definition
is similar to \cite[Definition 5.4.2]{Bauer2000} and to \cite[Definition 4]{Weihrauch2009ElementaryCT}. 

Recall that if $\mu$ is a subnumbering of $Y$, $\mu_{ce}$ designates
the subnumbering associated to computably enumerable sets of elements
of $Y$. 
\begin{defn}
\label{def: Lacombe-basis}A \emph{Lacombe basis} for $(X,\nu)$ is
a pair $(\mathcal{B},\beta)$, where $\mathcal{B}$ is a subset of
$P(X)$, $\beta$ is a numbering of $\mathcal{B}$, and that satisfies
the following conditions:
\begin{enumerate}
\item The elements of the basis are uniformly semi-decidable, i.e. $\beta\le\nu_{SD,m}$;
\item Any point in $X$ belongs to an element of $\mathcal{B}$, and $X$
can be written as a computable union of basic sets;
\item There is a procedure that given the $\beta$-names for two sets $B_{1}$
and $B_{2}$ produces the $\beta_{ce}$-name of a sequence of basic
sets $(B_{i})_{i\ge3}$ such that: 
\[
B_{1}\cap B_{2}=\underset{i\ge3}{\bigcup}B_{i}.
\]
\end{enumerate}
\end{defn}

It is clear that those conditions are more restrictive than those
of Nogina bases:
\begin{prop}
If $\nu$ is onto, any Lacombe basis is a Nogina basis. 
\end{prop}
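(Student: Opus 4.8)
The proposition asserts that if $\nu$ is onto (a genuine numbering, not merely a subnumbering), then every Lacombe basis is a Nogina basis. The plan is to verify the three defining conditions of a Nogina basis (Definition \ref{def: Nogina basis Text }) starting from the three conditions of a Lacombe basis (Definition \ref{def: Lacombe-basis}). Since $\nu$ is onto, $X = X_\nu$, so every point of $X$ is a $\nu$-computable point and the subnumbering $\nu_{SD,m}$ collapses: semi-decidability in the Lacombe sense coincides with semi-decidability in the Nogina sense (condition 1 transfers essentially verbatim, since $\nu_{SD,m} \equiv \nu_{SD}$ when $\nu$ is total-image).

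\textbf{The three conditions.} First I would check Nogina condition (1), uniform semi-decidability: this is immediate, since Lacombe condition (1) gives $\beta \le \nu_{SD,m}$, and with $\nu$ onto we have $\nu_{SD,m} \equiv \nu_{SD}$, hence $\beta \le \nu_{SD}$. Next, Nogina condition (2) asks for a procedure that, given a $\nu$-name of a point $x$, outputs a $\beta$-name of some basic set containing $x$. Here is where surjectivity genuinely enters: from Lacombe condition (2), $X$ is a computable union of basic sets, say $X = \bigcup_i \beta(\psi(i))$ for a computable $\psi$ enumerating $\beta$-names. Given a $\nu$-name $n$ of a point $x$, I would dovetail: enumerate the $\beta(\psi(i))$ and, for each, run the semi-decision procedure (available by condition 1) testing whether $x \in \beta(\psi(i))$; since the basic sets cover $X$, at least one test halts, and I output the corresponding $\beta$-name. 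This search must terminate because $x$ lies in $X = \bigcup_i \beta(\psi(i))$.

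\textbf{The intersection condition.} Finally, Nogina condition (3) requires, from $\beta$-names of $B_1, B_2$ and a $\nu$-name of $x \in B_1 \cap B_2$, a $\beta$-name of some $B_3$ with $x \in B_3 \subseteq B_1 \cap B_2$. Lacombe condition (3) provides a $\beta_{ce}$-name of a sequence $(B_i)_{i \ge 3}$ whose union is exactly $B_1 \cap B_2$. So I would again dovetail: enumerate this sequence and semi-decide membership $x \in B_i$ for each; since $x \in B_1 \cap B_2 = \bigcup_{i \ge 3} B_i$, the point $x$ lies in some $B_i$, so some test halts, and I output that $\beta$-name. Each such $B_i$ satisfies $B_i \subseteq B_1 \cap B_2$ by construction, giving exactly the Nogina intersection property.

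\textbf{Main obstacle.} The deductions are routine, but the single indispensable use of surjectivity deserves care: in both condition (2) and condition (3), the termination of the dovetailed search relies on the fact that the point $x$ actually belongs to the union being enumerated, and that we can \emph{semi-decide} membership of $x$ in each basic set. The former is guaranteed by the covering/union hypotheses, the latter by condition (1) together with $\nu$ being onto (so that $x = \nu(n)$ is a genuine computable point and the semi-decision procedures of $\beta \le \nu_{SD}$ apply to the name $n$). I expect the only subtlety worth flagging explicitly is why the Lacombe semi-decidability $\beta \le \nu_{SD,m}$ upgrades to the Nogina semi-decidability $\beta \le \nu_{SD}$, which is precisely where $X = X_\nu$ is used; everything else is a uniform dovetailing argument.
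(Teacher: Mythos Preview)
Your proof is correct and follows exactly the approach the paper has in mind; in fact the paper states this proposition without proof, prefacing it only with ``It is clear that those conditions are more restrictive than those of Nogina bases''. Your dovetailing argument is the standard way to unpack this clarity, and your explicit identification of where surjectivity of $\nu$ is used (to ensure $\nu_{SD,m}$ collapses to $\nu_{SD}$ and that the searches terminate) is a welcome elaboration.
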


We now define the computable topology associated to a Lacombe basis. 
\begin{defn}
\label{def:Top from Lacombe basis}Let $(X,\nu,\mathcal{B},\beta)$
be a subnumbered set equipped with a Lacombe basis. The \emph{Lacombe
topology associated to the Lacombe basis $(\mathcal{B},\beta)$ }is
a pair $(\mathcal{T},\tau)$ defined as follows:
\begin{enumerate}
\item $\mathcal{T}$ is the topology on $X$ generated by $\mathcal{B}$
(as a classical basis);
\item The subnumbering $\tau$ is obtained as the composition of the union
function with the subnumbering $\beta_{ce}$. This means that a $\tau$-name
of a set $O$ is the $\beta_{ce}$-name for a finite or infinite sequence
$(B_{n})_{n\in I}$ such that $\underset{n\in I}{\cup}B_{n}=O$. 
\end{enumerate}
Thus the effective open sets of this topology are exactly the Lacombe
sets. And of course we must show that this defines correctly a computable
topology: 
\end{defn}

\begin{prop}
\label{prop:Lacombe-topology}The Lacombe topology associated to a
Lacombe basis is a computable topology. 
\end{prop}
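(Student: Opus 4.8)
The plan is to follow the template of the proof of Proposition~\ref{prop:The-effective-topology of a basis}, verifying in turn each of the conditions of Definition~\ref{def:MAIN DEF} for the pair $(\mathcal{T},\tau)$ of Definition~\ref{def:Top from Lacombe basis}. Since Definition~\ref{def:Top from Lacombe basis} does not attach a formal inclusion relation to the Lacombe topology, we work with the default choice, the actual inclusion $\subseteq$ on names; by the proposition stating that the monotonicity clauses of Definition~\ref{def:MAIN DEF} are vacuous for this choice, the ``expanding'' and ``increasing'' requirements in condition~(4) require no argument, and only the plain $(\tau^{\text{id}_{\mathbb{N}}},\tau)$- and $(\tau\times\tau,\tau)$-computability of union and intersection must be established. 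Throughout, a $\tau$-name of an open set $O$ is a $\beta_{ce}$-code $j$, so that $O=\bigcup_{n\in W_{j}}\beta(n)$, and the engine of every argument is dovetailing over the c.e.\ index sets $W_{j}$.

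Conditions~(1), (2) and~(3) are the direct analogues of the Nogina case. For~(1), each basic set $\beta(n)$ receives a $\tau$-name uniformly by taking a $\beta_{ce}$-code of the singleton enumeration $\{n\}$, so $\beta\le\tau$ and the image of $\tau$ generates $\mathcal{T}$. For~(2), the empty set is named by a $\beta_{ce}$-code of the empty enumeration, and $X$ has a $\tau$-name by the second clause of Definition~\ref{def: Lacombe-basis}. For Malcev's condition~(3), I would show $\tau\le\nu_{SD,m}$ directly: given a $\tau$-code $j$ and the $\nu$-name of a computable point $x$, dovetail the enumeration of $W_{j}$ with the uniformly available semi-decision procedures of the $\beta(n)$ (supplied by $\beta\le\nu_{SD,m}$), halting as soon as some $n\in W_{j}$ with $x\in\beta(n)$ is found; this halts precisely when $x\in O\cap X_{\nu}$, uniformly in $j$.

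For the union~(4a), a $\tau^{\text{id}_{\mathbb{N}}}$-name is a program computing a sequence $k\mapsto j_{k}$ of $\beta_{ce}$-codes, and $\bigcup_{k}O_{k}=\bigcup\{\beta(n):\exists k,\ n\in W_{j_{k}}\}$; the displayed index set is c.e.\ uniformly in the input, so dovetailing produces a single $\beta_{ce}$-code, hence a $\tau$-name, of the union. The intersection~(4b) is where the defining property of a Lacombe basis is actually used, and it is the step I expect to carry the real content. Given $\tau$-codes $j_{1},j_{2}$ of $O_{1},O_{2}$, one writes
\[
O_{1}\cap O_{2}=\bigcup_{n\in W_{j_{1}},\,m\in W_{j_{2}}}\bigl(\beta(n)\cap\beta(m)\bigr),
\]
and the third clause of Definition~\ref{def: Lacombe-basis} furnishes, uniformly in $(n,m)$, a $\beta_{ce}$-code of a basic enumeration whose union is $\beta(n)\cap\beta(m)$. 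The task is then to amalgamate this doubly-indexed family of c.e.\ enumerations into one: dovetailing simultaneously over the pairs $(n,m)\in W_{j_{1}}\times W_{j_{2}}$ and over each of the resulting sub-enumerations yields a single c.e.\ index set of basic sets whose union is $O_{1}\cap O_{2}$, and thus a $\tau$-name of the intersection, computed uniformly in $(j_{1},j_{2})$.

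The only genuine subtlety, and the main obstacle, is the bookkeeping in~(4b): one must merge a family of c.e.\ sets that is itself indexed by a product of two c.e.\ sets, which requires a triple dovetailing (over $W_{j_{1}}$, over $W_{j_{2}}$, and over the sub-enumerations produced by clause~(3)) organised so that the procedure remains total and uniform. Once this merging is set up carefully, every clause reduces to the standard fact that a computably enumerable union of uniformly semi-decidable (respectively, of uniformly Lacombe) sets is again of the same kind, and the proof closes.
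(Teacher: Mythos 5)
Your proposal is correct and follows essentially the same route as the paper's proof: verify the conditions of Definition~\ref{def:MAIN DEF} in order, using the default inclusion as formal inclusion (making the monotonicity clauses vacuous), getting $\beta\le\tau$ for condition~(1), the second Lacombe clause for condition~(2), closure of semi-decidability under computable unions for Malcev's condition, and dovetailing plus the Lacombe intersection clause for conditions~(4a) and~(4b). The paper states these steps more tersely; your write-up merely makes the dovetailing bookkeeping explicit, including the decomposition $O_{1}\cap O_{2}=\bigcup_{n\in W_{j_{1}},\,m\in W_{j_{2}}}\bigl(\beta(n)\cap\beta(m)\bigr)$ that the paper leaves implicit in the phrase ``together with the previous point.''
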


\begin{proof}
Let $(X,\nu,\mathcal{B},\beta)$ be a numbered set equipped with a
Lacombe basis, and let $(\mathcal{T},\tau)$ denote the computable
topology associated to $(X,\nu,\mathcal{B},\beta)$. We show that
it is a computable topology, proving each condition that appeared
in Definition \ref{def:MAIN DEF} in order: 
\begin{enumerate}
\item The image of $\tau$ generates the topology $\mathcal{T}$. 

This comes from the easy fact that $\beta\le\tau$ (each basic open
set is an effective open set). Thus the topology generated by the
image of $\tau$ is $\mathcal{T}$. 
\item The empty set and $X$ both belong to the image of $\tau$. 

The empty set is the union of no basic open sets. For the whole set
$X$, this was guaranteed thanks to (2) in Definition \ref{def: Lacombe-basis}. 
\item (Malcev's Condition). The open sets in the image of $\tau$ are uniformly
semi-decidable, i.e. $\tau\le\nu_{SD,m}$. 

This comes from the fact that a computable union of semi-decidable
sets is semi-decidable. 
\item The function $\bigcup:\mathcal{T}^{\mathbb{N}}\rightarrow\mathcal{T}$
which maps a sequence of open sets $(A_{n})_{n\in\mathbb{N}}$ to
its union is $(\tau^{\text{id}_{\mathbb{N}}},\tau)$-computable.

This comes from the fact that a computable union of a computable union
of basic sets can be obtained as a single computable union by dovetailing. 
\item The function $\bigcap:\mathcal{T}\times\mathcal{T}\rightarrow\mathcal{T}$
which maps a pair of open sets to its intersection is $(\tau\times\tau,\tau)$-computable. 

This follows directly from the intersection condition for the Lacombe
basis $(\mathcal{B},\beta)$, together with the previous point. 

\end{enumerate}
\vspace{-0,5cm}
\end{proof}

\subsection{\label{subsec:Equivalence-of-bases}Equivalence of bases}

Note that the two notions of bases come with different notions of
equivalence: 
\begin{itemize}
\item Nogina bases: $(\mathcal{B}_{1},\beta_{1})$ is equivalent to $(\mathcal{B}_{2},\beta_{2})$
if there is an algorithm that, given a $\nu$-name for a point $x$
and a $\beta_{1}$-name of a ball that contains it, constructs the
$\beta_{2}$-name of a ball that contains $x$ and that is contained
in the first ball, and another algorithm that does the converse operation. 
\item Lacombe bases: $(\mathcal{B}_{1},\beta_{1})$ is equivalent to $(\mathcal{B}_{2},\beta_{2})$
if there is an algorithm that, given the $\beta_{1}$-name of a basic
open $B$ of the first basis, produces a name of $B$ as a Lacombe
set for $(\mathcal{B}_{2},\beta_{2})$, and conversely. 
\end{itemize}
And we of course have the following obvious propositions:
\begin{prop}
The Nogina topologies generated by equivalent Nogina bases are identical. 
\begin{prop}
The Lacombe topologies generated by equivalent Lacombe bases are identical. 
\end{prop}

\end{prop}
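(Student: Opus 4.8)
The plan is to treat both propositions by the same recipe: to say that two computable topologies on $(X,\nu)$ are identical means (by the notion of effective equivalence recorded above, each being effectively coarser than the other) that they coincide as classical topologies and that their open-set subnumberings $\tau_1,\tau_2$ satisfy $\tau_1\equiv\tau_2$. Since neither Lacombe nor Nogina topologies carry a nontrivial formal inclusion relation — by the stated convention they use actual inclusion, which is omitted — the monotonicity clauses are automatic, so in each case I only need to check the classical equality $\mathcal{T}_1=\mathcal{T}_2$ and the two translations $\tau_1\le\tau_2$ and $\tau_2\le\tau_1$.

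For the Lacombe proposition I would first argue $\mathcal{T}_1=\mathcal{T}_2$. By the definition of equivalence of Lacombe bases, every basic set $B$ of $(\mathcal{B}_1,\beta_1)$ is a Lacombe set for $(\mathcal{B}_2,\beta_2)$, hence a union of $\mathcal{B}_2$-basic sets and so open in $\mathcal{T}_2$; thus $\mathcal{B}_1\subseteq\mathcal{T}_2$ and $\mathcal{T}_1\subseteq\mathcal{T}_2$, and the symmetric argument gives the reverse inclusion. For $\tau_1\le\tau_2$, recall that a $\tau_1$-name of an open set $O$ is a $\beta_{1,ce}$-name of a sequence $(B_n)_{n\in I}$ with $O=\bigcup_{n\in I}B_n$. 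The equivalence procedure converts, uniformly in $n$, each $\beta_1$-name $B_n$ into a $\beta_{2,ce}$-name of a sequence $(C_{n,k})_k$ of $\mathcal{B}_2$-basic sets with $B_n=\bigcup_k C_{n,k}$. Dovetailing over the pairs $(n,k)$ then produces a single $\beta_{2,ce}$-name of a sequence whose union is $\bigcup_{n,k}C_{n,k}=O$, i.e. a $\tau_2$-name of $O$. By symmetry $\tau_2\le\tau_1$, so $\tau_1\equiv\tau_2$.

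For the Nogina proposition the structure is identical. Classical equality of the two topologies follows because the equivalence procedure, applied to a point $x$ and a basic set $B\ni x$ of one basis, returns a basic set of the other basis lying between $x$ and $B$; this witnesses that each $\mathcal{B}_1$-basic set is $\mathcal{T}_2$-open and conversely, so $\mathcal{T}_1=\mathcal{T}_2$. For $\tau_1\le\tau_2$, a $\tau_1$-name of $O$ consists of a semidecidability witness together with a procedure sending each $x\in O$ to a $\beta_1$-name of a basic set $B$ with $x\in B\subseteq O$. I keep the same semidecidability witness, since it depends only on $\nu$ and not on the basis, and I compose the neighborhood procedure with the basis-$1$-to-basis-$2$ equivalence: given $x\in O$, first obtain $B\ni x$ with $B\subseteq O$ in $\mathcal{B}_1$, then convert to $B'\in\mathcal{B}_2$ with $x\in B'\subseteq B\subseteq O$. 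This is a legitimate $\tau_2$-name of $O$, and the symmetric construction yields $\tau_2\le\tau_1$.

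In both cases the only content is uniformity, so I expect no genuine obstacle — consistent with these being flagged as obvious. The two points to be careful about are that the conversions supplied by the equivalence are genuinely uniform in the input name, so that they assemble into a single partial computable realizer, and, in the Lacombe case, that filling in gaps in a possibly finite $\beta_{ce}$-sequence with a name of the empty set and dovetailing the doubly-indexed family $(C_{n,k})$ leave the resulting union unchanged.
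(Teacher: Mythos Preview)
Your argument is correct and is exactly the routine verification the paper has in mind when it calls these propositions ``obvious'' and omits the proofs: in each case one checks that the classical topologies coincide and that the open-set subnumberings translate to one another, using the basis-equivalence procedures directly (composed with dovetailing in the Lacombe case, and with the given neighborhood map in the Nogina case). Since both topologies use actual inclusion as formal inclusion, you are right that the monotonicity conditions are automatic.
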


\section{\label{sec:Problems-with-Lacombe-bases}Comparison between Lacombe
and Nogina bases in different contexts}

In this section, we compare Nogina bases and Lacombe bases. Since
Nogina bases are defined only for numberings, we consider here only
countable sets equipped with numberings. 

\subsection{Ershov topology}

Here we detail the following result: the Lacombe topology generated
by singletons is not always the ``effective discrete topology'',
i.e. the Ershov topology. 

The Ershov topology on a subnumbered set $(X,\nu)$ is the topology
generated by all $\nu$-semi-decidable sets. We denote it $\mathcal{E}$.
When $\nu$ is a numbering, we can naturally equip $X$ with a computable
topology, which we also call the Ershov topology, using as a subnumbering
of $\mathcal{E}$ the numbering of semi-decidable sets. Denote this
set $(X,\nu,\mathcal{E},\nu_{SD})$. (When $\nu$ is a subnumbering,
$\nu_{SD}$ has to be replaced by the multi-numbering $\nu_{SD,m}$,
the Ershov topology is then a computable topology following the multi-subnumbering
version of Definition \ref{def:MAIN DEF}.)

The Ershov topology can be seen as the \emph{effective discrete topology},
as is shown by the following obvious proposition:
\begin{prop}
Any computable function from $(X,\nu,\mathcal{E},\nu_{SD})$ to a
computable topological space is effectively continuous. 
\end{prop}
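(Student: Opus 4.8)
The plan is to verify the three requirements of Definition \ref{def: eff C0 text def} for an arbitrary $(\nu,\mu)$-computable function $f\colon X\to Y$, where $(Y,\mu,\mathcal{T},\tau,\mathring{\subseteq})$ is the target computable topological space. The single idea driving everything is that semi-decidability pulls back along a computable realizer of $f$: if a subset of $Y$ is $\mu$-semi-decidable, its $f$-preimage is $\nu$-semi-decidable. Combined with Malcev's condition on $Y$, this produces the realizer for $f^{-1}$, classical continuity, and the monotonicity condition in one stroke.

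First I would fix a computable $(\nu,\mu)$-realizer $F$ for $f$, so that $\mu(F(n))=f(\nu(n))$ for every $\nu$-name $n$ (here $\nu$ is onto, so $X=X_\nu$). By Malcev's condition, item (3) of Definition \ref{def:MAIN DEF} applied to the codomain, we have $\tau\le\mu_{SD,m}$: there is a computable $G$ sending a $\tau$-name $p$ of an open set $O=\tau(p)$ to an index with $\varphi_{G(p)}(i)\,{\downarrow}\iff\mu(i)\in O$ for all $i\in\text{dom}(\mu)$. Now given a $\nu$-name $n$ of a point $x$, the point $f(x)=\mu(F(n))$ lies in $Y_\mu$, so $x\in f^{-1}(O)$ iff $\varphi_{G(p)}(F(n))\,{\downarrow}$. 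Thus $n\mapsto\varphi_{G(p)}(F(n))$ semi-decides $f^{-1}(O)$, and by the $s$-$m$-$n$ theorem its index $h(p)$ is computable from $p$, giving $\nu_{SD}(h(p))=f^{-1}(\tau(p))$. This establishes that $f^{-1}\colon\mathcal{T}\to\mathcal{E}$ is $(\tau,\nu_{SD})$-computable, the second requirement.

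Classical continuity then follows immediately: each $f^{-1}(\tau(p))$ is $\nu$-semi-decidable, hence open in the Ershov topology $\mathcal{E}$; since $f^{-1}$ commutes with arbitrary unions and finite intersections and the image of $\tau$ generates $\mathcal{T}$, the preimage of every open set of $Y$ lies in $\mathcal{E}$. For the monotonicity requirement I would use that on the domain the formal inclusion is the actual inclusion $\subseteq$ (this is the convention by which the Ershov topology is written $(X,\nu,\mathcal{E},\nu_{SD})$ with no fifth entry). If $p_1\mathring{\subseteq}p_2$ in $Y$, then $\tau(p_1)\subseteq\tau(p_2)$ by the very definition of a formal inclusion relation, whence $f^{-1}(\tau(p_1))\subseteq f^{-1}(\tau(p_2))$, i.e.\ $h(p_1)\subseteq h(p_2)$ for the actual inclusion on $X$; this is exactly the monotonicity condition, which is therefore automatic, precisely as in the remarks on using inclusion as a formal inclusion.

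I do not expect a genuine obstacle here, which matches the ``obvious'' label on the statement. The only point that demands care is the bookkeeping of the semi-decidability definitions across the codomain subnumbering $\mu$: one must observe that a $\mu_{SD,m}$-name of $O$ is a $\mu_{SD}$-name of $O\cap Y_\mu$, and that $f(x)\in O\iff f(x)\in O\cap Y_\mu$ since $f(x)\in Y_\mu$ always. Once this is noted, the composition $h(p)=$ ``index of $n\mapsto\varphi_{G(p)}(F(n))$'' is a routine application of $s$-$m$-$n$, and all three clauses of effective continuity are verified.
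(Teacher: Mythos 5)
Your proposal is correct, and it takes exactly the route the paper intends: the paper states this proposition without proof (labelling it obvious), and the intended argument is precisely yours — compose the realizer of $f$ with the semi-decision procedure supplied by Malcev's condition on the codomain to get a $(\tau,\nu_{SD})$-realizer of $f^{-1}$ via s-m-n, deduce classical continuity from the fact that the image of $\tau$ generates $\mathcal{T}$ and semi-decidable sets are Ershov-open, and observe that monotonicity is automatic because the Ershov space $(X,\nu,\mathcal{E},\nu_{SD})$ carries the actual inclusion as its formal inclusion. Your care with the $\mu_{SD,m}$ versus $\mu_{SD}$ bookkeeping (that $f(x)\in Y_{\mu}$ always, so membership in $O$ and in $O\cap Y_{\mu}$ coincide) is the only genuinely non-trivial point, and you handle it correctly.
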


Another classical characterization of the discrete topology is that
it is the topology generated by singletons. Indeed, on any set $X$,
the set of singletons always forms, classically, a topological basis,
and the topology this basis generates is exactly the power-set of
$X$. 

For the singletons to form the basis of a Nogina topology on $(X,\nu)$,
equality has to be semi-decidable for $\nu$. When it is, they form
a Nogina  basis. For the singletons to form a Lacombe basis, we need
to add the additional requirement that $X$ be $\nu$-r.e., for it
to be effectively open. However, we can dispense with that requirement
if we consider the basis that consists in all singletons of $X$ \emph{together
with the set $X$. }
\begin{prop}
Singletons form a Nogina  basis on $(X,\nu)$ if and only if equality
is semi-decidable for $\nu$. In this case, the computable topology
they generate is the Ershov topology. 
\end{prop}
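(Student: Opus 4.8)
The plan is to verify the three conditions defining a Nogina basis (Definition \ref{def: Nogina basis Text }) for the natural candidate numbering, and to observe that only the first of them carries any content. Equip the set $\mathfrak{B}=\{\{x\}:x\in X\}$ of singletons with the numbering $\beta$ defined by $\text{dom}(\beta)=\text{dom}(\nu)$ and $\beta(n)=\{\nu(n)\}$; since $\nu$ is onto, $\beta$ is a numbering of $\mathfrak{B}$. Condition (2) is then witnessed by the identity map: given a $\nu$-name $n$ of a point $x$, the index $n$ is itself a $\beta$-name of $\{x\}=\beta(n)\ni x$. Condition (3) is witnessed by returning the given name of the point: if $m$ is a $\nu$-name of $x\in\beta(n_{1})\cap\beta(n_{2})$, then $\beta(m)=\{x\}$ satisfies $\beta(m)\subseteq\beta(n_{1})\cap\beta(n_{2})$ and $x\in\beta(m)$. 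Neither of these steps uses any hypothesis on $\nu$.

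I would then isolate condition (1), $\beta\le\nu_{SD}$, as the only nontrivial requirement. Unwinding the definition of $\nu_{SD}$, a $\nu_{SD}$-name of $\beta(n)=\{\nu(n)\}$ is an index $e$ with $\varphi_{e}(i)\downarrow\iff\nu(i)=\nu(n)$. Thus $\beta\le\nu_{SD}$ asks for a computable $F$ producing, from $n$, such an index $F(n)$; by the s-m-n theorem this is equivalent to the existence of a single procedure that, on input a pair of $\nu$-names, halts iff they name the same point, i.e.\ to semi-decidability of equality for $\nu$. This yields both implications at once: if equality is semi-decidable the singletons form a Nogina basis, and conversely condition (1) of being a Nogina basis is literally the semi-decidability of equality.

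For the identification of topologies, assume equality is semi-decidable and compare the Nogina topology $(\mathcal{T},\tau)$ generated by the singletons (Definition \ref{def: top gen by eff basis}) with the Ershov topology $(X,\nu,\mathcal{E},\nu_{SD})$. Classically both are the discrete topology: every singleton is now $\nu$-semi-decidable, so $\mathcal{E}$ contains all singletons and therefore $\mathcal{E}=\mathcal{P}(X)=\mathcal{T}$. It remains to show $\tau\equiv\nu_{SD}$. The inequality $\tau\le\nu_{SD}$ is immediate, since a $\tau$-name is an encoded pair whose first component already testifies semi-decidability, so projecting yields a $\nu_{SD}$-name. For $\nu_{SD}\le\tau$, given a $\nu_{SD}$-name of a semi-decidable set $O$, pair it with the trivial second algorithm that on input a $\nu$-name $m$ of a point $x\in O$ outputs $m$; since $\beta(m)=\{x\}\subseteq O$ and $x\in\beta(m)$, this is a valid $\tau$-name. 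Hence $\tau\equiv\nu_{SD}$ and the two computable topologies are effectively equivalent.

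I expect no real obstacle here: the statement is essentially definitional once one recognizes that the uniform semi-decidability condition (1) for singletons \emph{is} semi-decidability of equality, and that for a basis of singletons every semi-decidable set automatically satisfies the pointwise covering condition, so the effective opens of the Nogina topology coincide, as a numbered family, with the semi-decidable sets. The only point requiring mild care is the bookkeeping with the s-m-n theorem to pass between semi-deciding $\{\nu(n)\}$ uniformly in $n$ and semi-deciding the equality relation.
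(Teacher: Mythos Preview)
Your proof is correct and is exactly the straightforward verification the paper has in mind: checking that conditions (2) and (3) of a Nogina basis are trivially witnessed by the identity on names, reducing the equivalence to condition (1) which is literally semi-decidability of equality, and then observing that for a basis of singletons the Nogina description of an open set collapses to its $\nu_{SD}$-name. The paper's own proof is the single word ``Straightforward,'' and your argument spells this out faithfully.
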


\begin{proof}
Straightforward.
\end{proof}
\begin{prop}
The set that consists of all singletons of $X$ together with $X$
defines a Lacombe basis on $(X,\nu)$ if and only if equality is semi-decidable
for $\nu$.
\end{prop}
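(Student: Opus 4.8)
The plan is to fix the natural numbering $\beta$ of the basis $\mathcal{B}=\{\{x\}:x\in X\}\cup\{X\}$, given by $\beta(\langle 0,n\rangle)=\{\nu(n)\}$ for $n\in\text{dom}(\nu)$ and $\beta(\langle 1,n\rangle)=X$ for all $n$; since $\nu$ is onto, this $\beta$ is indeed a numbering of $\mathcal{B}$. With this numbering in hand, I would first observe that the heart of the equivalence lies entirely in the first condition of Definition \ref{def: Lacombe-basis}. The requirement $\beta\le\nu_{SD,m}$, applied to the $\beta$-code $\langle 0,n\rangle$ of a singleton, asks precisely for a procedure that, uniformly in $n$, halts on a $\nu$-name $m$ exactly when $\nu(m)\in\{\nu(n)\}$, i.e. when $\nu(m)=\nu(n)$. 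This is nothing but uniform semi-decidability of equality for $\nu$. This observation immediately supplies the reverse implication: if $(\mathcal{B},\beta)$ is a Lacombe basis, then condition (1) forces equality to be semi-decidable.

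For the forward implication, assume equality is $\nu$-semi-decidable and check the three conditions of Definition \ref{def: Lacombe-basis}. Condition (1) is exactly the remark above, together with the trivial semi-decidability of the whole set $X$ (attached to the codes $\langle 1,n\rangle$ via an always-halting procedure); both translations are uniform by the Smn theorem. Condition (2) is immediate: every point lies in its own singleton, and $X$ is itself a basic set, so the constant sequence $\langle 1,0\rangle,\langle 1,0\rangle,\dots$ exhibits $X$ as a computable union of basic sets. This is precisely the role of adjoining $X$ to the basis: it frees us from having to write $X$ as a computable union of singletons, which would require $X$ to be $\nu$-c.e.

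The only condition requiring genuine work is (3), the intersection condition, and it is here that I expect the sole subtlety. Given $\beta$-names $b_1,b_2$, one reads off (the index coordinate is decidable) whether each names $X$ or a singleton. If either names $X$, the intersection equals the other set, so I output the $\beta_{ce}$-name of the one-term sequence consisting of the other code. If both name singletons, say $b_1=\langle 0,n\rangle$ and $b_2=\langle 0,m\rangle$, then $\beta(b_1)\cap\beta(b_2)$ equals $\{\nu(n)\}$ when $\nu(n)=\nu(m)$ and $\emptyset$ otherwise. Using the Smn theorem I would produce an index $i$ of a c.e. set $W_i\subseteq\text{dom}(\beta)$ which runs the equality semi-decision procedure on $(n,m)$ and, if and when it halts, enumerates the single code $\langle 0,n\rangle$, leaving $W_i$ empty otherwise. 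Then $\beta_{ce}(i)$ is $\{\{\nu(n)\}\}$ in the equal case and $\emptyset$ in the unequal case, and in both cases $\bigcup\beta_{ce}(i)=\beta(b_1)\cap\beta(b_2)$, as required. The possibility that $W_i$ stays empty --- corresponding to the semi-decision procedure never halting --- is exactly what lets us realize the empty intersection, using the fact that the union of no basic sets is the empty set. With (3) verified, $(\mathcal{B},\beta)$ is a Lacombe basis, completing the equivalence.
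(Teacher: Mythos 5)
Your proposal is correct, and since the paper's own proof is just the word ``Straightforward,'' your argument fills in exactly the verification the paper intends: the natural numbering $\beta$ induced by $\nu$, the observation that condition (1) applied to singleton codes is literally uniform semi-decidability of equality (giving the reverse direction), and the case analysis for the intersection condition, where the possibly-empty c.e. set correctly realizes the empty intersection. Nothing further is needed.
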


\begin{proof}
Straightforward.
\end{proof}
\begin{defn}
The Lacombe topology generated by singletons and $X$ is called the
\emph{discrete Lacombe topology. }
\end{defn}

We finally give here an example where the Ershov topology differs
from the discrete Lacombe topology, this is the simplest example of
a set where the two notions of effective bases give different results. 
\begin{example}
\label{exa:MAIN Example }Suppose that $X\subseteq\mathbb{N}$ is
a non-computably enumerable set, and consider the set $Y=2X\cup(2X+1)$.
Consider the numbering $\text{id}_{Y}$ of $Y$, it is the identity
on $Y$. 

It follows directly from the definition of the discrete Lacombe topology
that $2X$ and and $2X+1$ are not effective open sets for this topology.
However, they are (semi-)decidable subsets of $Y$, and thus effective
open sets in the Ershov topology. 

Thus those computable topologies differ. 
\end{example}

\subsection{\label{subsec:Metric-spaces I }Metric spaces}

What may be the most important problem with Lacombe bases is the following:
in a computable metric space\emph{$^{*}$}, the open balls do not
always form a Lacombe basis, and when they do, effective continuity
with respect to the Lacombe basis does not always correspond to effective
continuity with respect to the metric. 

And thus the notion of ``computable topological space'', if restricted
to Lacombe spaces, does not generalize that of a computable metric
space\emph{$^{*}$}. 

\subsubsection{Nogina topology associated to a computable metric space\emph{$^{*}$}}

Let $(X,\nu,d)$ be a computable metric space\emph{$^{*}$}. Suppose
that $\nu$ is onto. Recall that $c_{\mathbb{R}}$ designates the
Cauchy subnumbering of $\mathbb{R}$. Denote by $B(x,r)$ the open
ball in $X$ of center $x$ and of radius $r$. 

The numbering $\nu$ induces a numbering $\beta$ of open balls in
$X$: define $\beta(\langle p,q\rangle)=B(\nu(p),c_{\mathbb{R}}(q))$
for all $p$ in $\text{dom}(\nu)$ and $q$ in $\text{dom}(c_{\mathbb{R}})$. 
\begin{prop}
\label{prop: Open-balls Nogina basis}The open balls associated to
the numbering $\beta$ form a Nogina  basis. 
\end{prop}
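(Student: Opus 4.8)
The plan is to verify the three defining conditions of a Nogina basis (Definition~\ref{def: Nogina basis Text }) for the pair $(\mathcal{B},\beta)$ where $\mathcal{B}$ is the set of open balls $B(\nu(p),c_{\mathbb{R}}(q))$ and $\beta(\langle p,q\rangle)=B(\nu(p),c_{\mathbb{R}}(q))$. These are: uniform semi-decidability of the basic sets, the ability to produce from a point a basic set containing it, and the ability to refine the intersection of two basic sets around a given point. Each condition corresponds to a concrete computation that uses only the fact that $d$ is $(\nu\times\nu,c_{\mathbb{R}})$-computable, that $\nu$ is onto, and the basic arithmetic of $c_{\mathbb{R}}$.

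First I would check uniform semi-decidability. Given the $\beta$-name $\langle p,q\rangle$ and a $\nu$-name $n$ of a point $x$, membership $x\in B(\nu(p),c_{\mathbb{R}}(q))$ means $d(\nu(p),x)<c_{\mathbb{R}}(q)$. Since $d$ is computable one obtains a $c_{\mathbb{R}}$-name of the real $d(\nu(p),x)$, and since strict inequality of a Cauchy real against another computable real is semi-decidable (compare rational approximations of both sides with shrinking error bounds), one halts precisely when the inequality holds. This is uniform in all the inputs, giving $\beta\le\nu_{SD}$. For the second condition, given a $\nu$-name $n$ of $x$, simply output $\langle n,q_{1}\rangle$ where $q_{1}$ is a fixed $c_{\mathbb{R}}$-name of $1$ (say); then $x=\nu(n)\in B(\nu(n),1)$ trivially, and in particular every point lies in a basic set.

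The third condition is the substantive one and is where the map $\Theta$ from the introduction does the work. Given $\beta$-names $\langle p_{1},q_{1}\rangle$, $\langle p_{2},q_{2}\rangle$ of two balls and a $\nu$-name $n$ of a point $x$ in their intersection, I set
\[
r_{3}=\min\bigl(c_{\mathbb{R}}(q_{1})-d(\nu(p_{1}),x),\,c_{\mathbb{R}}(q_{2})-d(\nu(p_{2}),x)\bigr)
\]
and output the ball $B(x,r_{3})=\beta(\langle n,q_{3}\rangle)$ where $q_{3}$ is a $c_{\mathbb{R}}$-name of $r_{3}$. The triangle inequality guarantees $B(x,r_{3})\subseteq B(\nu(p_{1}),c_{\mathbb{R}}(q_{1}))\cap B(\nu(p_{2}),c_{\mathbb{R}}(q_{2}))$, and $x\in B(x,r_{3})$ provided $r_{3}>0$, which holds exactly because $x$ lies in the (open) intersection. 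The main obstacle, and the only genuinely delicate point, is that $r_{3}$ must be produced as a legitimate $c_{\mathbb{R}}$-name: the quantities $c_{\mathbb{R}}(q_{i})-d(\nu(p_{i}),x)$ are differences of Cauchy-computable reals and hence $c_{\mathbb{R}}$-computable, and $\min$ of two $c_{\mathbb{R}}$-computable reals is $c_{\mathbb{R}}$-computable; the subtlety is that we are promised $r_{3}>0$ only as a side condition on valid inputs, so the procedure need not certify positivity, it merely needs to emit a correct Cauchy name of the (positive) real $r_{3}$, which it can since $c_{\mathbb{R}}$ supports subtraction and $\min$ uniformly. All three procedures are total computable on valid inputs, so $(\mathcal{B},\beta)$ is a Nogina basis.
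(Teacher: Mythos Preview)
Your proof is correct and follows essentially the same approach as the paper: verify the three Nogina conditions directly, using computability of $d$ for semi-decidability and the formula $r_{3}=\min(r_{1}-d(\nu(p_{1}),x),\,r_{2}-d(\nu(p_{2}),x))$ for the intersection refinement. Your write-up is in fact more detailed than the paper's (which treats the result as immediate and omits condition~(2) entirely), and you have the sign in the $r_{3}$ formula right where the paper's printed version contains a typo.
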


\begin{proof}
This is straightforward: an open ball is always a semi-decidable set,
and given two balls $B(x_{1},r_{1})$ and $B(x_{2},r_{2})$ and a
point $y$ in their intersection, a radius $r_{3}$ can be computed
such that $B(y,r_{3})\subseteq B(x_{1},r_{1})\cap B(x_{2},r_{2})$:
take $r_{3}=\min(d(y,x_{1})-r_{1},d(y,x_{2})-r_{2})$, $r_{3}$ is
clearly given thanks to a $(\nu^{3}\times c_{\mathbb{R}}^{2},c_{\mathbb{R}})$-computable
function. 
\end{proof}
\begin{defn}
The \emph{metric Nogina topology associated to a computable metric
space$^{*}$ $(X,\nu,d)$ }is the computable topology generated by
the open balls of $X$ seen as forming a Nogina basis. 
\end{defn}

Of course, continuity for functions between metric spaces also has
its definition, which can be effectivised: 
\begin{defn}
\label{def: metric continuous Nogina }A function $f$ between computable
metric spaces\emph{$^{*}$} $(X,\nu,d)$ and $(Y,\mu,d)$ is \emph{effectively
metric continuous} if there is a procedure that, given a $\nu$-name
for a point $x$ in $X$ and a $c_{\mathbb{R}}$-name for a computable
real $\epsilon>0$, produces a $c_{\mathbb{R}}$-name for a computable
real $\eta$ such that: 
\[
\forall y\in X,\,d(x,y)<\eta\implies d(f(x),f(y))<\epsilon.
\]
\end{defn}

The following theorem shows that the two notions of continuity we
can define on metric spaces agree. 
\begin{thm}
\label{thm: Nogina-metric-continuity - Nogina continuity}Effective
metric continuity agrees with effective continuity with respect to
the Nogina metric topology. 
\end{thm}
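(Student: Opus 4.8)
The plan is to prove both implications, after a preliminary reduction. Since the metric Nogina topology (Definition \ref{def: top gen by eff basis}) carries no formal inclusion relation, i.e.\ it uses the actual inclusion, the monotonicity clauses in Definition \ref{def: eff C0 text def} are automatically satisfied by the remarks on using inclusion as a formal inclusion. So effective continuity reduces to the conjunction of two facts: $f$ is classically continuous and $f^{-1}:\mathcal{T}_{2}\rightarrow\mathcal{T}_{1}$ is $(\tau_{2},\tau_{1})$-computable. Throughout I use that $\nu$ and $\mu$ are onto numberings, that the metrics are computable, and that each basic ball is effectively open, so that $\beta\le\tau$ (from Proposition \ref{prop: Open-balls Nogina basis}).

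\emph{Metric continuity implies Nogina continuity.} Given the $\tau_{2}$-name of an open set $O_{2}$, which packages a semi-decision procedure $\sigma_{2}$ for $O_{2}$ together with a ball-map $g_{2}$ (on the $\mu$-name of a point of $O_{2}$ it returns the $\beta_{2}$-name of a ball lying in $O_{2}$ and containing that point), I build a $\tau_{1}$-name of $f^{-1}(O_{2})$. For semi-decidability: on the $\nu$-name of $x$, compute a Cauchy name of $f(x)$ and feed it to $\sigma_{2}$; this halts iff $f(x)\in O_{2}$, i.e.\ iff $x\in f^{-1}(O_{2})$. For the ball-map: on the $\nu$-name of $x\in f^{-1}(O_{2})$ obtain a $\mu$-name of $f(x)$, run $g_{2}$ to get a basic ball $B(z,s)\subseteq O_{2}$ with $f(x)\in B(z,s)$, set $\epsilon=s-d(f(x),z)>0$, apply the modulus of Definition \ref{def: metric continuous Nogina } at $(x,\epsilon)$ to get $\eta>0$, and return the $\beta_{1}$-name of $B(x,\eta)$. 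Then $x\in B(x,\eta)$, and for $y\in B(x,\eta)$ the modulus gives $d(f(x),f(y))<\epsilon$, whence $f(y)\in B(f(x),\epsilon)\subseteq B(z,s)\subseteq O_{2}$; thus $B(x,\eta)\subseteq f^{-1}(O_{2})$, as required.

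\emph{Nogina continuity implies metric continuity.} Given the $\nu$-name of $x$ and $\epsilon>0$, I first recover a Cauchy name of $f(x)$: for every $\mu$-computable center $z$ and rational radius $s$ the preimage $f^{-1}(B(z,s))$ is effectively open by hypothesis, so the relation $x\in f^{-1}(B(z,s))\iff d(f(x),z)<s$ is uniformly semi-decidable; dovetailing over centers and radii $2^{-k}$ produces arbitrarily good $\mu$-approximations of $f(x)$, so $f$ is $(\nu,c_{\mu})$-computable. With a $\mu$-name of $f(x)$, the basic ball $B(f(x),\epsilon)$ has a $\tau_{2}$-name since $\beta_{2}\le\tau_{2}$; applying the $(\tau_{2},\tau_{1})$-realizer of $f^{-1}$ yields a $\tau_{1}$-name of the open set $f^{-1}(B(f(x),\epsilon))$, which contains $x$. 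Running its ball-map on the $\nu$-name of $x$ gives a basic ball $B(x',r)$ with $x\in B(x',r)\subseteq f^{-1}(B(f(x),\epsilon))$; set $\eta=r-d(x,x')>0$, a positive computable real thanks to computability of $d$. By the triangle inequality $B(x,\eta)\subseteq B(x',r)$, so $d(x,y)<\eta$ forces $f(y)\in B(f(x),\epsilon)$, i.e.\ $d(f(x),f(y))<\epsilon$, which is exactly the modulus required by Definition \ref{def: metric continuous Nogina }.

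\emph{Main obstacle.} The only delicate point is intensional. The Nogina data describing $O_{2}$, namely the semi-decision procedure and the ball-map, act on \emph{names} of points, and the ball-map may return arbitrarily small balls around a given center; hence one cannot test $f(x)\in O_{2}$ by hoping that some produced ball captures $f(x)$. The honest route is to approximate the value $f(x)$ itself and run the semi-decision procedure on it, so the crux of both directions is the ability to compute Cauchy ($c_{\mu}$-)names of the values $f(x)$ from $\nu$-names of $x$. In the direction from Nogina to metric continuity this computability is \emph{extracted} from the effective openness of the preimages of basic balls; in the converse direction it is the computational content that must accompany the $\epsilon$--$\eta$ modulus, and I would take $f$ to be $(\nu,c_{\mu})$-computable as the natural standing assumption on a morphism of computable metric spaces\emph{$^{*}$}. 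Once $f$-values are available as $\mu$-names, both translations are the routine triangle-inequality manipulations already used in Proposition \ref{prop: Open-balls Nogina basis}.
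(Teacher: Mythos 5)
Your triangle-inequality bookkeeping (recentering the ball returned by the Nogina ball-map, composing it with the modulus, and conversely recentering $B(y,\eta)$ at $x$) is exactly the paper's argument, and the direction ``metric continuity implies Nogina continuity'' is correct once one grants, as the paper does in the very first line of its proof, that $f$ is a computable function, i.e.\ $(\nu,\mu)$-computable. The genuine gap is in your treatment of the converse. You claim that the computability of $f$ can be \emph{extracted} from Nogina effective continuity by ``dovetailing over centers and radii.'' That dovetailing requires a computable enumeration of (names of) the $\mu$-computable points of $Y$, i.e.\ effective separability --- precisely the hypothesis that computable metric spaces$^{*}$ are designed to drop. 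In a space such as that of Example \ref{exa:metric not lacombe} ($Y=2\mathbb{N}\cup(2X+1)$ with $X$ non-c.e., identity numbering), $\text{dom}(\mu)$ is not computably enumerable, so there is no effective enumeration of valid centers; and if you dovetail over all of $\mathbb{N}$ instead, then for $m\notin\text{dom}(\mu)$ the pair $\langle m,\cdot\rangle$ is not a valid $\beta_{2}$-name, the realizer of $f^{-1}$ receives garbage and may halt spuriously, so the sequence you assemble need not consist of $\mu$-names at all and is not a $c_{\mu}$-name of $f(x)$. The extraction you describe is a real phenomenon, but only on computably separable spaces (it is of the same Moschovakis/Ceitin-type nature as the results the paper discusses); it fails in the generality in which this theorem is stated.

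A second, smaller slip compounds this. The semi-decision procedures packaged in Nogina names operate on $\mu$-names (condition (1) of Definition \ref{def: Nogina basis Text } and Definition \ref{def: top gen by eff basis} are relative to the given numbering), so ``compute a Cauchy name of $f(x)$ and feed it to $\sigma_{2}$'' is a type mismatch: without an algorithm of passage to the limit, a $c_{\mu}$-name cannot be converted into a $\mu$-name, and membership of merely Cauchy-named points in Nogina opens is not semi-decidable, because the balls returned by the ball-map may shrink faster than the Cauchy approximations converge (this non-vanishing issue is exactly what the paper's $\circledcirc$ condition later addresses). Likewise $B(f(x),\epsilon)$ is a \emph{basic} ball, hence $\tau_{2}$-named via $\beta_{2}\le\tau_{2}$, only if its center is genuinely $\mu$-computable. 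Both problems, and the gap above, disappear simultaneously if you adopt the assumption the paper makes explicit at the start of its proof: $f$ is $(\nu,\mu)$-computable, so a true $\mu$-name of $f(x)$ is available in both directions. With that standing assumption your proof reduces to the paper's.
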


The proof is straightforward.
\begin{proof}
Let $f$ be a computable function between computable metric spaces\emph{$^{*}$}
$(X,\nu,d)$ and $(Y,\mu,d)$. 

Suppose first that $f$ is effectively metric continuous. Let $O_{1}$
be an effective open of $Y$ for the metric topology, we show that
$f^{-1}(O_{1})$ is effectively open for the metric topology of $X$.
First, $f^{-1}(O_{1})$ is automatically semi-decidable, and a $\nu_{SD}$
code for it can be obtained. Notice now that if $n$ is the $\nu$-name
for a point $x$ of $f^{-1}(O_{1})$, we can find an open ball contained
in $f^{-1}(O_{1})$ that contains $x$, using the effective metric
continuity: because $f$ is computable, find a $\mu$-name of $f(x)$,
then, because $O_{1}$ is an effective open set of $(Y,\mu,d)$, find
a radius $\epsilon$ such that $B(f(x),\epsilon)\subseteq O_{1}$,
then use the metric continuity to find a radius $\eta$ such that
$f(B(x,\eta))\subseteq B(f(x),\epsilon)$, the open ball $B(x,\eta)$
is thus a ball contained in $f^{-1}(O_{1})$ that contains $x$. This
shows that $f^{-1}(O_{1})$ is an effective open set and that a name
for it can be obtained from that of $O_{1}$. 

Suppose now that $f$ is effectively continuous for the Nogina topology.
Let $x=\nu(n)$ be a point in $X$ and $\epsilon$ be a radius. A
name of the open ball $B(f(x),\epsilon)$ as an effective open of
$Y$ can be computed from a name of $\epsilon$ and a $\mu$-name
of $f(x)$. Then, a name of $f^{-1}(B(f(x),\epsilon))$ as an effective
open in $X$ can be computed, and thus also the name of an open ball
$B(y,\eta)$ that contains $x$ and whose image is in $B(f(x),\epsilon)$.
This ball does not have to be centered at $x$, but replacing $\eta$
by $\eta_{1}=\eta-d(x,y)$ gives the number required for the metric
continuity at $x$. 
\end{proof}

\subsubsection{Lacombe topology for metric spaces}

On a computable metric space\emph{$^{*}$} $(X,\nu,d)$, the open
balls do not always form a Lacombe basis. Indeed, when producing points
of $X$ is difficult, like when $X$ is not $\nu$-r.e. or does not
have a computable and dense sequence, computing intersections might
be impossible. 
\begin{example}
\label{exa:metric not lacombe}Let $X$ be any non-computably enumerable
subset of $\mathbb{N}$, and put $Y=2\mathbb{N}\cup(2X+1)$. The identity
$\text{id}_{Y}$ defines a natural numbering of $Y$, and the distance
$d(n,m)=\vert n-m\vert$ is computable for this numbering. However,
the open balls do not form a Lacombe basis. Indeed, if it was possible
to compute the intersections $B(2n,2)\cap B(2n+2,2)$ as unions of
balls, it would be possible to enumerate $X$. 
\end{example}

We will now see that when the open balls of a computable metric space\emph{$^{*}$}
do form such a Lacombe basis, the Lacombe topology they generate does
not have to be the metric topology, and so continuity with respect
to the Lacombe topology will not be equivalent to the metric continuity
given by the epsilon-delta statement. 
\begin{example}
\label{exa: cont metric not cont Lacombe}We use again Example \ref{exa:MAIN Example }:
$X\subseteq\mathbb{N}$ is a non-computably enumerable set, $Y$ is
the set $2X\cup(2X+1)$. Consider the identity numbering $\text{id}_{Y}$
on $Y$. 

Equipping $(Y,\nu)$ with the discrete distance, defined by $d(n,m)=1$
if $n\ne m$ and $d(n,m)=0$ if $n=m$, makes of it a $\text{CMS}^{*}$.
The open balls on $Y$ are just the singletons and $Y$, and thus
open balls do form a Lacombe basis, and the Lacombe topology generated
by open balls is just the discrete Lacombe topology. 

Consider the set $\{0,1\}$ equipped with the obvious numbering $\mu:n\mapsto n\mod2$,
and also equipped with the discrete distance. 

The function $f$ defined by 
\[
f:\begin{cases}
Y & \rightarrow\{0,1\}\\
n & \mapsto n\mod2
\end{cases}
\]
is $(\text{id}_{Y},\mu)$-computable, and it is effectively metric
continuous: a radius of $1/2$ always works to prove the effective
metric continuity. However, for the Lacombe topology generated by
open balls, the function $f$ is not effectively continuous. 
\end{example}

Thus on a metric space, the open balls do not always form a Lacombe
basis, and even when they do, the topology they generate is not always
the effective metric topology. However, an important result of Moschovakis
states that on countable sets that are effectively Polish, those two
topologies agree. This was quoted as Theorem \ref{thm:Moschovakis, INTRO Theorem}.
We present a proof of this theorem in the following section. 

\subsection{\label{subsec:Proof of Moschovakis-1}A proof of Moschovakis' Theorem
based on Markov's Lemma }

We revisit Moschovakis' theorem on Lacombe sets in the case of subnumberings
in Section \ref{part: Non Surjective Numberings}, see Theorem \ref{thm:Moschovakis-theorem-on Spreen bases}.
Note that the theorem below does not generalize to subnumberings if
we keep the Nogina definition as it is. In particular, the argument
which says that the \emph{a priori} arbitrary radii produced by the
program associated to a Nogina name will \emph{automatically} not
vanish near computable points fails for non-computable points, as
is proven by the existence of a Lacombe subset of $\mathbb{R}$ that
contains all computable reals but that is still different from $\mathbb{R}$
(this theorem is also known as the existence of a $\Pi_{1}^{0}$-class
which contains no computable reals, but which is non-empty, it is
in fact a Cantor space of non-computable points). 

We include here a new proof of the original theorem. Our purpose is
to highlight how existence of an algorithm that computes limits plays
a central role for this theorem, whereas it will not be needed anymore
in Theorem \ref{thm:Moschovakis-theorem-on Spreen bases}. 

Our proof is similar to the original proof of Moschovakis, but it
improves it on a couple of points. It has the advantage of showing
the existence of a ``universal dense sequence of names'': there
is a certain computable sequence $(w_{n})_{n\in\mathbb{N}}$ such
that, on input the Nogina name of an open set $O$, the sequence of
balls centered at $\nu(w_{n})$ and with radius given by the Nogina
name of $O$ exhausts $O$. The proof given in \cite{Moschovakis1964}
uses the given Nogina name of $O$ to computably construct a computable
sequence of names appropriate to $O$. Different sequences of names
are thus applied to different Nogina open sets, we show that this
is not necessary. 

Finally, this proof highlights how Markov's Lemma \cite[Theorem 4.2.2]{Markov1963}
plays a central role in this theorem. 

Markov's Lemma says: in a computable metric space\emph{$^{*}$} $(X,\nu,d)$
where limits can be computed, if $(x_{n})_{n\in\mathbb{N}}$ is a
computable sequence that converges to a computable point $y$, then
$\{y\}$ is not a $\nu$-semi-decidable subset of $\{y\}\cup\{x_{n},\,n\in\mathbb{N}\}$. 

The proof of Moschovakis' Theorem can then be seen as an analysis
of Markov's Lemma, explicitly writing down the set of $\nu$-names
that appear in the proof of Markov's Lemma, and showing that they
belong to a computably enumerable set of $\nu$-names of points of
$X$. 

Using work of Spreen \cite{Spr98}, Moschovakis' theorem can be generalized
to any topological space which admits a computable and dense sequence,
where limits can be computed, and where points have neighborhood bases
that consist of co-semi-decidable sets. Note that ``topological spaces
where limit of computably converging sequences can be computed''
are defined in \cite{Spr98} thanks to a formal inclusion relation.
\begin{thm}
[Moschovakis, \cite{Moschovakis1964}, Theorem 11]\label{thm:Moschovakis,-,-Theorem-1}
Let $(X,\nu,d)$ be a numbered set with a computable metric, where
limits of computably convergent computable sequences can be computed,
and which admits a dense and computable sequence. 

Then, the set of open balls with computable radii, equipped with its
natural numbering, forms both a Lacombe and a Nogina basis, and the
computable topologies generated by these bases are identical. 
\end{thm}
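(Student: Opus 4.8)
The plan is to take the Nogina half essentially for free and to concentrate on the single substantial implication: that every Nogina open set can be effectively rewritten as a computable union of balls. That the balls with computable radii form a Nogina basis is already Proposition~\ref{prop: Open-balls Nogina basis}. For the Lacombe conditions of Definition~\ref{def: Lacombe-basis}, conditions (1) and (2) are immediate: balls are uniformly semi-decidable, and since the given dense computable sequence $(s_k)_k$ has points arbitrarily close to every point, $X=\bigcup_k B(s_k,1)$ is a computable union of balls. Condition (3), that $B_1\cap B_2$ be a computable union of balls, I would obtain as a special case of the main reduction: by Proposition~\ref{prop: Open-balls Nogina basis} the intersection is a Nogina open set, uniformly in the two ball names, so it suffices to turn an arbitrary Nogina open set effectively into a Lacombe set.

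Thus the heart of the matter is a single reduction: an algorithm that, given the Nogina name $p$ of an open set $O$, produces a name of $O$ as a computable union of balls. The key is a single computably enumerable family of \emph{names} of points, independent of $O$. First I would use the hypothesis that limits of computably convergent sequences can be computed to pass freely between $\nu$-names and Cauchy names, i.e. to present each point by a fast Cauchy sequence of $\nu$-computable points; this is where that hypothesis is used, and it cannot be dropped. Write $\mathcal{A}_p$ for the Nogina procedure attached to $p$. On a Cauchy name of any $x\in O$ it halts after reading only a finite prefix $\sigma$ of the name, so its output ball name depends only on $\sigma$; the returned ball contains $x$ and is contained in $O$. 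For each finite prefix $\sigma$ and each dense point $s_k$ compatible with $\sigma$ (so that $\sigma$ followed by a fast tail converging to $s_k$ is a valid name $w_{\sigma,k}$ of $s_k$), collect the names $w_{\sigma,k}$; this family is computably enumerable and does not depend on $O$. The claimed Lacombe name of $O$ enumerates, over all $w_{\sigma,k}$ with $\nu(w_{\sigma,k})\in O$ (a semi-decidable condition), the ball $\beta(\mathcal{A}_p(w_{\sigma,k}))$ returned by the procedure on that name.

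It then remains to verify that this union equals $O$. The inclusion $\subseteq$ is exactly the Nogina guarantee that each enumerated ball lies in $O$. For $\supseteq$, take $y\in O$ and any Cauchy name $m$ of it; then $\mathcal{A}_p$ on $m$ reads a finite prefix $\sigma_0$ and returns a ball $B_0\subseteq O$ with $y\in B_0$, so there is $\rho>0$ with $B(y,\rho)\subseteq B_0$. By density I would pick a dense point $s_k$ with $d(y,s_k)<\rho$ and close enough to the last entry of $\sigma_0$ that $w_{\sigma_0,k}$ is a legitimate name; then $s_k\in B_0\subseteq O$, so this index is enumerated. Since $w_{\sigma_0,k}$ shares the prefix $\sigma_0$ with $m$, the procedure halts identically on it and returns the \emph{same} ball $B_0$, and $y\in B_0$ shows that $y$ is covered. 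This is precisely the promised ``analysis of Markov's Lemma'': the names $w_{\sigma_0,k}$ are the perturbed names appearing in its proof, gathered once and for all into one computable family, and the robustness of the output under changing the tail is exactly the finite-precision phenomenon that Markov's Lemma exploits.

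Finally, to conclude that the two topologies coincide effectively in the sense of Definition~\ref{def: Eff finer eff coarser }, the Lacombe-to-Nogina direction is routine: a computable union of balls is semi-decidable, and given a point in it one locates a ball of the union containing it and then shrinks to a subball around the point, so the union is effectively Nogina open. The Nogina-to-Lacombe direction is the reduction above. Hence each topology is effectively finer than the other, so they are identical. I expect the genuine difficulty to be confined entirely to the coverage argument for $\supseteq$: it is the only place where one must exploit that the output is determined by a finite prefix of the input name and that dense points can be named so as to reuse those prefixes, and it is the only step that collapses without the hypothesis that computable limits can be computed.
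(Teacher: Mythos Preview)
Your reduction from Nogina-open to Lacombe-open has a genuine gap: the ``finite prefix'' step is Type~2 reasoning and does not go through in the Type~1 setting of this theorem. In Type~1 a Cauchy name is not an infinite sequence fed to an oracle machine but a single natural number $m$, a code for a program computing a fast Cauchy sequence. The procedure $\mathcal{A}_p$ (essentially $\hat{C}\circ\mathrm{Lim}$) receives $m$ in full; it does not ``read a prefix'' of anything. Even if its run happens to evaluate $\varphi_m$ only at arguments $0,\dots,N$, its output may still depend on the code $m$ itself, because $\mathrm{Lim}$ is only required to return \emph{some} $\nu$-name of the limit and may inspect $m$ arbitrarily, and $\hat{C}$ may then treat different $\nu$-names of the same point differently. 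Hence the claim ``since $w_{\sigma_0,k}$ shares the prefix $\sigma_0$ with $m$, the procedure halts identically on it and returns the same ball $B_0$'' is unjustified: two codes whose computed sequences agree on an initial segment need not be treated identically by a Type~1 algorithm. This is exactly why the paper remarks that the Type~2 versions of this theorem are easier.

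The paper's proof uses a genuinely different mechanism for the coverage step. It does not argue continuity in the Cauchy name. Instead it builds, via the programs $P_{n,k}$ and the limit algorithm, a computable family of \emph{codes} $z_n$ with $\nu(z_n)=x$ iff $\varphi_n(n)\uparrow$, and then argues by contradiction with the undecidability of the halting problem: if every $z_n$ with $\nu(z_n)\ne x$ produced a ball missing $x$, one could decide halting by comparing $d(\nu(z_n),x)$ with the returned radius. That is what ``analysis of Markov's Lemma'' means here --- an impossibility-via-reduction argument, not a robustness-of-output argument. Your universal enumerable family of names is in the right spirit, and the rest of your outline (the Nogina half, Lacombe conditions (1)--(2), reducing (3) to the main claim, the final equivalence) is fine; but the coverage direction must be redone with the halting-problem argument rather than the prefix-continuity argument.
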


For $i$ a natural number, we denote by $\varphi_{i}(n)\uparrow^{t}$
the fact that the computation of $\varphi_{i}$ at $n$ does not stop
in $t$ steps. We write $\varphi_{i}(n)\downarrow^{t}$ to say that
this computation does halt in less than $t$ steps. 

And $\varphi_{i}(n)\uparrow$ means that $n\notin\text{dom}(\varphi_{i})$,
$\varphi_{i}(n)\downarrow$ means that $n\in\text{dom}(\varphi_{i})$. 
\begin{proof}
The only non-trivial part is to compute, from the name of a Nogina
open set, a Lacombe name for the same set. 

Fix a computable sequence of names, $(u_{n})_{n\in\mathbb{N}}\in\text{dom}(\nu)^{\mathbb{N}}$,
whose image by $\nu$ is dense. Consider a limit algorithm $\text{Lim}$
which on input the name of a sequence $(x_{n})_{n\in\mathbb{N}}$
that converges at exponential speed to a point $l$ of $X$ produces
a $\nu$-name of $l$. 

Fix a Nogina open $O$. It is associated to a $(\nu,c_{\mathbb{R}})$-computable
multi-function $C:O\rightrightarrows\mathbb{R}_{>0}$ which satisfies
that for any $x\in O$ and $r\in C(x)$, $B(x,r)\subseteq O$. Denote
by $\hat{C}:\mathbb{N}\rightharpoonup\mathbb{N}$ a computable realizer
for $C$. 

We must justify that computing $\hat{C}$ along a dense sequence will
permit to capture all of $O$, and thus that the radii produced by
$\hat{C}$ do not vanish near points of $O$. This is in fact false
if we do not modify $(u_{n})_{n\in\mathbb{N}}$ to obtain a richer
set of names. Note also that, because $C$ is a multi-function, we
cannot directly apply continuity results to it. 

We will in fact consider a new computable sequence $(w_{n})_{n\in\mathbb{N}}\in\text{dom}(\nu)^{\mathbb{N}}$
of names, that defines the same elements as $(u_{n})_{n\in\mathbb{N}}$
(i.e. $\{\nu(u_{n}),\,n\in\mathbb{N}\}=\{\nu(w_{n}),\,n\in\mathbb{N}\}$),
but which is much richer. It is obtained as follows. 

For $k$ and $n$ in $\mathbb{N}$, define the program $P_{n,k}$
by: 
\[
P_{n,k}(t)=\text{if }\varphi_{n}(n)\uparrow^{t},\,\text{output }u_{\varphi_{k}(t)},\text{ otherwise let \ensuremath{t_{0}}=\ensuremath{\min}(\ensuremath{t,\,\varphi_{n}(n)\downarrow^{t})}, and output }u_{\varphi_{k}(t_{0})}.
\]
The sequence $(P_{n,k}(t))_{t\in\mathbb{N}}$ thus depends of a run
of the computation of $\varphi_{n}(n)$. While this run lasts, $P_{n,k}$
enumerates the sequence $(u_{\varphi_{k}(t)})_{t\in\mathbb{N}}$.
If this run halts in $t_{0}$ steps, the sequence $P_{n,k}$ becomes
constant to $u_{\varphi_{k}(t_{0})}$.

We will consider a certain subsets of the set $\{P_{n,k},\,n,k\in\mathbb{N}\}$. 

First, we select out of the programs $P_{n,k}$ those for which $\varphi_{n}(n)\downarrow$.

Then, we select those for which $\varphi_{k}(t)\downarrow$ for each
$t$ less or equal to the number of steps it takes to the computation
of $\varphi_{n}(n)$ to halt. 

After this selection, each of the remaining $P_{n,k}$ defines successfully
an eventually constant sequence extracted from $(u_{n})_{n\in\mathbb{N}}$. 

Finally, we select those $P_{n,k}$ which define a sequence that converges
at exponential speed (faster than $n\mapsto2^{-n}$). 

It is easy to see that the first two selection steps defined above
correspond to semi-decidable properties. 

This also holds for the third one. Indeed, since the sequence produced
by $P_{n,k}$ is eventually constant, and because a rank after which
it is constant can be computed from $n$, finitely many strict inequalities
need to be checked to prove that it converges exponentially fast. 

The limit algorithm can thus be successfully applied to each of the
remaining $P_{n,k}$, we denote by $(w_{m})_{m\in\mathbb{N}}$ the
computable sequence obtained by dovetailing the double sequence $\text{Lim}(P_{n,k})$,
for $n$ and $k$ in the semi-decidable subset described above.

We then claim that 
\[
O=\bigcup_{\nu(w_{n})\in O}B(\nu(w_{n}),c_{\mathbb{R}}(\hat{C}(w_{n}))).
\]

This will imply that the code for $O$ as a Lacombe set can be computed
from the code for $O$ as a Nogina open set. Indeed, $O$ being semi-decidable,
the set $\{n\in\mathbb{N},\nu(w_{n})\in O\}$ is a semi-decidable
subset of $\mathbb{N}$, and a $W$-name for this set is exactly a
Lacombe name for $O$. ($W$ is the usual numbering of semi-decidable
subsets of $\mathbb{N}$.)

The proof from now on is simply an analysis of Markov's Lemma \cite[Theorem 4.2.2]{Markov1963}. 

Let $x$ be any point of $O$.

We show that $x$ belongs to $B(\nu(w_{n}),c_{\mathbb{R}}(\hat{C}(w_{n})))$
for some $n$. 

Note that if $x$ is an element of the dense sequence, i.e. $x\in\{\nu(w_{n}),\,n\in\mathbb{N}\}$,
we have nothing to do. So we can suppose that $x\notin\{\nu(w_{n}),\,n\in\mathbb{N}\}$. 

There exists a total computable function $g$ such that for all $n$
in $\mathbb{N}$, $d(\nu(u_{g(n)}),x)<2^{-n}$. It exists for the
simple reason that a brute search algorithm for a point of $(\nu(u_{n}))_{n\in\mathbb{N}}$
whose distance to $x$ is less than $2^{-n}$ will always terminate.
Denote by $k$ a code of $g$, $g=\varphi_{k}$. 

Consider now the program $P_{n,k}$ as defined above, for the chosen
$k$, and varying $n$. 

Recall that $P_{n,k}$ produces a sequence as follows: while a run
of $\varphi_{n}(n)$ lasts, it produces $(u_{\varphi_{k}(0)},u_{\varphi_{k}(1)},u_{\varphi_{k}(2)},...)$,
if this run halts in $t_{0}$ steps, it becomes constant: $(u_{\varphi_{k}(0)},u_{\varphi_{k}(1)},u_{\varphi_{k}(2)},...,u_{\varphi_{k}(t_{0}-1)},u_{\varphi_{k}(t_{0})},u_{\varphi_{k}(t_{0})},...)$.

If $\varphi_{n}(n)\downarrow$, $P_{n,k}$ produces a sequence eventually
constant, which ends at a point $u_{l}$. Note that $\nu(u_{l})\ne x$. 

If $\varphi_{n}(n)\uparrow$, $P_{n,k}$ produces exactly the sequence
$(u_{\varphi_{k}(t)})_{t\in\mathbb{N}}$, and note that $(\nu(u_{\varphi_{k}(t)}))_{t\in\mathbb{N}}$
converges to $x$. 

Thus applying the limit algorithm to (the code of) $P_{n,k}$, we
obtain a sequence of names $(z_{n})_{n\in\mathbb{N}}$ such that $\nu(z_{n})=x\iff\varphi_{n}(n)\uparrow$.
Note also that if $\varphi_{n}(n)\downarrow$, then $z_{n}$ is a
name appearing in the sequence $(w_{n})_{n\in\mathbb{N}}$. 

Finally, recall that the halting problem is unsolvable. 

It is thus impossible, given $z_{n}$, to determine whether or not
it is a $\nu$-name of $x$. 

Applying the multi-function $\hat{C}$ to $z_{n}$, if we had that
\[
\nu(z_{n})\ne x\implies x\notin B(\nu(z_{n}),\hat{C}(z_{n})),
\]
a solution to the halting problem would follow: we would have the
equivalences
\[
\nu(z_{n})\ne x\iff d(\nu(z_{n}),x)\ge\hat{C}(z_{n}),
\]
\[
\nu(z_{n})=x\iff d(\nu(z_{n}),x)\le\hat{C}(z_{n})/2.
\]
But choosing between these options is always a decidable problem (because
the following problem is decidable: given a computable real $t$ such
that either $t\ge a$, or $t\le a-\epsilon$, for some fixed $a$
and $\epsilon>0$, choose which option holds). 

This means that the implication 
\[
\nu(z_{n})\ne x\implies x\notin B(\nu(z_{n}),\hat{C}(z_{n}))
\]
 cannot hold. And thus that there exists $n$ such that $\nu(z_{n})\ne x$,
but such that $x$ belongs to $B(\nu(z_{n}),\hat{C}(z_{n}))$. As
the name $z_{n}$ appears in the rich sequence of names $(w_{n})_{n\in\mathbb{N}}$,
we indeed have shown that for any $x$ in $O$, $x$ belongs to $B(\nu(w_{n}),\hat{C}(w_{n}))$
for some $n$ in $\mathbb{N}$. 
\end{proof}

\subsection{Uniform effective continuity at every point}

In this paragraph, we detail the following phenomenon: a computable
function $f:X\rightarrow Y$ can be effectively continuous at every
point $x$ of $X$, and this uniformly in $x$, while still not being
effectively continuous. But this phenomenon cannot happen for topologies
that are defined thanks to Nogina bases.
\begin{thm}
\label{thm:unif cont at each point and eff cont}If $(X,\nu,\mathcal{T},\tau)$
is a Nogina topology, then any function from $X$ to a computable
topological space $(Y,\mu,\mathcal{T}_{Y},\tau_{Y})$ that is uniformly
effectively continuous at each point of $X$ is effectively continuous
on $X$. 

And this statement is uniform, there is an effective procedure that
witnesses for the above equivalence. 
\end{thm}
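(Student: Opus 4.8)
The plan is to reduce the conclusion to computability of $f^{-1}$. Since $(X,\nu,\mathcal{T},\tau)$ is a Nogina topology its formal inclusion is the actual inclusion, so the monotonicity clauses in the definition of effective continuity are automatically satisfied; thus $f$ is effectively continuous as soon as it is classically continuous and $f^{-1}:\mathcal{T}_Y\to\mathcal{T}$ is $(\tau_Y,\tau)$-computable. Writing $H$ for the procedure witnessing uniform effective continuity at each point — so that, given a $\nu$-name $n$ of a point $x$ and a $\tau_Y$-name $m$ of an open set $O_Y\ni f(x)$, the value $H(n,m)$ is a $\tau$-name of an open set $O_X$ with $x\in O_X\subseteq f^{-1}(O_Y)$ — I would build from $H$ a realizer of $f^{-1}$.

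Recall from Definition \ref{def: top gen by eff basis} that a $\tau$-name of an open set $U$ of $X$ is a pair consisting of a semi-decision procedure for $U$, and a procedure that, on a $\nu$-name of any point $x\in U$, outputs a $\beta$-name of a basic set $B$ with $x\in B\subseteq U$. So, given a $\tau_Y$-name $m$ of $O_Y$, the task is to produce both components for $U=f^{-1}(O_Y)$.

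The second, pointwise, component is the heart of the argument. Given a $\nu$-name $n$ of a point $x\in f^{-1}(O_Y)$, I run $H(n,m)$; since $f(x)\in O_Y$, this returns a $\tau$-name of an open set $O_X$ with $x\in O_X\subseteq f^{-1}(O_Y)$. This $\tau$-name, being itself a Nogina name, carries its own pointwise component, which I apply to $n$: it yields a $\beta$-name of a basic set $B$ with $x\in B\subseteq O_X\subseteq f^{-1}(O_Y)$, exactly as required. It is here that the \emph{pointwise} shape of Nogina names matches the pointwise hypothesis: one call to $H$ produces a neighborhood, and the Nogina name of that neighborhood already contains the machinery needed to shrink it to a basic set. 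This is precisely what cannot be arranged with Lacombe names, where $f^{-1}(O_Y)$ would have to be presented as a computable union of basic sets, forcing one to enumerate its points.

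The first, semi-decision, component is where I expect the only genuine difficulty, and it is the one place the bare pointwise hypothesis does not suffice: I would use that $f$ is $(\nu,\mu)$-computable. Fixing a realizer $F$ of $f$, on a $\nu$-name $n$ of $x$ I compute the $\mu$-name $F(n)$ of $f(x)$ and then run on $F(n)$ the $\mu$-semi-decision procedure for $O_Y$ extracted from $m$ via Malcev's condition on $Y$ (that is, $\tau_Y\le\mu_{SD,m}$); this halts if and only if $f(x)\in O_Y$, i.e. if and only if $x\in f^{-1}(O_Y)$. Assembling the two components produces a $\tau$-name of $f^{-1}(O_Y)$ computed uniformly from $m$, so $f^{-1}$ is $(\tau_Y,\tau)$-computable; classical continuity of $f$ follows since each set in the image of $\tau_Y$ has open preimage and these sets generate $\mathcal{T}_Y$ by Definition \ref{def:MAIN DEF}. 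Finally, the whole construction is an effective transformation of codes for $H$ and $F$ into a code for a realizer of $f^{-1}$, which yields the uniformity asserted in the last sentence of the statement.
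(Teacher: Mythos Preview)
Your proposal is correct and follows essentially the same argument as the paper. You are in fact slightly more careful than the paper on two points: you make explicit that $(\nu,\mu)$-computability of $f$ is needed for the semi-decision component (the paper simply asserts that preimages of semi-decidable sets are semi-decidable, implicitly using this), and you spell out the extraction of a basic set from the Nogina name of $O_X$ by applying its pointwise component to $n$ (the paper's proof conflates the open set $A_x$ with the required basic set).
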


The proof is straightforward. 
\begin{proof}
Suppose that $(X,\nu,\mathcal{T},\tau)$ is a computable topological
space that comes from a Nogina basis $(\mathfrak{B},\beta)$. 

Suppose that a function $f:X\rightarrow Y$ is uniformly effectively
continuous at every point. We show that it is effectively continuous. 

Let $O$ be an effective open set in $Y$. We want to compute $f^{-1}(O)$
as an effective open set for $(X,\nu,\mathcal{T},\tau)$. 

The $\tau$-name of $f^{-1}(O)$ as a Nogina open is constituted of
an encoded pair $(n,m)$, where $n$ codes for the fact that $f^{-1}(O)$
is semi-decidable, and $m$ codes a function that, given a point in
$f^{-1}(O)$, produces the $\beta$-name of a set $B$ that contains
this point and that is contained in $f^{-1}(O)$. 

The reciprocal image of a semi-decidable set is always semi-decidable,
and this is uniform, so computing $n$ is straightforward. 

Computing $m$ follows directly from the hypothesis that $f$ is effectively
continuous at every point: given a point $x$ in $f^{-1}(O)$, it
is possible to compute an open set $A_{x}$ that contains $x$ and
that is contained in $f^{-1}(O)$, by effective continuity at $x$
with $O$ being the input neighborhood of $f(x)$. The code for the
map $x\mapsto A_{x}$ is precisely the required $m$ that gives the
code of $f^{-1}(O)$ as an effectively open set. 
\end{proof}
The result of this theorem is not valid in general. 
\begin{example}
\label{exa:unif cont at each point not eff cont }We use again Example
\ref{exa:MAIN Example }: $X\subseteq\mathbb{N}$ is a non-computably
enumerable set, and consider the set $Y=2X\cup(2X+1)$. Consider the
identity numbering $\text{id}_{Y}$ on $Y$, and the discrete Lacombe
topology on $Y$. 

Consider the Ershov topology on $\{0,1\}$ equipped with the obvious
numbering $\mu:n\mapsto n\mod2$. 

The function $f$ defined by 
\[
f:\begin{cases}
Y & \rightarrow\{0,1\}\\
n & \mapsto n\mod2
\end{cases}
\]
is obviously $(\text{id}_{Y},\mu)$-computable. 

It is uniformly effectively continuous at each point of $Y$: given
a point $y$ of $Y$, $\{y\}$ is an effectively open set for the
discrete Lacombe topology on $Y$, it is a neighborhood of $y$, and
of course $\{y\}\subseteq f^{-1}(f(y))$. 

However, $f$ is not effectively continuous, since $f^{-1}(0)=2X$
is not effectively open for the discrete Lacombe topology on $Y$. 
\end{example}

\section{\label{part: Non Surjective Numberings}Effective bases for computable
metrics: Spreen bases}

\subsection{\label{subsec:Spreen Bases}Spreen bases}

We consider a basis $(\mathfrak{B},\beta)$ equipped with a formal
inclusion relation, see Definition \ref{def:Formal inclusion relation Text }. 

If $\mathring{\subseteq}$ is a formal inclusion relation for $(\mathfrak{B},\beta)$,
we extend $\mathring{\subseteq}$ to allow us to compare a natural
number with a sequence: for $(u_{k})_{k\in\mathbb{N}}$ in $\text{dom}(\beta)^{\mathbb{N}}$,
define $n\mathring{\subseteq}(u_{k})_{k\in\mathbb{N}}$ iff $\exists k\in\mathbb{N},\,n\mathring{\subseteq}u_{k}$.
We also extend $\mathring{\subseteq}$ to allow us to compare pairs
of sequences: for sequences $(u_{k})_{k\in\mathbb{N}}$ and $(w_{k})_{k\in\mathbb{N}}$,
define $(u_{k})_{k\in\mathbb{N}}\mathring{\subseteq}(w_{k})_{k\in\mathbb{N}}$
iff $\forall p\in\mathbb{N},\,u_{p}\mathring{\subseteq}(w_{k})_{k\in\mathbb{N}}$.

Note that the extensions defined above remain reflexive and transitive
relations.

We denote by $\mathring{\sim}$ both the equivalence relation induced
by $\mathring{\subseteq}$ on $\mathbb{N}$, and that induced on $\text{dom}(\beta)^{\mathbb{N}}$:
\[
\forall n,m\in\text{dom}(\beta),\,n\mathring{\sim}m\iff n\mathring{\subseteq}m\text{ and }m\mathring{\subseteq}n;
\]
\begin{align*}
\forall(u_{k})_{k\in\mathbb{N}},(w_{k})_{k\in\mathbb{N}} & \in\text{dom}(\beta)^{\mathbb{N}},\\
 & \,(u_{k})_{k\in\mathbb{N}}\mathring{\sim}(w_{k})_{k\in\mathbb{N}}\iff(u_{k})_{k\in\mathbb{N}}\mathring{\subseteq}(w_{k})_{k\in\mathbb{N}}\text{ and }(w_{k})_{k\in\mathbb{N}}\mathring{\subseteq}(v_{k})_{k\in\mathbb{N}}.
\end{align*}

\begin{defn}
\label{def: Spreen Basis Text}A\emph{ Spreen topological basis for
$(X,\nu)$} is a triple $(\mathfrak{B},\beta,\mathring{\subseteq})$,
where $\mathfrak{B}$ is a subset of $P(X)$, $\beta$ is a numbering
of $\mathfrak{B}$, and $\mathring{\subseteq}$ is a formal inclusion
relation for $(\mathfrak{B},\beta)$, that satisfies the following
conditions:
\begin{enumerate}
\item \label{enu: Cond SD }The elements of the basis $(\mathfrak{B},\beta)$
are uniformly semi-decidable, i.e. $\beta\le\nu_{SD,m}$;
\item \label{enu:Cond Dense }Each non-empty basic open set contains a $\nu$-computable
point; 
\item \label{enu:Cond X eff open }There is a $(\text{id}_{\text{dom}(\nu)},\text{id}_{\text{dom}(\beta)}^{\text{id}_{\mathbb{N}}})$-computable
function $G_{1}:\text{dom}(\nu)\rightarrow\text{dom}(\beta)^{\mathbb{N}}$
that satisfies the following conditions: 
\[
\forall n\in\text{dom}(\nu),\,G_{1}(n)=(u_{k})_{k\in\mathbb{N}}\implies\forall k\in\mathbb{N},\,\nu(n)\in\beta(u_{k});
\]
\[
\boxed{\mathring{\sim}}\,\,\forall n,m\in\text{dom}(\nu),\,\nu(n)=\nu(m)\implies G_{1}(n)\mathring{\sim}G_{1}(m);
\]
\[
\circledcirc\,\,\forall x\in X,\,\exists b\in\text{dom}(\beta),\,x\in\beta(b)\,\&\,\forall n\in\text{dom}(\nu),\,\nu(n)\in\beta(b)\implies b\mathring{\subseteq}G_{1}(n).
\]
\item \label{enu:Cond Inter }There is a computable function $G_{2}:\text{dom}(\nu)\times\text{dom}(\beta)\times\text{dom}(\beta)\rightarrow\text{dom}(\beta)^{\mathbb{N}}$,
such that for any $n$ in $\text{dom}(\nu)$, $b_{1}$ and $b_{2}$
in $\text{dom}(\beta)$, with $\nu(n)\in\beta(b_{1})\cap\beta(b_{2})$,
we have $G_{2}(n,b_{1},b_{2})=(u_{k})_{k\in\mathbb{N}}$, with: 
\[
\forall k\in\mathbb{N},\,\nu(n)\in\beta(u_{k});
\]
\[
\forall k\in\mathbb{N},\,\beta(u_{k})\subseteq\beta(b_{1})\,\&\,\beta(u_{k})\subseteq\beta(b_{2});
\]
\begin{align*}
\boxed{\mathring{\sim}}\,\,\forall n,m & \in\text{dom}(\nu),\,\forall b_{1},b_{2},\hat{b}_{1},\hat{b}_{2}\in\text{dom (\ensuremath{\beta)}},\\
 & \,\nu(n)=\nu(m)\,\&\,b_{1}\mathring{\subseteq}\hat{b}_{1}\,\&\,b_{2}\mathring{\subseteq}\hat{b}_{2}\implies G_{2}(n,b_{1},b_{2})\mathring{\subseteq}G_{2}(m,\hat{b}_{1},\hat{b}_{2});
\end{align*}
\begin{align*}
 & \circledcirc\,\,\forall b_{1},b_{2}\in\text{dom}(\beta),\,\forall x\in X,\,x\in\beta(n_{1})\cap\beta(n_{2})\\
 & \implies\exists b_{3}\in\text{dom}(\beta),\,x\in\beta(b_{3})\,\&\,\forall m\in\text{dom}(\nu),\,\nu(m)\in\beta(b_{3})\implies b_{3}\mathring{\subseteq}G_{2}(m,b_{1},b_{2}).
\end{align*}
\end{enumerate}
\end{defn}

This ends this definition. 

Conditions written with the symbol $\circledcirc$ are here to prevent
that the basic open sets produced by different functions be needlessly
small, they are non-vanishing requirements. Notice that they always
contain a quantification that concerns all points of $X$, not only
the computable ones. 

Conditions written with the symbol $\boxed{\mathring{\sim}}$ are
conditions that impose that the results of computations should not
depend on a given name, up to formal inclusion, and should be increasing
with respect to formal inclusion. 
\begin{itemize}
\item Condition (\ref{enu:Cond Dense }) ensures that computable points
are dense, and that the conditions denoted $\circledcirc$ that follow
are not vacuous, without it, statements such as ``$\forall n\in\text{dom}(\nu),\,\nu(n)\in\beta(b)\implies b\mathring{\subseteq}G_{1}(n)$''
could be true only because $\beta(b)\cap X_{\nu}=\emptyset$. 
\item In (\ref{enu:Cond X eff open }), we ensure that $X$ is covered by
basic open sets, the function $G_{1}$ ensures that $X$ itself is
an effective open set.
\item In (\ref{enu:Cond Inter }), given a point $x$ in an intersection
$B_{1}\cap B_{2}$, we give a sequence of basic open sets that contain
$x$ and are contained in $B_{1}\cap B_{2}$, and we must ensure that
the sets obtained this way cover $B_{1}\cap B_{2}$. The function
$G_{2}$ ensures that intersection of open sets can be computed. 
\item Note that the condition $\circledcirc$ for $G_{1}$ has the following
obvious consequence: 
\[
X=\underset{n\in\text{dom}(\nu)\,}{\bigcup}\underset{\,G_{1}(n)=(u_{k})_{k\in\mathbb{N}}}{\bigcup}\beta(u_{k}).
\]
\item Similarly, condition $\circledcirc$ for $G_{2}$ has the following
obvious consequence: for any $b_{1}$ and $b_{2}$ in $\text{dom}(\beta)$,
we have 
\[
\beta(b_{1})\cap\beta(b_{2})=\underset{\{n\,\vert\,\nu(n)\in\beta(n_{1})\cap\beta(n_{2})\}}{\bigcup}\underset{\,G_{2}(n,b_{1},b_{2})=(u_{k})_{k\in\mathbb{N}}}{\bigcup}\beta(u_{k}).
\]
\end{itemize}
We will now explicitly define the computable topology associated to
this basis. 
\begin{defn}
\label{def: Spreen Top text }Let $(X,\nu,\mathcal{B},\beta,\mathring{\subseteq})$
be a subnumbered set equipped with a Spreen basis. The\emph{ Spreen
topology associated to the Spreen basis $(\mathcal{B},\beta,\mathring{\subseteq})$
}is a triple $(\mathcal{T},\tau,\mathring{\subseteq})$ defined as
follows:
\begin{enumerate}
\item The topology $\mathcal{T}$ is the topology on $X$ generated by $\mathcal{B}$
(as a classical basis);
\item The subnumbering $\tau$ is defined as follows. Define $\tau(\langle n,m\rangle)=O$
if and only if: 
\begin{enumerate}
\item $n$ is a $\nu_{SD}$-name for $O\cap X_{\nu}$; 
\item $m$ codes a computable function $F:\text{dom}(\nu)\rightharpoonup\text{dom}(\beta)^{\mathbb{N}}$,
defined at least on $\{n\in\text{dom}(\nu)\,\vert\,\nu(n)\in O\}$,
such that:
\[
\forall n\in\text{dom}(\nu),\,\nu(n)\in O\,\&\,F(n)=(u_{k})_{k\in\mathbb{N}}\implies\forall k\in\mathbb{N},\,\nu(n)\in\beta(u_{k})\,\&\,\beta(u_{k})\subseteq O;
\]
\[
\boxed{\mathring{\sim}}\,\,\forall n,m\in\text{dom}(F),\,\nu(n)=\nu(m)\implies F(n)\mathring{\sim}F(m);
\]
\begin{align*}
\circledcirc\,\,\forall x & \in O,\,\exists b\in\text{dom}(\beta),\,x\in\beta(b)\,\&\,\\
 & \forall n\in\text{dom}(\nu),\,\nu(n)\in\beta(b)\implies b\mathring{\subseteq}F(n).
\end{align*}
\end{enumerate}
\item We extend the formal inclusion $\mathring{\subseteq}$ to names of
open sets: if $O_{1}$ is given by a name $n_{1}$ that encodes a
pair $(O_{1}\cap X_{\nu},F_{1})$, and $O_{2}$ by a name $n_{2}$
that encodes a pair $(O_{2}\cap X_{\nu},F_{2})$, we put 
\[
n_{1}\mathring{\subseteq}n_{2}\iff(\forall p\in\text{dom}(\nu),\,\nu(p)\in O_{1}\implies\nu(p)\in O_{2}\,\&\,F_{1}(p)\mathring{\subseteq}F_{2}(p)).
\]
\end{enumerate}
\end{defn}

\vspace{0.3cm}

Note that an easy consequence of the $\circledcirc$ condition is
that the function $F$ associated to $O$ does allow to reconstruct
the whole of $O$ starting from its computable points: 
\[
O=\underset{\{n,\,\nu(n)\in O\}}{\bigcup}\underset{F(n)=(u_{k})_{k\in\mathbb{N}}}{\bigcup}\beta(u_{k}).
\]
(We even could change the union that appears above: instead of taking
a union on all possible names for points of $O\cap X_{\nu}$, any
choice of one name for each such point will also permit to rebuild
$O$.)
\begin{thm}
\label{thm: Main THM text }The Spreen topology associated to a Spreen
basis is a computable topology. 
\end{thm}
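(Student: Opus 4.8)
The plan is to verify, one at a time, the five conditions of Definition \ref{def:MAIN DEF} for the triple $(\mathcal{T},\tau,\mathring{\subseteq})$ built in Definition \ref{def: Spreen Top text }, using the functions $G_1$ and $G_2$ supplied by the Spreen basis as the engines of every construction. The first three conditions are the quickest. For condition (1) I would show $\beta\le\tau$ by exhibiting a $\tau$-name for each single basic open set $\beta(b)$: take its semi-decidable trace $\beta(b)\cap X_\nu$ (available from condition (\ref{enu: Cond SD })) together with the function $F(n)=G_2(n,b,b)$. Since $\beta(b)\cap\beta(b)=\beta(b)$, the containment and $\boxed{\mathring{\sim}}$ clauses for $F$ follow from the corresponding clauses of $G_2$ (using reflexivity $b\mathring{\subseteq}b$), and the $\circledcirc$ clause is exactly the non-vanishing of $G_2$ at $(b,b)$; as every set named by $\tau$ is a union of basic opens, the image of $\tau$ generates $\mathcal{T}$. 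For condition (2), the empty set is named vacuously, and $X$ is named by a $\nu_{SD}$-name of $X_\nu$ together with $F=G_1$, the three clauses for $F$ being precisely the three properties of $G_1$ in condition (\ref{enu:Cond X eff open }). Condition (3) (Malcev) is immediate: the projection $\langle n,m\rangle\mapsto n$ extracts a $\nu_{SD}$-name of $O\cap X_\nu$, witnessing $\tau\le\nu_{SD,m}$.

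The real content is condition (4). For the union, given a name of a sequence $(O_j)_{j}$, the semi-decidable trace of $\bigcup_j O_j$ is obtained by dovetailing the traces; for the function $F$ I would let $F(n)$ be the dovetailed concatenation of all sequences $F_j(n)$ over those $j$ with $\nu(n)\in O_j$ (detected by dovetailing the semi-decision procedures). For the intersection of $O_1$ and $O_2$, the trace is the intersection of the two traces, and I would let $F(n)$ be the dovetailed concatenation of all sequences $G_2(n,u_k,v_l)$ where $(u_k)_k=F_1(n)$ and $(v_l)_l=F_2(n)$; each such $G_2(n,u_k,v_l)$ consists of basic opens containing $\nu(n)$ and contained in $\beta(u_k)\cap\beta(v_l)\subseteq O_1\cap O_2$, using condition (\ref{enu:Cond Inter }).

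The step I expect to be the main obstacle is the $\boxed{\mathring{\sim}}$ requirement: the naive choice of a single witnessing index breaks it, because two names $n,m$ of the same point need not select the same index, which is exactly why the constructions above concatenate over \emph{all} admissible indices. For the union, the set of admissible $j$ depends only on $\nu(n)$, and $F_j(n)\mathring{\sim}F_j(m)$ for each $j$, so the concatenations are $\mathring{\sim}$-equivalent under the sequence-extension of $\mathring{\subseteq}$. For the intersection, given an element of $G_2(n,u_k,v_l)$ one uses $F_1(n)\mathring{\subseteq}F_1(m)$ and $F_2(n)\mathring{\subseteq}F_2(m)$ to find basic opens $\hat{u},\hat{v}$ in the $m$-sequences with $u_k\mathring{\subseteq}\hat{u}$ and $v_l\mathring{\subseteq}\hat{v}$, and then the monotonicity clause $\boxed{\mathring{\sim}}$ of $G_2$ gives $G_2(n,u_k,v_l)\mathring{\subseteq}G_2(m,\hat{u},\hat{v})$, which lands inside $F(m)$. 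The very same monotonicity clause, fed componentwise $\mathring{\subseteq}$-related input names, yields that $\bigcup$ and $\bigcap$ are increasing for the formal inclusion of Definition \ref{def: Spreen Top text }; and for $\bigcup$ the expanding property is immediate, since each input name formally includes into the output name (each $F_j(p)$ is a sub-block of $F(p)$ whenever $\nu(p)\in O_j$).

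Finally, the $\circledcirc$ clause for the outputs is checked from the non-vanishing of the inputs and of $G_2$, together with density of computable points (condition (\ref{enu:Cond Dense })). For the union, a witness $b$ delivered by $\circledcirc$ for some $O_{j_0}\ni x$ works verbatim: if $\nu(n)\in\beta(b)$ then $b\mathring{\subseteq}F_{j_0}(n)$ forces $\beta(b)\subseteq O_{j_0}$, so $j_0$ is admissible for $n$ and hence $b\mathring{\subseteq}F_{j_0}(n)\mathring{\subseteq}F(n)$. For the intersection, one feeds the witnesses $b_1,b_2$ from $\circledcirc$ of $O_1,O_2$ into the non-vanishing of $G_2$ to get $b_3$; density provides a computable point of $\beta(b_3)$, forcing $\beta(b_3)\subseteq\beta(b_1)\cap\beta(b_2)$, and then combining $b_3\mathring{\subseteq}G_2(n,b_1,b_2)$ with the monotonicity of $G_2$ yields $b_3\mathring{\subseteq}G_2(n,\hat{u},\hat{v})\subseteq F(n)$. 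Throughout, density is precisely what prevents these $\circledcirc$ implications from holding vacuously, and the reconstruction formula after Definition \ref{def: Spreen Top text } confirms that the open sets named by the constructed pairs are indeed $\bigcup_j O_j$ and $O_1\cap O_2$.
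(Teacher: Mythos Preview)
Your proof is correct and follows essentially the same route as the paper's: verify the five clauses of Definition \ref{def:MAIN DEF} in order, build the union by dovetailing all admissible $F_j(n)$, build the intersection by dovetailing $G_2(n,u_k,v_l)$ over all pairs from $F_1(n)\times F_2(n)$, and check $\boxed{\mathring{\sim}}$ and $\circledcirc$ via the corresponding clauses for $G_2$ together with density. The only difference is cosmetic: for clause (1) the paper names a basic set $\beta(b)$ by the constant sequence $F(n)=(b,b,\dots)$, witnessing $\circledcirc$ with $b$ itself via reflexivity of $\mathring{\subseteq}$, rather than your $F(n)=G_2(n,b,b)$; both work, the constant choice being slightly simpler.
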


\begin{proof}
On the subnumbered set $(X,\nu)$, denote by $(\mathfrak{B},\beta,\mathring{\subseteq})$
a Spreen basis, and by $(\mathcal{T},\tau,\mathring{\subseteq})$
the Spreen topology it generates. 

A simple check shows that the relation $\mathring{\subseteq}$ as
extended to names of open sets is indeed a formal inclusion relation
for $\tau$ (reflexivity and transitivity are immediate, and so is
the fact that $n_{1}\mathring{\subseteq}n_{2}\implies\tau(n_{1})\subseteq\tau(n_{2})$). 

We prove that $(\mathcal{T},\tau)$ is a computable topology, we follow
points (1)-(2)-(3)-(4-a) and (4-b) of Definition \ref{def:MAIN DEF}. 

We must first prove (1): the image of $\tau$ generates $\mathcal{T}$. 

We defined $\mathcal{T}$ to be the topology generated by $\mathfrak{B}$.
It thus suffices to show that each element of $\mathfrak{B}$ is in
the image of $\tau$ to prove that the image of $\tau$ indeed generates
$\mathcal{T}$. To check that basic open sets are indeed effectively
open sets, note the following: the basic open sets are semi-decidable,
and for a basic open set $B=\beta(b)$, the function $F$ which maps
any computable point of $B$ to the constant sequence $(b,\,b,\,...)$
clearly satisfies the conditions that appear in Definition \ref{def: Spreen Top text },
thus ensuring that $B$ is in the image of $\tau$. (Condition $\boxed{\mathring{\sim}}$
is immediate since $F$ is a constant function, and the number $b$
itself is a witness for the fact that $F$ satisfies the $\circledcirc$
condition: for any $x=\nu(n)$ in $B$, $b\mathring{\subseteq}F(n)=(b,b,b,...)$). 

We then prove (2): the empty set and $X$ are effective open sets
for $\tau$. 

This is straightforward for the empty set. The fact that $X$ is in
the image of $\tau$ follows directly from the third point of Definition
\ref{def: Spreen Basis Text}, the function $G_{1}$ introduced here
precisely guarantees that $X$ is effectively open for $\tau$. 

We now prove (3): Malcev's condition is verified. 

Recall that this means that given the $\tau$-name for an open set
$O$, it is possible to construct the $\nu_{SD}$-name of $O\cap X_{\nu}$,
or again that $\tau\le\nu_{SD,m}$. But the $\tau$-name for an effective
open set $O$ is a pair whose first element is precisely the $\nu_{SD}$
code for $O\cap X_{\nu}$, as ensured by condition (2-a) in Definition
\ref{def: Spreen Top text }. 

We now prove (4-a): unions are computable for $\tau$. 

A $\tau$-computable sequence $(O_{i})_{i\in\mathbb{N}}$ is described
by a sequence of pairs $(A_{i},F_{i})_{i\in\mathbb{N}}$, where the
sets $A_{i}=O_{i}\cap X_{\nu}$ are given by $\nu_{SD}$-names, and
where $F_{i}$ is a function that, given the name of a point in $O_{i}$,
produces a sequence of names for basic open sets in $O_{i}$ that
contain $x$. 

A $\tau$-name for the union $\underset{i\in\mathbb{N}}{\bigcup}O_{i}$
is obtained as follows. It will be given by a pair $(A_{\infty},F_{\infty})$,
where $A_{\infty}$ can just be taken as the union of the $A_{i}$,
whose $\nu_{SD}$-name can be computed. The function $F_{\infty}$
is obtained by dovetailing: given the $\nu$-name $n$ of a point
$x$ in $\underset{i\in\mathbb{N}}{\bigcup}O_{i}$, to compute $F_{\infty}(n)$,
one searches in parallel for each $i\in\mathbb{N}$ whether $x$ belongs
to $A_{i}$ (because they are semi-decidable), and, when it does,
it enumerates the sequence given by the computation of $F_{i}(n)$. 

We must check that the pair $(A_{\infty},F_{\infty})$ thus defined
indeed constitutes a $\tau$-name for $O_{\infty}=\underset{i\in\mathbb{N}}{\bigcup}O_{i}$,
i.e. that the conditions that appear in (2-b) of Definition \ref{def: Spreen Top text }
are satisfied. 

The fact that, if $x=\nu(n)$, $F_{\infty}(n)$ produces only basic
open sets that contain $x$ and are contained in $O_{\infty}$ is
straightforward. 

We now show that the function $F_{\infty}$ depends on a given name
only up to formal inclusion (condition $\boxed{\mathring{\sim}}$).
This follows directly from the fact that each $F_{i}$ satisfies this
property, and that the arbitrary permutation that can occur during
the dovetailing operation does not affect the relation $\mathring{\subseteq}$
as defined on sequences. 

We prove the $\circledcirc$ condition. Let $x$ be any point of $O_{\infty}$.
Then $x$ must belong to some $O_{i}$, and since $(A_{i},F_{i})$
is a $\tau$-name for $O_{i}$, the $\circledcirc$ condition is satisfied
for $F_{i}$: there must be $b$ in $\text{dom}(\beta)$ such that
$\beta(b)$ contains $x$, and such that for any $y=\nu(n)$ in $\beta(b)\cap X_{\nu}$,
we have $b\mathring{\subseteq}F_{i}(n)$. By construction, the sequence
$F_{i}(n)$ is formally included in the sequence $F_{\infty}(n)$
(as $F_{i}(n)$ appears as a subsequence of $F_{\infty}(n)$), and
thus we have $b\mathring{\subseteq}F_{i}(n)\mathring{\subseteq}F_{\infty}(n)$,
which shows that $b\mathring{\subseteq}F_{\infty}(n)$. 

We finally have to prove that computing unions is indeed increasing
and expanding with respect to $\mathring{\subseteq}$. In each case,
this follows immediately from the fact that for each $i$ in the union
$O_{\infty}=\underset{i\in\mathbb{N}}{\bigcup}O_{i}$, if $O_{i}$
was described by a pair $(A_{i},F_{i})$, and if $x=\nu(n)$ is a
point of $O_{i}$, then $F_{i}(n)$ is a subsequence of $F_{\infty}(n)$. 

This ends the proof of (4-a). 

Finally, we prove (4-b): finite intersections are computable. 

Suppose we are given the $\tau$-names of two sets $O_{1}$ and $O_{2}$,
described respectively by pairs $(A_{1},F_{1})$ and $(A_{2},F_{2})$.
Write $O_{3}=O_{1}\cap O_{2}$. A $\nu_{SD}$-name for the intersection
$O_{3}\cap X_{\nu}$, which we denote $A_{3}$, can easily be obtained
by computing the intersection of the semi-decidable sets $A_{1}$
and $A_{2}$.

We now describe a function $F_{3}:\text{dom}(\nu)\rightharpoonup\text{dom}(\beta)^{\mathbb{N}}$,
defined at least on $\nu^{-1}(A_{3})$, associated to $O_{3}$. 

Given the $\nu$-name $n$ of a point $x$ in $A_{3}$, compute $F_{1}(n)$,
and $F_{2}(n)$. This yields two sequences $(u_{k}^{1})_{k\in\mathbb{N}}$
and $(u_{k}^{2})_{k\in\mathbb{N}}$. 

We then apply the function $G_{2}$ given in the definition of a Spreen
basis to each pair $(u_{i}^{1},u_{j}^{2})$ and to $n$, this yields
a sequence $(t_{k}^{i,j})_{k\in\mathbb{N}}$, define the result of
the computation of $F_{3}$ on $x=\nu(n)$ to be a sequence obtained
by dovetailing all the sequences $(t_{k}^{i,j})_{k\in\mathbb{N}}$
obtained by the computations of $G_{2}(n,u_{i}^{1},u_{j}^{2})$, for
$i,j\in\mathbb{N}$. 

It is clear that the sequence obtained this way consists only of $\beta$-names
of basic open sets that contain $x$ and that are contained in $O_{3}=O_{1}\cap O_{2}$. 

We now need to check that the function $F_{3}$ defined this way satisfies
the conditions denoted $\boxed{\mathring{\sim}}$ and $\circledcirc$. 

Condition $\boxed{\mathring{\sim}}$ asks that the computation of
$F_{3}(n)$ be independent of a given name of $\nu(n)=x$, modulo
formal inclusion. This follows from the fact that $F_{1}$, $F_{2}$
and $G_{2}$ all satisfy this condition, and that $G_{2}$ is increasing
for $\mathring{\subseteq}$. We show this now. 

Let $n$ and $m$ be two names of $x$. Computing $F_{1}(n)$ and
$F_{1}(m)$ yields two $\mathring{\sim}-$equivalent sequences $(u_{k}^{1,n})_{k\in\mathbb{N}}$
and $(u_{k}^{1,m})_{k\in\mathbb{N}}$. Similarly for $F_{2}$, we
obtain $(u_{k}^{2,n})_{k\in\mathbb{N}}$ and $(u_{k}^{2,m})_{k\in\mathbb{N}}$. 

Then, for any $i$ and $j$, there are some $\hat{i}$ and $\hat{j}$
such that $u_{i}^{1,n}\mathring{\subseteq}u_{\hat{i}}^{1,m}$ and
$u_{j}^{2,n}\mathring{\subseteq}u_{\hat{j}}^{2,m}$. Then, by the
hypothesis $\boxed{\mathring{\sim}}$ on $G_{2}$, we have $G_{2}(n,u_{i}^{1,n},u_{j}^{2,n})\mathring{\subseteq}G_{2}(m,u_{\hat{i}}^{1,m},u_{\hat{j}}^{2,m})$.
This shows that $F_{3}(n)\mathring{\subseteq}F_{3}(m)$, and the other
direction follows by symmetry. 

This ends the proof for the $\boxed{\mathring{\sim}}$ property. 

We show that the $\circledcirc$ property holds:
\begin{align*}
\circledcirc\,\,\forall x & \in O_{3},\,\exists b\in\text{dom}(\beta),\,x\in\beta(b)\,\&\,\\
 & \forall n\in\text{dom}(\nu),\,\nu(n)\in\beta(b)\implies b\mathring{\subseteq}F_{3}(n).
\end{align*}
Let $x$ be as above. Apply the $\circledcirc$ property for $F_{1}$
and $F_{2}$ to obtain $b_{1}$ and $b_{2}$ that satisfy the above
statement but with respect to $F_{1}$ and $F_{2}$. 

We then apply the $\circledcirc$ property of $G_{2}$ to $x$, $b_{1}$
and $b_{2}$: this yields $b_{3}$ such that $x\in\beta(b_{3})$,
and for any computable point $y=\nu(n)$ in $\beta(b_{3})$, we have
$b_{3}\mathring{\subseteq}G_{2}(n,b_{1},b_{2})$. 

We will then show that $b_{3}$ indeed witnesses that $\circledcirc$
is satisfied for $x$ and $F_{3}$. 

For any computable point $y=\nu(n)$ in $\beta(b_{3})$, denote $F_{1}(n)=(u_{k}^{1})_{k\in\mathbb{N}}$.
We have that $b_{1}\mathring{\subseteq}(u_{k}^{1})_{k\in\mathbb{N}}$,
i.e. $b_{1}\mathring{\subseteq}u_{k_{1}}^{1}$ for some number $k_{1}$.
Similarly, for $F_{2}(n)=(u_{k}^{2})_{k\in\mathbb{N}}$, we have $b_{2}\mathring{\subseteq}(u_{k}^{2})_{k\in\mathbb{N}}$,
i.e. $b_{2}\mathring{\subseteq}u_{k_{2}}^{2}$ for some natural number
$k_{2}$. Then, computing $F_{3}(n)$, we build a sequence which contains
as a subsequence the result of $G_{2}(n,u_{k_{1}}^{1},u_{k_{2}}^{2})$.
But $b_{3}\mathring{\subseteq}G_{2}(n,b_{1},b_{2})$ by hypothesis,
and $G_{2}(n,b_{1},b_{2})\mathring{\subseteq}G_{2}(n,u_{k_{1}}^{1},u_{k_{2}}^{2})$
because $G_{2}$ is increasing for $\mathring{\subseteq}$. As the
relation $\mathring{\subseteq}$ is transitive, we obtain $b_{3}\mathring{\subseteq}F_{3}(n)$.

We finally have to check that computing intersections is indeed increasing
with respect to $\mathring{\subseteq}$. But this follows directly
from the corresponding property of the function $G_{2}$.

This finishes the proof. 
\end{proof}

\subsection{\label{subsec:New notion of equivalence of basis }New notion of
equivalence of basis }

We already gave two notions of equivalence of basis for Nogina and
Lacombe bases in Section \ref{subsec:Equivalence-of-bases}. We give
the corresponding definition for Spreen bases. Let $(X,\nu)$ be a
subnumbered set.
\begin{defn}
\label{def:equiv Spreen bases}A Spreen basis $(\mathcal{B}_{1},\beta_{1},\mathring{\subseteq}_{1})$
is equivalent to another one $(\mathcal{B}_{2},\beta_{2},\mathring{\subseteq}_{2})$
if there are two computable functions $f_{12}:\text{dom}(\nu)\times\text{dom}(\beta_{1})\rightarrow\text{dom}(\beta_{2})^{\mathbb{N}}$
and $f_{21}:\text{dom}(\nu)\times\text{dom}(\beta_{2})\rightarrow\text{dom}(\beta_{1})^{\mathbb{N}}$
that satisfy the following conditions: 

For $f_{12}$:

\[
\forall n\in\text{dom}(\nu),\,\forall b\in\text{dom}(\beta_{1}),\,f_{12}(n,b)=(b_{i})_{i\in\mathbb{N}}\,\implies\,\forall i\ge0,\,\nu(n)\in\beta_{2}(b_{i})\,\&\,\beta_{2}(b_{i})\subseteq\beta_{1}(b);
\]
\begin{align*}
\boxed{\mathring{\sim}}\,\,\forall n_{1},n_{2} & \in\text{dom}(\nu),\,\forall b_{1},b_{2}\in\text{dom}(\beta_{1}),\,\nu(n_{1})=\nu(n_{2})\,\&\,b_{1}\mathring{\subseteq}_{1}b_{2}\\
 & \implies f_{12}(n_{1},b_{1})\mathring{\subseteq}_{2}f_{12}(n_{2},b_{2});
\end{align*}
\begin{align*}
 & \circledcirc\,\forall x\in X,\,\forall b_{1}\in\text{dom}(\beta_{1}),\,x\in\beta_{1}(b_{1})\implies\exists b_{2}\in\text{dom}(\beta_{2}),\\
 & x\in\beta_{2}(b_{2})\,\&\,\forall n\in\text{dom}(\nu),\,\nu(n)\in\beta_{2}(b_{2})\implies b_{2}\mathring{\subseteq}_{2}f_{12}(n,b_{1}).
\end{align*}

The function $f_{21}$ should satisfy symmetric conditions. 
\end{defn}

The usefulness of this definition lies in the following proposition: 
\begin{prop}
\label{prop:Equiv Basis Spreen}The Spreen topologies generated by
equivalent Spreen bases on $(X,\nu)$ are identical. In this case,
it means that the identity function $\text{id}_{X}$, seen as going
from $X$ equipped with one topology to $X$ equipped with the other
one, is effectively continuous in both directions. 
\end{prop}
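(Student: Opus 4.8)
The plan is to exhibit effective translations in both directions between the open-set names furnished by the two Spreen topologies, and then to invoke the remark following Definition \ref{def: Eff finer eff coarser } that mutual effective coarseness forces the two computable topologies to coincide. Before that, I would record that the two bases generate the same \emph{classical} topology. Fix a basic set $\beta_{1}(b_{1})$ and a point $x\in\beta_{1}(b_{1})$; the $\circledcirc$ clause for $f_{12}$ produces $b_{2}$ with $x\in\beta_{2}(b_{2})$, and since $\beta_{2}(b_{2})$ is non-empty it contains a $\nu$-computable point $\nu(n_{0})$ by condition (\ref{enu:Cond Dense }) of Definition \ref{def: Spreen Basis Text}; then $b_{2}\mathring{\subseteq}_{2}f_{12}(n_{0},b_{1})$ together with the set-containment clause on $f_{12}$ gives $\beta_{2}(b_{2})\subseteq\beta_{1}(b_{1})$. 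Hence every $\beta_{1}(b_{1})$ is a union of $\mathcal{B}_{2}$-basic sets, so $\mathcal{T}_{1}\subseteq\mathcal{T}_{2}$, and symmetrically $\mathcal{T}_{1}=\mathcal{T}_{2}$.

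For the effective translation, recall from Definition \ref{def: Spreen Top text } that a $\tau_{1}$-name of $O$ is a pair $(A,F_{1})$ with $A$ a $\nu_{SD}$-name of $O\cap X_{\nu}$ and $F_{1}$ a function assigning to each name $n$ of a point of $O$ a sequence of $\beta_{1}$-names of basic sets containing $\nu(n)$ and contained in $O$. The translation keeps $A$ unchanged (it is basis-independent) and defines $F_{2}(n)$ by dovetailing the sequences $f_{12}(n,u_{k})$ over the terms $u_{k}$ of $F_{1}(n)=(u_{k})_{k\in\mathbb{N}}$. This is plainly computable. That the terms of $F_{2}(n)$ are $\beta_{2}$-names of basic sets containing $\nu(n)$ and contained in $O$ is immediate from the set-containment clause on $f_{12}$, each such set lying inside some $\beta_{1}(u_{k})\subseteq O$. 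The $\boxed{\mathring{\sim}}$ clause for $F_{2}$ follows by combining the $\boxed{\mathring{\sim}}$ clauses of $F_{1}$ and of $f_{12}$: if $\nu(n)=\nu(m)$ then for each $u_{k}$ in $F_{1}(n)$ there is $u_{k'}$ in $F_{1}(m)$ with $u_{k}\mathring{\subseteq}_{1}u_{k'}$, whence $f_{12}(n,u_{k})\mathring{\subseteq}_{2}f_{12}(m,u_{k'})$, giving $F_{2}(n)\mathring{\subseteq}_{2}F_{2}(m)$ and, by symmetry, $F_{2}(n)\mathring{\sim}_{2}F_{2}(m)$.

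The main obstacle is the $\circledcirc$ clause for $F_{2}$. Given $x\in O$, I would first apply $\circledcirc$ for $F_{1}$ to get $b_{1}$ with $x\in\beta_{1}(b_{1})$ and $b_{1}\mathring{\subseteq}_{1}F_{1}(n)$ for every $n$ with $\nu(n)\in\beta_{1}(b_{1})$, and then apply $\circledcirc$ for $f_{12}$ to $x$ and $b_{1}$ to get $b_{2}$ with $x\in\beta_{2}(b_{2})$ and $b_{2}\mathring{\subseteq}_{2}f_{12}(n,b_{1})$ for every $n$ with $\nu(n)\in\beta_{2}(b_{2})$. The key auxiliary fact, proved exactly as in the classical step above, is $\beta_{2}(b_{2})\subseteq\beta_{1}(b_{1})$; this lets me transfer between the two neighbourhoods. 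Now fix $n$ with $\nu(n)\in\beta_{2}(b_{2})\subseteq\beta_{1}(b_{1})$. Then $b_{1}\mathring{\subseteq}_{1}u_{k_{0}}$ for some term $u_{k_{0}}$ of $F_{1}(n)$, so by the monotonicity part of the $\boxed{\mathring{\sim}}$ clause for $f_{12}$ at the single point $\nu(n)$ we get $f_{12}(n,b_{1})\mathring{\subseteq}_{2}f_{12}(n,u_{k_{0}})$; combining with $b_{2}\mathring{\subseteq}_{2}f_{12}(n,b_{1})$ and transitivity of the extended relation, and noting that $f_{12}(n,u_{k_{0}})$ is a subsequence of $F_{2}(n)$, yields $b_{2}\mathring{\subseteq}_{2}F_{2}(n)$. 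Thus $b_{2}$ witnesses $\circledcirc$ for $F_{2}$, and $(A,F_{2})$ is a genuine $\tau_{2}$-name of $O$.

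It remains to check that this translation is increasing for the formal inclusion on open-set names of Definition \ref{def: Spreen Top text }, which reduces to showing that $F_{1}(p)\mathring{\subseteq}_{1}F_{1}'(p)$ implies $F_{2}(p)\mathring{\subseteq}_{2}F_{2}'(p)$; this is the same dovetailing-plus-$\boxed{\mathring{\sim}}$ argument as for the $\boxed{\mathring{\sim}}$ clause, now applied across the two names rather than at a single point. The symmetric construction built from $f_{21}$ gives the reverse translation, also increasing, so each topology is effectively coarser than the other and they are identical by Definition \ref{def: Eff finer eff coarser }. I expect the only genuinely delicate point to be the auxiliary inclusion $\beta_{2}(b_{2})\subseteq\beta_{1}(b_{1})$ and the careful chaining of the two $\circledcirc$ clauses through it; everything else is routine bookkeeping of the formal-inclusion relations under dovetailing.
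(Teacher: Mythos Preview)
Your proof is correct and follows essentially the same approach as the paper: keep the semi-decidable code $A$ unchanged and define the new neighborhood function by composing the old one with the basis-equivalence map, then verify the Spreen conditions. The paper's proof writes this composition via currying as $F_{1}(n,p)=f_{21}(n,F_{2}(n,p))$ and dismisses the verifications with ``it is easy to see''; you carry out the same construction by dovetailing and actually supply the $\circledcirc$ argument (chaining the $\circledcirc$ clause of $F_{1}$ with that of $f_{12}$ through the auxiliary inclusion $\beta_{2}(b_{2})\subseteq\beta_{1}(b_{1})$), which the paper omits entirely. Your additional preliminary step showing $\mathcal{T}_{1}=\mathcal{T}_{2}$ classically is not in the paper but is a reasonable completeness check.
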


\begin{proof}
Consider equivalent bases $(\mathcal{B}_{1},\beta_{1},\mathring{\subseteq}_{1})$
and $(\mathcal{B}_{2},\beta_{2},\mathring{\subseteq}_{2})$ that generate
Spreen topologies $(\mathcal{T}_{1},\tau_{1},\mathring{\subseteq}_{1})$
and $(\mathcal{T}_{2},\tau_{2},\mathring{\subseteq}_{2})$. Consider
functions $f_{12}$ and $f_{21}$ as in Definition \ref{def:equiv Spreen bases}.
By symmetry, we only have to prove that $(\mathcal{T}_{1},\tau_{1},\mathring{\subseteq}_{1})$
is effectively finer than $(\mathcal{T}_{2},\tau_{2},\mathring{\subseteq}_{2})$.
Consider an effective open $O$ given by a $\tau_{2}$ name that encodes
a pair $(A,F_{2})$. We compute a $\tau_{1}$ name for $O$. 

As always, the code of $A=O\cap X_{\nu}$ as a semi-decidable set
can be taken as it is to compute a $\tau_{1}$ name of $O$. 

The function $F_{2}$ maps points to sequences of basic open sets,
by currying we can consider that it is a function of two variables
$F_{2}:\nu^{-1}(A)\times\mathbb{N}\rightarrow\text{dom}(\beta_{2})$.
Similarly, we define a function $F_{1}$ as a function of two variables:
$F_{1}:\nu^{-1}(A)\times\mathbb{N}\rightarrow\text{dom}(\beta_{1})$.
Put 
\[
F_{1}(n,p)=f_{21}(n,F_{2}(n,p)).
\]
Because $f_{21}$ respects the formal inclusion relation, it is easy
to see that $F_{1}$ satisfies all the required criterions of Definition
\ref{def: Spreen Basis Text}. And the formula above indeed respect
the formal inclusion: if $F_{2}$ is replaced by $\hat{F}_{2}$, with
$F_{2}(n)\mathring{\subseteq}\hat{F}_{2}(n)$ for all $n\in\nu^{-1}(A)$,
then the formula above gives $\hat{F}_{1}$ with $F_{1}(n)\mathring{\subseteq}\hat{F}_{1}(n)$
for all $n\in\nu^{-1}(A)$. 

The remaining details are easy to check. 
\end{proof}

\subsection{\label{subsec:Indep Numbering basis Spreen}Dependence on the numbering
of the basis}

We will now show that being a Spreen basis is independent of a choice
of a numbering for the basis, but not up to equivalence of numbering:
up to equivalence \emph{that respects the formal inclusion relation}. 

Let $(X,\nu)$ be a subnumbered set, and $(\mathfrak{B},\beta_{1},\mathring{\subseteq})$
a Spreen basis. Let $\beta_{2}$ be another numbering of $\mathfrak{B}$,
equivalent to $\mathfrak{B}$. We thus have a formal inclusion relation
$\mathring{\subseteq}$ defined on $\text{dom}(\beta_{1})$, but a
priori no formal inclusion relation of $\text{dom}(\beta_{2})$. Recall
that $\mathring{\subseteq}$ induces an equivalence relation $\mathring{\sim}$. 
\begin{defn}
\label{def:Equivalence of numbering preserves strong inclusion }We
say that the equivalence $\beta_{1}\equiv\beta_{2}$ \emph{preserves
the formal inclusion relation} \emph{of $\text{dom}(\beta_{1})$ }if
there are two computable functions $f_{12}:\text{dom}(\beta_{1})\rightarrow\text{dom}(\beta_{2})$
and $f_{21}:\text{dom}(\beta_{2})\rightarrow\text{dom}(\beta_{1})$
such that 
\[
\forall b\in\text{dom}(\beta_{1}),\,\beta_{1}(b)=\beta_{2}(f_{12}(b)),
\]
\[
\forall b\in\text{dom}(\beta_{2}),\,\beta_{2}(b)=\beta_{1}(f_{21}(b)),
\]
\[
\forall b\in\text{dom}(\beta_{1}),\,b\mathring{\sim}f_{21}(f_{12}(b)).
\]
\end{defn}

The first and second conditions just state that $f_{12}$ and $f_{21}$
witness for the fact that $\beta_{1}\equiv\beta_{2}$, but we ask
more: starting from the $\beta_{1}$-name of a basic set $B$, we
can turn it into a $\beta_{2}$-name for $B$ thanks to $f_{12}$,
and then turn it back into a $\beta_{1}$ name of $B$ via $f_{21}$.
We thus obtain two names for the same basic open set, we ask that
they be equivalent for the formal inclusion of $(\mathfrak{B},\beta_{1})$. 
\begin{example}
Suppose that we have a computable metric space\emph{$^{*}$} $(X,\nu,d)$.
There is a numbering $\beta_{1}$ of open balls of $X$ defined by
$\beta_{1}(\langle n,m\rangle)=B(\nu(n),c_{\mathbb{R}}(m))$. 

As usual we define a formal inclusion for $\beta_{1}$ by 
\[
\langle n_{1},m_{1}\rangle\mathring{\subseteq}\langle n_{2},m_{2}\rangle\iff d(\nu(n_{1}),\nu(n_{2}))+c_{\mathbb{R}}(m_{1})\le c_{\mathbb{R}}(m_{2}).
\]

If $\beta_{2}$ is another numbering of the open balls of $(X,d)$,
obtained in a similar fashion, but defined thanks to subnumberings
$\nu':\mathbb{N}\rightharpoonup X$ and $c'_{\mathbb{R}}:\mathbb{N}\rightharpoonup\mathbb{R}$,
which are $\equiv$-equivalent respectively to $\nu$ and $c_{\mathbb{R}}$,
then it is easy to see that $\beta_{1}$ and $\beta_{2}$ are equivalent,
and that this equivalence is witnessed by a function that preserves
the formal inclusion of $\beta_{1}$. 
\end{example}

We now give an example of functions witnessing for an equivalence
of numbering that do not preserve the formal inclusion relation. 
\begin{example}
We consider the numbering $\beta_{1}$ defined in the example above. 

Suppose that for some point $x_{0}=\nu(n)$ of $X_{\nu}$, we have
$B(x_{0},1)=B(x_{0},2)$. (It is obviously necessary to have a hypothesis
that guarantees that the formal inclusion relation is not equal to
the inclusion relation: $n\mathring{\subseteq}m\nLeftrightarrow\beta(n)\subseteq\beta(m)$.)
Denote by $m_{1}$ a $c_{\mathbb{R}}$-name of $1$ and by $m_{2}$
a $c_{\mathbb{R}}$-name of $2$. We define a function $f_{21}:\text{dom}(\beta_{1})\rightarrow\text{dom}(\beta_{1})$
by $f_{21}(\langle n,m_{1}\rangle)=\langle n,m_{2}\rangle$, and by
$f_{21}(u)=u$ elsewhere. 

Then $f_{21}$ witnesses for the fact that $\beta_{1}\le\beta_{1}$:
it turns the $\beta_{1}$-name of a ball into another name of that
same ball. Take $f_{12}$ to be the identity. Then these two functions
together witness for the trivial fact that $\beta_{1}\equiv\beta_{1}$,
but $f_{21}\circ f_{12}$ does not preserve the formal inclusion relation,
and its induced equivalence relation $\mathring{\sim}$: $f_{21}\circ f_{12}$
will indeed map a name of $B(x_{0},1)$ to another name of $B(x_{0},1)$,
but one where the radius explicitly given is $2$, thus different
from the original radius. 

This situation is precisely the one we avoid thanks to Definition
\ref{def:Equivalence of numbering preserves strong inclusion }. 
\end{example}

We now prove the following proposition:
\begin{prop}
Let $(\mathfrak{B},\beta_{1},\mathring{\subseteq}_{1})$ be a Spreen
basis for $(X,\nu)$, and $\beta_{2}$ another numbering of $\mathfrak{B}$,
equivalent to $\beta_{1}$, and suppose that this equivalence is compatible
with the formal inclusion relation. Then it is possible to define
a formal inclusion $\mathring{\subseteq}_{2}$ on $\text{dom}(\beta_{2})\times\text{dom}(\beta_{2})$,
such that $(\mathfrak{B},\beta_{2},\mathring{\subseteq}_{2})$ is
also a Spreen basis for $(X,\nu)$, and the Spreen topologies defined
by these bases are identical. 
\end{prop}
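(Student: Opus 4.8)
The plan is to transport the formal inclusion relation from $\beta_1$ to $\beta_2$ along the translation maps, and then to show that the two bases are equivalent in the sense of Definition \ref{def:equiv Spreen bases}, so that Proposition \ref{prop:Equiv Basis Spreen} delivers the identity of the generated topologies. Let $f_{12}:\text{dom}(\beta_1)\rightarrow\text{dom}(\beta_2)$ and $f_{21}:\text{dom}(\beta_2)\rightarrow\text{dom}(\beta_1)$ be the computable functions witnessing that the equivalence $\beta_1\equiv\beta_2$ preserves the formal inclusion relation, as in Definition \ref{def:Equivalence of numbering preserves strong inclusion }. The single fact driving every verification is the round-trip identity $f_{21}(f_{12}(b))\mathring{\sim}b$ for all $b\in\text{dom}(\beta_1)$, which lets us collapse any composite $f_{21}\circ f_{12}$ up to $\mathring{\sim}$.

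First I would define the formal inclusion on $\text{dom}(\beta_2)$ by pulling back along $f_{21}$:
\[
a\mathring{\subseteq}_2 b\iff f_{21}(a)\mathring{\subseteq}f_{21}(b).
\]
Reflexivity and transitivity are inherited from $\mathring{\subseteq}$, and $a\mathring{\subseteq}_2 b$ gives $\beta_1(f_{21}(a))\subseteq\beta_1(f_{21}(b))$, i.e. $\beta_2(a)\subseteq\beta_2(b)$; hence $\mathring{\subseteq}_2$ is a formal inclusion relation for $(\mathfrak{B},\beta_2)$. Next I would verify that $(\mathfrak{B},\beta_2,\mathring{\subseteq}_2)$ satisfies Definition \ref{def: Spreen Basis Text}. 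Conditions (1) and (2) are immediate: semi-decidability transfers by precomposing a $\beta_1$-realizer with $f_{21}$, and the density of computable points in non-empty basic sets is a purely set-theoretic statement about $\mathfrak{B}$, unchanged by the renumbering. For conditions (3) and (4) I would transport the witnessing functions, setting (with $f_{12}$ applied termwise to sequences)
\[
G_1'(n)=f_{12}(G_1(n)),\qquad G_2'(n,a_1,a_2)=f_{12}\bigl(G_2(n,f_{21}(a_1),f_{21}(a_2))\bigr).
\]

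The membership and containment clauses pass through directly because $\beta_2(f_{12}(\cdot))=\beta_1(\cdot)$. The $\boxed{\mathring{\sim}}$ and $\circledcirc$ clauses are where the round-trip identity does the work. The crucial observation is that $f_{12}$ is \emph{monotone at the level of names}: $b'\mathring{\subseteq}c'$ in $\text{dom}(\beta_1)$ implies $f_{12}(b')\mathring{\subseteq}_2 f_{12}(c')$, because the latter unwinds to $f_{21}(f_{12}(b'))\mathring{\subseteq}f_{21}(f_{12}(c'))$, which follows from $f_{21}(f_{12}(b'))\mathring{\sim}b'\mathring{\subseteq}c'\mathring{\sim}f_{21}(f_{12}(c'))$ by transitivity. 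Since the extensions of $\mathring{\subseteq}$ to sequences are defined purely through term-level inclusions, this monotonicity lifts to the termwise action of $f_{12}$ on sequences, so the $\boxed{\mathring{\sim}}$ clauses for $G_1',G_2'$ follow from those for $G_1,G_2$ (using in addition that $\mathring{\subseteq}_2$ unwinds the hypotheses $a_i\mathring{\subseteq}_2\hat a_i$ into $f_{21}(a_i)\mathring{\subseteq}f_{21}(\hat a_i)$ before invoking the $\boxed{\mathring{\sim}}$ clause of $G_2$). For the $\circledcirc$ clauses one feeds the witness $b'$ produced by the original non-vanishing condition through $f_{12}$ and sets $b=f_{12}(b')$, then uses monotonicity to turn $b'\mathring{\subseteq}G_1(n)$ into $b\mathring{\subseteq}_2 G_1'(n)$, and similarly for $G_2'$.

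Finally, to identify the two Spreen topologies I would exhibit the equivalence of the bases in the sense of Definition \ref{def:equiv Spreen bases}, taking the constant sequences
\[
h_{12}(n,b)=(f_{12}(b),f_{12}(b),\dots),\qquad h_{21}(n,b)=(f_{21}(b),f_{21}(b),\dots).
\]
The containment and $\boxed{\mathring{\sim}}$ clauses follow again from $\beta_2(f_{12}(b))=\beta_1(b)$ and the round-trip identity, while the $\circledcirc$ clause is trivial since $b_2:=f_{12}(b_1)$ satisfies $b_2\mathring{\subseteq}_2 f_{12}(b_1)$ by reflexivity; the maps $h_{21}$ are handled symmetrically. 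Proposition \ref{prop:Equiv Basis Spreen} then gives that the generated Spreen topologies are effectively equivalent, hence identical. The main obstacle I anticipate is not conceptual but bookkeeping: in the $\circledcirc$ clauses of $G_1'$ and $G_2'$ one must correctly thread the existential quantifier hidden in the sequence extension of $\mathring{\subseteq}$ and check that the membership $\nu(n)\in\beta_2(b)$ triggers the original implication through $f_{21}$, and this is the step I would write out in full.
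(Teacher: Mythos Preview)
Your proof is correct and follows essentially the same approach as the paper: you define $\mathring{\subseteq}_2$ by pullback along $f_{21}$, transport $G_1$ and $G_2$ via $f_{12}$ and $f_{21}$ exactly as the paper does, and conclude by invoking Proposition~\ref{prop:Equiv Basis Spreen} through the constant-sequence maps built from $f_{12}$ and $f_{21}$. Your explicit isolation of the monotonicity of $f_{12}$ (via the round-trip identity) is in fact slightly more detailed than the paper's own argument, which leaves this step implicit.
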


\begin{proof}
Consider the two functions $f_{12}$ and $f_{21}$ that witness for
$\beta_{1}\equiv\beta_{2}$, and that respect the formal inclusion
relation $\mathring{\subseteq}_{1}$ of $\text{dom}(\beta_{1})$.
We define a formal inclusion relation $\mathring{\subseteq}_{2}$
on $\text{dom}(\beta_{2})$ by 
\[
b_{1}\mathring{\subseteq}_{2}b_{2}\iff f_{21}(b_{1})\mathring{\subseteq}_{1}f_{21}(b_{2}).
\]

This is indeed a formal inclusion relation: it is transitive, reflexive,
and we do have the condition 
\[
b_{1}\mathring{\subseteq}_{2}b_{2}\implies\beta_{2}(b_{1})\subseteq\beta_{2}(b_{2}).
\]

We now show that $(\mathfrak{B},\beta_{2},\mathring{\subseteq}_{2})$
is a Spreen basis. We thus check conditions (\ref{enu: Cond SD }),
(\ref{enu:Cond Dense }), (\ref{enu:Cond X eff open }) and (\ref{enu:Cond Inter })
of Definition \ref{def: Spreen Basis Text}. 

(\ref{enu: Cond SD }) The fact that $\beta_{2}\le\nu_{SD,m}$ follows
directly from $\beta_{2}\equiv\beta_{1}$.

(\ref{enu:Cond Dense }) Computable points are dense: this follows
immediately as well. 

(\ref{enu:Cond X eff open }) We define a function $G_{1}^{2}$ that
witnesses that $X$ is effectively open for the topology generated
by $(\mathfrak{B},\beta_{2})$. We rely on the corresponding function
$G_{1}^{1}$ associated to the basis $(\mathfrak{B},\beta_{1},\mathring{\subseteq}_{1})$. 

Define $G_{1}^{2}$ as follows. Given the name $n$ of a point $x$,
compute $G_{1}^{1}(n)$. This gives a $\beta_{1}$-computable sequence
of basic open sets around $x$. The result of $G_{1}^{2}(n)$ is the
image of this sequence under $f_{12}$. In other words we set $G_{1}^{2}=f_{12}\circ G_{1}^{1}$.
We check conditions $\boxed{\mathring{\sim}}$ and $\circledcirc$. 

The computation of $G_{1}^{2}(n)$ does not depend on the given name
of $x$ up to formal inclusion. Indeed, if $\nu(n')=x$, then $G_{1}^{1}(n)\mathring{\sim}_{1}G_{1}^{1}(n')$.
Because $f_{21}\circ f_{12}$ preserves $\mathring{\subseteq}_{1}$,
we have $f_{21}\circ f_{12}\circ G_{1}^{1}(n)\mathring{\sim}_{1}f_{21}\circ f_{12}\circ G_{1}^{1}(n')$.
But, by definition of $\mathring{\subseteq}_{2}$, this is exactly
$f_{12}\circ G_{1}^{1}(n)\,\mathring{\sim}_{2}\,f_{12}\circ G_{1}^{1}(n')$,
which was what was to be proven. 

We now prove the non-vanishing condition denoted $\circledcirc$.
Given $x$ in $X$, we apply the non-vanishing condition for $G_{1}^{1}$
at $x$. This gives $b\in\text{dom}(\beta_{1})$ such that for any
$y=\nu(n)$ in $\beta_{1}(b)\cap X_{\nu}$, $b\mathring{\subseteq}_{1}G_{1}^{1}(n)$. 

Then $f_{12}(b)$ will show that $\circledcirc$ is satisfied for
$G_{1}^{2}$ at $x$. Indeed, given $y=\nu(n)$ in $\beta_{2}(b)\cap X_{\nu}$,
we have that $G_{1}^{2}(n)=f_{12}\circ G_{1}^{1}(n)$. But then $f_{12}(b)\mathring{\subseteq}_{2}f_{12}\circ G_{1}^{1}(n)$
follows, since this is by definition $f_{21}\circ f_{12}(b)\mathring{\subseteq}_{1}f_{21}\circ f_{12}\circ G_{1}^{1}(n)$,
which holds by $b\mathring{\subseteq}_{1}G_{1}^{1}(n)$ and because
$f_{21}\circ f_{12}$ preserves $\mathring{\subseteq}_{1}$. 

(\ref{enu:Cond Inter }) We construct a function $G_{2}^{2}$ that
computes intersections for $(\mathfrak{B},\beta_{2},\mathring{\subseteq}_{2})$,
based on the corresponding function $G_{2}^{1}$ associated to $(\mathfrak{B},\beta_{1},\mathring{\subseteq}_{1})$.
Define $G_{2}^{2}$ by 
\[
\forall n\in\text{dom}(\nu),\,\forall b_{1},b_{2}\in\text{dom}(\beta_{2}),\,G_{2}^{2}(n,b_{1},b_{2})=f_{12}\circ G_{2}^{1}(n,f_{21}(b_{1}),f_{21}(b_{2})).
\]
The fact that the function $G_{2}^{2}$ thus defined meets the required
criterions follows exactly as (\ref{enu:Cond X eff open }), and we
thus leave it out. 

We have thus shown that $(\mathfrak{B},\beta_{2},\mathring{\subseteq}_{2})$
is indeed a Spreen basis. The fact that $(\mathfrak{B},\beta_{1},\mathring{\subseteq}_{1})$
and $(\mathfrak{B},\beta_{2},\mathring{\subseteq}_{2})$ generate
the same Spreen topology then follows directly from Proposition \ref{prop:Equiv Basis Spreen},
as the functions $f_{21}$ and $f_{12}$ provide an equivalence of
basis between $(\mathfrak{B},\beta_{1},\mathring{\subseteq}_{1})$
and $(\mathfrak{B},\beta_{2},\mathring{\subseteq}_{2})$. 
\end{proof}

\subsection{\label{subsec:Effective-continuity-and preimage of basic open set }Effective
continuity and preimage of basic open sets}

One of the advantages of working with a basis $\mathfrak{B}_{2}$
for a topology $\mathcal{T}_{2}$ on a set $Y$ is the following classical
statement: 
\begin{fact}
For any topological space $(X,\mathcal{T}_{1})$ and function $f:X\rightarrow Y$,
$f$ is continuous if and only if the preimage by $f$ of a basic
open set of $\mathfrak{B}_{2}$ is open in $X$. 
\end{fact}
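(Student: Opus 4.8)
The plan is to prove the two implications separately, the forward one being immediate and the reverse one resting on the single observation that taking preimages commutes with arbitrary unions.

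First I would dispatch the easy direction. If $f$ is continuous, then since every basic open set $B\in\mathfrak{B}_2$ is in particular an element of $\mathcal{T}_2$ (a basis consists of open sets), its preimage $f^{-1}(B)$ is open in $X$ directly from the definition of continuity as ``preimages of open sets are open''. No further work is needed.

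For the reverse direction, I would assume that $f^{-1}(B)$ is open in $X$ for every $B\in\mathfrak{B}_2$, and show that $f^{-1}(O)$ is open for an arbitrary $O\in\mathcal{T}_2$. The key step is to exploit that $\mathfrak{B}_2$ is a basis: every open set $O$ of $Y$ can be written as a union $O=\bigcup_{B\in\mathcal{C}}B$ of basic open sets, for some subcollection $\mathcal{C}\subseteq\mathfrak{B}_2$. I would then invoke the elementary set-theoretic identity $f^{-1}\bigl(\bigcup_{B\in\mathcal{C}}B\bigr)=\bigcup_{B\in\mathcal{C}}f^{-1}(B)$, which exhibits $f^{-1}(O)$ as a union of the sets $f^{-1}(B)$. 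Each of these is open by hypothesis, and an arbitrary union of open sets is open in $X$; hence $f^{-1}(O)\in\mathcal{T}_1$. Since $O$ was arbitrary, $f$ is continuous.

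The ``hard part'' here is purely conceptual rather than technical: the whole content lies in recognizing that the basis lets one decompose any open set as a union, together with the distributivity of preimages over unions. Both facts are routine, so I expect no genuine obstacle — this is a standard verification, and I would present it compactly.
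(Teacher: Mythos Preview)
Your proof is correct and is the standard argument. The paper does not actually prove this Fact at all --- it is stated as a classical result and taken for granted, serving only as motivation for the effective versions that follow (Propositions \ref{prop:Spreen Preim basic set} and its companions).
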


When does an effective equivalent of this holds for the different
notions of effective bases we have introduced? For Lacombe bases,
always. For Nogina bases, this holds whenever the set $X$ is also
equipped with a Nogina topology (whereas the above classical statement
makes no reference to what the topology on $X$ should be). For Spreen
bases, this holds whenever $X$ is equipped with a Spreen basis \emph{and}
when the function that computes the preimage of a basic open set of
$Y$ respects the formal inclusion relation. We state and prove these
easy facts, as they will be useful later on. 
\begin{prop}
If $(X,\nu,\mathcal{T}_{1},\tau_{1})$ is a computable topological
space and $(Y,\mu)$ is equipped with a Lacombe basis $(\mathfrak{B}_{2},\beta_{2})$,
then a $(\nu,\mu)$-computable map is effectively continuous if and
only if $f^{-1}:\mathfrak{B}_{2}\rightarrow\mathcal{T}_{1}$ is $(\beta_{2},\tau_{1})$
computable. 
\end{prop}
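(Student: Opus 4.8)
The plan is to collapse the notion of effective continuity to a bare computability statement about $f^{-1}$, and then to transfer between the basic level and the Lacombe level using the two facts that preimages commute with unions and that unions are computable in $(X,\nu,\mathcal{T}_1,\tau_1)$. First I would note that both sides carry the actual inclusion as their formal inclusion relation: $X$ does by the standing convention, and the Lacombe topology $(\mathcal{T}_2,\tau_2)$ generated by $(\mathfrak{B}_2,\beta_2)$ introduces no nontrivial formal inclusion. Hence, by the proposition on using inclusion as a formal inclusion, $f$ is effectively continuous if and only if $f$ is classically continuous and $f^{-1}:\mathcal{T}_2\rightarrow\mathcal{T}_1$ is $(\tau_2,\tau_1)$-computable. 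So it suffices to prove that, for a $(\nu,\mu)$-computable $f$, this pair of conditions is equivalent to the $(\beta_2,\tau_1)$-computability of $f^{-1}:\mathfrak{B}_2\rightarrow\mathcal{T}_1$.

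For the forward direction I would use that every basic open set is a Lacombe set, i.e. $\beta_2\le\tau_2$, which was established in the proof of Proposition \ref{prop:Lacombe-topology}: a $\beta_2$-name $b$ of a basic open $B$ translates computably into the $\tau_2$-name given by the Lacombe code of the one-term family $(B)$. Precomposing a $(\tau_2,\tau_1)$-realizer of $f^{-1}$ with this translation yields a $(\beta_2,\tau_1)$-realizer of $f^{-1}$ on $\mathfrak{B}_2$. Classical continuity is part of the hypothesis here, so nothing more is needed.

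For the converse, the key algebraic fact is $f^{-1}\!\left(\bigcup_i B_i\right)=\bigcup_i f^{-1}(B_i)$. A $\tau_2$-name of an open set $O$ is a $(\beta_2)_{ce}$-name of a family $(B_i)_{i\in I}$ with $O=\bigcup_{i\in I}B_i$. Enumerating this family and applying the given $(\beta_2,\tau_1)$-realizer term by term produces a computable family of $\tau_1$-names for the sets $f^{-1}(B_i)$, whose union is $f^{-1}(O)$. I would then invoke the computability of $\bigcup$ on $(\mathcal{T}_1,\tau_1)$ from Definition \ref{def:MAIN DEF}, together with the lemma following that definition which permits feeding a $\tau_{1,ce}$-code in place of a genuine $\tau_1^{\text{id}_{\mathbb{N}}}$-sequence, to assemble a single $\tau_1$-name for $\bigcup_i f^{-1}(B_i)=f^{-1}(O)$; this is the sought $(\tau_2,\tau_1)$-realizer. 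Classical continuity of $f$ then comes for free: the $(\beta_2,\tau_1)$-computability already forces $f^{-1}(B)\in\mathcal{T}_1$ for every basic open $B$, so preimages of basic opens are open and the classical Fact recalled above yields continuity.

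The argument is short, and the only point requiring care is the bookkeeping in the converse: the Lacombe code is a code for a c.e., possibly empty or finite, index set, so the term-by-term application of the realizer followed by the union must be arranged to accept such codes rather than honest infinite sequences. This is precisely what the lemma after Definition \ref{def:MAIN DEF} handles, so no genuine obstacle arises; union-preservation of preimages does all the real work.
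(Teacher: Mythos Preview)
Your proof is correct and follows essentially the same approach as the paper: the paper's own proof consists of the single observation that $f^{-1}(\bigcup_i B_i)=\bigcup_i f^{-1}(B_i)$ together with the computability of unions in $(\mathcal{T}_1,\tau_1)$, which is exactly your converse direction. You are simply more thorough in spelling out the forward direction, the triviality of the formal-inclusion conditions, and the recovery of classical continuity, all of which the paper leaves implicit.
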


\begin{proof}
We have to check that the hypotheses are sufficient to compute $f^{-1}$,
seen as mapping Lacombe sets to open sets of $\mathcal{T}_{1}$. To
compute the preimage of a Lacombe set $L=\cup_{i\in\mathbb{N}}B_{i}$,
compute $f^{-1}(B_{i})$ for each $i$, then compute the union $\cup f^{-1}(B_{i})$
in $\mathcal{T}_{1}$, this can be done because $(\mathcal{T}_{1},\tau_{1})$
is a computable topological space. 
\end{proof}
\begin{prop}
If $(X,\nu,\mathcal{T}_{1},\tau_{1})$ is a computable topological
space coming from a Nogina basis $(\mathfrak{B}_{1},\beta_{1})$,
and if $(Y,\mu,\mathcal{T}_{2},\tau_{2})$ is a computable topological
space coming from a Nogina basis $(\mathfrak{B}_{2},\beta_{2})$,
then a $(\nu,\mu)$-computable map is effectively continuous if and
only if $f^{-1}:\mathfrak{B}_{2}\rightarrow\mathcal{T}_{1}$ is $(\beta_{2},\tau_{1})$-computable. 
\end{prop}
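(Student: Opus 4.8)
The plan is to prove the two implications separately: the forward direction is an immediate composition of realizers, while the backward direction is the effective incarnation of the classical Fact stated just above.

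For the forward direction, suppose $f$ is effectively continuous. Recall from the proof of Proposition \ref{prop:The-effective-topology of a basis} (point~1) that in any Nogina topology each basic open set is itself an effective open set, i.e. $\beta_2 \le \tau_2$. Thus, given a $\beta_2$-name of a basic open $B \in \mathfrak{B}_2$, I would first compute a $\tau_2$-name of $B$, and then apply the realizer witnessing that $f^{-1}:\mathcal{T}_2 \to \mathcal{T}_1$ is $(\tau_2,\tau_1)$-computable. This produces a $\tau_1$-name of $f^{-1}(B)$, showing that $f^{-1}:\mathfrak{B}_2 \to \mathcal{T}_1$ is $(\beta_2,\tau_1)$-computable. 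Since both spaces carry the actual inclusion as formal inclusion, the monotonicity clause of effective continuity is automatically satisfied, so there is nothing further to check here.

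For the backward direction, assume $f^{-1}:\mathfrak{B}_2 \to \mathcal{T}_1$ is $(\beta_2,\tau_1)$-computable. First I would establish classical continuity: the hypothesis gives $f^{-1}(B) \in \mathcal{T}_1$ for every $B \in \mathfrak{B}_2$, so for an arbitrary open set $O = \bigcup_i B_i$ one has $f^{-1}(O) = \bigcup_i f^{-1}(B_i) \in \mathcal{T}_1$; this is precisely the classical Fact quoted above. Next I would produce, from a $\tau_2$-name of an effective open set $O$ of $Y$, a $\tau_1$-name of $f^{-1}(O)$. By Definition \ref{def: top gen by eff basis}, a $\tau_2$-name of $O$ is an encoded pair $(n,m)$ where $n$ witnesses semi-decidability of $O$ and $m$ codes a function sending a $\mu$-name of a point $y \in O$ to a $\beta_2$-name of a basic open $B$ with $y \in B \subseteq O$. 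The semi-decidability component of the target $\tau_1$-name is obtained by composing a realizer of $f$ with the decider $n$, since $f^{-1}(O)$ is semi-decidable whenever $O$ is and $f$ is $(\nu,\mu)$-computable.

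The core of the backward direction is building the Nogina function of the $\tau_1$-name of $f^{-1}(O)$. Given a $\nu$-name $p$ of a computable point $x \in f^{-1}(O)$, I would compute a $\mu$-name of $f(x)$ via the realizer of $f$; apply $m$ to obtain a $\beta_2$-name of a basic open $B$ with $f(x) \in B \subseteq O$; feed this name to the hypothesized $(\beta_2,\tau_1)$-realizer to obtain a $\tau_1$-name of $f^{-1}(B)$; and finally apply the embedded Nogina function of that $\tau_1$-name to the original name $p$, which is legitimate since $x \in f^{-1}(B)$ because $f(x) \in B$. This yields a $\beta_1$-name of a basic open $B_1$ with $x \in B_1 \subseteq f^{-1}(B) \subseteq f^{-1}(O)$, exactly as required. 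I expect the main obstacle to be purely bookkeeping: the $\tau_1$-name of $f^{-1}(B)$ is itself a Nogina-style pair, so one must unpack it and pass the original point-name into its second component, and one must track the nested inclusions $B_1 \subseteq f^{-1}(B) \subseteq f^{-1}(O)$ to confirm the produced set is contained in $f^{-1}(O)$.
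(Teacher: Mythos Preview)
Your proposal is correct and follows essentially the same approach as the paper. In fact you are more thorough: the paper only sketches the backward direction and stops at ``compute the preimage of this basic open set by $f^{-1}$'', leaving implicit the unpacking step (applying the Nogina function inside the resulting $\tau_1$-name to the original point-name $p$) that you spell out explicitly; you also supply the forward direction and the classical-continuity check, both of which the paper omits.
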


\begin{proof}
Suppose that $f^{-1}:\mathfrak{B}_{2}\rightarrow\mathcal{T}_{1}$
is $(\beta_{2},\tau_{1})$-computable. Let $O_{2}$ be a Nogina open
of $Y$, given by a pair $(A_{2},F_{2})$. We compute its preimage
$O_{1}$ as a pair $(A_{1},F_{1})$. $A_{1}$ is just $f^{-1}(A_{2})$
given as a semi-decidable set. To compute $F_{1}(x)$, proceed as
follows: compute $f(x)$, then $F_{2}(f(x))$, which gives a basic
open set around $f(x)$, then compute the preimage of this basic open
set by $f^{-1}$. This indeed provides a function $F_{1}$ for $O_{1}$. 
\end{proof}
\begin{prop}
\label{prop:Spreen Preim basic set}If $(X,\nu,\mathcal{T}_{1},\tau_{1},\mathring{\subseteq}_{1})$
is a computable topological space coming from a Spreen basis $(\mathfrak{B}_{1},\beta_{1},\mathring{\subseteq}_{1})$,
and if $(Y,\mu,\mathcal{T}_{2},\tau_{2},\mathring{\subseteq}_{2})$
is defined thanks to a Spreen basis $(\mathfrak{B}_{2},\beta_{2},\mathring{\subseteq}_{2})$,
then a $(\nu,\mu)$-computable map is effectively continuous if $f^{-1}:\mathfrak{B}_{2}\rightarrow\mathcal{T}_{1}$
is $(\beta_{2},\tau_{1})$-computable and if it can be computed by
a map increasing with respect to the formal inclusions. 
\end{prop}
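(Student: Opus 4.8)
The plan is to verify the three requirements in Definition~\ref{def: eff C0 text def}: classical continuity, $(\tau_2,\tau_1)$-computability of $f^{-1}:\mathcal{T}_2\to\mathcal{T}_1$, and the fact that this computation can be realized by a map increasing for the formal inclusions. Classical continuity is immediate: the hypothesis gives that $f^{-1}(\beta_2(b))$ is open in $X$ for every basic set $\beta_2(b)$, and since $\mathfrak{B}_2$ generates $\mathcal{T}_2$ this forces $f$ to be continuous. So essentially all the content is to produce, uniformly from a $\tau_2$-name of an open set $O_2$ encoding a pair $(A_2,F_2)$, a $\tau_1$-name of $f^{-1}(O_2)$ encoding a pair $(A_1,F_1)$, and to check that the resulting map $h:\text{dom}(\tau_2)\to\text{dom}(\tau_1)$ is increasing.

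For the construction, $A_1=f^{-1}(O_2)\cap X_\nu$ is semi-decidable: given a $\nu$-name $n$, run the $(\nu,\mu)$-realizer $\hat{f}$ of $f$ to obtain a $\mu$-name of $f(\nu(n))$ and semi-decide membership in $A_2$. For $F_1$, given the name $n$ of a point $x=\nu(n)\in f^{-1}(O_2)$, I would compute $m=\hat{f}(n)$, apply $F_2$ to obtain a sequence $(v_k)_{k}$ of $\beta_2$-names of basic sets around $f(x)$ contained in $O_2$, then for each $k$ use the hypothesized $(\beta_2,\tau_1)$-computability of $f^{-1}$ to compute a $\tau_1$-name of $f^{-1}(\beta_2(v_k))$, encoding a pair $(A_1^{v_k},F_1^{v_k})$; finally dovetail the sequences $F_1^{v_k}(n)$ over all $k$. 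Every basic set so produced contains $x$ (since $f(x)\in\beta_2(v_k)$, so $x\in f^{-1}(\beta_2(v_k))$) and lies in $f^{-1}(\beta_2(v_k))\subseteq f^{-1}(O_2)$, as required.

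The $\boxed{\mathring{\sim}}$ condition and the monotonicity of $h$ both follow by the same chaining argument used in the proof of Theorem~\ref{thm: Main THM text }: name-independence of $F_2$ (and of the inner $F_1^{v_k}$) together with the hypothesis that $f^{-1}$ on $\mathfrak{B}_2$ is computed by a map increasing for $\mathring{\subseteq}_2,\mathring{\subseteq}_1$ lets me match, for every entry $u$ of the dovetailed output associated to one name, a formally larger entry $u'$ associated to the other, while the permutations introduced by dovetailing do not affect $\mathring{\subseteq}$ on sequences. For monotonicity of $h$, if a $\tau_2$-name of $O_2$ is formally included in that of $O_2'$, then applying the inclusion at $q=\hat{f}(p)$ (a computable point of $O_2$) yields $f^{-1}(O_2)\cap X_\nu\subseteq f^{-1}(O_2')\cap X_\nu$, and the increasingness of $f^{-1}$ on $\mathfrak{B}_2$ propagates $F_2(\hat{f}(p))\mathring{\subseteq}_2 F_2'(\hat{f}(p))$ to $F_1(p)\mathring{\subseteq}_1 F_1'(p)$.

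The step I expect to be the main obstacle is the non-vanishing condition $\circledcirc$ for $F_1$, since it must hold at \emph{every} point of $f^{-1}(O_2)$, including non-computable ones, and hence cannot be checked by direct computation; the strategy is to transport witnesses through $f^{-1}$. Fix $x\in f^{-1}(O_2)$; applying $\circledcirc$ for $O_2$ to $f(x)$ produces $b_2\in\text{dom}(\beta_2)$ with $f(x)\in\beta_2(b_2)$ and $b_2\mathring{\subseteq}_2 F_2(q)$ for every computable $\mu(q)\in\beta_2(b_2)$. Feeding $b_2$ to the hypothesized $f^{-1}$ yields a $\tau_1$-name of $f^{-1}(\beta_2(b_2))$ with function $F_1^{b_2}$; applying $\circledcirc$ for this open set to $x$ gives $b_1\in\text{dom}(\beta_1)$ with $x\in\beta_1(b_1)$ and $b_1\mathring{\subseteq}_1 F_1^{b_2}(n)$ for all computable $\nu(n)\in\beta_1(b_1)$. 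I claim $b_1$ witnesses $\circledcirc$ for $F_1$ at $x$. By Spreen condition~(\ref{enu:Cond Dense }) the nonempty $\beta_1(b_1)$ contains a computable point, and the relation $b_1\mathring{\subseteq}_1 F_1^{b_2}(\cdot)$ then forces $\beta_1(b_1)\subseteq f^{-1}(\beta_2(b_2))$, so any computable $y=\nu(n)\in\beta_1(b_1)$ satisfies $f(y)\in\beta_2(b_2)$; thus $b_2\mathring{\subseteq}_2 F_2(\hat{f}(n))$ gives some $v_{k_0}$ in $F_2(\hat{f}(n))$ with $b_2\mathring{\subseteq}_2 v_{k_0}$, and increasingness of $f^{-1}$ on $\mathfrak{B}_2$ yields $F_1^{b_2}(n)\mathring{\subseteq}_1 F_1^{v_{k_0}}(n)$. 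Since $F_1^{v_{k_0}}(n)$ is one of the subsequences dovetailed into $F_1(n)$, transitivity gives $b_1\mathring{\subseteq}_1 F_1^{b_2}(n)\mathring{\subseteq}_1 F_1^{v_{k_0}}(n)\mathring{\subseteq}_1 F_1(n)$, which is exactly what $\circledcirc$ demands.
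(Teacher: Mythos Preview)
Your proof is correct and follows the same strategy as the paper's: compute $F_2$ at $f(x)$, pull back each resulting basic set via the assumed $(\beta_2,\tau_1)$-computable $f^{-1}$, and verify $\boxed{\mathring{\sim}}$, $\circledcirc$, and monotonicity by chaining the corresponding properties of $F_2$ and of $f^{-1}$. You are in fact more explicit than the paper, which stops at the sequence of $\tau_1$-names of the $f^{-1}(\beta_2(v_k))$ and uses $f^{-1}(B_2)$ itself as the $\circledcirc$-witness; your extra step of evaluating the inner $F_1^{v_k}$ at $n$ and dovetailing, together with your two-stage $\circledcirc$ argument producing a genuine $\beta_1$-witness $b_1$, supplies details the paper leaves implicit.
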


\begin{proof}
Suppose that $f^{-1}:\mathfrak{B}_{2}\rightarrow\mathcal{T}_{1}$
is $(\beta_{2},\tau_{1})$-computable, and that it is computed by
a map increasing for $\mathring{\subseteq}$. Let $O_{2}$ be a Spreen
open of $Y$, given by a pair $(A_{2},F_{2})$. We compute its preimage
$O_{1}$ as a pair $(A_{1},F_{1})$. $A_{1}$ is just $f^{-1}(A_{2})$
given as a semi-decidable set. To compute $F_{1}(x)$, proceed as
follows: compute $f(x)$, then $F_{2}(f(x))$, which gives a sequence
of basic open sets around $f(x)$, then compute the preimage of this
sequence as the sequence of preimages of each basic open set by $f^{-1}$.
This provides a function $F_{1}$ for $O_{1}$, we prove that it satisfies
the required conditions. 

The computation of $F_{1}(x)$ does not depend on a given name of
$x$, up to formal inclusion, this follows directly from the fact
that this holds for $F_{2}$ and from the fact that $f^{-1}:\mathfrak{B}_{2}\rightarrow\mathcal{T}_{1}$
preserves formal inclusion. 

The $\circledcirc$ condition follows as well. Let $x$ be a point
of $A_{1}$. Apply the $\circledcirc$ condition for $F_{2}$ at $f(x)$,
this gives a neighborhood $B_{2}$ of $f(x)$. We then take $B_{1}=f^{-1}(B_{2})$
as a witness of the $\circledcirc$ condition at $x$. For any $y$
in $B_{1}$, $f(y)$ belongs to $B_{2}$, and thus $F_{2}(y)$ formally
contains $B_{2}$. Thus $f^{-1}(F_{2}(y))$ formally contains $B_{1}$,
as $f^{-1}$ respects formal inclusion. 
\end{proof}

\subsection{\label{subsec:Lacombe-bases-define-Spreen-Bases}Lacombe bases define
Spreen bases}

The definition of a Lacombe basis on a subnumbered set $(X,\nu)$
does not have to change between the $\nu$ onto/non-onto cases. In
fact, for a Lacombe basis to define a computable topology, \emph{we
do not even have to suppose that the $\nu$-computable points are
dense}, i.e. meet every non-empty basic set. This allows Grubba and
Weihrauch to talk about ``pointless topology'' in \cite{Weihrauch2009ElementaryCT},
and Bauer to talk about a ``point-free approach'' in \cite{Bauer2000}. 

This is not the case for Spreen topologies, we can reconstruct open
sets only thanks to their intersection with the set of computable
points, it is in this case necessary to suppose that computable points
are dense to prove that a Spreen basis defines a computable topology. 
\begin{lem}
\label{Lem: Lacombe basis is Spreen basis} Let $(X,\nu)$ be a subnumbered
set. Any effective Lacombe basis on $(X,\nu)$ is a Spreen basis as
soon as computable points are dense. 
\end{lem}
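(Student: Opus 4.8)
The plan is to take an arbitrary Lacombe basis $(\mathfrak{B},\beta)$ on $(X,\nu)$, equip it with the finest possible formal inclusion relation — namely equality, $a\mathring{\subseteq}b \iff a=b$ — and then verify the four conditions of Definition \ref{def: Spreen Basis Text} one by one. Since equality is trivially a preorder and trivially implies $\beta(n)\subseteq\beta(m)$, it is a legitimate formal inclusion relation. The crucial observation is that with $\mathring{\subseteq}$ taken to be equality, the induced equivalence $\mathring{\sim}$ is also equality, so the boxed conditions $\boxed{\mathring{\sim}}$ become nearly vacuous: they only demand that when $\nu(n)=\nu(m)$ the outputs be literally equal sequences, which I can arrange by making all constructed functions depend only on the \emph{name} in a fixed algorithmic way — but more simply, I should be careful here, since the Lacombe data does not obviously produce name-independent outputs. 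This is where the density hypothesis and the structure of Lacombe bases must be leveraged.

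\textbf{Checking the conditions.} Condition (\ref{enu: Cond SD }), uniform semi-decidability $\beta\le\nu_{SD,m}$, is immediate since it is part of the Lacombe basis definition (Definition \ref{def: Lacombe-basis}, condition 1). Condition (\ref{enu:Cond Dense }), that each non-empty basic set contains a $\nu$-computable point, is exactly the hypothesis ``computable points are dense'' added in the lemma's statement. For condition (\ref{enu:Cond X eff open }), I must produce $G_1$: given a $\nu$-name $n$ of a point $x$, I should output a sequence of $\beta$-names of basic sets all containing $x$. The natural move is to use the Lacombe decomposition of $X$ as a computable union of basic sets (condition 2 of the Lacombe definition) together with semi-decidability: enumerate all basic sets $\beta(k)$ appearing in the fixed decomposition of $X$, and for each one test semi-decidably whether $x=\nu(n)\in\beta(k)$; output those that pass. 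Since $\beta\le\nu_{SD,m}$, membership of a computable point in a basic set is semi-decidable, so this dovetailing produces a well-defined computable sequence. The $\circledcirc$ condition then requires, for every $x\in X$ (not only computable ones), a witnessing basic set $b$ with $x\in\beta(b)$ such that every computable point of $\beta(b)$ ``sees'' $b$ in its output — and because $\mathring{\subseteq}$ is equality, $b\mathring{\subseteq}G_1(n)$ simply means $b$ literally appears in the sequence $G_1(n)$. This holds because $\beta(b)$ is among the sets in the decomposition of $X$, so any computable point inside it will enumerate $b$.

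\textbf{The main obstacle} will be condition (\ref{enu:Cond Inter }), constructing $G_2$ to compute intersections, and in particular reconciling the Lacombe intersection procedure with the name-independence requirement $\boxed{\mathring{\sim}}$ under equality inclusion. The Lacombe basis supplies, from names $b_1,b_2$, a $\beta_{ce}$-name of a sequence $(B_i)_{i\ge 3}$ with $\beta(b_1)\cap\beta(b_2)=\bigcup_i B_i$; this sequence depends only on $b_1,b_2$ and not on any point-name $n$, which is exactly what makes the equality version of $\boxed{\mathring{\sim}}$ tractable. So I would define $G_2(n,b_1,b_2)$ by running this Lacombe procedure and filtering to those $B_i$ that contain $\nu(n)$ (tested semi-decidably), producing a sequence whose members all contain $x=\nu(n)$ and are contained in $\beta(b_1)\cap\beta(b_2)$. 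Because the underlying Lacombe sequence is determined by $b_1,b_2$ alone, the filtered output depends on $n$ only through which $B_i$ contain $\nu(n)$, and since equal $\nu$-values pass the same membership tests, the outputs are equal up to $\mathring{\sim}$ (equality, extended to sequences as equality of the sets they enumerate). The $\circledcirc$ condition for $G_2$ then follows: for $x\in\beta(b_1)\cap\beta(b_2)$ there is some $B_{i_0}=\beta(b_3)$ in the Lacombe decomposition containing $x$, and every computable point in $\beta(b_3)$ will, running $G_2$, enumerate precisely this $b_3$, giving $b_3\mathring{\subseteq}G_2(m,b_1,b_2)$. The final clause of the lemma — that there is a process turning a Lacombe-set code into a Spreen-open name — follows by packaging the semi-decidable code for $O\cap X_\nu$ together with the function $F$ that, on a computable point, runs $G_1$-style filtering against the Lacombe decomposition of $O$; the details are routine once the basis-level $G_1,G_2$ are in place, so I would relegate them to a brief closing paragraph.
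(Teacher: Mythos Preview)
Your proposal is correct and follows essentially the same route as the paper: take equality as the formal inclusion, build $G_1$ by filtering the fixed Lacombe decomposition of $X$ against semi-decidable membership, build $G_2$ by filtering the Lacombe intersection decomposition of $\beta(b_1)\cap\beta(b_2)$, and observe that with $\mathring{\subseteq}$ equal to $=$, the $\boxed{\mathring{\sim}}$ conditions reduce to the outputs being permutations of one another while the $\circledcirc$ witnesses are simply the basic sets appearing in the relevant Lacombe decomposition. Your closing remark about converting Lacombe-set codes into Spreen-open names actually belongs to the \emph{next} lemma (Lemma~\ref{lem:Lacombe set into Spreen set }) rather than this one, but the argument you sketch for it is also the paper's.
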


\begin{proof}
Let $(\mathfrak{B},\beta)$ be a Lacombe basis. First, we define a
formal inclusion for this basis: put $n\mathring{\subseteq}m\iff n=m$
(and so we also have $n\mathring{\sim}m\iff n=m$). We then follow
the points of Definition \ref{def: Spreen Basis Text}. 

(\ref{enu: Cond SD }) The elements of a Lacombe basis are indeed
semi-decidable. 

(\ref{enu:Cond Dense }) That computable points are dense is an assumption
we have added. 

(\ref{enu:Cond X eff open }) We define a function $G_{1}$ that will
prove that $X$ is effectively open. By assumption, $X$ can be expressed
as a Lacombe set: $X=\bigcup_{b\in A}\beta(b)$, where $A$ is a r.e.
subset of $\mathbb{N}$. $G_{1}$ is computed as follow: given the
name $n$ of a point $x$ of $X$, it lists all the basic open sets
of $(\beta(b))_{b\in A}$ that contain $x$. This is of course computable. 

Condition $\boxed{\mathring{\sim}}$ is easily checked: given two
different names for $x$, the sequences produced by $G_{1}$ on these
names will be identical up to a permutation, and so $\mathring{\sim}$-equivalent. 

Condition $\circledcirc$ is also easy to check. Let $x$ be a point
of $X$. Since $X=\bigcup_{b\in A}\beta(b)$, $x$ belongs to $\beta(b_{1})$,
for some $b_{1}$ in $A$. This number $b_{1}$ shows that $\circledcirc$
is satisfied: for any $y=\nu(m)$ in $\beta(b_{1})$, the list given
by $G_{1}(m)$ will contain $b_{1}$, and thus $b_{1}\mathring{\subseteq}G_{1}(m)$. 

(\ref{enu:Cond Inter }) We now have to define a function $G_{2}$
that computes intersections. Of course we use the function $\mathcal{A}$
that computes intersections for $(\mathfrak{B},\beta)$ as a Lacombe
basis. Given $b_{1}$, $b_{2}$ and $n$ such that $\nu(n)\in\beta(b_{1})\cap\beta(b_{2})$,
$G_{2}$ proceeds as follows: it decomposes $\beta(b_{1})\cap\beta(b_{2})$
as a Lacombe set, using $\mathcal{A}$, it obtains a sequence $(b_{k})_{k\ge3}$,
it then extracts from this sequence all basic open sets that contain
$x$ (using the fact that they are uniformly semi-decidable), outputting
$(b_{k})_{k\ge3\,\&\,\nu(n)\in\beta(b_{k})}$ (in a certain order). 

We now show that $G_{2}$ thus defined satisfies the required conditions
of Spreen bases. 

Condition $\boxed{\mathring{\sim}}$: given another name $m$ of $x=\nu(n)$,
and names $b_{1}$ and $b_{2}$ for $\beta(b_{1})$ and $\beta(b_{2})$,
the list that $G_{2}$ will output for $G_{2}(m,b_{1},b_{2})$ can
only be a permutation of that obtained for $G_{2}(n,b_{1},b_{2})$,
and thus they are $\mathring{\sim}$-equivalent. (Note that in general
we have to check that $G_{2}$ is increasing for $\mathring{\subseteq}$
in its last two variables, but since here $\mathring{\subseteq}$
is the equality relation, $=$, this is automatic.)

Condition $\circledcirc$. Let $b_{1}$ and $b_{2}$ be elements of
$\text{dom}(\beta)$, and $x$ be any element of $\beta(b_{1})\cap\beta(b_{2})$.
When computing the intersection $\beta(b_{1})\cap\beta(b_{2})$, as
a Lacombe set $\bigcup_{b\in A}\beta(b)$, thanks to $\mathcal{A}$,
we know that some $\beta(b_{3})$ with $b_{3}\in A$ must contain
$x$. In this case, $b_{3}$ will witness condition $\circledcirc$.
Indeed, given any computable point $y=\nu(m)$ in $\beta(b_{3})$,
the computation of $G_{2}(m,b_{1},b_{2})$ will yield a sequence that
contains $b_{3}$, and thus we indeed have $b_{3}\mathring{\subseteq}G_{2}(m,b_{1},b_{2})$. 
\end{proof}
We can then describe the Spreen topology defined thanks to a Lacombe
basis: the effective open sets are composed of a semi-decidable set
of computable points, and of a function that, given a point $x$,
outputs a sequence of basic open sets that all contain $x$. This
is a Lacombe set ``centered at $x$''. 

Note that the formal inclusion relation, as extended to these effective
open sets, exactly says the following: a name $n_{1}$ for an open
set $O_{1}$, which encodes a pair $(A_{1},F_{1})$, is formally included
in $n_{2}$, which encodes $O_{2}$ thanks to a pair $(A_{2},F_{2})$,
if for any point $x$ of $A_{1}$, the computation of $F_{1}$ at
$x$ yields a sequence of names of basic open sets which is a subsequence
of the sequence computed by $F_{2}$ at $x$. And thus $n_{1}\mathring{\sim}n_{2}$
if and only if at each point, the sequences computed by $F_{1}$ and
$F_{2}$ are just permutations one of the other. 
\begin{lem}
\label{lem:Lacombe set into Spreen set }Let $(\mathfrak{B},\beta)$
be a Lacombe basis on $(X,\nu)$, assume that the $\nu$-computable
points are dense. Then any Lacombe set for the Lacombe topology generated
by $(\mathfrak{B},\beta)$ is an effective open set for the Spreen
topology generated by $(\mathfrak{B},\beta,=)$ seen as a Spreen basis.
And this statement is uniform: there is a process that transforms
the code of a Lacombe set into the code of this same set as a Spreen
open set.
\end{lem}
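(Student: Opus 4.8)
The plan is to invoke Lemma \ref{Lem: Lacombe basis is Spreen basis}, which already equips $(\mathfrak{B},\beta)$ with the formal inclusion relation $\mathring{\subseteq}$ given by equality and thereby makes it a Spreen basis; it then only remains to upgrade a $\beta_{ce}$-name of a Lacombe set into a $\tau$-name in the sense of Definition \ref{def: Spreen Top text }, and to do so effectively. So I would fix a Lacombe set $L=\bigcup_{j\in W_i}\beta(j)$ presented by an index $i$ with $W_i\subseteq\text{dom}(\beta)$, and construct from $i$ the two components of the desired pair $\langle n,m\rangle$.

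First I would produce $n$ and $m$. For $n$: the set $L\cap X_\nu$ is semi-decidable, since on input a $\nu$-name $p$ one dovetails the enumeration of $W_i$ against the uniform semi-decision procedures for $\nu(p)\in\beta(j)$ and halts as soon as one confirmation occurs; this yields a $\nu_{SD}$-code $n$ of $L\cap X_\nu$ uniformly from $i$. For $m$: I would define $F(p)$ to enumerate, again by dovetailing, every $j\in W_i$ with $\nu(p)\in\beta(j)$, padding with an already-found index (one exists whenever $\nu(p)\in L$) so that the output is an infinite sequence in $\text{dom}(\beta)^{\mathbb{N}}$. By construction each enumerated $\beta(j)$ contains $\nu(p)$ and, being one of the sets in the union defining $L$, satisfies $\beta(j)\subseteq L$, which is the first requirement of Definition \ref{def: Spreen Top text }.

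Then I would verify the two decorated conditions. The condition $\boxed{\mathring{\sim}}$ is immediate because the formal inclusion is equality: if $\nu(p)=\nu(p')$ then $F(p)$ and $F(p')$ enumerate the same set of indices $\{j\in W_i:\nu(p)\in\beta(j)\}$, possibly in different dovetailing orders, hence $F(p)\mathring{\sim}F(p')$ under the extension of $\mathring{\sim}$ to sequences. The condition $\circledcirc$ is where the hypothesis that $L$ is a union over $W_i$ does the work, and where I expect the only real care to be needed: given any $x\in L$ (computable or not), pick $b\in W_i$ with $x\in\beta(b)$; then for every computable point $\nu(m)\in\beta(b)$ one has $b\in W_i$ and $\nu(m)\in\beta(b)$, so $b$ is eventually enumerated into $F(m)$, giving $b\mathring{\subseteq}F(m)$ as required.

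Finally, all of the above is uniform in $i$, so the map sending a $\beta_{ce}$-code of $L$ to the pair $\langle n,m\rangle$ is computable, which is exactly the asserted uniform process. The main obstacle is simply ensuring $\circledcirc$ holds for all points of $X$ and not only the computable ones; this is handled precisely because the Lacombe presentation of $L$ supplies, around each such point, a basic set $\beta(b)$ whose index $b\in W_i$ is guaranteed to reappear in the output of $F$ at every computable point it contains.
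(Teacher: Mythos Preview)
Your proof is correct and follows essentially the same approach as the paper: construct the semi-decidable code by dovetailing, define $F(p)$ to enumerate those basic indices in the presentation of $L$ whose sets contain $\nu(p)$, and verify $\boxed{\mathring{\sim}}$ and $\circledcirc$ exactly as you do. Your explicit padding step and discussion of uniformity are in fact slightly more careful than the paper's own write-up.
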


The proof is a standard verification. 
\begin{proof}
Recall that $(\mathfrak{B},\beta)$ can be seen as a Spreen basis
by Lemma \ref{Lem: Lacombe basis is Spreen basis}, with the formal
inclusion relation $n\mathring{\subseteq}m\iff n=m$. 

Consider a Lacombe set $L=\bigcup_{i\in\mathbb{N}}\beta(b_{i})$,
where $(b_{i})_{i\in\mathbb{N}}$ is a computable sequence. We transform
it into a Spreen open set. First it is easy to obtain the code of
$L\cap X_{\nu}$ as a semi-decidable set. Then we define a function
$F$ which maps a point $x=\nu(n)$ to a sequence of basic open sets
that contain it and are contained in $L$. Just define $F(n)$ to
be the sequence $\{b_{i}\,\vert\,\nu(n)\in\beta(b_{i})\}$, which
is enumerable because the basic open sets are uniformly semi-decidable. 

Now we ensure conditions denoted $\boxed{\mathring{\sim}}$ and $\circledcirc$. 

If $\nu(n)=\nu(m)$, then the sequences obtained for $F(n)$ and $F(m)$
are just permutation one of the other, and so they are $\mathring{\sim}$-equivalent,
thus $\boxed{\mathring{\sim}}$ is satisfied.

Consider now a point $x$ of $L=\bigcup_{i\in\mathbb{N}}\beta(b_{i})$.
There must be $i$ such that $x$ belongs to $\beta(b_{i})$. Then,
for any $n$ in $\text{dom}(\nu)$ such that $\nu(n)\in\beta(b_{i})$,
the computation of $F(n)$ will yield a sequence which contains $b_{i}$.
This shows that $b_{i}\mathring{\subseteq}F(n)$, and thus that the
condition $\circledcirc$ is satisfied. 
\end{proof}
As before, this topology can have more effective open sets than the
Lacombe topology. 
\begin{prop}
Suppose that a function $f:X\rightarrow Y$ between computable topological
spaces equipped with Lacombe bases is effectively continuous. Suppose
that computable points are dense in both spaces, so that we can equip
$X$ and $Y$ with Spreen topologies, by Proposition \ref{Lem: Lacombe basis is Spreen basis}.
Then the function $f$ is also effectively continuous with respect
to the Spreen topologies on $X$ and $Y$, and it can be computed
by a function that respects the formal inclusion relation. 
\end{prop}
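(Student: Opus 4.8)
The plan is to reduce the statement to Proposition \ref{prop:Spreen Preim basic set}, which guarantees that a $(\nu,\mu)$-computable map between spaces equipped with Spreen bases is effectively continuous as soon as the preimage operation $f^{-1}:\mathfrak{B}_{2}\rightarrow\mathcal{T}_{1}$ is computable on basic open sets and increasing for the formal inclusions. By Lemma \ref{Lem: Lacombe basis is Spreen basis}, the density of computable points is exactly what lets us regard both Lacombe bases $(\mathfrak{B}_{1},\beta_{1})$ and $(\mathfrak{B}_{2},\beta_{2})$ as Spreen bases, in each case with the formal inclusion relation taken to be equality, i.e. $b\mathring{\subseteq}b'\iff b=b'$. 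So it suffices to verify the two hypotheses of Proposition \ref{prop:Spreen Preim basic set}; as in the other propositions of this subsection, $f$ is a $(\nu,\mu)$-computable map.

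First I would build the preimage map on basic sets. Since $f$ is effectively continuous for the Lacombe topologies, its preimage function maps Lacombe sets of $Y$ to Lacombe sets of $X$ computably. Given a $\beta_{2}$-name $b$ of a basic open set $\beta_{2}(b)$, I first read $\beta_{2}(b)$ as a Lacombe set: it is the union of the single basic set $\beta_{2}(b)$, so a Lacombe name is obtained uniformly from $b$. Applying the Lacombe preimage operation yields a Lacombe name of $f^{-1}(\beta_{2}(b))$ in $X$, and Lemma \ref{lem:Lacombe set into Spreen set } then transforms this Lacombe name into a name of the same set as a Spreen open set of $X$, again uniformly. Composing these three computable steps gives a $(\beta_{2},\tau_{1})$-realizer for $f^{-1}:\mathfrak{B}_{2}\rightarrow\mathcal{T}_{1}$, where $\tau_{1}$ is the Spreen subnumbering on $X$.

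It then remains to check that this realizer is increasing for the formal inclusions, and this is immediate: on the domain side the formal inclusion is equality, so the only instances of $b\mathring{\subseteq}_{2}b'$ are the diagonal ones with $b=b'$, on which the realizer (being an ordinary function of its input) returns identical Spreen names; reflexivity of $\mathring{\subseteq}_{1}$ then supplies the required monotonicity. With both hypotheses verified, Proposition \ref{prop:Spreen Preim basic set} yields that $f$ is effectively continuous for the Spreen topologies and is computed by a map respecting the formal inclusion relations, which is exactly the claim.

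I do not expect a genuine obstacle here: the mathematical content has already been isolated into Lemma \ref{lem:Lacombe set into Spreen set } (the uniform Lacombe-to-Spreen conversion) and Proposition \ref{prop:Spreen Preim basic set}, while the equality formal inclusion dissolves the one condition that is substantive in the general Spreen setting. The only point needing care is bookkeeping, namely confirming that reading a single basic set as a Lacombe set, applying Lacombe continuity, and converting the result back to a Spreen description are each uniform in the input $\beta_{2}$-name, so that the composite is a single computable realizer.
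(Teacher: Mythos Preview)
Your proposal is correct and follows essentially the same approach as the paper's proof: both reduce to Proposition \ref{prop:Spreen Preim basic set}, compute the preimage of a basic open set as a Lacombe set, invoke Lemma \ref{lem:Lacombe set into Spreen set } to convert it to a Spreen open, and observe that monotonicity is trivial because the formal inclusion on $\text{dom}(\beta_{2})$ is equality. Your write-up is in fact slightly more careful about uniformity than the paper's own argument.
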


\begin{proof}
Denote by $(\mathfrak{B}_{2},\beta_{2})$ the numbered basis on $Y$.
Denote by $\tau_{1}$ the subnumbering of the Spreen topology on $X$.

We will use Proposition \ref{prop:Spreen Preim basic set}: it suffices
to check that the map $f^{-1}:\mathfrak{B}_{2}\rightarrow\mathcal{T}_{1}$
is $(\beta_{2},\tau_{1})$-computable by a function that respects
formal inclusion to obtain the desired result. 

But given a basic set $\beta_{2}(b)$, $f^{-1}(\beta_{2}(b))$ can
be computed as a Lacombe set, and Lemma \ref{lem:Lacombe set into Spreen set }
shows that this Lacombe set can be turned into a Spreen open. The
fact that the formal inclusion is respected is trivial: here $b_{1}\mathring{\subseteq}b_{2}\iff b_{1}=b_{2}$. 
\end{proof}

\subsection{\label{subsec:Effective-separability-Moschovakis THM}Effective separability
implies equality of Spreen and Lacombe topologies }

Being a Lacombe basis is usually more restrictive than being a Spreen
basis. And, even when a basis is both a Lacombe basis and a Spreen
basis, the corresponding computable topologies can differ. We show
that they are identical whenever the topology has a computable and
dense sequence.

Note that the results that follow cannot be turned into equivalences:
a space can have a Spreen basis that is also a Lacombe basis without
having a computable and dense sequence.
\begin{lem}
\label{lem: dense seq spreen =00003D> lacombe}Let $(\mathfrak{B},\beta,\mathring{\subseteq})$
be a Spreen basis on $(X,\nu)$. If there exists a $\nu$-computable
sequence which is dense for the topology generated by $\mathfrak{B}$,
then $(\mathfrak{B},\beta)$ is also a Lacombe basis. 
\end{lem}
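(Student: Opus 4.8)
The plan is to verify directly the three conditions of Definition \ref{def: Lacombe-basis} for $(\mathfrak{B},\beta)$, drawing on the functions $G_1$ and $G_2$ supplied by the Spreen structure together with the dense computable sequence. So first I would fix once and for all a total computable map $j\mapsto w_j\in\text{dom}(\nu)$ with $\{\nu(w_j)\,:\,j\in\mathbb{N}\}$ dense in the topology generated by $\mathfrak{B}$. The first Lacombe requirement, uniform semi-decidability $\beta\le\nu_{SD,m}$, is \emph{literally} condition (\ref{enu: Cond SD }) of Definition \ref{def: Spreen Basis Text}, so there is nothing to prove there.

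For the second Lacombe condition I would exhibit $X$ as a computable union of basic sets by dovetailing the sequences $G_1(w_j)$ over all $j\in\mathbb{N}$; since $G_1$ is computable with values in $\text{dom}(\beta)^{\mathbb{N}}$, this yields a single computable sequence of $\beta$-codes, which also witnesses that every point lies in some basic set. That its union is $X$ is the content of condition (\ref{enu:Cond X eff open }) restricted to the dense sequence: given $x\in X$, the $\circledcirc$ clause for $G_1$ produces $b\in\text{dom}(\beta)$ with $x\in\beta(b)$ and $b\mathring{\subseteq}G_1(n)$ for every name $n$ of a point of $\beta(b)$. Since $\beta(b)$ is open and non-empty, density gives some $j$ with $\nu(w_j)\in\beta(b)$; then $b\mathring{\subseteq}G_1(w_j)$, i.e. $b\mathring{\subseteq}u$ for some term $u$ of the sequence $G_1(w_j)$, and the defining property of a formal inclusion gives $\beta(b)\subseteq\beta(u)$, whence $x\in\beta(u)$. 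The reverse inclusion is immediate.

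The third Lacombe condition is the real work. Given $\beta$-codes $b_1,b_2$, I would produce a $\beta_{ce}$-name for $\beta(b_1)\cap\beta(b_2)$ by enumerating, for each $j$ with $\nu(w_j)\in\beta(b_1)\cap\beta(b_2)$ — a computably enumerable condition, since the basic sets are semi-decidable and the name $w_j$ is in hand — all the terms of $G_2(w_j,b_1,b_2)$. Every set so enumerated is contained in $\beta(b_1)\cap\beta(b_2)$ by the defining property of $G_2$, which gives the easy inclusion of the union into the intersection. For the reverse inclusion I would take $x\in\beta(b_1)\cap\beta(b_2)$ and apply the $\circledcirc$ clause for $G_2$ to obtain $b_3$ with $x\in\beta(b_3)$ and $b_3\mathring{\subseteq}G_2(m,b_1,b_2)$ whenever $\nu(m)\in\beta(b_3)$.

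The single delicate point — and the main obstacle — is that to use $G_2(w_j,b_1,b_2)$ with its intended meaning I must land on a dense point lying \emph{simultaneously} in $\beta(b_3)$ and in $\beta(b_1)\cap\beta(b_2)$, whereas the $\circledcirc$ clause by itself does not force $\beta(b_3)\subseteq\beta(b_1)\cap\beta(b_2)$. The remedy is to apply density not to $\beta(b_3)$ but to the open set $\beta(b_3)\cap\beta(b_1)\cap\beta(b_2)$, which is non-empty because it contains $x$. This produces $j$ with $\nu(w_j)\in\beta(b_3)\cap\beta(b_1)\cap\beta(b_2)$: the membership $\nu(w_j)\in\beta(b_1)\cap\beta(b_2)$ makes $G_2(w_j,b_1,b_2)=(u_k)_{k}$ the meaningful sequence, while $\nu(w_j)\in\beta(b_3)$ gives $b_3\mathring{\subseteq}G_2(w_j,b_1,b_2)$, hence $b_3\mathring{\subseteq}u_k$ for some $k$, so $\beta(b_3)\subseteq\beta(u_k)$ and $x\in\beta(u_k)$. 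Since this $u_k$ is one of the codes enumerated by the construction, the reverse inclusion follows and the proof is complete.
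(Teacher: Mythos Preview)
Your proof is correct and follows essentially the same approach as the paper: apply $G_1$ along the dense sequence to show $X$ is a Lacombe set, and apply $G_2$ along the dense points lying in $\beta(b_1)\cap\beta(b_2)$ to decompose intersections. The paper's proof is terser on what you call the ``delicate point'' (it simply invokes the $\circledcirc$ condition and the consequence recorded after Definition~\ref{def: Spreen Basis Text}), while you spell out explicitly that density must be applied to $\beta(b_3)\cap\beta(b_1)\cap\beta(b_2)$ rather than to $\beta(b_3)$ alone; this extra care is warranted and your argument is complete.
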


\begin{proof}
There are two points we have to show: (1), that $X$ is a Lacombe
set, and (2), that it is possible, given two basics $\beta(b_{1})$
and $\beta(b_{2})$, to give a computable sequence of basic open sets
that constitutes their intersection. 

Denote by $(u_{k})_{k\in\mathbb{N}}\in\text{dom}(\nu)^{\mathbb{N}}$
a computable sequence of natural numbers such that $(\nu(u_{k}))_{k\in\mathbb{N}}$
meets every non-empty set of $\mathfrak{B}$. 

(1) Because $(\mathfrak{B},\beta)$ is a Spreen basis, there is a
program $G_{1}$ that shows that $X$ is effectively open. For each
$k$ in $\ensuremath{\mathbb{N}},$ $G_{1}(u_{k})$ is a sequence
of names of basic open sets which we denote $(b_{k,p})_{p\in\mathbb{N}}$.
Then the sequence $(b_{k,p})_{\langle k,p\rangle\in\mathbb{N}}$ obtained
by dovetailing testifies for the fact that $X$ is a computable union
of basic sets. Indeed, let $x$ be a point of $X$. By condition $\circledcirc$
for $G_{1}$, there is a basic set $B_{1}=\beta(b_{1})$ for which,
for any computable point $y=\nu(n)$ in $B_{1}$, one of the basic
sets in the sequence $G_{1}(n)$ will contain $x$. Since an element
of $(\nu(u_{k}))_{k\in\mathbb{N}}$ will be contained in $B_{1}$,
we have that indeed 
\[
X=\bigcup_{\langle k,p\rangle\in\mathbb{N}}\beta(b_{k,p}),
\]
and thus $X$ is a Lacombe set. 

(2) The proof is similar as above. Given two basic open sets $B_{1}=\beta(b_{1})$
and $B_{2}=\beta(b_{2})$, consider the double sequence $(G_{2}(u_{k},b_{1},b_{2}))_{\nu(u_{k})\in B_{1}\cap B_{2}}=(\mathfrak{b}_{k,p})_{\nu(u_{k})\in B_{1}\cap B_{2},p\in\mathbb{N}}$.
The condition $\circledcirc$ on $G_{2}$ will then ensure that we
do have the equality: 
\[
B_{1}\cap B_{2}=\bigcup_{\{(k,p)\in\mathbb{N}^{2}\,\vert\,\nu(u_{k})\in B_{1}\cap B_{2}\}}\beta(\mathfrak{b}_{k,p}).
\]

\vspace{-0.3cm}
\end{proof}
\begin{thm}
[Moschovakis' theorem on Lacombe sets is trivial for Spreen bases]\label{thm:Moschovakis-theorem-on Spreen bases}Let
$(\mathfrak{B},\beta,\mathring{\subseteq})$ be a Spreen basis on
$(X,\nu)$. If there exists a $\nu$-computable sequence which is
dense for the topology generated by $\mathfrak{B}$, then the Spreen
and Lacombe topologies generated by $(\mathfrak{B},\beta)$, respectively
seen as a Spreen and as a Lacombe basis, are computably equivalent
(i.e. they are identical). 
\end{thm}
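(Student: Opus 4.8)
The plan is to prove that the two computable topologies are mutually \emph{effectively coarser} in the sense of Definition~\ref{def: Eff finer eff coarser }; by the remark following that definition this forces them to be identical, since both carry the same classical topology $\mathcal{T}$ (each is generated by $\mathfrak{B}$ as a classical basis) and then necessarily equivalent subnumberings. Concretely I would exhibit two uniform procedures: one turning the Spreen name of an open set into a Lacombe name of the same set, and one turning a Lacombe name into a Spreen name, each increasing for the ambient formal inclusion relations. The existence of the Lacombe topology in the first place is guaranteed by the preceding Lemma~\ref{lem: dense seq spreen =00003D> lacombe}.

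One direction is already in hand: turning a Lacombe name into a Spreen name of the same set is exactly the content of Lemma~\ref{lem:Lacombe set into Spreen set } (the basis being viewed as a Spreen basis via Lemma~\ref{Lem: Lacombe basis is Spreen basis}). So the substantive work is the converse. Given the Spreen name of an open set $O$, i.e.\ a pair $(A,F)$ where $A=O\cap X_\nu$ is a $\nu_{SD}$-name and $F$ supplies at each computable point of $O$ a sequence of basic sets around it and inside $O$ (Definition~\ref{def: Spreen Top text }), I must build a Lacombe decomposition of $O$. Fixing the $\nu$-computable dense sequence $(u_k)_{k\in\mathbb{N}}$ of the hypothesis, whose image meets every non-empty basic set, I would dovetail: enumerate those $k$ with $u_k\in A$ (possible since $A$ is semi-decidable), run $F(u_k)=(v^k_j)_{j\in\mathbb{N}}$, and collect all the basic sets so produced, claiming
\[
O \;=\; \bigcup_{\,k\,:\,u_k\in A\,}\ \bigcup_{j\in\mathbb{N}}\beta(v^k_j).
\]

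The crux is this set equality, and this is precisely where the classical difficulty evaporates. The inclusion $\supseteq$ is immediate, every listed basic set being contained in $O$ by the defining property of $F$. For $\subseteq$, I take an arbitrary $x\in O$ --- possibly a \emph{non-computable} point --- and invoke the non-vanishing condition $\circledcirc$ for $F$: it produces a basic set $\beta(b)\ni x$ with $b\mathring{\subseteq}F(n)$ for every computable point $\nu(n)\in\beta(b)$, and in particular $\beta(b)\subseteq O$. Since $\beta(b)$ is non-empty it is met by the dense sequence, say $\nu(u_k)\in\beta(b)$, so $u_k\in A$ and $F(u_k)$ contains a term $v^k_j$ with $b\mathring{\subseteq}v^k_j$, whence $x\in\beta(b)\subseteq\beta(v^k_j)$ lies in the union. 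This is the pointwise analogue of the argument already used in Lemma~\ref{lem: dense seq spreen =00003D> lacombe} for $X$ and for intersections of two basic sets. I would stress the contrast with Theorem~\ref{thm:Moschovakis,-,-Theorem-1}: the original Moschovakis proof needed an algorithm computing limits and a careful analysis of Markov's Lemma exactly to rule out that the radii produced along a dense sequence vanish near points of $O$, whereas here that guarantee is already encoded in $\circledcirc$, whose quantifier ranges over all of $X$. No limit computation enters, which is the sense in which the theorem becomes ``trivial'' for Spreen bases.

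It then remains to dispatch the formal-inclusion bookkeeping, which I expect to be the only genuinely delicate point. The Spreen-to-Lacombe procedure above is automatically increasing: if two Spreen names satisfy $n_1\mathring{\subseteq}n_2$ then $\tau(n_1)\subseteq\tau(n_2)$ as sets, and as the Lacombe topology carries the actual inclusion as its default formal inclusion while the procedure preserves the represented set, monotonicity is free. The reverse monotonicity, for the Lacombe-to-Spreen procedure of Lemma~\ref{lem:Lacombe set into Spreen set }, is the step I would carry out most carefully, since it is where the interplay between a \emph{chosen} Lacombe decomposition and the formal inclusion on basic sets is concentrated: I would verify it by tracking, for each computable point, the basic sets of the source decomposition through the construction of $F$ and matching them, up to $\mathring{\subseteq}$, against basic sets available on the target side. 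With both procedures shown increasing, mutual effective coarseness holds and the Spreen and Lacombe topologies coincide.
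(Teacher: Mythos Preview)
Your proposal is correct and follows essentially the same route as the paper: the substantive step is the Spreen-to-Lacombe direction, carried out by running the function $F$ from the Spreen name along the given dense sequence and invoking the non-vanishing condition $\circledcirc$ to show the resulting union exhausts $O$ --- this is exactly the paper's argument, and your explanation of why $\circledcirc$ replaces the Markov-lemma analysis of Theorem~\ref{thm:Moschovakis,-,-Theorem-1} is on point. The paper's proof is in fact terser than yours: it declares that only the Spreen-to-Lacombe transformation needs to be shown, writes down the same union you do, and stops, without discussing the reverse direction or any formal-inclusion bookkeeping; so your worry in the last paragraph about monotonicity of the Lacombe-to-Spreen procedure, while a legitimate technical concern, goes beyond what the paper itself verifies.
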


\begin{proof}
The proof is straightforward, and similar to that of the above lemma.
All we need to show is that we can computably transform the name of
a Spreen open set into a Lacombe name for this same set. 

Denote by $(u_{k})_{k\in\mathbb{N}}\in\text{dom}(\nu)^{\mathbb{N}}$
a computable sequence of natural numbers such that $(\nu(u_{k}))_{k\in\mathbb{N}}$
is dense. 

Let $O_{1}$ be given as a Spreen set by a pair $(A,F)$. For each
$n$ in $\nu^{-1}(A)$, $F(n)$ is a sequence, we use below a functional
notation: $F(n)(p)$ is the $p$-th element of this sequence. 

The set $L=\{u_{k},\,\nu(u_{k})\in A\}$ is computably enumerable,
and by the $\circledcirc$ property for $F$, we have 
\[
\underset{u_{k}\in L}{\bigcup}\bigcup_{p\in\mathbb{N}}F(u_{k})(p)=O_{1}.
\]

\vspace{-0.5cm}
\end{proof}

\subsection{\label{subsec:Recursive-metric-spaces II }Computable metric spaces\emph{$^{*}$}
define Spreen bases }

To show that the definition of a Spreen basis is not to restrictive,
we have shown that, under the mild assumption that computable points
be dense, all Lacombe bases define Spreen bases. 

We will now show that in any computable metric space\emph{$^{*}$}
$(X,\nu,d)$, where $X_{\nu}$ is dense in $X$ but not necessarily
equal to it, the open balls centered at computable points also form
a Spreen basis. Recall that these balls do not always form a Lacombe
basis, so this result is not contained in the results of Section \ref{subsec:Lacombe-bases-define-Spreen-Bases}. 

After that, we give a direct description of the Spreen topology on
$(X,\nu,d)$ which is more natural than the one obtained by following
the definition of Spreen bases, in terms of left computable reals. 

\begin{figure}
\includegraphics[scale=0.5]{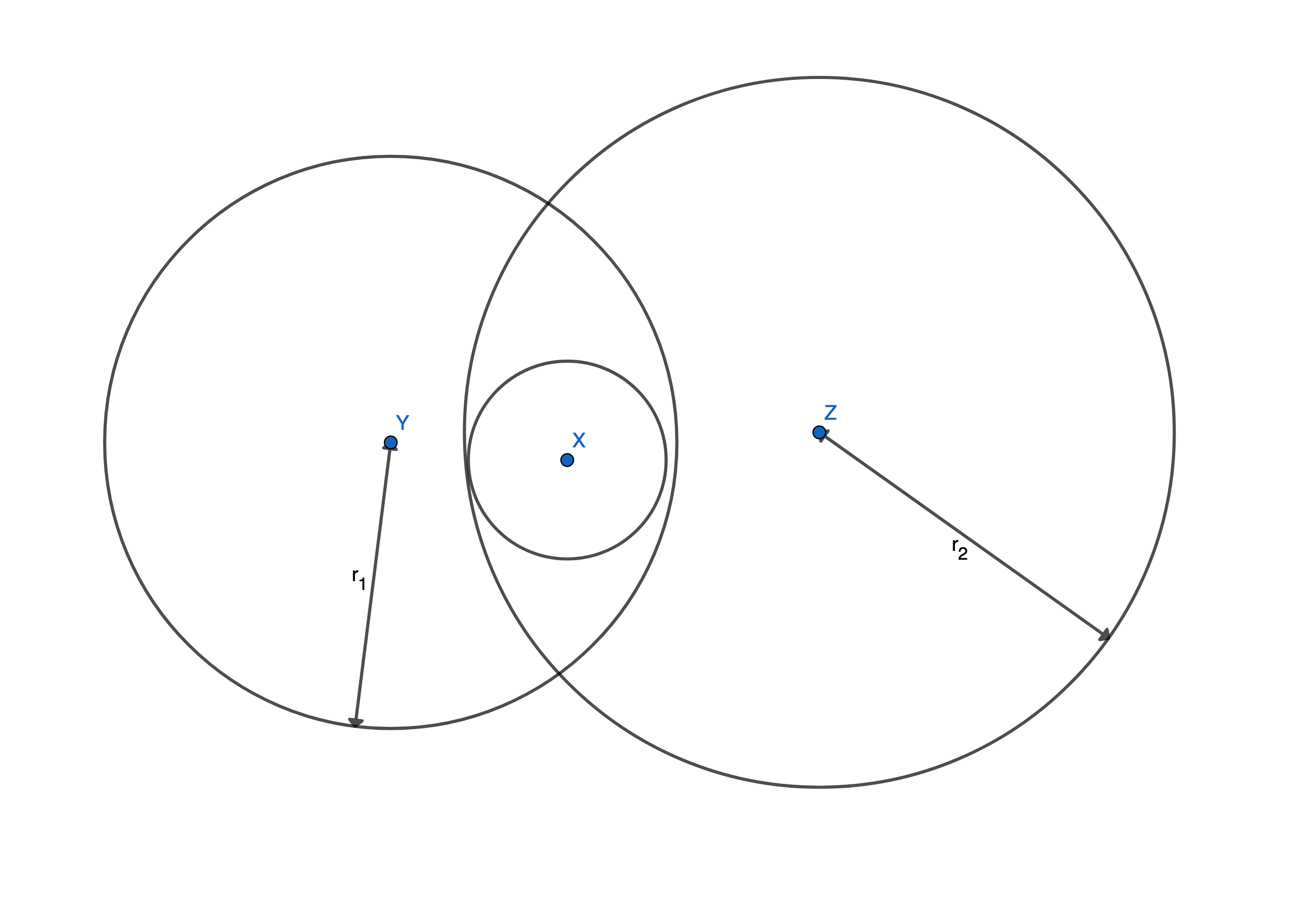}\caption{Computing an intersection in a metric space}
\end{figure}

\subsubsection{Open balls form Spreen bases}
\begin{lem}
\label{lem: Open balls Spreen Basis }Let $(X,\nu,d)$ be a computable
metric space\emph{$^{*}$}. The open balls centered at computable
points, equipped with their natural numbering that comes from $\nu\times c_{\mathbb{R}}$,
form a Spreen basis. 
\end{lem}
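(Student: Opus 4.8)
The plan is to instantiate Definition \ref{def: Spreen Basis Text} with the numbering $\beta$ defined by $\beta(\langle n,m\rangle)=B(\nu(n),c_{\mathbb{R}}(m))$ for $n\in\text{dom}(\nu)$ and $m\in\text{dom}(c_{\mathbb{R}})$, together with the formal inclusion relation on center-radius pairs
\[
\langle n_1,m_1\rangle\mathring{\subseteq}\langle n_2,m_2\rangle\iff d(\nu(n_1),\nu(n_2))+c_{\mathbb{R}}(m_1)\le c_{\mathbb{R}}(m_2).
\]
Reflexivity and transitivity of $\mathring{\subseteq}$ follow from the triangle inequality, which also shows that $\langle n_1,m_1\rangle\mathring{\subseteq}\langle n_2,m_2\rangle$ forces $\beta(\langle n_1,m_1\rangle)\subseteq\beta(\langle n_2,m_2\rangle)$, so $\mathring{\subseteq}$ is a formal inclusion relation. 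Condition (\ref{enu: Cond SD }) is then immediate: given $\langle n,m\rangle$ and a $\nu$-name $i$, the test $d(\nu(i),\nu(n))<c_{\mathbb{R}}(m)$ is a strict inequality between Cauchy-computable reals, hence semi-decidable uniformly in $i$, giving $\beta\le\nu_{SD,m}$. Condition (\ref{enu:Cond Dense }) is equally immediate, since a nonempty ball contains its own center $\nu(n)$, which is a $\nu$-computable point.

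For condition (\ref{enu:Cond X eff open }) I would take $G_1(n)$ to be the sequence of concentric balls $(B(\nu(n),k+1))_{k\in\mathbb{N}}$, presented as the $\beta$-names $(\langle n,m_k\rangle)_k$ with $m_k$ a fixed $c_{\mathbb{R}}$-name of $k+1$; this is plainly computable and each ball contains $\nu(n)$. The condition $\boxed{\mathring{\sim}}$ reduces to the observation that when $\nu(n)=\nu(m)$ one has $\langle n,m_k\rangle\mathring{\sim}\langle m,m_k\rangle$, because the defining inequality becomes $d(\nu(n),\nu(m))\le 0$. For $\circledcirc$, given any $x\in X$ (computable or not), density of $X_\nu$ lets me pick a computable center $p=\nu(n_0)$ with $d(x,p)<1$ and take $b$ to be the name of $B(p,2)$; then $x\in\beta(b)$, and for every $\nu(n)\in B(p,2)$ the value $d(p,\nu(n))+2<4$ is eventually dominated by some $k+1$, so $b\mathring{\subseteq}G_1(n)$.

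The heart of the proof is condition (\ref{enu:Cond Inter }). Mimicking the map $\Theta$ discussed in the introduction, I would define $G_2(n,b_1,b_2)$, for $b_i=\langle n_i,m_i\rangle$ with centers $c_i=\nu(n_i)$ and radii $r_i=c_{\mathbb{R}}(m_i)$, to output the constant sequence naming the single ball $B(\nu(n),r_3)$ with $r_3=\min(r_1-d(\nu(n),c_1),r_2-d(\nu(n),c_2))$. This radius is Cauchy-computable, and when $\nu(n)\in\beta(b_1)\cap\beta(b_2)$ it is positive, so the ball contains $\nu(n)$, and the triangle inequality gives $B(\nu(n),r_3)\subseteq\beta(b_1)\cap\beta(b_2)$. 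For $\boxed{\mathring{\sim}}$, assuming $\nu(n)=\nu(m)$, $b_1\mathring{\subseteq}\hat b_1$ and $b_2\mathring{\subseteq}\hat b_2$, I would verify $r_3\le\hat r_3$ componentwise: from $d(\hat c_i,c_i)+r_i\le\hat r_i$ and $d(\nu(n),\hat c_i)\le d(\nu(n),c_i)+d(c_i,\hat c_i)$ one gets $\hat r_i-d(\nu(n),\hat c_i)\ge r_i-d(\nu(n),c_i)$, and taking minima yields $r_3 \le \hat r_3$ (so monotonicity and name-independence are proved at once, the centers being equal).

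The step I expect to be the real obstacle is the non-vanishing condition $\circledcirc$ for $G_2$, since it quantifies over \emph{all} points $x\in\beta(b_1)\cap\beta(b_2)$, including non-computable ones, while the witness $b_3$ must be a genuine $\beta$-name, i.e. a ball with a \emph{computable} center and a computable radius. To handle this I would set $s=\min(r_1-d(x,c_1),r_2-d(x,c_2))>0$, fix a rational $q$ with $0<q<s/4$, and use density to choose a computable center $c_3$ with $d(x,c_3)<q$; then $b_3$ is the name of $B(c_3,q)$. One checks $x\in B(c_3,q)$, and for every computable $\nu(m)\in B(c_3,q)$ the triangle inequality bounds $d(\nu(m),c_i)<2q+d(x,c_i)$, whence $\rho_m:=\min_i(r_i-d(\nu(m),c_i))>s/2$ while $d(\nu(m),c_3)+q<2q<s/2$, giving exactly $b_3\mathring{\subseteq}G_2(m,b_1,b_2)$. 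A minor bookkeeping point is that $G_2$ must be total into $\text{dom}(\beta)^{\mathbb{N}}$; since positivity of $r_3$ is only semi-decidable, I would simply allow $\mathfrak{B}$ to contain the (empty) balls arising from non-positive radii, which is harmless because condition (\ref{enu:Cond Dense }) only constrains nonempty basic sets.
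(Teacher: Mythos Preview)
Your proof is correct and follows essentially the same route as the paper: the same formal inclusion relation, the same $\Theta$-type map for $G_2$, and the same density argument to produce the witness $b_3$ for $\circledcirc$. The only cosmetic differences are that the paper takes $G_1$ to be the constant sequence of radius-$1$ balls (rather than your growing radii) and proves $\circledcirc$ for $G_2$ via a separate ``$1/3$ lemma'' instead of your $s/4$ computation; your handling of the totality of $G_2$ by admitting empty balls is a point the paper leaves implicit.
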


\begin{proof}
Denote by $\mathfrak{B}$ the set of open balls of $(X,\nu,d)$, centered
at computable points and with computable radii, and by $\beta$ the
numbering of $\mathfrak{B}$ defined as follows: 
\[
\text{dom}(\beta)=\{\langle n,m\rangle\in\mathbb{N},n\in\text{dom}(\nu)\,\&\,m\in\text{dom}(c_{\mathbb{R}})\};
\]
\[
\forall\langle n,m\rangle\in\text{dom}(\beta),\,\beta(\langle n,m\rangle)=B(\nu(n),c_{\mathbb{R}}(m)).
\]

First we define a formal inclusion relation for $(\mathfrak{B},\beta)$.
This is just the relation that comes from $(x,r_{1})\mathring{\subseteq}(y,r_{2})\iff d(x,y)+r_{1}\le r_{2}$.
Formally, it is defined on $\text{dom}(\beta)\times\text{dom}(\beta)$
by:
\[
\langle n_{1},m_{1}\rangle\mathring{\subseteq}\langle n_{2},m_{2}\rangle\iff d(\nu(n_{1}),\nu(n_{2}))+c_{\mathbb{R}}(m_{1})\le c_{\mathbb{R}}(m_{2}).
\]

We now show each point of Definition \ref{def: Spreen Basis Text}
in turn. 

(\ref{enu: Cond SD }) Of course, the open balls in a $\text{CMS}^{*}$
are uniformly semi-decidable sets. 

(\ref{enu:Cond Dense }) Computable points in a computable metric
space\emph{$^{*}$} are dense. 

(\ref{enu:Cond X eff open }) We want to describe a function $G_{1}:\text{dom}(\nu)\rightarrow\text{dom}(\beta)^{\mathbb{N}}$
that testifies for the fact that $X$ is an effectively open set.
Denote by $1_{\mathbb{R}}$ a $c_{\mathbb{R}}$-name for the real
number $1$. Then, for any $n$ in $\text{dom}(\nu)$, the natural
number $\langle n,1_{\mathbb{R}}\rangle$ is a $\beta$-name of $B(\nu(n),1)$.
Define $G_{1}(n)$ to be the constant sequence $\langle n,1_{\mathbb{R}}\rangle,\,\langle n,1_{\mathbb{R}}\rangle,\,\langle n,1_{\mathbb{R}}\rangle,...$ 

This of course satisfies the condition $\boxed{\mathring{\sim}}$
of independence on a name modulo $\mathring{\sim}$. Similarly, the
$\circledcirc$ condition is trivially satisfied. 

(\ref{enu:Cond Inter }) We finally describe a function $G_{2}:\text{dom}(\nu)\times\text{dom}(\beta)\times\text{dom}(\beta)\rightarrow\text{dom}(\beta)^{\mathbb{N}}$
which will compute intersections. 

Following Definition \ref{def: Spreen Basis Text}, $G_{2}$ should
produce a sequence of balls, it will here in fact produce a single
ball, this ball is in turn seen as a constant sequence.

Given a triple $(n,b_{1},b_{2})$, with $x=\nu(n)$, $B(y,r_{1})=\beta(b_{1})$
and $B(z,r_{2})=\beta(b_{2})$, denote by $t$ the $c_{\mathbb{R}}$-name
of $\text{min}(r_{1}-d(x,y),r_{2}-d(x,z))$, which is computable from
$(n,b_{1},b_{2})$, and we define $G_{2}(n,b_{1},b_{2})$ to be $\langle n,t\rangle$.
That is, $G_{2}(n,b_{1},b_{2})$ gives a $\beta$-name of $B(x,\text{min}(r_{1}-d(x,y),r_{2}-d(x,z)))$. 

We then show that this function satisfies the conditions $\boxed{\mathring{\sim}}$
and $\circledcirc$. 

The $\boxed{\mathring{\sim}}$ condition is easily verified: the given
name of $x$ does not change the outcome of the computation, which
is always $B(x,\text{min}(r_{1}-d(x,y),r_{2}-d(x,z)))$. Suppose $b_{1}$
and $b_{2}$ are replaced by numbers $\hat{b}_{1}$ and $\hat{b}_{2}$
that formally contain them. We have $\beta(\hat{b}_{1})=B(\hat{y},\hat{r}_{1})$
with $\hat{r}_{1}\ge r_{1}+d(y,\hat{y})$, and $\beta(\hat{b}_{2})=B(\hat{z},\hat{r}_{2})$
with $\hat{r}_{2}\ge r_{2}+d(z,\hat{z})$. 

We want to show that $\hat{r}_{1}-d(x,\hat{y})\ge r_{1}-d(x,y)$ and
that $\hat{r}_{2}-d(x,\hat{z})\ge r_{2}-d(x,z)$ both hold. 

This follows easily from the triangular inequality, with: 
\begin{align*}
 & \hat{r}_{1}\ge r_{1}+d(y,\hat{y}) & \text{ (hypothesis on \ensuremath{\hat{r}_{1}})}\\
 & -d(x,\hat{y})\ge-d(x,y)-d(y,\hat{y}) & \text{(triangular inequality)}
\end{align*}

By summing both inequalities we obtain the result, by symmetry $\hat{r}_{2}-d(x,\hat{z})\ge r_{2}-d(x,z)$
also holds. 

Finally, we show that the $\circledcirc$ condition holds: for any
point $x$ (computable or not) and any two balls $B_{1}=\beta(b_{1})$
and $B_{2}=\beta(b_{2})$, there exists $b_{3}$ in $\text{dom}(\beta)$
such that:
\[
\forall y=\nu(m)\in\beta(b_{3}),\,b_{3}\mathring{\subseteq}G_{2}(m,b_{1},b_{2}).
\]

We use the following lemma: 
\begin{lem}
\label{lem: 1/3 works for circledcirc in metric space}Fix a ball
$B=B(y,r)$, and define a map $F$ by $F(z)=r-d(y,z)$ for $z\in B$.
Let $x$ be a point of $B$. Then for any $z\in B(x,F(x)/3)$, we
have $(x,F(x)/3)\mathring{\subseteq}(z,F(z))$, i.e. $d(x,z)+F(x)/3\le F(z)$. 
\end{lem}

\begin{proof}
Suppose $z\in B(x,F(x)/3)$, i.e. $d(x,z)\le\frac{1}{3}F(x)$. We
want to show: 
\[
F(z)\ge d(x,z)+F(x)/3.
\]

We have:
\begin{align*}
F(z) & =r-d(y,z)=r-d(x,y)+d(x,y)-d(y,z)=F(x)+d(x,y)-d(y,z)\\
 & \ge F(x)-d(x,z)
\end{align*}

And 
\[
F(x)-d(x,z)\ge\frac{2}{3}F(x)\ge d(x,z)+F(x)/3.
\]
This proves the Lemma. 
\end{proof}
Fix balls $B_{1}=\beta(b_{1})=B(y,r_{1})$ and $B_{2}=\beta(b_{2})=B(z,r_{2})$,
and let $x$ be any point of $B_{1}\cap B_{2}$. 

Following Lemma \ref{lem: 1/3 works for circledcirc in metric space},
we see that, for any computable point $t=\nu(m)$, if $t\in B(x,\frac{1}{3}\text{min}(r_{1}-d(x,y),r_{2}-d(x,z)))$,
then $G_{2}(m,b_{1},b_{2})$ produces the name of a ball that contains
formally $B(x,\frac{1}{3}\text{min}(r_{1}-d(x,y),r_{2}-d(x,z)))$. 

Choose for $b_{3}$ the name of any ball contained in $B(x,\frac{1}{3}\text{min}(r_{1}-d(x,y),r_{2}-d(x,z)))$,
and which contains $x$. This exists by density of the computable
points of $X$. By Lemma \ref{lem: 1/3 works for circledcirc in metric space},
this ball proves the $\circledcirc$ condition for $G_{2}$ with respect
to $x$, $b_{1}$ and $b_{2}$. 
\end{proof}
(Note that, above, we have no way of choosing a canonical $b_{3}$:
the only canonical choice would be $B(x,\frac{1}{3}\text{min}(r_{1}-d(x,y),r_{2}-d(x,z)))$,
but when $x$ is not a computable point, this ball is not in $\mathfrak{B}$.) 

\subsubsection{Direct description of the Spreen metric topology }

By what precedes, the open balls form a Spreen basis, and thus we
do define a computable topology on $(X,\nu,d)$ by the following:
\begin{itemize}
\item An open set $O$ is described by a pair $(A,F)$, where $A=O\cap X_{\nu}$
is semi-decidable, and $F$ is a program that, given a point $x$
of $O$, produces a sequence of balls that all contain $x$ and are
contained in $O$, and which satisfies the conditions denoted $\boxed{\mathring{\sim}}$
and $\circledcirc$ in Definition \ref{def: Spreen Top text }. 
\end{itemize}
Note however that the program $F$ is not expected, given a point
$x$, to produce balls centered at $x$. We give an equivalent description
of the metric topology where the produced balls are centered at $x$.
What interests us is that we can replace the condition that a sequence
of balls is produced by the fact that a single real number is produced,
\emph{but given as} \emph{a left computable real. }In this case, the
condition $\boxed{\mathring{\sim}}$ of respect of the formal inclusion
relation exactly says that the left computable real computed by $F$
is independent of the name of a point given as input: $F$ is an actual
function, and not a multi-function, as in the case in the Nogina metric
topology. 

We denote by $c_{\nearrow}$ the subnumbering of $\mathbb{R}$ associated
to left computable reals, as introduced in the preliminaries, Section
\ref{subsec:Computable-real-numbers}. 
\begin{defn}
[Direct definition of the metric topology] \label{def:Direct-definition-of-Metric-TOP}Let
$(X,\nu,d)$ be a computable metric space\emph{$^{*}$}. We define
a computable topology $(\mathcal{T},\tau,\mathring{\subseteq})$ on
$(X,\nu)$ thanks to the following. $\mathcal{T}$ is the metric topology
on $X$, and $\tau$ is a subnumbering of $\mathcal{T}$. The description
of an open set $O$ for $\tau$ is an encoded pair $(n,m)$, where
$n$ gives the code of $O\cap X_{\nu}$ as a semi-decidable set, and
where $m$ encodes a $(\nu,c_{\nearrow})$-computable function $F:O\cap X_{\nu}\rightarrow\mathbb{R}$
which satisfies the following:
\[
\forall x\in O\cap X_{\nu},\,B(x,F(x))\subseteq O;
\]
\[
\circledcirc\,\,\forall x\in O,\,\exists r>0,\,\forall y\in B(x,r)\cap X_{\nu},\,F(y)>r.
\]
Finally, define a formal inclusion by the following. Suppose $O_{1}$
and $O_{2}$ are effectively open sets of $X$ given by pairs $(n_{1},m_{1})$
and $(n_{2},m_{2})$ that encode pairs $(O_{1}\cap X_{\nu},F_{1})$
and $(O_{2}\cap X_{\nu},F_{2})$ as above. Then we define $\mathring{\subseteq}$
by:
\[
\langle n_{1},m_{1}\rangle\mathring{\subseteq}\langle n_{2},m_{2}\rangle\iff\forall y\in O_{1}\cap X_{\nu},\,y\in O_{2}\cap X_{\nu}\,\&\,F_{1}(y)\le F_{2}(y).
\]
\end{defn}

We do not give details for the following straightforward proposition: 
\begin{prop}
\label{prop: Spreen top equiv direct description }On a $\text{CMS}^{*}$
$(X,\nu,d)$, the computable topology generated by seeing the open
balls as a Spreen basis is equivalent to the computable topology described
directly in Definition \ref{def:Direct-definition-of-Metric-TOP},
this is witnessed by functions increasing with respect to formal inclusion. 
\end{prop}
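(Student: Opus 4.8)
The plan is to exhibit two computable, formal-inclusion-preserving translations between the two subnumberings of $\mathcal{T}$ — the subnumbering $\tau_{\mathfrak{B}}$ coming from viewing the open balls as a Spreen basis (Lemma \ref{lem: Open balls Spreen Basis }) and the subnumbering $\tau$ of Definition \ref{def:Direct-definition-of-Metric-TOP} — and to check that each is increasing for the relevant formal inclusion. Since both carry the same classical topology $\mathcal{T}$, producing such translations in both directions shows, via Definition \ref{def: Eff finer eff coarser }, that the identity on $X$ is effectively continuous each way, hence that the two computable topologies are effectively equivalent.

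For the direction $\tau_{\mathfrak{B}}\to\tau$, I start from a Spreen name $(A,F)$ of $O$, where $F(n)$ is a sequence of $\beta$-names of balls $B(c_k,s_k)$ containing $x=\nu(n)$ and contained in $O$, and define the centred radius $\widetilde F(n)=\sup_k\bigl(s_k-d(x,c_k)\bigr)$. Each term is a computable real uniformly in $k$, so the supremum is left computable (suprema of computable sequences are $c_{\nearrow}$-computable), making $\widetilde F$ a $(\nu,c_{\nearrow})$-computable function. One checks $B\bigl(x,\widetilde F(n)\bigr)=\bigcup_k B\bigl(x,s_k-d(x,c_k)\bigr)\subseteq\bigcup_k B(c_k,s_k)\subseteq O$ and $\widetilde F(n)>0$. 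Crucially, the $\boxed{\mathring{\sim}}$ clause on $F$ forces $\widetilde F$ to be independent of the chosen name of $x$: two names give $\mathring{\sim}$-equivalent sequences, hence the same supremum, so $\widetilde F$ is a genuine function rather than a multi-function, exactly as Definition \ref{def:Direct-definition-of-Metric-TOP} demands. The non-vanishing clause $\circledcirc$ for $\widetilde F$ is extracted from $\circledcirc$ for $F$: the latter yields a basic ball $B(c_3,s_3)\ni x$ with $b_3\mathring{\subseteq}F(n)$ whenever $\nu(n)\in B(c_3,s_3)$, and a triangle-inequality estimate then gives $\widetilde F(y)\ge s_3-d(y,c_3)$ on $B(c_3,s_3)\cap X_{\nu}$, so $r=(s_3-d(x,c_3))/3$ witnesses $\circledcirc$.

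For the direction $\tau\to\tau_{\mathfrak{B}}$, I start from a direct name $(A,\widetilde F)$, read off a nondecreasing rational sequence $q_0\le q_1\le\cdots$ converging to $\widetilde F(n)$ from its $c_{\nearrow}$-name (discarding nonpositive terms), and let $F(n)$ enumerate the $\beta$-names of the balls $B(x,q_j)$ centred at $x=\nu(n)$; each lies in $B\bigl(x,\widetilde F(n)\bigr)\subseteq O$ and contains $x$, so $F$ is a legitimate Spreen function in the sense of Definition \ref{def: Spreen Basis Text}. Here $\boxed{\mathring{\sim}}$ holds because two names of $x$ produce two increasing rational sequences with the same supremum $\widetilde F(x)$, which are mutually cofinal; as all balls share the centre $x$, formal inclusion reduces to comparison of radii, so the two sequences are $\mathring{\sim}$-equivalent. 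For $\circledcirc$, the direct non-vanishing clause gives $r>0$ with $\widetilde F(y)>r$ on $B(x,r)\cap X_{\nu}$; choosing a computable centre $c_3$ with $d(x,c_3)<r/4$ (by density of $X_{\nu}$) and $s_3=r/2$, I verify $d(c_3,y)+s_3<r<\widetilde F(y)$ for $y\in B(c_3,s_3)\cap X_{\nu}$, which is precisely $b_3\mathring{\subseteq}F(n)$.

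Finally, monotonicity for the formal inclusions reduces in each direction to a single triangle inequality. Going $\tau_{\mathfrak{B}}\to\tau$: if $F_1(y)\mathring{\subseteq}F_2(y)$ then each ball of $F_1(y)$ is formally contained in a ball of $F_2(y)$, and $d(c,c')+s\le s'$ combined with the triangle inequality yields $s-d(y,c)\le s'-d(y,c')$, whence $\widetilde F_1(y)\le\widetilde F_2(y)$. Going $\tau\to\tau_{\mathfrak{B}}$: $\widetilde F_1(y)\le\widetilde F_2(y)$ makes the approximating rational sequence for $\widetilde F_1(y)$ cofinally dominated by that for $\widetilde F_2(y)$, giving $F_1(y)\mathring{\subseteq}F_2(y)$. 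The main obstacle — the only place where the metric genuinely enters — is the $\circledcirc$ clause in the $\tau_{\mathfrak{B}}\to\tau$ direction, where one must turn the qualitative statement that a fixed witness ball is formally included in the sequence $F(n)$ into a uniform quantitative lower bound on $\widetilde F$ near $x$ via the estimate $-d(y,c_3)\ge -d(y,c_k)+d(c_3,c_k)-\dots$; everything else is bookkeeping confirming that the ball-sequence and left-computable-radius descriptions carry the same information.
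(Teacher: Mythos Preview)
Your argument is correct and complete in all essentials; since the paper omits the proof entirely (declaring it ``straightforward''), there is no alternative route to compare against --- you have supplied precisely the verification the paper leaves out, organised around the two natural translations and the check that each preserves formal inclusion.

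One small technical point worth tightening in the $\tau\to\tau_{\mathfrak B}$ direction: your $\boxed{\mathring{\sim}}$ and monotonicity arguments rely on mutual cofinality of the approximating rational sequences, which in turn needs each $q_j$ to be \emph{strictly} below the supremum $\widetilde F(x)$. The paper's definition of $c_\nearrow$ does not forbid a nondecreasing sequence from eventually reaching its limit, so if, say, $\widetilde F_1(y)=\widetilde F_2(y)$ and the first sequence attains this value while the second does not, your cofinality claim would fail as stated. The fix is trivial --- replace $q_j$ by $q_j-2^{-j}$ (after discarding nonpositive terms) before forming the balls --- but it is worth making explicit.
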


\subsubsection{Metric continuity in the general setting }

The definition of a Nogina basis described in Section \ref{part:Two-notions-of-Bases}
is the most general notion of a basis, obtained thanks to the most
natural effectivization of the statement 
\[
(A)\,\,\,\,\forall x\in O,\,\exists B\in\mathfrak{B},\,x\in B\,\&\,B\subseteq O,
\]
for $O$ an open set in a topology defined by a basis $\mathfrak{B}$.

This notion of effective open yields a notion of effective continuity,
and it was shown that, in metric spaces, this notion of effective
continuity is equivalent to the one obtained thanks to the plain effectivization
of the metric continuity statement, for a function $f:X\rightarrow Y$
between metric spaces:
\[
(B)\,\,\,\,\forall x\in X,\,\forall\epsilon>0,\,\exists\delta>0,\,\forall y\in X,\,d(x,y)<\delta\implies d(f(x),f(y))<\epsilon.
\]
In both cases, we can talk about a ``plain effectivization'', because
the definitions considered in Section \ref{part:Two-notions-of-Bases}
were obtained by adding no additional conditions to the effectivization
of the statements (A) and (B) written above.

On the contrary, the notion of effective open set we consider in the
present section, while still obtained as an effectivization of statement
(A), has many more conditions. But all the conditions that are added
to the plain effectivization are conditions that have no classical
equivalent, or which are classically always true. 

Since we have added additional conditions on effective open sets,
and since we have a new definition of the computable topology on a
metric space, we will have a new, non-basic, version of ``effective
metric continuity'', which will include a non vanishing condition
(denoted $\circledcirc$). We describe this now. 

Recall that $c_{\nearrow}$ is the subnumbering of $\mathbb{R}$ associated
to left computable reals. Below, we denote by $\mathbb{R}_{c_{\nearrow}}^{+}$
the set of strictly positive left computable reals. 
\begin{defn}
\label{def:Spreen Metric Continuity }A function $f:X\rightarrow Y$
between computable metric spaces\emph{$^{*}$} $(X,\nu,d)$ and $(Y,\mu,d)$
is \emph{formally effectively metric continuous} if there exists a
$(\nu\times c_{\nearrow},c_{\nearrow})$-computable function $\phi:X\times\mathbb{R}_{c_{\nearrow}}^{+}\rightarrow\mathbb{R}_{c_{\nearrow}}^{+}$
such that 
\[
\forall x\in X_{\nu},\forall y\in X,\forall\epsilon\in\mathbb{R}_{c_{\nearrow}}^{+},\,d(x,y)<\phi(x,\epsilon)\implies d(f(x),f(y))<\epsilon;
\]
and which satisfies the following non-vanishing condition: 
\begin{align*}
\circledcirc\,\,\forall x\in X,\,\forall\epsilon\in\mathbb{R}_{c_{\nearrow}}^{+}, & \,\exists r>0,\,\forall y\in B(x,r)\cap X_{\nu},\,\phi(y,\epsilon)\ge r.
\end{align*}
\end{defn}

Notice that in this definition we use a function for $\phi$, as opposed
to a multi-function which may yield different results for different
names of a same point. 
\begin{lem}
In Definition \ref{def:Spreen Metric Continuity }, we can suppose
WLOG that the function $\phi$ is increasing in its second variable. 
\end{lem}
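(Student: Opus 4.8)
The plan is to replace the given modulus $\phi$ by its lower envelope in the second variable, sampled only over rational radii, and to check that this envelope inherits every required property while being manifestly monotone. Concretely I would set
\[
\tilde\phi(x,\epsilon)\;=\;\sup\{\phi(x,q)\;:\;q\in\mathbb{Q},\ 0<q<\epsilon\}.
\]
Monotonicity in $\epsilon$ is then immediate: enlarging $\epsilon$ only enlarges the index set $\{q\in\mathbb{Q}:0<q<\epsilon\}$ over which the supremum is taken, so $\tilde\phi(x,\cdot)$ is non-decreasing.

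Next I would verify that $\tilde\phi$ still witnesses formal effective metric continuity. For the continuity inequality, fix $x\in X_\nu$, $y\in X$ and $\epsilon>0$ with $d(x,y)<\tilde\phi(x,\epsilon)$; by definition of the supremum there is a rational $q$ with $0<q<\epsilon$ and $d(x,y)<\phi(x,q)$, so the original property applied at $q$ in place of $\epsilon$ gives $d(f(x),f(y))<q<\epsilon$. For the non-vanishing condition $\circledcirc$, fix $x\in X$ and $\epsilon$, choose any rational $\epsilon_0$ with $0<\epsilon_0<\epsilon$, and apply the original $\circledcirc$ at $\epsilon_0$ to obtain $r>0$ with $\phi(y,\epsilon_0)\ge r$ for every $y\in B(x,r)\cap X_\nu$; since $\epsilon_0$ lies in the index set defining $\tilde\phi(\,\cdot\,,\epsilon)$, we get $\tilde\phi(y,\epsilon)\ge\phi(y,\epsilon_0)\ge r$ on the same ball. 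The same choice of a rational $\epsilon_0\in(0,\epsilon)$ also shows $\tilde\phi(x,\epsilon)\ge\phi(x,\epsilon_0)>0$, so $\tilde\phi$ takes values in the strictly positive reals.

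The one genuinely computational point — and the step I expect to be the main obstacle — is that $\tilde\phi$ must remain $(\nu\times c_{\nearrow},\,c_{\nearrow})$-computable. Here I would use two facts from the preliminaries on left computable reals: first, that the relation $q<\epsilon$ is semi-decidable from a $c_{\nearrow}$-name of $\epsilon$ (an increasing rational sequence $q_i\nearrow\epsilon$ with $q_i<\epsilon$ satisfies $q<\epsilon\iff\exists i,\ q<q_i$), so one can effectively enumerate all positive rationals below $\epsilon$; and second, the folklore result that $\sup$ is $(c_{\nearrow}^{\mathrm{id}_{\mathbb{N}}},c_{\nearrow})$-computable. Given a $\nu$-name of $x$ and a $c_{\nearrow}$-name of $\epsilon$, I would dovetail: enumerate the positive rationals $q$ confirmed to satisfy $q<\epsilon$, and for each start computing the $c_{\nearrow}$-name of $\phi(x,q)$ (legitimate since every rational has a canonical $c_{\nearrow}$-name and $\phi$ is computable). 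This produces, uniformly, a $c_{\nearrow}^{\mathrm{id}_{\mathbb{N}}}$-name of the countable family $(\phi(x,q))_q$, to which the supremum operator applies and yields a $c_{\nearrow}$-name of $\tilde\phi(x,\epsilon)$. (If one insists on finite rather than possibly infinite values, one may replace $\tilde\phi$ by $\min(\epsilon,\tilde\phi)$; this stays increasing in $\epsilon$ and keeps all three conditions after shrinking the radius $r$ in $\circledcirc$ to $\min(r,\epsilon)$.) Thus $\tilde\phi$ meets Definition \ref{def:Spreen Metric Continuity } and is increasing in its second variable, as required.
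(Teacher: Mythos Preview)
Your proof is correct and follows essentially the same approach as the paper: replace $\phi$ by a supremum in the second variable and appeal to the computability of $\sup$ for $c_{\nearrow}$. The only difference is cosmetic---you index the supremum over rationals $q<\epsilon$, while the paper indexes over all left computable reals in $(0,\epsilon]$ (invoking the preliminary fact that such intervals are $c_{\nearrow}$-computably enumerable); both choices work for the same reason. Your write-up is in fact more complete than the paper's, which states the monotonization principle and leaves the verification of the continuity inequality and the $\circledcirc$ condition to the reader.
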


\begin{proof}
This is a simple application of the following general principle: if
$f:\mathbb{R}_{c_{\nearrow}}^{+}\rightarrow\mathbb{R}_{c_{\nearrow}}^{+}$
is $(c_{\nearrow},c_{\nearrow})$-computable, then so is the function
$\hat{f}$ defined by $x\mapsto\underset{0<y\le x}{\text{sup}}f(x)$.
And this statement is uniform: it is possible to obtain a code for
$\hat{f}$ from a code for $f$. This follows from the following facts,
stated in Section \ref{part: Preliminaries}: 
\begin{itemize}
\item the set of left computable reals between $0$ and $x$ is $c_{\nearrow}$-computably
enumerable, 
\item the function $\text{sup}$ is computable for computable sequences
of left computable reals. 
\end{itemize}
\vspace{-0,5cm}
\end{proof}
We will thus always use increasing moduli of continuity in what follows.

We will now prove a theorem of equivalence between formal effective
metric continuity and effective continuity with respect to a Spreen
topology generated by open balls. 
\begin{thm}
\label{thm: Spreen metric continuity <=00003D> Spreen continuity }Let
$f:X\rightarrow Y$ be a function between computable metric spaces\emph{$^{*}$}
$(X,\nu,d)$ and $(Y,\mu,d)$. Then, $f$ is formally effectively
metric continuous if and only if it is effectively continuous with
respect to the Spreen topologies induced by the metrics on $X$ and
$Y$. 
\end{thm}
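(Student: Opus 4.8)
The plan is to work throughout with the direct description of the metric topology (Definition \ref{def:Direct-definition-of-Metric-TOP}), which is legitimate by Lemma \ref{lem: Open balls Spreen Basis } and Proposition \ref{prop: Spreen top equiv direct description }: an effective open set is then a pair $(A,F)$, with $A=O\cap X_{\nu}$ semi-decidable and $F$ a $(\nu,c_{\nearrow})$-computable radius function satisfying $B(x,F(x))\subseteq O$ and the non-vanishing clause $\circledcirc$. As in the setting of Theorem \ref{thm: Nogina-metric-continuity - Nogina continuity} I take $f$ to be $(\nu,\mu)$-computable, and by the lemma immediately preceding the theorem I may assume any modulus $\phi$ is increasing in its second argument.

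For the forward direction (formal metric continuity $\Rightarrow$ effective continuity) I would invoke Proposition \ref{prop:Spreen Preim basic set}: it suffices to compute, monotonically for the formal inclusions, the preimage of a basic ball $B=B(\mu(q),\epsilon)$ as an effective open set of $X$. Its trace $f^{-1}(B)\cap X_{\nu}$ is semi-decidable since $f$ is computable and $B$ is semi-decidable. For the radius I set $F_{1}(x)=\phi\bigl(x,\epsilon-d(f(x),\mu(q))\bigr)$ on $f^{-1}(B)\cap X_{\nu}$: the second argument is a positive left computable real because $f(x)\in B$, and $B(x,F_{1}(x))\subseteq f^{-1}(B)$ follows from the $\epsilon$--$\delta$ clause of Definition \ref{def:Spreen Metric Continuity } and the triangle inequality. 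Monotonicity against the ball formal inclusion $d(\mu(q_{1}),\mu(q_{2}))+\epsilon_{1}\le\epsilon_{2}$ reduces, again by the triangle inequality, to $\epsilon_{1}-d(f(x),\mu(q_{1}))\le\epsilon_{2}-d(f(x),\mu(q_{2}))$ together with the monotonicity of $\phi$. The clause needing care is $\circledcirc$ for $F_{1}$. Here I first record that $f$ is classically continuous at every point: this follows from the $\circledcirc$ clause of $\phi$ and density of $X_{\nu}$, by approximating an arbitrary point $z$ near $x$ by a computable $y$ and applying the $\epsilon$--$\delta$ clause at $y$ twice. Then, at $x\in f^{-1}(B)$ with slack $\epsilon_{0}=\epsilon-d(f(x),\mu(q))>0$, I combine continuity of $f$ at $x$ (which keeps $\epsilon-d(f(y),\mu(q))>\epsilon_{0}/2$ for $y$ near $x$) with the $\circledcirc$ clause of $\phi$ applied to $x$ and $\epsilon_{0}/2$.

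For the backward direction (effective continuity $\Rightarrow$ formal metric continuity) I would define, for $x=\nu(n)\in X_{\nu}$ and $\epsilon\in\mathbb{R}_{c_{\nearrow}}^{+}$: compute $f(x)$, present $B(f(x),\epsilon)$ as an effective open set of $Y$ through the direct description (semi-decidable trace, canonical radius $z\mapsto\epsilon-d(z,f(x))$), apply the effective-continuity realizer to get $f^{-1}(B(f(x),\epsilon))=(A',F')$, and set $\phi(x,\epsilon)=F'(x)$. This is $(\nu\times c_{\nearrow},c_{\nearrow})$-computable, and it is a genuine \emph{function} of the point $x$ rather than a multifunction: two names of $x$ give $\mathring{\sim}_{2}$-equivalent names of $B(f(x),\epsilon)$, which the realizer, respecting formal inclusion, sends to $\mathring{\sim}_{1}$-equivalent names, so the left computable real $F'(x)$ is well defined. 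The $\epsilon$--$\delta$ clause is immediate from $B(x,F'(x))\subseteq f^{-1}(B(f(x),\epsilon))$.

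The hard part will be the $\circledcirc$ clause of $\phi$, which is delicate because the target ball $B(f(y),\epsilon)$ moves with the point $y$. Fix $x\in X$ and $\epsilon$. Using classical continuity of $f$ at $x$ (part of the hypothesis here), choose $\rho>0$ with $d(x,y)<\rho\Rightarrow d(f(x),f(y))<\epsilon/4$, and, by density of $Y_{\mu}$, a computable $\mu(p)$ with $d(\mu(p),f(x))<\epsilon/4$. The \emph{fixed} effective open set $B_{0}=B(\mu(p),\epsilon/2)$ is then formally included, in the sense of the open-set formal inclusion $\mathring{\subseteq}_{2}$, in every moving ball $B(f(y),\epsilon)$ with $d(x,y)<\rho$; hence by monotonicity of the realizer $f^{-1}(B_{0})\mathring{\subseteq}_{1}f^{-1}(B(f(y),\epsilon))$, so the radius function of $f^{-1}(B_{0})$ is dominated pointwise by that of the moving preimage. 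Applying the $\circledcirc$ clause of the single effective open set $f^{-1}(B_{0})$ at $x$ (note $x\in f^{-1}(B_{0})$ since $d(f(x),\mu(p))<\epsilon/2$) yields $r_{0}>0$ with radius $>r_{0}$ on $B(x,r_{0})\cap X_{\nu}$; taking $r=\min(\rho,r_{0})$ then forces $\phi(y,\epsilon)\ge F'_{B_{0}}(y)>r_{0}\ge r$ for all $y\in B(x,r)\cap X_{\nu}$, which is exactly $\circledcirc$. This passage from a moving target to a single fixed reference ball, bridged by the formal-inclusion monotonicity of effective continuity and the $\circledcirc$ property of the one set $f^{-1}(B_{0})$, is the crux of the argument.
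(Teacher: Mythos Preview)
Your proof is correct and follows essentially the same route as the paper's. The only organizational difference is that in the forward direction you invoke Proposition~\ref{prop:Spreen Preim basic set} to reduce to preimages of basic balls, whereas the paper computes the preimage of an arbitrary Spreen open directly via $F_{X}(x)=\phi(x,F_{Y}(f(x)))$; since the canonical radius function of a basic ball $B(\mu(q),\epsilon)$ is exactly $z\mapsto\epsilon-d(z,\mu(q))$, your formula is the specialization of the paper's, and the $\circledcirc$ verifications coincide. Your backward direction is the same fixed-reference-ball argument as the paper's (with constants $\epsilon/2,\epsilon/4$ in place of $\epsilon/3,\epsilon/6$), and your explicit derivation of classical continuity of $f$ from the $\circledcirc$ clause of $\phi$ fills in a point the paper leaves tacit.
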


In what follows, we use the improper notation $B(x,r)\mathring{\subseteq}B(y,r')$
to mean $d(x,y)+r\le r'$. This is legitimate when there is no ambiguity
on the given $x$, $r,$ $y$ and $r'$. 
\begin{proof}
\textbf{First implication. }

We first suppose that $f$ is formally metric continuous, and show
it is effectively continuous for the Spreen topologies induced by
the metrics on $X$ and $Y$. 

By hypothesis, there is a modulus of continuity $\phi:X\times\mathbb{R}_{c_{\nearrow}}\rightarrow\mathbb{R}_{c_{\nearrow}}$,
which is increasing in its second variable. 

We use the direct description of the Spreen topology on $X$: an open
set is given by a pair $(A,F)$, where $A$ is a semi-decidable set,
and $F:A\rightarrow\mathbb{R}_{c_{\nearrow}}$ is a computable function
that, given a point of $A$, produces a radius around it. It should
satisfy the appropriate non-vanishing condition. 

Let $O_{Y}$ be a Spreen open in $Y$, given by a pair $(A_{Y},F_{Y})$.
We compute $O_{X}=f^{-1}(O_{Y})$ as a Spreen open. 

A name for $A_{X}=f^{-1}(A_{Y})$ as a semi-decidable set can be obtain
directly. 

We now describe a function $F_{X}$ associated to $O_{X}$. Given
a point $x\in A_{X}$, we put
\[
F_{X}(x)=\phi(x,F_{Y}(f(x))).
\]

This is indeed a map from $A_{X}$ to $\mathbb{R}_{c_{\nearrow}}$,
and we always have $B(x,F_{X}(x))\subseteq O_{X}$. This second fact
comes from the property of $\phi$: for any $z$ in $B(x,F_{X}(x))$,
$d(f(x),f(z))<F_{Y}(f(x))$, and so $f(z)\in O_{Y}$. 

We check condition $\circledcirc$ for $F_{X}$, which comes directly
from the $\circledcirc$ conditions for $F_{Y}$ and $\phi$. Let
$x$ be any point of $O_{X}$. Apply condition $\circledcirc$ for
$O_{Y}$ at $f(x)$: there is a radius $r_{1}$ such that for any
computable point $y$ in $B(f(x),r_{1})$, we have $B(f(x),r_{1})\mathring{\subseteq}B(y,F_{Y}(y))$.
Apply now the $\circledcirc$ condition for $\phi$ at $x$ and $r_{1}$:
this gives $r_{2}$ such that for any computable $z$ in $B(x,r_{2})$,
$B(x,r_{2})\mathring{\subseteq}B(z,\phi(z,r_{1}))$. 

Up to reducing $r_{2}$, we can also assume that $B(x,r_{2})\subseteq f^{-1}(B(f(x),r_{1}))$
holds. 

We now show that $B(x,r_{2})$ testifies for the non-vanishing condition
$\circledcirc$ for $F_{X}$ at $x$.

Consider any point $z$ in $B(x,r_{2})$. Then $f(z)$ belongs to
$B(f(x),r_{1})$, and so: 
\[
B(f(x),r_{1})\mathring{\subseteq}B(f(z),F_{Y}(f(z))).
\]
This implies that $F_{Y}(f(z))\ge r_{1}$. We can thus apply the hypothesis
on $\phi$, and obtain, because $\phi$ is increasing in its second
variable:
\[
B(x,r_{2})\mathring{\subseteq}B(z,\phi(z,r_{1}))\mathring{\subseteq}B(z,\phi(z,F_{Y}(f(z)))).
\]
And thus we obtain $B(z,r_{2})\mathring{\subseteq}B(z,\phi(z,F_{Y}(f(z))))$,
as was required. 

We thus have proven that $f^{-1}$ is $(\tau_{2},\tau_{1})$-computable,
witnessed by the map $F_{X}$ defined by $F_{X}(x)=\phi(x,F_{Y}(f(x)))$.
It remains to show that this map is increasing with respect to formal
inclusion, but this is straightforward: if $F_{Y}$ is replaced by
$F_{Y}'$, with $F_{Y}(y)\le F_{Y}'(y)$ for any $y$, and if $F'_{X}$
designates the map $x\mapsto\phi(x,F_{Y}'(f(x)))$, then $\forall x,\,F_{X}(x)\le F'_{X}(x)$,
because $\phi$ is increasing in its second variable. 

\textbf{Converse. }

Suppose now that $f$ is effectively continuous between $(X,\nu,d)$
and $(Y,\mu,d)$. The corresponding Spreen topologies are denoted
$(\mathcal{T}_{1},\tau_{1},\mathring{\subseteq}_{1})$ and $(\mathcal{T}_{2},\tau_{2},\mathring{\subseteq}_{2})$.
We build a modulus of continuity $\phi$ for it. 

Given $x$ in $X_{\nu}$ and $\epsilon$ in $\mathbb{R}_{c_{\nearrow}}$,
we show how to compute $\phi(x,\epsilon)$. First compute a $\tau_{2}$-name
for $B(f(x),\epsilon)$. Then compute a $\tau_{1}$-name of $f^{-1}(B(f(x),\epsilon))$,
which is of the form $(A_{1},F_{1})$, finally compute a $c_{\nearrow}$-name
of $F_{1}(x)$. 

We show that this indeed defines a modulus of continuity for $f$. 

Suppose that $d(x,z)<F_{1}(x)$. Then $z$ belongs to $f^{-1}(B(f(x),\epsilon)$,
thus $d(f(x),f(z))<\epsilon$, as should be. 

Note that the function $\phi$ thus defined is automatically increasing
in its second variable, because $f^{-1}$ is computed by a map that
should respect formal inclusion. 

Finally, we prove the $\circledcirc$ condition for $\phi$: 
\begin{align*}
\circledcirc\,\,\forall x\in X,\,\forall\epsilon\in\mathbb{R}_{c_{\nearrow}}, & \,\exists r>0,\,\forall y\in B(x,r)\cap X_{\nu},\,B(x,r)\mathring{\subseteq}B(y,\phi(y,\epsilon)).
\end{align*}

Let $x$ be a point of $X$ and $\epsilon$ in $\mathbb{R}_{c_{\nearrow}}$.
Consider the ball $B(f(x),\epsilon/6)$, which may not be centered
at a computable point. Consider a computable point $z$ close enough
of $f(x)$ so that $B(z,\epsilon/3)$ contains formally $B(f(x),\epsilon/6)$,
i.e. such that $d(z,f(x))<\epsilon/3$. 

Consider now the preimage of $B(z,\epsilon/3)$ by $f$ as a Spreen
open. It is given by a pair $(A_{z},F_{z})$. Since it contains $x$,
we can apply condition $\circledcirc$ at $x$ to it: there exists
$r$ such that for any $y$ at distance at most $r$ of $x$, $B(x,r)\mathring{\subseteq}B(y,F_{z}(y))$.

This radius $r$ shows that the condition $\circledcirc$ is satisfied
for $\phi$ at $(x,\epsilon)$. 

Indeed, let $y$ be any point of $B(x,r)$. Then $f(y)$ belongs to
$B(z,\epsilon/3)$, and thus $B(z,\epsilon/3)\mathring{\subseteq}B(f(y),\epsilon)$.
And thus $f^{-1}(B(z,\epsilon/3))\mathring{\subseteq}f^{-1}(B(f(y),\epsilon))$,
as we suppose that $f^{-1}$ is computed by a function that respects
formal inclusion. And thus if $f^{-1}(B(f(y),\epsilon))$ is described
by a pair $(A,F)$, we have $F(y)\ge F_{z}(y)$. And $B(x,r)\mathring{\subseteq}B(y,F_{z}(y))\mathring{\subseteq}B(y,F(y))$. 

But we have defined $\phi(y,\epsilon)$ to be $F(y)$, so we indeed
have that for any $y$ in $B(x,r)$, $B(x,r)\mathring{\subseteq}B(y,\phi(y,\epsilon))$. 
\end{proof}

\subsection{Spreen bases and Nogina bases with onto numberings\label{subsec:Spreen-bases-and-Nogina bases}}

We have seen that as soon as the ambient space has a computable and
dense sequence, any Spreen basis is a Lacombe basis, and the Spreen
and Lacombe topologies they generate are identical. Notice that the
proof of this result was trivial. However, the result of Moschovakis
which states the same for Nogina and Lacombe bases is non trivial
(a proof of Moschovakis' Theorem was given in Theorem \ref{thm:Moschovakis,-,-Theorem-1}). 

Where did the non-trivial part of this theorem move to? In the passage
from a Nogina basis to a Spreen basis.

The notion of Spreen basis was introduced here to define properly
a computable topology on a subnumbered set $(X,\nu)$ without having
to suppose that $\nu$ is onto, while still providing more effective
open sets than Lacombe bases, and properly generalizing the concept
of a computable metric space\emph{$^{*}$}. 

But when $\nu$ is onto, the definition of a Spreen basis is still
valid, and the topology it generates is still a computable topology. 

It is obvious that the conditions of a Spreen basis are more restrictive
than those of Nogina bases, and we can thus ask: 
\begin{itemize}
\item When is it true that a Nogina basis is automatically a Spreen basis? 
\item When is it true that the Spreen and Nogina topologies agree? 
\end{itemize}
On a computable metric space\emph{$^{*}$}, the open balls centered
at computable points, with computable radii form both a Nogina basis
and a Spreen basis. We can then interpret the theorem of Moschovakis
as follows.
\begin{thm}
Suppose that $(X,\nu,d)$ is a computable metric space\emph{$^{*}$}
with a computable and dense sequence, and an algorithm that computes
limits (for sequences that converge at exponential speed). If $\nu$
is actually onto, then the Nogina topology generated by open balls
is effectively equivalent (i.e. identical) to the Spreen topology. 
\end{thm}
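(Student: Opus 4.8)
The plan is to obtain the identification by factoring it through the Lacombe topology, which serves here as a common object that \emph{both} Moschovakis-type results already proved in this paper attach to the open balls. The decisive observation is that all three effective bases live on one and the same underlying basis, namely the set of open balls with computable radii. By Lemma \ref{lem: Open balls Spreen Basis } these form a Spreen basis $(\mathfrak{B},\beta,\mathring{\subseteq})$; by Proposition \ref{prop: Open-balls Nogina basis}, which is precisely where the hypothesis that $\nu$ be onto enters (Nogina bases are only defined for genuine numberings), they form a Nogina basis; and, invoking the computable dense sequence together with Lemma \ref{lem: dense seq spreen =00003D> lacombe}, they also form a Lacombe basis. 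Since the Lacombe topology is determined by its basis alone, the phrase ``the Lacombe topology generated by the open balls'' denotes one and the same computable topology throughout the argument.

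I would then assemble the two links. The first, $\mathrm{Spreen}=\mathrm{Lacombe}$, is immediate from Theorem \ref{thm:Moschovakis-theorem-on Spreen bases}: its hypothesis is exactly the existence of a $\nu$-computable sequence dense for the ball topology, which we have, and its conclusion is that the Spreen and Lacombe topologies attached to $(\mathfrak{B},\beta)$ are effectively equivalent. The second link, $\mathrm{Nogina}=\mathrm{Lacombe}$, is Moschovakis' Theorem \ref{thm:Moschovakis,-,-Theorem-1}, whose three hypotheses -- a numbering (i.e. $\nu$ onto), a computable algorithm of passage to the limit, and a computable and dense sequence -- are precisely the three standing assumptions of the present statement. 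Chaining the two equivalences, and using that effective equivalence of computable topologies (being the conjunction $\mathcal{T}_{1}=\mathcal{T}_{2}$ together with $\tau_{1}\equiv\tau_{2}$, built from the equivalence relation $\equiv$ on subnumberings and from composition of effectively continuous identity maps) is transitive, yields $\mathrm{Nogina}=\mathrm{Lacombe}=\mathrm{Spreen}$.

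The content of the proof is therefore entirely a matter of matching hypotheses and invoking transitivity, and I would stress the phenomenon that motivates the section: the hypothesis that limits of exponentially convergent sequences can be computed is consumed \emph{only} in the second link. The passage $\mathrm{Spreen}\to\mathrm{Lacombe}$ is trivial, whereas $\mathrm{Nogina}\to\mathrm{Lacombe}$ is the genuinely hard step, so the whole non-triviality of the identification -- the non-vanishing near every point of the a priori arbitrary radii furnished by a Nogina name -- is exactly the Markov's Lemma content of the proof of Theorem \ref{thm:Moschovakis,-,-Theorem-1}. The only point requiring care, and thus the main (minor) obstacle, is bookkeeping: one must confirm that the Spreen basis of Lemma \ref{lem: Open balls Spreen Basis } (carrying the metric formal inclusion) and the Lacombe basis viewed as a Spreen basis (carrying the equality formal inclusion) furnish effective equivalences whose translating procedures are compatible with the relevant formal inclusions, so that the two ``effective equivalence'' statements may legitimately be composed into one.
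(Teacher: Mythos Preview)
Your proposal is correct and follows essentially the same route as the paper: factor through the Lacombe topology, invoking Moschovakis' Theorem \ref{thm:Moschovakis,-,-Theorem-1} for the Nogina--Lacombe identification and the Spreen--Lacombe results of Section \ref{subsec:Effective-separability-Moschovakis THM} for the other link. The paper's proof is terser---it cites Lemma \ref{lem:Lacombe set into Spreen set } for Lacombe $\to$ Spreen and leaves Spreen $\to$ Nogina implicit (a Spreen name is automatically a Nogina name, since the Spreen conditions strictly extend the Nogina ones)---whereas you invoke Theorem \ref{thm:Moschovakis-theorem-on Spreen bases} to get both directions at once; this is a cosmetic difference, and your closing remark on formal-inclusion bookkeeping is a legitimate point of care that the paper simply elides.
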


\begin{proof}
Moschovakis proves directly that the Nogina topology agrees with the
Lacombe topology, but since it is always the case that a Lacombe open
can be transformed into a Spreen open, by Lemma \ref{lem:Lacombe set into Spreen set },
the result follows. 
\end{proof}

\bibliographystyle{alpha}
\bibliography{TopBiblio}

\end{document}